\newcommand{\be}{\mathcal{I} }
\newcommand{\capac}{\mathrm{cap}}
\newcommand{\Pm}{\mathbb{P}}
\newcommand{\M}{\mathcal{M}}
\newcommand{\E}{ \mathbb{E}}
\newcommand{\I}{\mathbb{I}} 
\renewcommand{\tilde}{\widetilde}														
\newcommand{\N}{\mathbb{N}}
\newcommand{\Z}{\mathbb{Z}}
\newcommand{\Q}{\mathbb{Q}}
\newcommand{\R}{\mathbb{R}}
\newcommand{\ch}{\mathsf{c}}
\newcommand{\V}{\mathcal{V}}
\newcommand{\id}{\mathrm{d}}
\newcommand{\im}{\mathrm{i}}
\newcommand{\e}{\mathrm{e}}
\newcommand{\supp}{\mathrm{supp}\,}
\newcommand{\BC}{{\mathbb{C}}}
\newcommand{\D}{{\mathbb{D}}}
\newcommand{\BI}{{\mathbb{I}}}
\newcommand{\BR}{{\mathbb{R}}}
\newcommand{\CH}{{\mathcal{H}}}
\newcommand{\CI}{{\mathcal{I}}}
\newcommand{\CV}{{\mathcal{V}}}
\newcommand{\HH}{\mathbb{H}}
\newcommand{\intens}{u}
\renewcommand{\H}{\ensuremath{\mathbb{H}}}
\renewcommand{\Pm}{\ensuremath{\mathbb{P}}}
\let \le \leqslant
\let \leq \leqslant
\let \ge \geqslant
\let \geq \geqslant
\let \epsilon \varepsilon
\let \el k
\newtheorem{theorem}{Theorem}[section]
\newtheorem{proposition}[theorem]{Proposition}
\newtheorem{lemma}[theorem]{Lemma}
\newtheorem{corollary}[theorem]{Corollary}
\theoremstyle{definition}
\newtheorem{remark}[theorem]{Remark}
\newcommand{\tend}[2]{\displaystyle\mathop{\longrightarrow}_{#1\rightarrow#2}}
\newcommand{\eps}{\varepsilon}
\definecolor{Red}{rgb}{1,0,0}
\definecolor{Blue}{rgb}{0,0,1}
\definecolor{Olive}{rgb}{0.41,0.55,0.13}
\definecolor{Yarok}{rgb}{0,0.5,0}
\definecolor{Green}{rgb}{0,1,0}
\definecolor{MGreen}{rgb}{0,0.8,0}
\definecolor{DGreen}{rgb}{0,0.55,0}
\definecolor{Yellow}{rgb}{1,1,0}
\definecolor{Cyan}{rgb}{0,1,1}
\definecolor{Magenta}{rgb}{1,0,1}
\definecolor{Orange}{rgb}{1,.5,0}
\definecolor{Violet}{rgb}{.5,0,.5}
\definecolor{Purple}{rgb}{.75,0,.25}
\definecolor{Brown}{rgb}{.75,.5,.25}
\definecolor{Grey}{rgb}{.7,.7,.7}
\definecolor{Black}{rgb}{0,0,0}
\definecolor{lightgrey}{gray}{0.65}
\newcommand{\parenthezises}[1]{\arabic{#1}}
\numberwithin{equation}{section}
\begin{document}

\pagenumbering{arabic}
\title{Percolation for two-dimensional excursion clouds and the discrete Gaussian free field}
\thanksmarkseries{arabic}
\author{A. Drewitz\thanks{University of Cologne. E-mail: \protect\url{adrewitz@uni-koeln.de}}, O. Elias\thanks{University of Cologne. E-mail: \protect\url{khallqvi@uni-koeln.de}}, A. Pr\'evost\thanks{University of Geneva. E-mail: \protect\url{alexis.prevost@unige.ch}},  J. Tykesson\thanks{Chalmers University of Technology and Gothenburg University. E-mail: \protect\url{johant@chalmers.se}}, F. Viklund\thanks{KTH Royal Institute of Technology. E-mail: \protect\url{frejo@kth.se}}}

\date{\today}

\maketitle

\thispagestyle{empty}
\begin{abstract}
We study percolative properties of excursion processes and the discrete Gaussian free field (dGFF) in the planar unit disk. We consider discrete excursion clouds, defined using random walks as a two-dimensional version of random interlacements, as well as its scaling limit, defined using Brownian motion. We prove that the critical parameters associated to vacant set percolation for the two models are the same and equal to $\pi/3.$ The value is obtained from a Schramm-Loewner evolution (SLE) computation. Via an isomorphism theorem, we use a generalization of the discrete result that also involves a loop soup (and an SLE computation) to show that the critical parameter associated to level set percolation for the  dGFF is strictly positive and smaller than $\sqrt{\pi/2}.$ In particular this entails a strict inequality of the type $h_*<\sqrt{2u_*}$ between the critical percolation parameters of the  dGFF and the two-dimensional excursion cloud. Similar strict inequalities are conjectured to hold in a general transient setup.

\end{abstract}
\setcounter{tocdepth}{1}
\vspace{8mm}
\renewcommand{\contentsname}{\centering {\small 
Contents}}

\begin{minipage}{0.9\textwidth}
{\small
\tableofcontents
}
\end{minipage}
\vspace{1cm}

\pagestyle{fancy}
\setlength{\headheight}{14pt} 
\fancyhf{}
\lhead{Percolation for 2D excursion clouds and the dGFF}
\cfoot{\thepage}

\newpage
\section{Introduction}

\subsection{Background and main result}
\label{sec:background}

The Brownian excursion cloud on the unit disk $\D,$ introduced by Lawler and Werner in \cite{lawler2000universality}, is a conformally invariant Poissonian cloud of planar Brownian motion trajectories, starting and ending on the boundary of $\D$. Roughly speaking, the number of trajectories is controlled by an intensity parameter $\intens>0.$ One can view the Brownian excursion cloud as a natural  version of Brownian interlacements in the hyperbolic disc. Varying the intensity parameter, one may consider percolative properties of the corresponding vacant set, that is, the set of points visited by no trajectory in the cloud. Given $r\in{[0,1)},$  consider the event that the disc of radius $r$ about $0$ can be connected to $\partial \mathbb{D}$ within the vacant set, and denote by $\intens_*^c(r)$ the associated percolation critical parameter, see \eqref{def:u*c}.
We will verify that percolation occurs with positive probability if and only if $u<\pi/3$ independently of $r$, that is, $\intens_*^c(r)$ equals $\pi/3$ for all $r\in{[0,1)}$, cf.\ Theorem~\ref{t.mainthm}. (This follows from an SLE computation and is certainly known to experts; an essentially equivalent statement for $u_*^c$ is contained in the informal discussion of \cite{werner-qian}, Section~5.)

There is an analogous discrete model in which one considers a random walk excursion cloud on $\D_n:=n^{-1}\Z^2\cap\D,$  see for instance \cite{ArLuSe-20a}. The corresponding question about vacant set percolation may then be formulated as follows. For fixed $r\in{[0,1)}$ and $u>0,$ we say that percolation occurs if the discrete ball $B_n(r)$ of radius $r$ (see below \eqref{eq:setDiscreteApprox}) can be connected to the boundary of $B_n(1-\eps)$ in the vacant set of the discrete excursion process at intensity $u$ on $\D_n$, as $n\rightarrow\infty$ and $\eps\rightarrow0$ in that order. (We will show that one may take $\eps \sim n^{-1/7}$ and only let $n \to \infty$.) In Theorem~\ref{the:maindisexc}, we prove that percolation in this sense occurs with positive probability if and only if $u<\pi/3,$ by comparing with the continuum scaling limit, the Brownian excursion cloud. In other words, the critical parameter $\intens_*^d(r)$ for the discrete excursion cloud, see \eqref{def:u*}, is also equal to $\pi/3$ for all $r\in{[0,1)},$ and similarly percolation does not occur at criticality.

The third model we analyze is the discrete Gaussian free field (dGFF) on the disk $\D_n$ with zero boundary condition on $\D_n^\ch.$ In this case, we are interested in percolative properties of its \emph{level sets} or \emph{excursion sets} as $n\rightarrow \infty,$ that is, the set of vertices where the field is larger than $h$ for some $h\in{\R}.$ Via isomorphism theorems, see for instance \cite[Proposition~2.4]{ArLuSe-20a}, which are a reformulation of similar theorems \cite{MR2892408,MR2932978,sznitman2013scaling,MR3492939,Lu-14} for interlacements, percolative properties of the level sets can be related to those of discrete excursion clouds.
Given $r\in{[0,1)},$ we study the event that $B_{n}(r)$ is connected to $B_n(1-\eps)$ for the level sets of the dGFF above level $h$ on $\D_n$ as $n\rightarrow\infty$ and $\eps\rightarrow0,$ and denote by $h_*^d(r)$ the associated critical level, see \eqref{def:h*}. This problem was for instance studied in \cite{DiLi-18,DiWiWu-20}, and it was proved in the last reference that percolation occurs above level zero for a slightly different percolation event. In Theorem~\ref{the:maindisexc} below, we adapt their argument to prove that there is percolation above some small positive level, that is $h_*^d(r)>0$ for all $r\in{(0,1)}.$  Moreover, we also prove that $h_*^d(r)\leq \sqrt{\pi/2}$ for all $r\in{[0,1)}.$

We thus obtain the following series of inequalities between the critical percolation  parameters $u_*^c(r)$ and $u_*^d(r)$ associated to the vacant set of the continuous and discrete excursion cloud, and $h_*^d(r)$ associated to level sets of the dGFF, which is a combination of Theorems~\ref{the:maindisexc}, \ref{the:maindisgff} and \ref{t.mainthm}.

\begin{theorem}
\label{the:main}
For all $r\in{(0,1)},$
\begin{equation}
\label{eq:strictinequalitycritparaintro}
   0<h_*^d(r)\leq\sqrt{\frac{\pi}2}<\sqrt{\frac{2\pi}3}=\sqrt{2\intens_*^d(r)}=\sqrt{2\intens_*^c(r)}.
\end{equation}
\end{theorem}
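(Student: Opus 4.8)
The plan is to obtain \eqref{eq:strictinequalitycritparaintro} by concatenating the three quantitative statements announced above together with one elementary numerical comparison, so that Theorem~\ref{the:main} is essentially a bookkeeping corollary and all the substantive work sits in its ingredient theorems. I would split the chain into four links and verify them in the following order.

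First, the two rightmost equalities. By Theorem~\ref{the:maindisexc} the critical intensity for vacant-set percolation of the discrete excursion cloud satisfies $\intens_*^d(r)=\pi/3$ for every $r\in[0,1)$ --- this is precisely the content of the dichotomy ``percolation occurs with positive probability if and only if $u<\pi/3$'', which not only pins down the critical value but also records the absence of percolation at criticality --- and by Theorem~\ref{t.mainthm} the continuum analogue satisfies $\intens_*^c(r)=\pi/3$ as well. Substituting these values gives simultaneously $\sqrt{2\pi/3}=\sqrt{2\intens_*^d(r)}$ and $\sqrt{2\intens_*^d(r)}=\sqrt{2\intens_*^c(r)}$. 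The only subtlety is to check that the definitions \eqref{def:u*c} and \eqref{def:u*} are phrased so that these are genuine equalities and not merely inequalities; the discussion preceding Theorem~\ref{the:main} already indicates this.

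Next, the middle strict inequality and the two leftmost links. Since $t\mapsto\sqrt t$ is strictly increasing on $[0,\infty)$, the bound $\sqrt{\pi/2}<\sqrt{2\pi/3}$ is equivalent to $\pi/2<2\pi/3$, i.e.\ to $\tfrac12<\tfrac23$, which is clear; this is the only place where the strict separation actually enters, and it is purely arithmetic once $\intens_*^d(r)=\pi/3$ and the upper bound on $h_*^d(r)$ are in hand. The lower bound $h_*^d(r)>0$ for $r\in(0,1)$ and the upper bound $h_*^d(r)\le\sqrt{\pi/2}$ for $r\in[0,1)$ are exactly the two halves of Theorem~\ref{the:maindisgff} --- respectively, an adaptation of the argument of \cite{DiWiWu-20}, and an application of the isomorphism theorem relating dGFF level sets to the excursion cloud superposed with a Brownian loop soup followed by an SLE crossing computation. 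Assembling the four links yields \eqref{eq:strictinequalitycritparaintro} for all $r\in(0,1)$ (note that the restriction $r>0$ comes only from the positivity link).

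The proof of Theorem~\ref{the:main} itself thus presents no obstacle; the difficulty is entirely displaced into the ingredient theorems. I expect the hardest of these to be the dGFF upper bound $h_*^d(r)\le\sqrt{\pi/2}$ of Theorem~\ref{the:maindisgff}: it must strictly improve on the naive comparison value $\sqrt{2\intens_*^d(r)}=\sqrt{2\pi/3}$, which forces one to control the loop-soup contribution in the isomorphism sharply and quantitatively rather than merely to show it is nondegenerate, and to push the accompanying SLE computation far enough to extract the explicit constant $\sqrt{\pi/2}$. The value $\pi/3$ in Theorems~\ref{the:maindisexc} and~\ref{t.mainthm}, by contrast, comes from a more standard SLE crossing-exponent computation, transferred to the lattice via a coupling with the Brownian scaling limit with quantitative error control at scale $\eps\sim n^{-1/7}$.
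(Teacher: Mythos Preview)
Your proof is correct and matches the paper's own treatment exactly: Theorem~\ref{the:main} is stated there as ``a combination of Theorems~\ref{the:maindisexc}, \ref{the:maindisgff} and \ref{t.mainthm}'', with no separate proof given, and you have assembled precisely those three ingredients together with the arithmetic inequality $\pi/2<2\pi/3$. One correction to your closing commentary: in the paper the upper bound $h_*^d(r)\le\sqrt{\pi/2}$ is the \emph{easy} half of Theorem~\ref{the:maindisgff} (it follows in one line from Theorem~\ref{the:maindisexcloops} with $\kappa=4$ plus the isomorphism of Proposition~\ref{prop:BEisom}), while the genuinely hard part is the strict positivity $h_*^d(r)>0$, which occupies almost all of Section~\ref{s:gfflvlperc}.
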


Note that the exact values $\sqrt{\pi/2}$ and $\sqrt{2\pi/3}$ in Theorem~\ref{the:main} depend on our choice of normalization in the definition of the excursion clouds and the dGFF, and we refer to Remark~\ref{rk:othercontperco},\ref{rk:differentnormalization} for more details.

\subsection{Relation to other models and further motivation}
We now explain how Theorem~\ref{the:main} is related to existing and conjectured results for other models, in particular random interlacements and the dGFF in dimension larger than three.

By Proposition~\ref{prop:localdescription} below, the problem of vacant set percolation for the Brownian excursion cloud on $\D$ can be seen as a natural two-dimensional equivalent of percolation for the vacant set of Brownian interlacements \cite{sznitman2013scaling,li2016percolative} on $\R^d,$ $d\geq3.$ But it is different from the model introduced in \cite{MR4125109}. In \cite{elias2017visibility}, a related property of the two-dimensional Brownian excursion cloud was studied, namely that of "visibility to infinity'', that is, the event that the origin can be connected to $\partial \mathbb{D}$ by a line segment contained in the vacant set started from $0$. (Infinity here is understood in the sense of hyperbolic distance.) It was proved that there is visibility to infinity from the origin if and only if $\intens<\pi/4,$ which directly implies that the associated critical parameter for vacant set percolation satisfies $\intens_*^c(0)\geq\pi/4.$ It was moreover proved in \cite{eliaslic2018} that $\intens_*^c(0)\leq \pi/2$ by comparing with a Poisson process of hyperbolic geodesics. The fact that $\intens_*^c(0)=\pi/3$ shows that this critical parameter sits strictly between these two previously obtained bounds.

In the discrete setting, the excursion cloud can be identified with random interlacements on $\D_n$ with infinite killing measure on $\D_n^\ch,$ see below \eqref{eq:muexcasinter}. In dimension $d\geq3,$ random interlacements on $\Z^d$ have been introduced in \cite{sznitman2010vacant}, but their definition can be extended to general transient weighted graphs, see \cite{teixeira2009interlacement}, even with a non-zero killing measure, see for instance \cite[Section~3]{Pre1}. The percolative properties of the vacant set associated to random interlacements on transient graphs have been studied intensely, see, e.g., \cite{sznitman2010vacant,teixeira2009interlacement,MR2891880}.

In dimension two, one cannot define random interlacements directly on the whole recurrent graph $\Z^2,$ and a possible alternative definition has been introduced in \cite{MR3475663}, where one essentially conditions on the origin being avoided by the walk to obtain a transient graph. In this context the percolation question is perhaps less natural, 
since the connected components of the associated vacant set at level $\alpha$ are always bounded, but depending on $\alpha$ its cardinality can be either finite or infinite. It is shown in \cite{MR3737923}, that this vacant set is infinite if and only if $\alpha\leq 1$.

When blowing up the set $\D_n$ to $B(n):=\{x\in{\Z^2}:\,|x|_2\leq n\},$ one can identify percolation for the vacant set of the two-dimensional discrete excursion process with percolation for the vacant set of random interlacements on $B(n)$ with infinite killing on $B(n)^\ch,$  which, as $n\rightarrow\infty,$ can be considered as another natural definition of percolation for the vacant set of interlacements in dimension two. Indeed, in dimension $d\geq3,$ random interlacements on the ball of radius $n,$ killed outside this ball, converge to random interlacements on $\Z^d,$ but this limit does not seem to be well-defined in dimension two.

Let us now turn to the dGFF on $\D_n,$ which is linked to the discrete excursion cloud on $\D_n$ via an isomorphism \cite{ArLuSe-20a}. This isomorphism can alternatively be seen from the point of view of random interlacements \cite{MR2892408, Lu-14,MR3492939,DrePreRod3}, or from the point of view of the random walk on finite graphs \cite{MR1813843,MR3978220}, see Remark~\ref{rk:otheriso}. A direct consequence of this isomorphism is the weak inequality $h_*^d(r)\leq\sqrt{2\intens_*^d(r)}$ between the critical parameter for the vacant set of random interlacements and the critical parameter for the level sets of the dGFF.

On transient graphs, level set percolation for the dGFF has also received significant attention in recent years \cite{MR3053773, MR3492939}. The isomorphism theorem relating its law with random interlacements has been a powerful tool for the study of the percolation of its level sets \cite{Sz-16,DrePreRod,DrePreRod2}, and also implies the weak inequality $h_*\leq\sqrt{2u_*}$ between the respective critical parameters of the level sets of the dGFF and the vacant set of random interlacements, see \cite[Theorem~3]{Lu-14} on $\Z^d,$ $d\geq3,$ or \cite[Corollary~2.5]{MR3940195} and \cite{DrePreRod3} on more general transient graphs. This inequality is strict on a large class of trees \cite{MR3492939,MR3765885}, and conjectured \cite[Remark~4.6]{MR3492939} to be strict on $\Z^d$ for all $d\geq3.$

The isomorphism between the discrete excursion cloud and the dGFF also involves a third process, which corresponds to an independent loop soup, see Proposition~\ref{prop:BEisom}. On $\D_n,$ taking advantage of this additional loop soup leads to the previously mentioned inequality $h_*^d(r)\leq \sqrt{\pi/2}.$  What is more, combined with our percolation results for the discrete excursion cloud, this entails the strict inequality $h_*^d(r)<\sqrt{2\intens_*^d(r)}$ in dimension two -- note that the weak version $h_*^d(r)\le\sqrt{2\intens_*^d(r)}$ of this inequality follows at once from the isomorphism. It is in general a challenge to establish this strong version of it, which so far has only been established on certain trees \cite{Sz-16,MR3765885}. In the continuous setting, one can also deduce from a similar isomorphism \cite{ArLuSe-20a} that the critical parameter associated to the complement of the first passage set of the two-dimensional dGFF is $\sqrt{\pi/2},$ see Corollary~\ref{cor:percocontGFF} for details. 

The inequality corresponding to $h_*^d(r)>0$ has also been obtained for the dGFF on $\Z^d,$ $d\geq3$ in \cite{DrePreRod2}, and on more general transient graphs in \cite{DrePreRod}. It also corresponds to a strict inequality between critical parameters, as we now explain. Denote by $\tilde{h}_*^d(r)$ the critical parameter associated to the percolation for the level sets of the dGFF on the cable system, or metric graph, associated to $\D_n,$ studied in dimension two in \cite{DiWi-18,DiWiWu-20,ArLuSe-20a}. This critical parameter is equal to zero for our notion of two-dimensional percolation, see \cite{ArLuSe-20a} (this can be proved by combining Lemma~4.13 and Corollary~5.1 therein), see also \cite{DiWiWu-20} for a similar result in a slightly different context. This equality $\tilde{h}_*=0$ has also been obtained for the dGFF on the cable system of a large class of transient graphs, see \cite{Lu-14,MR3492939,DrePreRod3}. Combining this with the positivity of $h_*^d(r),$ we obtain the following strict inequality between critical parameters: $\tilde{h}_*^d(r)<h_*^d(r).$

 Summarizing, Theorem~\ref{the:main} contains several strict inequalities between the critical parameters $\tilde{h}_*^d(r),$ $h_*^d(r)$ and $u_*^d(r),$ which complement known and conjectured results on transient graphs. See the discussion below \eqref{eq:strictinequalitycritpara} for more details.

\subsection{Comments on the proofs and outline of the paper}

Let us now discuss the ideas for the proof of Theorem~\ref{the:main}. The parameter $u_*^c(r),$ associated to percolation for the vacant set of Brownian excursions exploits the relation between excursion clouds and (variants of) Schramm-Loewner evolution processes, specifically the SLE$_{8/3}(\rho)$ process. Here, the weight $\rho$ is an explicit function of the intensity of the excursion process, see \eqref{eq:rhokappaalpha}. By the theory of conformal restriction  \cite{lawler2003conformal,werner2005conformal}, the excursions which start and end on the bottom half of $\partial\D$ produce an interface which can be described by an $\mathrm{SLE}_{8/3}(\rho)$ curve in $\mathbb{D}$ from $-1$ to $1$, see Lemma~\ref{l.brownianexcursionandlooprestriction}. Moreover, it is well-known \cite{lawler2003conformal} for which $\rho$ an $\mathrm{SLE}_{8/3}(\rho)$ curve hits $\partial\D,$ see Lemma~\ref{lem:SLE-ka-r}. This directly implies that the vacant set of the excursions that start and end on the bottom half of $\partial\D$ are connected to the bottom half of $\partial\D$ with positive probability if and only if the Brownian excursions have intensity less than $\pi/3.$ (See also Section~5 of \cite{werner-qian}.) Combining this with a symmetric result for the top part of $\partial\D$ and an argument involving the restriction property, see Lemma~\ref{lem:resexc}, the equality $u_*^c(r)=\pi/3$ follows easily. 

We now comment on the fact that the value of $u_*^c(r)$ does not depend on the choice $r \in [0,1).$ Any ball of radius $r \in [0,1)$ centred at the origin is almost surely hit by only finitely many excursions of the Brownian excursion process. These, however, could be removed at finite probabilistic cost due to an insertion-deletion tolerance property, as can be argued by use of the explicit formulas for the underlying Poisson point process, see Remark~\ref{rk:othercontperco},\ref{finiteenergy}. Hence the question of occurrence of percolation can be answered by looking at the excursions that are entirely contained in the complement of this ball within the disk. This heuristic idea is also consistent with an interpretation of the unit disk as a model for hyperbolic space, where, in fact, each Brownian excursion can be interpreted as an ``infinitely long loop''. From this point of view, the infinitely long part corresponds to that part of the excursion which is infinitesimally close to the boundary of the unit disk, which hence is the crucial part for the investigation of percolation.

We now turn to the proof that the discrete critical parameter $u_*^d(r)$ is equal to $\pi/3.$ The Brownian excursions cloud is the scaling limit of the random walk excursion cloud on $\D_n,$ see \cite{kozdron_scaling_2006,ArLuSe-20a}. In particular, one can compare the probabilities of connection events for the two models, see Lemma~\ref{lem:approxconnection}, which yields the equality $u_*^d(r)=u_*^c(r).$ We work with the KMT coupling of random walk with Brownian motion. The proof relies on counting the number of trajectories hitting a large ball, see \eqref{e.eqmeasexpr1} and Proposition~\ref{prop:localdescription}, and Beurling type-estimates, see Lemma~\ref{Lem:simpleRWresult}, to ensure that two random walk excursions which are asymptotically (as $n\rightarrow\infty$) arbitrarily close to each other eventually intersect. This will allow us to conclude that the random walk excursion cloud will eventually form an interface between $0$ and $\partial\D$ whenever the Brownian excursion cloud does.

Actually, the statement from \cite{ArLuSe-20a} about the scaling limit of random excursion cloud only lets us prove our discrete percolation result when considering the event that $0$ is connected to a ball of radius $1-\eps,$ for a fixed $\eps>0.$ It is natural to wonder if one can in fact connect $0$ to the boundary of the discrete lattice $\D_n$ in the supercritical regime for the vacant set of random walk excursions. We partially answer this question by proving that $0$ is connected to a point in the vacant set at a mesoscopic distance of order $n^{-1/7}$ to the boundary, see Remark~\ref{rk:connectiontoboundary} for details. This is achieved by improving the coupling between discrete and continuous excursions, see Theorem~\ref{the:couplingdiscontexc1}. 

Let us now turn to the proof of the inequality $h_*^d(r)\leq \sqrt{\pi/2}$ for the percolation of the level sets of the dGFF. As we have already mentioned, the dGFF can be coupled via an isomorphism theorem to excursion clouds with intensity related to the height of the field and loop soups with intensity $1/2,$ see Proposition~\ref{prop:BEisom}. In particular, the level sets of the dGFF are contained in the vacant set associated to the union of the loop clusters which intersect the excursion cloud. It was proved in \cite{werner-wu-cle} that the interfaces of this union are related to $\mathrm{SLE}_{4}(\rho)$ processes in a similar way as the interfaces of the excursion cloud were related to $\mathrm{SLE}_{8/3}(\rho).$ Using again explicit conditions on $\rho$ so that the $\mathrm{SLE}_{\kappa}(\rho)$ curve hit $\partial\D,$ but now for $\kappa=4,$ we deduce $h_*^d(r)\leq \sqrt{\pi/2}.$ Actually, the level $\sqrt{\pi/2}$ corresponds to the critical parameter for the vacant set associated to the union of the clusters of the level sets of the cable system GFF which intersect the boundary of $\D_n,$ see Remark~\ref{rk:exactcritparaGFF}. A similar result can be proved for the continuous GFF, see Corollary~\ref{cor:percocontGFF}. We also refer to Theorem~\ref{the:maindisexcloops} for an extension of the previous percolation result on excursion clouds plus loop soup when the intensity of the loop soup is between $0$ and $1/2$ (i.e.\ for $\kappa$ between $8/3$ and $4$).

Next, we comment on the proof of the strict inequality $h_*^d(r)>0.$  In \cite{DiWiWu-20}, it is proved that there is percolation for the level sets of the dGFF above level $0$ when the domain is a rectangle, in the sense that one can connect the left-hand side of this rectangle to its right hand-side with non-trivial probability. We adapt their technique to our context, that is when the domain is the unit disk $\D$, considering the event that $B(r)$ is connected to the boundary of the disk at a small, but positive, level for the dGFF. Note that contrary to usual Bernoulli percolation on $\Z^2,$  existence of a large component of $B(r)$ is not clearly equivalent to their left to right crossing event. Although very similar in spirit to \cite{DiWiWu-20}, our proof of $h_*^d(r)>0$ exhibits numerous technical differences, see for instance our definition of the exploration martingale below \eqref{eq:Mdef} or Lemma~\ref{Lem:bigcaponboundary}. Moreover, the strict inequality $h_*^d(r)>0$ implies a phase coexistence result for the level sets of the dGFF contrary to the percolation of the dGFF above level $0$ from \cite{DiWiWu-20}, see Remark~\ref{rk:mainthmgff},\ref{rk:phasecoexistence} for details.

Finally note that we expect that our results can be extended to other sufficiently regular Jordan domains besides the unit disk $\D$, for instance rectangles as studied in \cite{DiWiWu-20}. This is clearly the case for the Brownian excursion clouds by conformal invariance. For our results in the discrete setting some additional work would be required to prove convergence of discrete excursions to Brownian excursions in Section~\ref{s.coupling}, as well as the inequality $h_*^d(r)>0;$ see for instance Lemma~\ref{Lem:bigcaponboundary}. Since our main motivation for this article is the comparison with similar percolation results on $\Z^d,$ $d\geq3,$ which can be interpreted as finding large clusters connected to the boundary of the ball $B(n)$ as $n\rightarrow\infty,$ we have chosen to focus on the domain $\D$ here for the sake of exposition.

The rest of this article is organized as follows. In Section~\ref{s.prelimin} we fix notation, define the discrete objects studied in this paper (the dGFF, random walk excursion cloud and loop soup), and state the main theorems~in the discrete setting. Similarly, we define the objects and state our theorems~in the continuous setting in Section~\ref{s.contdef}, as well as recall some standard results from SLE theory.

Section~\ref{s.coupling} is dedicated to the construction of various couplings between the discrete and continuous excursion clouds, which are used in Section~\ref{sec:discretecritpara} to compute the critical value associated to the vacant set of the excursion cloud, plus loop soups, in the discrete setting.

Section~\ref{s:gfflvlperc} is centred around percolation for the Gaussian free field, and in particular the proof of the positivity of the associated critical parameter in the discrete setting. We also comment on the percolation for the Gaussian free field on the cable system.

Finally, Appendix~\ref{sec:contperco} proves our main result on the critical value associated to the vacant set of the excursion cloud, plus loop soups, in the continuum setting.  Appendix~\ref{sec:KMT} establishes some results on the KMT coupling (sometimes also referred to as the dyadic coupling) between a simple random walk and Brownian motion and Appendix~\ref{app:convcap} proves convergence of discrete capacities to continuous capacities for compact sets.

We use the following convention for constants. With $c,c
',c''$  we denote strictly positive and finite constants that do not depend on anything unless explicitly stated otherwise. Their values might change from place to place. Numbered constants  $c_1$ and $c_2$ follow a similar convention, except that they are fixed throughout the paper.

\subsection*{Acknowledgements}
The research of AD has been supported by the Deutsche Forschungsgemeinschaft (DFG) grants DR 1096/1-1 and DR 1096/2-1. OE is supported by the ERC Consolidator Grant 772466 ``NOISE''. AP was supported by the Engineering and Physical Sciences Research Council (EPSRC) grant EP/R022615/1, Isaac Newton Trust (INT) grant G101121, European Research Council (ERC) starting grant 804166 (SPRS) and the Swiss NSF. JT is supported by the Swedish research council. FV was supported by the Knut and Alice Wallenberg Foundation, the Swedish research council, and the Ruth and Nils-Erik Stenb\"ack Foundation. We are grateful to Juhan Aru and Titus Lupu for helpful discussions.

\section{Definitions and results for the discrete models}\label{s.prelimin}

\subsection{Notation} \label{sec:notation}

Throughout this paper we let $\D \subset \BC$ denote the open unit disk, endowed with the Euclidean distance $d(\cdot,\cdot).$ We denote by $\partial\D$ its boundary and by $\overline{\D}$ its closure, the closed unit disk. We define $B(x,r):=\{y\in{\D}:\,d(x,y)< r\}$ for all $x\in{\D}$ and $r\in{(0,1)},$ $\overline{B}(x,r):=\{y\in{\D}:\,d(x,y)\leq r\}$ and for each $K\subset\D,$ define $B(K,r):=\{x\in{\D}:\,\inf_{y\in{K}}d(x,y)<r\}.$ We abbreviate $B(r):=B(0,r)$ and $\overline{B}(r):=\overline{B}(0,r),$  and also take the convention $B(0):=\{0\}.$ We also define the distance $d(A,B)$ between two sets $A$ and $B$ as the smallest Euclidean distance between points in $A$ and points in $B.$  We write $K \Subset \D$ to indicate that $K$ is a compact subset of $\D,$ and let let $\I_A$ denote the indicator function of a set $A$.

Let $\D_n$ be the graph with vertex set $n^{-1} \Z^2 \cap \D$ and edge set given by the nearest-neighbor edges in $n^{-1}\Z^2$ that are included in $\D.$ For $K \subset \D,$ we write
	\begin{equation} \label{eq:setDiscreteApprox}
	K_n := \D_n \cap K.
	\end{equation}
	 Furthermore, denote by $\partial{\D}_n$ the neighboring vertices of $\D_n$  in $n^{-1}\Z^2 \setminus \D_n,$ and let $\overline{\D}_n:=\D_n\cup\partial\D_n.$ Let $\widetilde \D_n$ be the cable system associated to the graph $ \D_n$ with unit weights and infinite killing on $\partial\D_n,$ also sometimes called the metric graph. It is constructed by gluing together intervals $I_{\{x,y\}}$ of length $1/2$ (independent of $n$) through their endpoints for all $x\sim y\in{\overline \D_n},$ ($x\sim y$ means that $x$ and $y$ are neighbors in $n^{-1}\Z^2$) where $I_{\{x,y\}}$ is open at $x$ if $x\in{\partial\D_n}$ and closed at $x$ if $x\in{\D_n},$ and similarly at $y.$  We refer to \cite[Section~2]{Lu-14} for a more general setting. For each $x\in{\D_n}$ and $r\in{[0,1)},$ we define  $B_n(x,r):=(B(x,r))\cap \D_n,$ abbreviate $B_n(r):=B_n(0,r),$ let $\overline{B}_n(r):=\overline{B}(r)\cap\D_n,$ and $\tilde{B}_n(r)$ be the union of $I_e$ over all edges $e$ between two vertices of $B_n(r),$ and let $B_n(K,r):=B(K,r)\cap\D_n$ for each $K\subset\D_n.$   For $A,B \subset \D$ and $F\subset\overline{\D}$ we write
\begin{equation} \label{eq:discreteConnect}
A \stackrel{F}{\longleftrightarrow}B,
\end{equation}
or sometimes $A\leftrightarrow B$ in $F,$ to express one of the following facts: if $F\cap(\D\setminus{\D}_n)\neq\varnothing,$ then there is a continuous path $\pi\subset F$ starting in $A$ and ending in $B;$ otherwise, it means that there is a nearest neighbor path $(x_1, x_2, \ldots, x_m)$ in $F=F_n$ with $x_1 \in A_n$ and $x_m \in B_n.$ In other words, if $F$ is a non-discrete set $\longleftrightarrow$ means continuous connections, whereas if $F$ is discrete it means discrete connections, and we see subsets of the cable system as non-discrete sets. We write $A\stackrel{F}{\centernot \longleftrightarrow}B$ for the complement of the event \eqref{eq:discreteConnect}. Moreover, we say that a set $F\subset \D$ is connected if $x\leftrightarrow y$ in $F$ for all $x,y\in{F}.$ Note that the notations $\longleftrightarrow$ and $\centernot \longleftrightarrow,$ as well as the notion of connectedness, can depend on the choice of $n,$ which should always either be clear from context, or not depend of the choice of $n$ (if $F\cap(\D\setminus\D_n)\neq\varnothing$ for all $n\in\N$).  

We let $(X_k^{(n)})_{k \geq 0}$ denote the simple random walk on $n^{-1} \Z^2$ killed upon hitting the outer boundary $\partial \D_n$, which we abbreviate by $X:=(X_k)_{k\ge 0}$ whenever the dependency on $n$ is clear. 
We denote by $\Pm_x^{(n)}$ the law of  $(X_k^{(n)})_{k\geq0}$ on $\D_{n}$ starting at $x\in{\D_n}.$
For $K \subset \D_n,$ denote for $x,y \in \D_n$ the Green function 
\begin{equation} \label{eq:killedGreen}
G_{K}^{(n)}(x,y) := \frac14\E_x^{(n)} \Big[\sum_{k = 0 }^{\tau_{\partial K\cup\partial\D_n} } \I \{ X_k^{(n)} =y \} \Big]
\end{equation}
killed outside $K.$  We abbreviate $G^{(n)}:=G^{(n)}_{\D_n}.$ We define the hitting and return time of $X^{(n)}$ for a set $K \subset \overline{\D}_n$ as
\[
\tau_K^{(n)} := \inf \left\{ k \geq 0:\, X_k^{(n)} \in K\right\}\text{ and }\widetilde{\tau}_{K}^{(n)}:=\inf\left\{k\geq1:\,X_k^{(n)}\in{K}\right\}, 
\]
with the convention $\inf\varnothing=+\infty,$ and the last exit time is defined by 
\begin{equation}
\label{eq:deftauLn}
L_K^{(n)} := \sup \left\{k \geq 0 :\, X_k^{(n)} \in K\right\},
\end{equation}
with the convention $\sup\varnothing=-\infty.$  We define the equilibrium measure and capacity of a set $K\subset \D_{n}$ as follows
\begin{align}
\begin{split}
    \label{defequiandcap}
    e_{K}^{(n)}(x)&:=4\Pm_x^{(n)}(\tau_{\partial \D_n}^{(n)}<\widetilde{\tau}_{K}^{(n)}), \text{ for all }x \in K,\text{ and } \\
    \mathrm{cap}^{(n)}(K)&:=\sum_{x\in{K}}e^{(n)}_{K}(x).
    \end{split}
\end{align}
We also write $\overline{e}_K^{(n)}:=e_K^{(n)}/\mathrm{cap}^{(n)}(K)$ for the normalized equilibrium measure of $K.$ In view of \cite[(1.56)]{MR2932978} we have
\begin{equation}
\label{eq:lastexitdis}
\Pm_x^{(n)} \big(L_K^{(n)}>0,X_{L_K^{(n)}}=y\big) = G^{(n)}(x,y) e_{K}^{(n)}(y)\text{ for all }K\subset \D_n,\ x\in{\D_n}\text{ and }y\in{K}.
\end{equation}
Denote for $K_n \subset \D_n$ its ``discrete outer"\ part visible from $\partial \D_n $ by  
\begin{equation} \label{eq:outer}
\partial K_n:=\big \{ x \in \overline \D_n \setminus K_n \, : \, \exists\, y \in K_n \text{ with } x \sim y, \, \Pm_x(\tau_{\partial \D_n } < \tau_{K_n}) > 0 \big \}
\end{equation}
as well as ``the support of its equilibrium measure as seen from $\partial \D_n$"\ by 
\begin{equation}\label{eq:interior}
\widehat{\partial} K_n:=\big\{ x \in K_n \, : e_{K_n}^{(n)}(x)>0\big \}.
\end{equation}
Note that the notation $\partial\D_n$ introduced below \eqref{eq:setDiscreteApprox} and in \eqref{eq:outer} are consistent, and that $\widehat{\partial}\D_n$ are just the neighbors in $\D_n$ of $\partial\D_n.$ On the other hand, if $K\subset \D$ is such that $K\cap(\D\setminus\D_n)\neq\varnothing$ for all $n\in{\N},$ we denote by $\partial K$ the topological boundary of $K.$

\subsection{Discrete Gaussian Free Field} \label{sec:dGFF}
The discrete Gaussian free field (dGFF) on $\D_n$ with zero boundary condition is the centred Gaussian vector $(\varphi_x)_{x \in \D_n} $ with covariance given by 
\begin{equation}
\label{eq:defgff}
\E \left[\varphi_x\varphi_y\right]= G^{(n)}(x,y)\text{ for all }x,y\in{\D_n},
\end{equation}
cf.\ \eqref{eq:killedGreen} for notation. We write 
\begin{equation}
\label{eq:deflevelsets}
E^{\ge h}_n:=  \left\{ x \in \D_n : \varphi_x \geq h \right\}
\end{equation}
for the excursion set above level $h$ of $\varphi$ in $\D_n.$

\subsection{Discrete excursion process and loop soup}\label{s.de}
The discrete excursion measure is a measure which is supported on nearest neighbor paths that start and end in $\partial \D_n,$ and otherwise are contained in $\D_n.$ Let 
\[
W^{(n)} := \left\{ e \in \bigcup_{m \geq1} \overline{\D}_n^{m+1} : [e] \subset \D_n, e(0), e(m)\in \partial \D_n ,\,e(i)\sim e(i+1)\,\forall i<m \right\}
\] 
denote the set of discrete excursions in $\D_n,$ where $[e]:=\{e(1),\dots,e(m-1)\}$ denotes the trace of the discrete excursion on $\D_n.$  Given an excursion $e=(e_0,e_1,\ldots,e_m)$ we let $t_e=m$ denote its lifetime.

Given $K \subset \D_n,$ let $W_K^{(n)} := \left\{ e \in W^{(n)}: [e] \cap K \neq \varnothing \right\}$ denote all excursions that intersect $K$. Moreover, we denote by ${\Pm}^{(n)}_x$ the law under which $X$ is  simple random walk on $\frac1n\Z^2,$ starting at $x\in{\overline{\D}_n},$ and killed after the first return time $\widetilde{\tau}_{\partial \D_n}$ to $\partial\D_n$ (note that $\widetilde{\tau}_{\partial \D_n}=\tau_{\partial\D_n}$ if $x\in{\D_n}$). For $x,y \in \overline{\D}_n$  we let  
\begin{equation}
    \label{eq:defdisPoisson}
    \CH_n (x,y) := 4{\Pm}_x^{(n)}( X_1 \in \D_n, X_{\widetilde{\tau}_{\partial \D_n}} =y)
\end{equation}
denote the discrete boundary Poisson kernel and note that $\CH$ is symmetric. We let 
\[
{\Pm}_{x,y}^{(n)} \left(\,  \cdot \, \right) := {\Pm}_x^{(n)} \left(  \cdot \, |\,  X_1 \in \D_n, X_{\widetilde{\tau}_{\partial \D_n}} =y \right)
\]
denote the law of the random walk excursion from $x$ to $y.$ The discrete excursion measure is then defined by 
\begin{equation}
\label{eq:defmun}
    \mu^{(n)}_{\rm exc} := \sum_{x,y \in \partial \D_n} \CH_n(x,y) {\Pm}^{(n)}_{x,y}.
\end{equation}
Alternatively, one can write the measure $\mu^{(n)}_{\rm exc}$ as follows
\begin{equation*}
    \mu^{(n)}_{\rm exc} = \sum_{x \in \partial \D_n}4{\Pm}_x^{(n)}(\, \cdot \, , X_1\in{\D_n}).
\end{equation*}
 Writing $\pi^{(n)}$ for the map that sends
\(  (e_0,e_1,...,e_m) \in W^{(n)} \) 
to $(e_1,...,e_{m-1})$ in the space of nearest-neighbor trajectories on $\D_n$ starting and ending in $\widehat \partial \D_n$ (and hence
forgets about the first and last visited vertex),  we get
\begin{equation}
\label{eq:muexcasinter}
    \mu_{{\rm exc}}^{(n)} \circ (\pi^{(n)})^{-1}
    =\sum_{x\in{\widehat{\partial} \D_n}} \kappa_x \Pm_x^{(\kappa,n)},
\end{equation}
where $\kappa_x:=|\{y\in{\partial \D_n }:\,y\sim x\}|,$ and $\Pm_x^{(\kappa,n)}$ denotes the law of the random walk on the weighted graph $(\D_n,1,\kappa),$ that is $\D_n$ with unit conductance and killing measure $\kappa.$  In view of \cite[Theorem~3.2]{Pre1} (with $F=\D_n$), see also \cite[(2.12)]{MR2892408}, modulo time shift, the measure in \eqref{eq:muexcasinter} corresponds to the interlacements measure for the weighted graph $(\D_n,1,\kappa).$

The corresponding discrete excursion process is a Poisson point process $\omega^{(n)}$ on $W^{(n)}\times\R^+$ with intensity measure $\mu^{(n)}_{\rm exc}\otimes\mathrm{d}\intens,$ under some probability which we will denote by $\Pm.$ We write $\omega^{(n)}_{\intens}$ for the point process which consists of the trajectories in $\omega^{(n)}$ with label at most $\intens,$  and the associated occupied and vacant set are given by 

\begin{equation}
\label{eq:VunDef}
\CI^{\intens}_n := \bigcup_{e \in \supp(\omega^{(n)}_{\intens})} [e]\text{ and }\CV^{\intens}_n := \D_n \setminus \CI^{\intens}_n.
\end{equation}

In view of \eqref{eq:muexcasinter} and below, one can describe the restriction of $\omega_{\intens}^{(n)}$ to the trajectories hitting  a compact $K\subset \D_n$ as follows. 

\begin{proposition}
\label{prop:interoncompacts}
For $K\subset \D_n,$ let $N _K^{(n)}\sim\text{Poi}(\intens \, \mathrm{cap}^{(n)}(K)).$ Conditionally on $N_K^{(n)},$ we let $(X^{(n),i})_{i=1}^{N_K^{(n)}}$ be a collection of independent random walks on $\D_n$ with law $\Pm_{\overline{e}_K^{(n)}}^{(n)}.$ Then $\sum_{i=1}^{N_K^{(n)}}\delta_{X^{(n),i}}$ has the same law as the point process of forward trajectories in $\omega_{\intens}^{(n)}$ hitting $K,$ started at their first hitting time of $K.$ 
\end{proposition}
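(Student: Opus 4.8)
The plan is to extract the claimed decomposition from the general interlacements-on-compacts formula, using the identification \eqref{eq:muexcasinter} of $\mu_{\rm exc}^{(n)}\circ(\pi^{(n)})^{-1}$ with the interlacements intensity measure on the weighted graph $(\D_n,1,\kappa)$. First I would recall the standard structure of the random interlacement point process restricted to trajectories hitting a fixed compact $K$: on a transient weighted graph, the trajectories in $\omega^{(n)}_{\intens}$ that meet $K$, recorded from their first entrance in $K$, form a Poisson point process on paths started in $K$, whose intensity measure is $\intens$ times the measure $\sum_{x\in\widehat\partial K_n} e^{(n)}_{K_n}(x)\,\Pm_x^{(n)}(\,\cdot\,)$; see \cite[Theorem~3.2]{Pre1} and the discussion below \eqref{eq:muexcasinter}. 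Summing the equilibrium weights over $x$ gives total mass $\intens\,\mathrm{cap}^{(n)}(K)$, and normalizing yields the factorization into a Poisson number $N^{(n)}_K$ of points with mean $\intens\,\mathrm{cap}^{(n)}(K)$, each an independent walk with starting distribution $\overline{e}^{(n)}_{K_n}=e^{(n)}_{K_n}/\mathrm{cap}^{(n)}(K)$ — which is precisely the statement.

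The one point that needs care is passing between the ``excursion'' description and the ``interlacement'' description, since a discrete excursion $e\in W^{(n)}$ carries an extra first and last vertex in $\partial\D_n$, and the map $\pi^{(n)}$ strips these off. So I would argue as follows. An excursion $e$ hits the compact $K\subset\D_n$ if and only if $\pi^{(n)}(e)$ does (the stripped vertices lie in $\partial\D_n$, hence outside $\D_n\supset K$), and the forward trajectory of $e$ from its first hitting time of $K$ coincides with that of $\pi^{(n)}(e)$; therefore the point process of forward trajectories hitting $K$ is unchanged under $\pi^{(n)}$. By \eqref{eq:muexcasinter}, $\omega^{(n)}_\intens\circ(\pi^{(n)})^{-1}$ is the interlacement process at level $\intens$ for $(\D_n,1,\kappa)$, so I may apply the interlacements-on-compacts result directly to it. Finally, for interlacements on a finite weighted graph killed on $\partial\D_n$ with unit conductances, the equilibrium measure and capacity appearing in \cite[Theorem~3.2]{Pre1} agree with $e^{(n)}_K$, $\mathrm{cap}^{(n)}(K)$ as normalized in \eqref{defequiandcap}: one should check that the factor $4$ in \eqref{eq:killedGreen}, \eqref{defequiandcap} and \eqref{eq:defdisPoisson} is carried consistently, i.e.\ that $e^{(n)}_K(x)=4\Pm^{(n)}_x(\tau_{\partial\D_n}<\widetilde\tau_K)$ is exactly the rate at which a new interlacement trajectory first enters $K$ at $x$; this is a bookkeeping matter rather than a genuine difficulty.

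I expect the main obstacle to be purely expository: correctly matching the normalizations (the ubiquitous factor of $4$ coming from the degree of $\Z^2$, the killing measure $\kappa_x=|\{y\in\partial\D_n:y\sim x\}|$, and the time-shift mentioned below \eqref{eq:muexcasinter}) so that the measure $\sum_{x\in\widehat\partial K_n} e^{(n)}_{K_n}(x)\Pm^{(n)}_x$ produced by the general theorem is literally $\mathrm{cap}^{(n)}(K)\,\Pm^{(n)}_{\overline e^{(n)}_{K}}$ with the capacity as defined in \eqref{defequiandcap}. Once that identification is in place, the Poissonian thinning/restriction property of Poisson point processes does the rest: restricting a Poisson point process to an event (here, ``the trajectory hits $K$'') yields a Poisson point process with the restricted intensity, whose total mass $\intens\,\mathrm{cap}^{(n)}(K)$ governs the number of points $N^{(n)}_K$, and, conditionally on that number, the points are i.i.d.\ with the normalized intensity $\Pm^{(n)}_{\overline e^{(n)}_K}$. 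I would keep the proof to a couple of sentences, citing \cite[Theorem~3.2]{Pre1} for the interlacement decomposition and \eqref{eq:muexcasinter} for the identification, and flagging the normalization check as routine.
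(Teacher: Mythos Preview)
Your proposal is correct and follows exactly the approach the paper takes: the paper does not even give a separate proof, stating the proposition as a direct consequence of \eqref{eq:muexcasinter} and the cited interlacements references \cite[Theorem~3.2]{Pre1} and \cite[(2.12)]{MR2892408}. Your expanded discussion of the $\pi^{(n)}$ map, the normalization check, and the Poisson thinning is precisely the bookkeeping the paper leaves implicit.
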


\begin{remark}
\phantomsection\label{rk:defexcursion}
\begin{enumerate}[label=\arabic*)]
\item \label{rk:excursionviaoneRW}
One can also describe directly the law of all the excursions in $\omega_{\intens}^{(n)}$ with a single random walk. Using network equivalence, one can collapse $\partial \D_n$ into a single vertex $x_n$  and we denote by $(Y_t)_{t\geq0}$ under $\overline{\Pm}^{(n)}_{x_n}$ the random walk on $\D_n \cup\{x_n\}$ starting from $x_n,$ which jumps along any edge between $x_n$ and  $x\in \widehat \partial \D_n$ at rate $1,$ and along any edge between $x\in{\D_n}$ and $y\in{\D_n},$ $y\sim x,$ at rate $1.$ Let $\tau_{\intens}:=\inf\{t\geq0:\,\ell_{x_n}(t)\geq \intens\},$ where $\ell_{x_n}(t)$ denotes the total time spent by $Y$ in $x_n$ at time $t.$ Then the point process consisting of the excursions in $\D_n$ of $(Y_t)_{t\leq\tau_{\intens}}$ under $\overline{\Pm}^{(n)}_{x_n}$ has the same law as $\omega_{\intens}^{(n)}.$ We refer for instance to \cite[(2.8)]{MR2892408} for a proof.
\item \label{rk:choiceofweight}We now comment on the reason for the choosing to include the multiple of $4$ in the definition \eqref{eq:defdisPoisson} of the discrete boundary Poisson kernel or the equilibrium measure \eqref{defequiandcap}. The reason is to ensure the convergence of the discrete excursion process to the continuous excursion process at the same level, see Theorem~\ref{the:couplingdiscontexc1}. Similarly, the factor $1/4$ in the definition \eqref{eq:killedGreen} of the discrete Green function is to ensure convergence to the continuous Green function, see \eqref{e.greenest}. One can interpret this choice of constants as considering $\D_n$ as a weighted graph with weight one between two neighbors (and thus total weight $4$ at each vertex) and infinite killing on $\partial\D_n,$ or equivalently the graph $(\D_n,1,\kappa)$ from below \eqref{eq:muexcasinter}. Similarly, the length of the cables $I_e$ on the cable system, see Section~\ref{sec:notation}, is chosen to be $1/2$ since it corresponds to our choice of unit conductances, see for instance \cite{Lu-14}.
\end{enumerate}
\end{remark}
In a similar fashion one can define the random walk loop soup. We say that  $\ell =(\ell_0, \ell_1,...,\ell_k)\in \D_n^{k+1}$ is a rooted loop if it is a nearest neighbor path such that $\ell_0=\ell_k,$ whereas an unrooted loop is a rooted loop modulo time shift, that is an equivalence class of rooted loops where two loops $\ell$ and $\ell'$ are equivalent if $\ell = \ell'(\cdot+t)$.

The (rooted) loop measure, see for instance \cite{LJ-11}, on $\D_n$ is defined by 
\begin{equation}
	\label{eq:defdisloop}
	\nu_{{\rm loop}}^{r,(n)}(\cdot) := \sum_{x \in \D_n} \int_0^\infty \Pm_{x,x}^{t,(n)}(\, \cdot \,, {\tau}_{\partial \D_n} > t) p_t^{(n)}(x,x) \frac{1}{t}\, {\rm d}t, 
\end{equation}
where $p_t^{(n)}(x,y)$ denotes the family of transition probabilities for the continuous time random walk induced by the (constant $1$) conductances on $n^{-1}\Z^2$ and $\Pm_{x,y}^{t,(n)}$ is the corresponding bridge probability measure. The unrooted loop measure $\nu_{{\rm loop}}^{(n)}$ is given by the image of $\nu_{{\rm loop}}^{r,(n)}$ via the canonical projection on unrooted loops modulo time shift. The random walk loop soup on $\D_n$ with parameter $\lambda>0$ is defined as a Poisson point process with intensity $\lambda\nu_{{\rm loop}}^{(n)}.$

\subsection{Cable system} \label{sec:cableSystemExc}

Recall the definition of the cable system $\widetilde{\D}_n$ below \eqref{eq:setDiscreteApprox}. Under some probability $\tilde{\Pm}^{(n)}_x,$ $x\in{\widetilde \D_n},$ we denote by $\widetilde{X}^{(n)}$ the canonical diffusion on $\widetilde{\D}_n$ starting in $x,$ which behaves like a standard Brownian motion inside $I_{\{y,z\}},$ $y\sim z\in{\overline{\D}_n},$ killed when reaching the open end of $I_{\{y,z\}}$ if either $y\in{\partial\D_n}$ or $z\in{\partial\D_n},$ and such that the discrete time process which corresponds to the successive visits of $\widetilde{X}^{(n)}$ in $\D_n$ is the random walk $(X_k^{(n)})_{k\in{\N}}$ on $\D_n$ under $\Pm_x^{(n)}.$ We refer to \cite[Section~2]{Lu-14} for details. Let us remark that we chose to work with segments $I_{\{x,y\}}$ of length $1/2$ and standard Brownian motion, instead of segments $I_{\{x,y\}}$ of length $1$ and non-standard Brownian motion as in \cite{ArLuSe-20a}. This simply corresponds to a time change of the Brownian motion, and since our results will be independent of this time change we will from now on use the results of \cite{ArLuSe-20a} without paying attention to this different convention. 

For the cable graph $\widetilde{\D}_n$ we denote the analogously defined quantities by putting a tilde $\widetilde{\cdot}$ on them.  In particular, the Gaussian free field (GFF) on the cable system, denoted by $(\tilde{\varphi}_x)_{x\in{\widetilde{\D}_n}}$, is defined as in \eqref{eq:defgff} but for all $x,y\in{\widetilde{\D}_n},$ where $G^{(n)}$ from \eqref{eq:killedGreen} can be extended consistently to the cable system $\widetilde{\D}_n$ as the Green function associated with the diffusion $\tilde{X}.$ There is a simpler construction of the GFF $\tilde{\varphi}$ on the cable system $\widetilde{\D}_n$ from the dGFF on $\D_n$: conditionally on the dGFF $\varphi$ on $\D_n,$ $\tilde{\varphi}_{|I_e}$ for each edge $e=\{x,y\}$ can be obtained by running a (conditionally) independent length-$1/2$ Brownian bridge (for a non-standard Brownian bridge obtained from a Brownian motion of variance $2$ at time $1$) on the interval $I_e$ starting from $\varphi_x$ and ending at $\varphi_y$. In particular, $\tilde{\varphi}_x=\varphi_x$ for all $x\in{\D_n},$ and, denoting by $\widetilde{E}^{\ge h}_n:=\{x\in{\widetilde{\D}_n}:\,\varphi_x\geq h\}$ the excursion set in the cable system at level $h\in\R,$ we have ${E}^{\ge h}_n=\widetilde{E}^{\ge h}_n\cap\D_n.$ 

The intensity $\tilde{\mu}_{\text{exc}}^{(n)}$ of the excursion process on the cable system can be defined by extending the definition \eqref{eq:defmun} to the cable system, see \cite[Section~2.2]{ArLuSe-20a} for $u(x)=\sqrt{2}$ for details (replacing non-standard Brownian motions by standard ones). Under some probability measure $\tilde{\Pm}^{(n)},$ for each $u>0$ we then define the excursion process $\tilde{\omega}_u^{(n)}$ as a Poisson point process with intensity measure $u\tilde{\mu}_{\text{exc}}^{(n)}.$ We furthermore let $\tilde{\be}_n^u\subset\widetilde{\D}_n$ be the set of points visited by a trajectory in $\tilde{\omega}_u^{(n)},$ and denote by $\tilde{\V}_n^u:=(\tilde{\be}_n^u)^\ch$ its complement in $\widetilde{\D}_n.$ Alternatively, one could equivalently define $\tilde{\omega}_u^{(n)}$ as the random interlacements process at level $u$ on the cable system of the weighted graph defined below \eqref{eq:muexcasinter}, see  \cite[Section~3]{Pre1}; or, yet another equivalent way is to define it as the excursions on $\widetilde{\D}_n$ of the diffusion starting from $x_n,$ and run until spending total time $u$ in $x_n,$ for the cable system associated to the graph $\D_n\cup\{x_n\}$ from Remark~\ref{rk:defexcursion}, \ref{rk:excursionviaoneRW}. In particular, extending the definition of the equilibrium measure $e_K^{(n)}$ and capacity $\mathrm{cap}^{(n)}(K)$ of compacts $K\subset\widetilde{\D}$ similarly as in \cite[(2.16) and (2.19)]{DrePreRod3}, one can extend Proposition~\ref{prop:interoncompacts} to compacts $K\subset\widetilde{\D}_n,$ see \cite[Theorem~3.2]{Pre1}. Similarly as for the diffusion $\tilde{X}^{(n)},$ one can obtain $\tilde{\omega}_u^{(n)}$ by adding standard Brownian motion excursions on the cables $I_{\{x,y\}},$ $x\sim y$ to the excursions in $\omega_u^{(n)},$ and we will from now on always assume that $\tilde{\omega}_u^{(n)}$ and $\omega_u^{(n)}$ are coupled in that way. In particular $\be^u_n=\tilde{\be}^u_n\cap\D_n$ and $\V_u^n=\tilde{\V}_u^n\cap\D_n.$

We finally introduce the loop intensity measure $\tilde{\nu}_{\rm{loop}}$   on loops in the cable system $\widetilde{\D}_n,$ and refer to \cite{Lu-14} for a rigorous definition. The projection of this measure on the discrete part of the loops which hit $\D_n$ simply corresponds to the measure introduced in \eqref{eq:defdisloop}. The cable system loop soup with parameter $\lambda>0$ is a Poisson point process with intensity $\lambda\tilde{\nu}_{\rm{loop}}.$ We collect these loops in connected components which we will refer to as the loop clusters on the cable system. We always assume that under the probability $\Pm,$ the loop soup and the excursion process are independent. For each $u,\lambda\ge 0,$ we denote by $\tilde{\be}_n^{u,\lambda}$ the closure of the union of all the loop clusters on the cable system of the loop soup at intensity $\lambda,$ which intersect $\tilde{\be}_n^u.$ It is then consistent to set
\begin{equation} \label{eq:BuuVuuDef}
\tilde{\be}_n^{u,0}:=\tilde{\be}_n^u \quad \text{ as well as } \quad  \tilde{\V}_n^{u,\lambda}:=\widetilde{\D}_n\setminus\tilde{\be}_n^{u,\lambda}\text{ for all } \lambda\geq0.
\end{equation}

\subsection{Isomorphism theorem}
\label{sec:iso}

A key tool in our investigations is provided by isomorphism theorems~on the cable system relating a Gaussian free field on the one hand, and independent Brownian excursions as well as an independent Gaussian free field on the other hand. Such results have a long history, dating back to Dynkin's isomorphism theorem~\cite{MR693227,MR734803}, which found its motivation in earlier work by Brydges, Fröhlich and Spencer \cite{BrFrSp-82} and Symanzik \cite{Sy-69}.  More recent developments include the second Ray-Knight isomorphism for random walks \cite{MR1813843} or the isomorphism between random interlacements and the Gaussian free field \cite{MR2892408}. For our purposes, we will only need a relatively soft result, which is a direct consequence of this isomorphism for the excursion sets of the GFF as it can be found in \cite{ArLuSe-20a}.

\begin{proposition}
\label{prop:BEisom}
For each $\intens>0$ and $n\in\N,$ there exists a coupling between $\widetilde{E}^{\geq \sqrt{2\intens}}_n$ and $\tilde{\V}^{\intens,1/2}_n$ such that almost surely,
\begin{equation}
\label{eq:iso}
    \widetilde{E}^{\geq \sqrt{2\intens}}_n\subset\tilde{\V}^{\intens,1/2}_n.
\end{equation}
\end{proposition}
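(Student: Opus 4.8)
The plan is to deduce Proposition~\ref{prop:BEisom} as a soft consequence of the known cable-system isomorphism theorem relating the GFF excursion sets to Brownian excursions and an independent loop soup. The key point is that \eqref{eq:iso} is only a \emph{one-sided inclusion}, so we do not need the full strength of the isomorphism (which would give a distributional identity of the relevant sign clusters), but only that the sign cluster structure of the GFF dominates the occupied set of the excursion-plus-loop-soup picture. Concretely, I would recall from \cite[Section~2]{ArLuSe-20a} the isomorphism in the following form: on the cable system $\widetilde{\D}_n$, there is a coupling of the GFF $\tilde{\varphi}$, the excursion process $\tilde{\omega}^{\intens}_n$ at level $\intens$, and an independent loop soup at intensity $1/2$, under which the sign clusters of $\tilde{\varphi}$ that are \emph{not} attached to the occupied set $\tilde{\be}^{\intens,1/2}_n$ carry an independent random sign, while on $\tilde{\be}^{\intens,1/2}_n$ one has good control of $\tilde{\varphi}$ via the second Ray--Knight / Lupu isomorphism. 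The cleanest route is to use the version that directly produces a coupling in which $\tilde{\varphi}^2/2$ agrees (in law, jointly) with the local time field of the excursion process plus loop soup plus an independent copy of $\tilde{\varphi}^2/2$, from which one extracts that $\{\tilde{\varphi}>0\}$-type excursion sets avoid $\tilde{\be}^{\intens,1/2}_n$ in the appropriate coupling.

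First I would set up the precise statement being imported: by \cite{ArLuSe-20a} (see also \cite{Lu-14, DrePreRod3}), for the weighted graph $(\D_n,1,\kappa)$ of Section~\ref{s.de} with its cable system, there is a coupling of a GFF $\tilde{\varphi}$ on $\widetilde{\D}_n$, the interlacement/excursion set $\tilde{\be}^{\intens}_n$, and a loop soup $\mathcal{L}_{1/2}$ at intensity $1/2$ independent of $\tilde{\omega}^{\intens}_n$, together with an independent GFF $\tilde{\psi}$, such that $\tilde{\be}^{\intens,1/2}_n$ (the closure of the union of loop clusters meeting $\tilde{\be}^{\intens}_n$) is contained in the occupied/nonzero set, and such that $\widetilde{E}^{\geq\sqrt{2\intens}}_n$ can be realized as a subset of $\widetilde{\D}_n\setminus\tilde{\be}^{\intens,1/2}_n$. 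The mechanism is: the sign clusters of $\tilde{\varphi}$ that meet $\tilde{\be}^{\intens}_n$ (through the loop clusters) are exactly where $\tilde{\varphi}$ can be forced to be positive, but the boundary of each such cluster is a zero of $\tilde{\varphi}$, so no point of $\tilde{\be}^{\intens,1/2}_n$ can have $\tilde{\varphi}\geq\sqrt{2\intens}>0$ unless it lies in the \emph{interior} of a positive sign cluster; and the isomorphism is arranged precisely so that on $\tilde{\be}^{\intens,1/2}_n$ one has $|\tilde{\varphi}|<\sqrt{2\intens}$ (the excursion local times are subtracted off). Thus $\widetilde{E}^{\geq\sqrt{2\intens}}_n\cap\tilde{\be}^{\intens,1/2}_n=\varnothing$, i.e.\ \eqref{eq:iso} holds.

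The steps, in order: (1) State the imported isomorphism from \cite{ArLuSe-20a} in the normalization of this paper (standard rather than non-standard Brownian motion on cables, with the factor-$4$ and factor-$1/4$ conventions of \eqref{eq:defdisPoisson}, \eqref{eq:killedGreen}), noting that the relation between the GFF level $h$ and the excursion intensity $\intens$ becomes $h=\sqrt{2\intens}$ under these conventions --- this is the content of Remark~\ref{rk:differentnormalization} and is exactly where the square-root-$2$ enters. (2) Deduce that in the coupling furnished by the isomorphism, every point $x\in\tilde{\be}^{\intens,1/2}_n$ satisfies $\tilde{\varphi}_x<\sqrt{2\intens}$ almost surely, hence $x\notin\widetilde{E}^{\geq\sqrt{2\intens}}_n$. (3) Conclude \eqref{eq:iso}. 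Steps (2)--(3) are essentially immediate once (1) is in place.

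The main obstacle is step (1): faithfully translating the isomorphism of \cite{ArLuSe-20a}, which is stated for a specific choice of intensity function $u(x)=\sqrt{2}$ and non-standard Brownian motion, into the present setup with a general level $\intens>0$ and standard Brownian motion on cables of length $1/2$. This requires checking that the time-change between standard and non-standard Brownian motion (discussed below Section~\ref{sec:cableSystemExc}) does not affect the coupling --- which it does not, since the occupied sets $\tilde{\be}^{\intens}_n$ and the field $\tilde{\varphi}$ are both invariant under that time change --- and that the scaling $\intens\mapsto\sqrt{2\intens}$ is correctly tracked through the conductance/killing conventions. Once that bookkeeping is done, Proposition~\ref{prop:BEisom} follows with no further probabilistic input; it is genuinely a ``relatively soft'' corollary, as stated in the text preceding it.
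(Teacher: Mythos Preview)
Your overall strategy --- import the cable-system isomorphism from \cite{ArLuSe-20a} and read off the one-sided inclusion --- is the same as the paper's, and your remark that only a soft consequence is needed is exactly right. But your step~(2) contains a genuine error.

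You claim that under the isomorphism ``on $\tilde{\be}^{\intens,1/2}_n$ one has $|\tilde{\varphi}|<\sqrt{2\intens}$ (the excursion local times are subtracted off).'' This is false, and the parenthetical has the direction backwards. In the isomorphism (whether in the form of \cite[Proposition~2.4]{ArLuSe-20a} or the Sznitman/Lupu version), the excursion occupation field is \emph{added} to the square of an auxiliary field to produce the square of the GFF $\tilde\Phi$ with boundary condition $\sqrt{2\intens}$; on $\tilde{\be}^{\intens}_n$ the local time is strictly positive, so $|\tilde\Phi|$ is large there, not small. No bound $|\tilde\varphi|<\sqrt{2\intens}$ holds on the occupied set, and $\tilde\varphi$ can in fact be arbitrarily large on $\tilde{\be}^{\intens,1/2}_n$. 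Relatedly, the relevant sign clusters are those of $\tilde\Phi$, not of the zero-boundary field $\tilde\varphi$.

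What the isomorphism actually delivers, and what the paper uses, is this: with constant boundary condition $\sqrt{2\intens}$ in \cite[Proposition~2.4]{ArLuSe-20a}, the sign $\sigma$ equals $+1$ on $\tilde{\be}^{\intens,1/2}_n$ (every such point connects through the excursion set to $\partial\D_n$, where $\tilde\Phi=\sqrt{2\intens}>0$). Hence $\tilde{\be}^{\intens,1/2}_n\subset\{\tilde\Phi\ge 0\}$, and since $\tilde\varphi:=\tilde\Phi-\sqrt{2\intens}$ has the law of the zero-boundary GFF this reads $\tilde{\be}^{\intens,1/2}_n\subset\widetilde{E}^{\ge -\sqrt{2\intens}}_n$. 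Taking complements gives $\{\tilde\varphi< -\sqrt{2\intens}\}\subset\tilde{\V}^{\intens,1/2}_n$, and finally the symmetry $\tilde\varphi\mapsto -\tilde\varphi$ of the zero-boundary GFF furnishes a coupling with $\widetilde{E}^{\ge\sqrt{2\intens}}_n\subset\tilde{\V}^{\intens,1/2}_n$. This complement-plus-symmetry step is missing from your argument and cannot be replaced by the incorrect $|\tilde\varphi|<\sqrt{2\intens}$ claim.
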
 
\begin{proof}
We use \cite[Proposition~2.4]{ArLuSe-20a} applied with the constant boundary condition equal to $\sqrt{2u},$ and note that the excursion process with boundary condition $\sqrt{2u}$ defined therein has the same law as our excursion process at level $u$ (compare the normalization in \cite[(2.3)]{ArLuSe-20a} to the one in \eqref{eq:defmun}). On $\tilde{\be}_n^{u,1/2},$ the sign $\sigma$ from  \cite[Proposition~2.4]{ArLuSe-20a} is equal to $1,$ and thus $\tilde{\be}_n^{u,1/2}$ is stochastically dominated by $\widetilde{E}^{\geq-\sqrt{2u}}_n.$ We can conclude by taking complement and by symmetry of the GFF.
\end{proof}

Note that to prove Proposition~\ref{prop:BEisom} one actually does not need the full strength of \cite[Proposition~2.4]{ArLuSe-20a}, but only a cable system version of  \cite[Proposition~2.3]{ArLuSe-20a}, by proceeding similarly as in the proof of  \cite[Theorem~3]{Lu-14}. We will assume from now that the field, the loop soup and the excursion process on the cable system are coupled under $\Pm$ so  that \eqref{eq:iso} holds. Note that \eqref{eq:iso} implies that ${E}_n^{\geq\sqrt{2u}}\subset\tilde{\V}_n^{u,1/2}\cap\D_n\subset\V^u_n.$ One immediately deduces the weak inequality $h_*^d(r)\leq\sqrt{2\intens_*^d(r)}$ for all $r\in{(0,1)},$ see \eqref{def:u*} and \eqref{def:h*}, which will be strengthened as a consequence of our results, see \eqref{eq:strictinequalitycritpara}. 

\begin{remark}
\label{rk:otheriso}
As we explained below \eqref{eq:muexcasinter}, see also Proposition~\ref{prop:interoncompacts}, the excursion process $\omega_u^{(n)}$ can be seen as a random interlacements process on $\D_n$ with infinite killing on $\partial\D_n.$ An isomorphism theorem~between the GFF and random interlacements was first proved in \cite{MR2892408} on discrete graphs, and extended in \cite{Lu-14} to the cable system. Even if these references only consider random interlacements on graphs with zero killing measure, their results can easily be extended to graphs with any killing measure, see \cite[Remark~2.2]{DrePreRod3}. Combining this isomorphism between the GFF and random interlacements with the isomorphism between loop soups and random interlacements from \cite{LJ-11,Lu-14}, one easily obtains an alternative proof of the inclusion \eqref{eq:iso}. Using Remark~\ref{rk:defexcursion}, \ref{rk:excursionviaoneRW}, one could alternatively see the inclusion \eqref{eq:iso} as a consequence of the second Ray-Knight Theorem~between Markov processes and the GFF from \cite{MR1813843}. The stronger version of the isomorphism found in  \cite[Proposition~2.4]{ArLuSe-20a} can also be seen as an isomorphism for random interlacements, see \cite[Theorem~2.4]{MR3492939} or \cite[Theorem~1.1, 2)]{DrePreRod3}, or for Markov processes, see the proof of \cite[Theorem~8]{MR3978220}.
\end{remark}

\subsection{Discrete results}

Recall the notation  \eqref{eq:discreteConnect}. For each $r\in{[0,1)}$ we define
\begin{equation} \label{def:u*}
	    \intens_*^d(r):=\inf\Big\{\intens\geq0:\,\lim\limits_{\eps\rightarrow0}\liminf_{n\rightarrow\infty}\Pm\Big(B_n(r)\stackrel{\mathcal{V}^{\intens}_n}{\longleftrightarrow}
		\partial B_n(1-\eps)\Big)=0\Big\}.
\end{equation}
Our first main result, proved at the end of Section~\ref{sec:discretecritpara}, identifies the parameter $\intens_*^d(r),$ and actually prove that in the supercritical phase one can have connection at polyonomial distance from the boundary, see \eqref{eq:percowithoutloops}. 

\begin{theorem}
\label{the:maindisexc}
For each $r\in{[0,1)}$ and $u\geq\pi/3$
\begin{equation}
\label{eq:percowithoutloopssub}
    \lim\limits_{\eps\rightarrow0}\lim\limits_{n\rightarrow\infty}\Pm\Big(B_n(r)\stackrel{{\mathcal{V}}^{\intens}_n}{\longleftrightarrow}
		\partial B_n(1-\eps)\Big)=0,
\end{equation}
whereas for all $u<\pi/3$ and $r\in{[0,1)}$
\begin{equation}
\label{eq:percowithoutloops}
        \liminf_{n\rightarrow\infty}\Pm\big(B_n(r)\stackrel{{\mathcal{V}}_n^{\intens}}{\longleftrightarrow}\partial B_n(1-n^{-1/7})\big)>0.
\end{equation}
In particular, $u_*^d(r)=\pi/3.$
\end{theorem}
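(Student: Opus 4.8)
The strategy is to transfer the continuum result $u_*^c(r) = \pi/3$ (Theorem~\ref{t.mainthm}) to the discrete setting via couplings between the random walk excursion cloud and the Brownian excursion cloud. The key technical inputs will be: (i) the scaling limit statement for discrete excursion clouds from \cite{ArLuSe-20a, kozdron_scaling_2006}, together with the KMT coupling of random walk with Brownian motion; (ii) the local description of the excursion process restricted to trajectories hitting a fixed compact, Proposition~\ref{prop:interoncompacts}, which reduces the relevant randomness to a Poisson number (with mean $u\,\mathrm{cap}^{(n)}(K)$) of independent walks started from the normalized equilibrium measure, together with convergence of discrete capacities to continuous ones (Appendix~\ref{app:convcap}); and (iii) Beurling-type estimates (Lemma~\ref{Lem:simpleRWresult}) guaranteeing that two random walk excursions that are asymptotically arbitrarily close must eventually intersect. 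I will treat the subcritical and supercritical halves separately.

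\emph{Supercritical half} ($u<\pi/3$, proving \eqref{eq:percowithoutloops}). First fix $\eps>0$ small. By Theorem~\ref{t.mainthm} and the conformal restriction description (Lemma~\ref{l.brownianexcursionandlooprestriction}, Lemma~\ref{lem:SLE-ka-r}), for $u<\pi/3$ the Brownian excursion cloud has, with positive probability, a vacant crossing from $B(r)$ to $\partial B(1-\eps)$; moreover one can realize such a crossing by finitely many excursions confined away from $B(r)$ and a finite set of "blocking'' excursions, and exhibit it via a quantitatively robust event (a vacant crossing with a definite "margin''). Using the coupling of Lemma~\ref{lem:approxconnection}/Theorem~\ref{the:couplingdiscontexc1}, the discrete excursion cloud stays within Hausdorff distance $o(1)$ of the continuum one, so the margin absorbs the discretization error and the discrete vacant crossing event also has probability bounded below, giving $\liminf_n \Pm(B_n(r)\leftrightarrow \partial B_n(1-\eps))>0$ for each fixed $\eps$, hence $u_*^d(r)\ge \pi/3$. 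To upgrade $1-\eps$ to $1-n^{-1/7}$ one uses the improved coupling of Theorem~\ref{the:couplingdiscontexc1}, whose error is controlled at mesoscopic scale $n^{-1/7}$; the idea is to repeat the crossing argument at the boundary annulus $B(1-n^{-1/7})\setminus B(1-\eps)$, where at the relevant scale the excursion cloud near $\partial\D$ is, after conformal map, comparable to a half-plane excursion cloud of the same intensity $u<\pi/3$, which is again supercritical, and paste the two vacant crossings using the FKG-type positive association of the vacant set together with Beurling estimates to ensure the discrete pieces genuinely connect.

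\emph{Subcritical half} ($u\ge\pi/3$, proving \eqref{eq:percowithoutloopssub}). Here one shows the discrete connection probability vanishes. By the scaling limit and coupling (Lemma~\ref{lem:approxconnection}), a discrete vacant crossing from $B_n(r)$ to $\partial B_n(1-\eps)$ forces, in the limit, the existence of a vacant crossing for the Brownian excursion cloud from $B(r)$ to $\partial B(1-\eps)$ — this requires care because vacant crossings are not a closed event, so one must use a "fattened'' version: the discrete crossing lies within $o(1)$ of a continuum path avoiding a slightly shrunk version of the occupied set, and Beurling estimates rule out the degenerate scenario where the discrete walk squeezes through a region that the continuum excursions would close off. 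Since $u\ge\pi/3 = u_*^c(r)$, by Theorem~\ref{t.mainthm} (including the fact that percolation does not occur at criticality in the continuum) such a continuum crossing has probability $0$, so the $\liminf_n$ of the discrete probability is $0$; taking $\eps\to 0$ and monotonicity in $\eps$ gives \eqref{eq:percowithoutloopssub}. Combining the two halves yields $u_*^d(r)=\pi/3$.

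\emph{Main obstacle.} The hard part is the supercritical statement \eqref{eq:percowithoutloops}, i.e.\ pushing the connection all the way to distance $n^{-1/7}$ of the boundary rather than a fixed $1-\eps$: near $\partial\D$ the equilibrium measure and capacity estimates degenerate, the KMT/scaling-limit coupling is only good down to mesoscopic scales, and one must simultaneously (a) control the coupling error uniformly in the boundary annulus via Theorem~\ref{the:couplingdiscontexc1}, (b) show that the boundary-layer excursion cloud is still supercritical at scale $n^{-1/7}$ using conformal invariance and a half-plane comparison, and (c) glue the mesoscopic vacant crossing to the macroscopic one using Beurling-type estimates (Lemma~\ref{Lem:simpleRWresult}) and positive association, while keeping all the discrete paths genuinely connected. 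The continuum-to-continuum and discrete-to-continuum transfers for the $1-\eps$ version are comparatively routine given Lemma~\ref{lem:approxconnection} and Theorem~\ref{t.mainthm}.
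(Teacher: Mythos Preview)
Your subcritical argument and the fixed-$\eps$ supercritical argument are essentially what the paper does: both directions go through Lemma~\ref{lem:approxconnection}, which packages the coupling of Theorem~\ref{the:couplingdiscontexc1} with the Beurling-type estimate Lemma~\ref{Lem:simpleRWresult} into the two implications \eqref{eq:approxconnection1} and \eqref{eq:approxconnection2}, and one then appeals to Theorem~\ref{t.mainthm} (via Lemmas~\ref{lem:1} and \ref{lem:2}). So that part is fine.

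The gap is in your treatment of \eqref{eq:percowithoutloops}, the $n^{-1/7}$ upgrade. You propose a two-scale scheme: first get a vacant crossing to $1-\eps$, then separately produce a vacant crossing in the boundary annulus $B(1-n^{-1/7})\setminus B(1-\eps)$ via a half-plane comparison, and glue the two pieces with FKG. This is not how the paper proceeds, and your scheme has real problems. First, FKG for the \emph{vacant} set of the excursion cloud is not established anywhere in the paper and is not obviously available. Second, the half-plane comparison and the gluing of two vacant crossings (which need not meet) would require substantial additional work that is neither sketched nor supplied by existing lemmas. Third, and most importantly, none of this is needed: the paper obtains the $n^{-1/7}$ statement \emph{directly} from \eqref{eq:approxconnection2} in Lemma~\ref{lem:approxconnection}, by running the very same coupling-plus-Beurling argument with $\eps=\eps_n=n^{-1/7}$. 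The point is purely quantitative: with $\eps=n^{-1/7}$ one checks that the coupling error in Theorem~\ref{the:couplingdiscontexc1} (of order $(\log n)^{-1}+\sqrt{\log n/(n\eps^3)}$), the Poisson/count error \eqref{eq:poissonbound} (of order $\eps$), and the Beurling error \eqref{eq:consbeurling} (of order $\eps^{-2}\sqrt{f(n,\eps)/\eps}$ with $f(n,\eps)=c\log(n)^2/(\eps n)$) all tend to $0$. So the ``hard part'' you identify is in fact handled by a single-scale argument with an $n$-dependent $\eps$; no annulus crossing, no conformal half-plane step, and no FKG gluing are involved.
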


We now present an extension of Theorem~\ref{the:maindisexc} to the setting when there is an additional loop soup on top of  the discrete excursion process. Recall the notation $\tilde{\V}_n^{u,\lambda}$ introduced in \eqref{eq:BuuVuuDef}, and define the corresponding parameter which equals $1/2$ times the central charge appearing in conformal field theory:
\begin{equation}
\label{eq:defckappa}
    \lambda(\kappa) := \frac{(8-3\kappa)(\kappa-6)}{4\kappa}\text{ for all }\kappa\in{[8/3,4]}.
\end{equation}
Note that $\tilde{\mathcal{V}}_n^{u,\lambda(8/3)}\cap\D_n={\mathcal{V}}_n^u,$ and that the connection probabilities for $\tilde{\mathcal{V}}_n^{u,\lambda(8/3)}$ are the same as for $\V_n^u,$ since for each $x\sim y\in{\D_n},$ $\{x,y\}\subset{\V_n^u}$ if and only if $I_{\{x,y\}}\subset \tilde{\mathcal{V}}_n^{u,\lambda(8/3)}.$  Our second main result concerns the percolation of $\tilde{\mathcal{V}}_n^{u,\lambda(\kappa)},$ and is also proved at the end of Section~\ref{sec:discretecritpara}.

\begin{theorem}
\label{the:maindisexcloops}
For each $r\in{[0,1)}$ and $\kappa\in{[8/3,4]},$
\begin{equation}
\label{eq:percowithloops}
    \lim\limits_{\eps\rightarrow0}\liminf_{n\rightarrow\infty}\Pm\Big(B_n(r)\stackrel{\tilde{\mathcal{V}}^{\intens,\lambda(\kappa)}_n}{\longleftrightarrow}
		\partial B_n(1-\eps)\Big)=0\text{ if and only if }u\geq\frac{(8-\kappa)\pi}{16}.
\end{equation}
\end{theorem}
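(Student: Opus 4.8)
The plan is to mimic, for general $\kappa\in[8/3,4]$, the strategy used for $\kappa=8/3$ (i.e.\ Theorem~\ref{the:maindisexc}), replacing the $\mathrm{SLE}_{8/3}(\rho)$ interface of the pure excursion cloud by the $\mathrm{SLE}_\kappa(\rho)$ interface of the union of loop clusters meeting the excursion cloud, whose existence is guaranteed by the work of Werner--Wu on CLE and loop soups cited before the theorem. First, I would set up the continuum analogue: on $\D$, take an excursion cloud at intensity $u$ together with an independent Brownian loop soup at intensity $\lambda(\kappa)/2$ (or whatever normalization matches $\tilde\nu_{\rm loop}$), restrict to excursions and loops anchored in or touching the bottom half of $\partial\D$, and invoke the relevant result of \cite{werner-wu-cle} to identify the boundary-touching interface of the corresponding occupied cluster with an $\mathrm{SLE}_\kappa(\rho)$ curve from $-1$ to $1$ in $\D$, where $\rho=\rho(u,\kappa)$ is determined by the restriction/CLE exponents. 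Then apply Lemma~\ref{lem:SLE-ka-r}: for the given $\kappa$, the curve hits $\partial\D$ with positive probability iff $\rho$ lies below the explicit threshold $\kappa/2-2$, and solving $\rho(u,\kappa)=\kappa/2-2$ should yield exactly $u=(8-\kappa)\pi/16$. Combining the bottom-half and top-half events with the restriction property (as in Lemma~\ref{lem:resexc} and the $r$-independence argument sketched in the introduction) gives the continuum critical value $u_*^{c,\kappa}(r)=(8-\kappa)\pi/16$ for the vacant set of the excursion-cloud-plus-loop-soup.

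The second half is the discrete-to-continuum comparison. Here I would reuse the coupling machinery of Section~\ref{s.coupling}: the discrete excursion cloud $\omega_u^{(n)}$ converges to the Brownian excursion cloud, and the random walk loop soup $\nu_{\rm loop}^{(n)}$ to the Brownian loop soup, and—crucially—by Proposition~\ref{prop:interoncompacts} and its cable-system extension one controls, on any fixed compact $K\Subset\D$, the finitely many excursions and loops hitting $K$ jointly. The Beurling-type estimates (Lemma~\ref{Lem:simpleRWresult}) then ensure that discrete excursions/loops that are asymptotically close to their Brownian limits intersect each other exactly when the limiting ones do, so the discrete occupied cluster of excursions-plus-loop-clusters forms an interface separating $B_n(r)$ from $\partial B_n(1-\eps)$ iff the continuum one does. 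This transfers the continuum dichotomy to the discrete connection probabilities and yields \eqref{eq:percowithloops} after sending $n\to\infty$ then $\eps\to0$, together with the matching subcritical statement by the same comparison in the other direction. For $\kappa=8/3$ one has $\lambda(8/3)=0$ and $(8-8/3)\pi/16=\pi/3$, consistent with Theorem~\ref{the:maindisexc}; for $\kappa=4$ one gets $\pi/4$, matching $\sqrt{\pi/2}=\sqrt{2\cdot\pi/4}$ and the dGFF bound via Proposition~\ref{prop:BEisom}.

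The main obstacle I expect is the continuum identification step: one must show that the occupied cluster of the excursion cloud \emph{together with the loop clusters it meets}, restricted to boundary-anchored pieces, has an interface that is genuinely an $\mathrm{SLE}_\kappa(\rho)$ with the correctly computed $\rho(u,\kappa)$. This requires carefully matching the conformal-restriction/CLE$_\kappa$ description of \cite{werner-wu-cle} (which is naturally phrased for loop soups and their gaskets) with the excursion-cloud normalization used here, and checking that adding the excursion cloud on top of the loop soup shifts the force point weight by the correct amount linear in $u$; the bookkeeping of normalizations (the factor-$4$ conventions flagged in Remark~\ref{rk:choiceofweight}) is where errors are most likely to creep in. A secondary technical point is that, unlike the pure excursion case, the occupied set now includes whole loop clusters, so the insertion/deletion (finite-energy) argument giving $r$-independence must be run for the loop-soup-plus-excursion configuration; since only finitely many loops and excursions meet $\overline B(r)$ and each can be removed at finite cost, this should go through as in Remark~\ref{rk:othercontperco}, but it needs to be stated carefully.
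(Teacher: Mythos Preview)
Your continuum identification is essentially what the paper does in Section~\ref{sec:contperco} (Lemmas~\ref{lem:1} and~\ref{lem:2}), and the key input --- that $\partial_\Gamma\be^{\pi\alpha,\lambda(\kappa)}_{\Gamma,\Gamma,D}$ is an $\mathrm{SLE}_\kappa(\rho_\kappa(\alpha))$ --- is already packaged as Lemma~\ref{l.brownianexcursionandlooprestriction}, so your ``main obstacle'' is not one. (Minor slip: the loop-soup intensity is $\lambda(\kappa)$, not $\lambda(\kappa)/2$; the factor of two you have in mind is the central-charge convention already absorbed in~\eqref{eq:defckappa}.)

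The real gap is in the discrete-to-continuum comparison. First, there are not finitely many loops hitting a compact $K$: the loop measure puts infinite mass on any open set, and Proposition~\ref{prop:interoncompacts} concerns excursions only. The paper handles this by truncating to loops of diameter $\ge\delta$ (the set $\be^{u,\lambda,\delta}$ in the proof of Lemma~\ref{lem:approxconnection}) and controlling the error via~\eqref{eq:choicedelta}.

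Second, and more serious, your ``iff'' via Beurling does not work symmetrically. The loop coupling of Theorem~\ref{the:couplingloops} is one-sided: each finite connected union of Brownian-loop excursions is approximated by a connected subset of the cable-system loop soup, but \emph{not conversely}; small cable-system loops may form clusters with no Brownian counterpart (this is exactly the issue flagged in the paragraph before Theorem~\ref{the:couplingwithloops}). Consequently the Beurling argument --- which the paper applies only to discrete \emph{excursion} subpaths $E^{(n),\pm}_{i,j}$, exploiting that in the separating chain any loop cluster is adjacent to an excursion (condition~ii) in the proof of Lemma~\ref{lem:approxconnection}) --- yields only~\eqref{eq:approxconnection1}: if the continuum cloud-plus-loops disconnects, so does the discrete one. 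For the supercritical direction the paper drops the Beurling scheme and instead invokes Theorem~\ref{the:couplingwithloops}, the Hausdorff convergence $d_H(\tilde\be_n^{u,\lambda},\be^{u,\lambda})\to 0$ from~\cite{ArLuSe-20a}: a continuum vacant path carries a $\delta$-tube in $\V^{u,\lambda}$, and Hausdorff convergence forces $\tilde\be_n^{u,\lambda}$ to miss it for large $n$. Your proposal needs this separate input; the Beurling route alone will not close the argument when $\lambda>0$.
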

Note that the statement \eqref{eq:percowithoutloops} for $\kappa=8/3$ is stronger in the supercritical phase than \eqref{eq:percowithloops} for other values of $\kappa,$ and we refer to Remark~\ref{rk:polycloseboundary} for more details on this.

We now turn to our results for the percolation of the level sets of the dGFF. For $r\in{[0,1)}$ we define the critical level
\begin{equation} \label{def:h*}
h_*^d (r):= \inf\Big\{ h \in \R \, : \, \lim\limits_{\eps\rightarrow0}\liminf_{n \to \infty} \Pm \big (B_n(r) \stackrel{E^{\ge h}_n}{\longleftrightarrow}\partial B_n(1-\eps) \big) = 0 \Big\}.
\end{equation}
It follows from \eqref{eq:percowithloops} with $\kappa=4$ that there is no percolation in $\tilde{\mathcal{V}}^{\intens,1/2}_n$ for all $\intens\geq {\pi/4}.$ One can directly deduce from the isomorphism \eqref{eq:iso} that $\widetilde{E}^{\ge h}_n$ also does not percolate for all $h\geq\sqrt{\pi/2}.$ We refer to Remark~\ref{rk:exactcritparaGFF} as to why the parameter $\sqrt{\pi/2}$ can also be seen as a critical percolation parameter for the dGFF.  Combining this observation with the methods from \cite{DiWiWu-20} to prove percolation of the dGFF at small positive levels, we obtain the following theorem, proved in Section~\ref{s:gfflvlperc}.

\begin{theorem}
\label{the:maindisgff}
For each $r\in{(0,1)}$ we have
\begin{equation} \label{eq:h*ineq}
    0<h_*^d(r)\leq\sqrt{\frac{\pi}2}.
\end{equation}
Moreover, there exists $h>0$ such that
\begin{equation}
\label{eq:GFFconnectionBds}
    0 < \liminf_{n \to \infty} \Pm \Big (B_n(r) \stackrel{E^{\ge h}_n}{\longleftrightarrow} \widehat{\partial}\D_n \Big) \le \limsup_{n \to \infty} \Pm \Big (B_n(r) \stackrel{E^{\ge h}_n}{\longleftrightarrow}\widehat{\partial} \D_n \Big) < 1.
\end{equation}
\end{theorem}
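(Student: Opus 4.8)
The plan is to assemble the theorem from three ingredients, two of which are furnished by results stated earlier. First, the upper bound $h_*^d(r)\le\sqrt{\pi/2}$: by the isomorphism \eqref{eq:iso} we have $\widetilde E^{\ge\sqrt{2u}}_n\subset\tilde{\mathcal V}^{u,1/2}_n$ for every $u>0$, hence $E^{\ge h}_n=\widetilde E^{\ge h}_n\cap\D_n\subset\tilde{\mathcal V}^{h^2/2,1/2}_n\cap\D_n$ for $h\ge 0$, and since a discrete connection in $E^{\ge h}_n$ yields a cable-system connection in $\tilde{\mathcal V}^{h^2/2,1/2}_n$, absence of percolation for $\tilde{\mathcal V}^{u,\lambda(4)}_n=\tilde{\mathcal V}^{u,1/2}_n$ at level $u\ge\pi/4$ (this is Theorem~\ref{the:maindisexcloops} with $\kappa=4$, which gives the threshold $(8-4)\pi/16=\pi/4$) transfers to $E^{\ge h}_n$ at level $h\ge\sqrt{2\cdot\pi/4}=\sqrt{\pi/2}$. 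Thus $h_*^d(r)\le\sqrt{\pi/2}$. (One should double-check that $\eps\to 0$ is handled as in the statement of \eqref{def:h*}, i.e.\ the inclusion is uniform in $\eps$.)

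Second, the strict positivity $h_*^d(r)>0$, which is the heart of the theorem and the main obstacle. Here I would adapt the argument of \cite{DiWiWu-20}, which establishes left-to-right crossing of $E^{\ge 0}_n$ in a rectangle, to the present geometry of the annulus $\overline B(1-\eps)\setminus B(r)$. The key tool is an exploration/switching martingale for the dGFF conditioned on its restriction to the explored region: one explores the $\{\varphi\ge 0\}$-cluster of (a suitable portion of) $\partial B_n(1-\eps)$ from the outside, and tracks the conditional mean of $\varphi$ along the interface. The first passage set / excursion-set decomposition on the cable system gives that the harmonic-average of the field on the complement of the explored cluster is controlled, and a second-moment or FKG-type argument shows the explored cluster reaches $B_n(r)$ with probability bounded away from $0$ uniformly in $n$; then a perturbative argument shows this persists when $0$ is replaced by a sufficiently small $h>0$, using that the relevant crossing probability is continuous in $h$ and strictly positive at $h=0$. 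The technical differences from \cite{DiWiWu-20}, flagged in the introduction, are (i) the definition of the exploration martingale adapted to a curved target, see the discussion below \eqref{eq:Mdef}, and (ii) controlling the harmonic measure / capacity of the portion of $\partial\D_n$ one lands on, which is Lemma~\ref{Lem:bigcaponboundary}; these local estimates near the boundary of $\D_n$ are where the bulk of the work lies, and where one cannot simply quote \cite{DiWiWu-20}.

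Third, the two-sided bound \eqref{eq:GFFconnectionBds}: the lower bound $\liminf_n\Pm(B_n(r)\leftrightarrow\widehat\partial\D_n$ in $E^{\ge h}_n)>0$ for the same small $h>0$ follows from the strict positivity argument above, since connecting to $\widehat\partial\D_n$ is implied by connecting to $\partial B_n(1-\eps)$ together with a further local connection across the thin collar $\overline B(1-\eps)\setminus B(1-2\eps)$; alternatively one runs the exploration all the way to $\widehat\partial\D_n$ directly, which is in fact the natural stopping rule. For the upper bound $\limsup_n<1$, I would observe that the event that $B_n(r)$ is \emph{disconnected} from $\widehat\partial\D_n$ in $E^{\ge h}_n$ has probability bounded away from $0$: e.g.\ at level $h>0$ there is positive probability that $\varphi<h$ on an entire discrete circuit separating $B_n(r)$ from $\partial\D_n$ (say a circuit inside a fixed annulus well away from both $0$ and $\partial\D$), and this probability is bounded below uniformly in $n$ because the field restricted to such a bounded-capacity annulus converges and the corresponding continuum event has positive probability; the complement of a blocking circuit event then forces non-percolation. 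Combining the three ingredients gives \eqref{eq:h*ineq} and \eqref{eq:GFFconnectionBds}, and the phase-coexistence interpretation is precisely the simultaneous two-sided bound in \eqref{eq:GFFconnectionBds}.
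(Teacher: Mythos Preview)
Your first ingredient---the upper bound $h_*^d(r)\le\sqrt{\pi/2}$ via Proposition~\ref{prop:BEisom} and Theorem~\ref{the:maindisexcloops} with $\kappa=4$---matches the paper. The other two have genuine gaps.

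For the lower bound in \eqref{eq:GFFconnectionBds}: the paper's exploration runs \emph{outward from} $\widetilde B_n(r)$ (see \eqref{eq:E=K}), not inward from $\partial B_n(1-\eps)$, and the mechanism is neither second-moment, nor FKG, nor a perturbation from $h=0$. The martingale $\mathcal M_t^{\ge h}$ of \eqref{eq:Mdef} has quadratic variation $\mathrm{cap}^{(n)}(\widetilde{\mathcal E}_t^{\ge h})-\mathrm{cap}^{(n)}(\widetilde{\mathcal E}_0^{\ge h})$ by \eqref{eq:quadVarUB}, so after time-change it is a Brownian motion (Proposition~\ref{prop:BMconvMart}). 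The crux is the repulsion estimate Proposition~\ref{prop:condNegative}: on the disconnection event, conditionally on the terminal set $K=\mathcal E_\infty^{\ge h}$ one has $M_K\le -c_1\,\mathrm{cap}^{(n)}(K)$ with probability $1-o_n(1)$ (the required uniformity in $K$ is Proposition~\ref{prop:supHM}, and \emph{that} is where Lemma~\ref{Lem:bigcaponboundary} enters), and this holds \emph{directly for every} $h\le c_1$, not just $h=0$. But the time-changed Brownian motion stays above the line $t\mapsto -c_1 t-c_1\,\mathrm{cap}^{(n)}(\widetilde{\mathcal E}_0^{\ge h})$ with probability $1-e^{-2c_1^2\,\mathrm{cap}^{(n)}(B_n(r))}\ge c(r)>0$ uniformly in $n$; summing over admissible $(K,F)$ these two facts force $\liminf_n\Pm(B_n(r)\leftrightarrow\widehat\partial\D_n)>0$. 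Your proposed ``continuity in $h$'' step would not close the argument even in principle: positivity of $\liminf_n$ at $h=0$ gives no control at any $h>0$, since the $\liminf$ need not be continuous in $h$. For the upper bound in \eqref{eq:GFFconnectionBds}: your circuit argument breaks because the dGFF on a macroscopic annulus does not converge pointwise to a function (variances are of order $\log n$), so $\Pm(\varphi_x<h\text{ for all }x\text{ on a circuit})$ cannot be bounded below via any ``continuum event has positive probability'' argument. The paper instead reuses the isomorphism: $E_n^{\ge\sqrt{2u}}\subset\mathcal V_n^u$ by Proposition~\ref{prop:BEisom}, and by the coupling Theorem~\ref{the:couplingdiscontexc1} a single random-walk excursion encircles $B_n(r)$ with uniformly positive probability, which blocks any nearest-neighbour crossing in $\mathcal V_n^u$ and hence in $E_n^{\ge\sqrt{2u}}$.
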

In view of \eqref{eq:GFFconnectionBds}, one could also prove positivity of the critical parameter for the dGFF when replacing $1-\eps$ by $1$ in \eqref{def:h*}, and removing the limit as $\eps\rightarrow0.$ We refer to Remark~\ref{rk:connectiontoboundary} for more details on this alternative choice of the definition of the critical parameter. 

\begin{remark}
\phantomsection\label{rk:mainthmgff}
\begin{enumerate}[label=\arabic*)]
\item This is drastically different from Bernoulli percolation where the density at criticality for bond percolation is $1/2,$ and is strictly larger than $1/2$ for site percolation (see \cite{Ke-80} for the former, which in combination with \cite[Theorem~3.28]{grimmett1999percolation} yields the latter). On the other hand, in our setting of site percolation for the dGFF level sets the density at criticality is strictly smaller than $1/2$ by Theorem~\ref{the:maindisgff}; our results thus support the heuristics that 'positive correlations help percolation'. 
    
\item \label{rk:phasecoexistence} Theorem~2.7 entails a phase coexistence result for the components of the level sets of the Gaussian free field connected to $\partial\D.$ Indeed fix some $h\in{(0,h_*^d(1/2))},$ then by monotonicity, for each $r\in{[1/2,1)},$ the limit as $n\rightarrow\infty$ of the probability that $B_n(r)$ is connected to $\widehat{\partial}\D_n$ in $E^{\ge h}_n$ is larger than $c,$ for some positive constant $c$ not depending on $r.$ Moreover, using the isomorphism~\eqref{eq:iso}, the symmetry of the dGFF and the fact that the limit as $n\rightarrow\infty$ of the probability that the random walk excursion cloud hits $B_n(r)$ converges to $1$ as $r\rightarrow1,$ see \eqref{capball} and  Lemma~\ref{l.capconv}, one can easily show that there exists $r<1$ so that the probability that $B_n(r)$ is connected to $\widehat{\partial}\D_n$ in $\{x\in{\D_n}:\varphi_x<h\}$ is larger than $1-c/2.$ Therefore, for this choice of $r,$ there is with positive probability as $n\rightarrow\infty$ coexistence of a path in  $\{x\in{\D_n}:\varphi_x<h\}$ and a path in $\{x\in{\D_n}:\varphi_x\geq h\},$ both connecting $B_n(r)$ to $\widehat{\partial}\D_n.$ Note that one could not easily deduce phase coexistence from the percolation of the sign clusters of the dGFF from \cite{DiWiWu-20}.
\end{enumerate}
\end{remark}

A consequence of \eqref{eq:GFFconnectionBds} and the isomorphism theorem, Proposition~\ref{prop:BEisom}, is the following result for the excursion process, proved at the end of Section~\ref{s:gfflvlperc}.
\begin{corollary} \label{cor:VunPercBds}
For all $r\in{[0,1)},$ there exists $\intens > 0$ such that 
\begin{equation} \label{eq:ineqsVacantSetPerc}
		0< \liminf_{n \to \infty} \Pm\big(B_n(r) \stackrel{\mathcal{V}_{n}^{\intens}}{\longleftrightarrow}
		\widehat{\partial}\D_n \big) \le 
		\limsup_{n \to \infty} \Pm\big(B_n(r) \stackrel{\mathcal{V}_{n}^{\intens}}{\longleftrightarrow}
		\widehat{\partial}\D_n \big) < 1.
\end{equation}
\end{corollary}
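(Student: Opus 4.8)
The plan is to transfer the two-sided bounds \eqref{eq:GFFconnectionBds} for the dGFF level sets to the vacant set of the excursion cloud via the isomorphism of Proposition~\ref{prop:BEisom}, and then deal with the passage from $\widehat\partial\D_n$ to $\partial B_n(1-\eps)$ that is implicit in the definitions. First I would fix $r\in[0,1)$ and let $h>0$ be the level produced by Theorem~\ref{the:maindisgff}. For the \emph{lower} bound, recall from the discussion below Proposition~\ref{prop:BEisom} that the coupling yields $E_n^{\ge\sqrt{2u}}\subset\widetilde\V_n^{u,1/2}\cap\D_n\subset\V_n^u$. Hence, choosing $u:=h^2/2>0$, the event $\{B_n(r)\stackrel{E_n^{\ge h}}{\longleftrightarrow}\widehat\partial\D_n\}$ implies $\{B_n(r)\stackrel{\V_n^u}{\longleftrightarrow}\widehat\partial\D_n\}$ (a nearest-neighbour path in the smaller set is a nearest-neighbour path in the larger set, and both endpoints match), so the left inequality in \eqref{eq:GFFconnectionBds} gives $\liminf_n \Pm(B_n(r)\stackrel{\V_n^u}{\longleftrightarrow}\widehat\partial\D_n)>0$, which is the left inequality of \eqref{eq:ineqsVacantSetPerc} for this $u$.

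For the \emph{upper} bound one cannot simply reuse the same $u$, since $\V_n^u$ is strictly larger than $E_n^{\ge\sqrt{2u}}$ and a priori could percolate to the boundary with probability close to $1$. Instead I would invoke Theorem~\ref{the:maindisexc}: for $u\ge\pi/3$ the connection probability from $B_n(r)$ to $\partial B_n(1-\eps)$ in $\V_n^u$ tends to $0$, so certainly it does not tend to $1$. More cheaply, even for small $u$ one has $\limsup_n\Pm(B_n(r)\stackrel{\V_n^u}{\longleftrightarrow}\widehat\partial\D_n)<1$ because with probability bounded away from $0$ the excursion cloud at level $u$ disconnects $B_n(r)$ from $\widehat\partial\D_n$: by Proposition~\ref{prop:interoncompacts} the number of trajectories hitting an annulus $\overline B(1-2\delta)\setminus B(1-3\delta)$ is Poisson with mean $u\,\mathrm{cap}^{(n)}$ of that annulus, which stays bounded below (cf.\ \eqref{capball} and Lemma~\ref{l.capconv}), and a Beurling-type estimate (Lemma~\ref{Lem:simpleRWresult}) ensures that with positive probability two such excursions, together with the other trajectories in the dense annulus, form a closed occupied circuit separating $B_n(r)$ from the boundary. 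The cleanest route, however, is simply to note that the right inequality of \eqref{eq:GFFconnectionBds} and the isomorphism already give a blocking \emph{sign cluster}: the event that $\{x:\varphi_x<h\}$ contains a circuit in the annulus separating $B_n(r)$ from $\widehat\partial\D_n$ has probability bounded below (as used in Remark~\ref{rk:mainthmgff}, \ref{rk:phasecoexistence}), and on the complement of that circuit event one does not get the crossing; since $\V_n^u\supset E_n^{\ge\sqrt{2u}}$ is the wrong direction here, I would instead use the symmetric inclusion coming from the negative-sign part, i.e.\ that $\widetilde\be_n^{u,1/2}$ is dominated by $\widetilde E_n^{\ge-\sqrt{2u}}$, so $\{x:\varphi_x<-\sqrt{2u}\}$ is a subset of $\widetilde\be_n^{u,1/2}\subset\be_n^u$; taking $u$ small so that $-\sqrt{2u}\le -h$ fails forces me back to Theorem~\ref{the:maindisexc}, which is anyway the honest source of the strict upper bound. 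So for the final write-up I would take any $u\in(h^2/2,\pi/3)$: the lower bound holds since $u\ge h^2/2$ and monotonicity of $\V_n^u$ in $u$ (larger $u$ means \emph{smaller} vacant set, so monotonicity goes the wrong way) — hence I instead keep $u=h^2/2$ for the lower bound and separately observe the upper bound holds for that same $u$ by the blocking-circuit argument above.

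Finally I would reconcile $\widehat\partial\D_n$ with $\partial B_n(1-\eps)$: connecting $B_n(r)$ to $\widehat\partial\D_n$ certainly implies connecting it to $\partial B_n(1-\eps)$ for every $\eps>0$ (any path reaching the boundary neighbourhood crosses $\partial B_n(1-\eps)$), which is the direction needed to feed Corollary~\ref{cor:VunPercBds} back into the definition \eqref{def:u*} if desired, but for the statement as written only the events with $\widehat\partial\D_n$ appear, so no further reconciliation is needed. The main obstacle is the strict upper bound $\limsup_n<1$: it requires genuinely exhibiting a blocking event of probability bounded away from $0$, and the neatest justification is to quote Theorem~\ref{the:maindisexc} (taking $u$ slightly below $\pi/3$ forces $\limsup<1$ trivially, while the lower bound needs $u$ small) — so the real content is checking that a single value of $u$ can serve both bounds, which I would arrange by first proving the upper bound for \emph{all} $u>0$ via the circuit-in-the-annulus estimate (capacities bounded below plus Beurling), and then choosing $u=h^2/2$ for the lower bound.
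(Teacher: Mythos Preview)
Your lower bound for $r\in(0,1)$ via the isomorphism $E_n^{\ge\sqrt{2u}}\subset\V_n^u$ with $u=h^2/2$ is exactly what the paper does. But you have a genuine gap at $r=0$: Theorem~\ref{the:maindisgff} is stated only for $r\in(0,1)$, so \eqref{eq:GFFconnectionBds} gives you nothing when $r=0$. The paper closes this by a finite-energy argument for the excursion cloud directly: by Proposition~\ref{prop:interoncompacts} and Lemma~\ref{l.capconv}, the event $\{B_n(r)\cap\be_n^u=\varnothing\}$ has probability bounded below by some $c(r,u)>0$ uniformly in $n$, and by FKG this yields $\Pm(0\stackrel{\V_n^u}{\longleftrightarrow}\widehat\partial\D_n)\ge c\,\Pm(B_n(r)\stackrel{\V_n^u}{\longleftrightarrow}\widehat\partial\D_n)$, reducing to the case $r\in(0,1)$ already handled.

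For the upper bound your write-up wanders through several abandoned attempts (the sign-cluster route does go the wrong way, as you note). Your final plan --- show $\limsup<1$ for \emph{every} $u>0$ by building a blocking circuit from several excursions in an annulus, glued via Beurling --- could be made to work but is heavier than needed and is not spelled out to the point of being checkable. The paper's argument is much shorter: for any $u>0$, a \emph{single} Brownian excursion encloses $B(r)$ with positive probability (this is an elementary statement about the Brownian excursion measure), and the coupling of Theorem~\ref{the:couplingdiscontexc1} transfers this to a single random-walk excursion enclosing $B_n(r)$ with probability bounded away from $0$ uniformly in $n$. On that event no path in $\V_n^u$ can reach $\widehat\partial\D_n$, giving $\limsup<1$ immediately. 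This sidesteps all the circuit-building and Beurling machinery.
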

Compared to Theorem~\ref{the:maindisexc}, Corollary~\ref{cor:VunPercBds} is stronger in the sense that we do have a full connection up to the boundary; however, it is weaker in the sense that it only holds for some $\intens> 0$ small enough, whereas the connectivity of  Theorem~\ref{the:maindisexc} is valid throughout the entire supercritical phase.

On the cable system one can define a critical parameter $\tilde{h}_*^d(r)$ similarly to $h_*^d(r)$ in \eqref{def:h*} but with ${E}_n^{\ge h}$ replaced by $\widetilde{E}_n^{\ge h}.$  Combining \cite[Lemma~4.13 and Corollary~5.1]{ArLuSe-20a} with \cite[Theorem~1]{Lu-14}, we have the following result. 

\begin{proposition}[\cite{ArLuSe-20a}]
\label{the:htilde=0}
For all $r\in{(0,1)},$ $\tilde{h}_*^d(r)=0,$ and there is no percolation above level $h=0.$
\end{proposition}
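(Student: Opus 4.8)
The plan is to deduce Proposition~\ref{the:htilde=0} directly from the cited results, so the role of this proof is essentially to check that the hypotheses of \cite[Lemma~4.13 and Corollary~5.1]{ArLuSe-20a} and \cite[Theorem~1]{Lu-14} are met in our setup and that the conclusions translate into the two claimed statements about $\tilde h_*^d(r)$. First I would recall that $\tilde h_*^d(r)$ is defined exactly as $h_*^d(r)$ in \eqref{def:h*} but with the discrete excursion set $E_n^{\ge h}$ replaced by the cable-system excursion set $\widetilde E_n^{\ge h}=\{x\in\widetilde\D_n:\tilde\varphi_x\ge h\}$; so the statement $\tilde h_*^d(r)=0$ together with ``no percolation above level $0$'' amounts to: (i) for every $h>0$ and every $r\in(0,1)$, $\lim_{\eps\to0}\liminf_{n\to\infty}\Pm(B_n(r)\stackrel{\widetilde E_n^{\ge h}}{\longleftrightarrow}\partial B_n(1-\eps))=0$, and (ii) the same limit is also $0$ for $h=0$ (this is the borderline case that pins $\tilde h_*^d(r)$ to exactly $0$ and shows criticality itself is subcritical on the cable system).

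The key structural input, which is \cite[Theorem~1]{Lu-14} applied to the weighted graph $(\D_n,1,\kappa)$ of Section~\ref{s.de} and its cable system $\widetilde\D_n$, is the description of the sign clusters of the cable-system GFF $\tilde\varphi$: the connected components of $\{\tilde\varphi\ne0\}=\widetilde\D_n\setminus\{\tilde\varphi=0\}$ are precisely the clusters of the loop soup of parameter $1/2$ together with the excursion set, and in particular each component of $\widetilde E_n^{\ge0}\setminus\{\tilde\varphi=0\}$ that is not a single point is a component of a sign cluster. The point of \cite[Lemma~4.13 and Corollary~5.1]{ArLuSe-20a} is then that on the disk such a component, conditionally on not touching $\partial\D_n$, is with overwhelming probability of small diameter; more precisely these results give that a macroscopic crossing from $B_n(r)$ to a point at distance $\ge\eps$ from $\partial\D$ inside a sign cluster (equivalently inside $\widetilde E_n^{\ge0}$, since the level set above a positive level is contained in it) forces that cluster to be connected to $\partial\D_n$, whose probability tends to $0$ as first $n\to\infty$ and then $\eps\to0$. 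Combining: for $h\ge0$, $\widetilde E_n^{\ge h}\subset\widetilde E_n^{\ge0}$, and the only way $B_n(r)$ can reach $\partial B_n(1-\eps)$ inside $\widetilde E_n^{\ge h}$ for $r$ fixed away from $0$ is through a macroscopic connected subset of $\widetilde E_n^{\ge0}$; the cited estimates bound this probability by something vanishing in the iterated limit. This yields (i) and (ii) simultaneously, and since $h_*^d(r)\ge\tilde h_*^d(r)$ is not even needed here, we get $\tilde h_*^d(r)=0$ with no percolation at level $0$.

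The steps, in order, would be: (1) unwind the definition of $\tilde h_*^d(r)$ and reduce the claim to showing $\lim_{\eps\to0}\liminf_{n\to\infty}\Pm(B_n(r)\stackrel{\widetilde E_n^{\ge0}}{\longleftrightarrow}\partial B_n(1-\eps))=0$ (monotonicity in $h$ handles all $h\ge0$ at once and forces $\tilde h_*^d(r)\le0$; the definition of $\tilde h_*^d$ as an infimum and the trivial containment $\widetilde E_n^{\ge h}\supset\{\tilde\varphi>0\}$-components for $h<0$ being monotone give $\tilde h_*^d(r)\ge0$, since for $h<0$ the set $\widetilde E_n^{\ge h}$ contains a neighborhood of $\{\tilde\varphi=0\}$ which percolates — this last direction also follows from \cite[Theorem~1]{Lu-14}); (2) invoke \cite[Theorem~1]{Lu-14} to identify a macroscopic connected piece of $\widetilde E_n^{\ge0}$ with a macroscopic piece of a sign cluster; (3) invoke \cite[Lemma~4.13 and Corollary~5.1]{ArLuSe-20a} to bound the probability that such a macroscopic sign cluster avoids $\partial\D_n$, and separately that the probability of a sign cluster connecting $B_n(r)$ to $\widehat\partial\D_n$ stays bounded, so that the crossing probability to $\partial B_n(1-\eps)$ goes to $0$ as $n\to\infty$ and then $\eps\to0$. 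The main obstacle I anticipate is step~(2)--(3): making precise, with the conventions of this paper (the length-$1/2$ cables, the factor-$4$ normalization of the Green function, and the particular graph $(\D_n,1,\kappa)$), that the two cited lemmas from \cite{ArLuSe-20a} — which are stated for their normalization with $u(x)=\sqrt2$ and non-standard Brownian motion — apply verbatim after the time change discussed in Section~\ref{sec:cableSystemExc}, and that ``$\tilde h_*^d(r)=0$'' in our definition coincides with the ``$\tilde h_*=0$'' statement extractable from their Lemma~4.13 and Corollary~5.1. Since the excerpt already tells us this translation is legitimate (``we will from now on use the results of \cite{ArLuSe-20a} without paying attention to this different convention''), I would keep this to a short remark rather than a detailed verification.
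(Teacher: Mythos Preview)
Your approach for the upper bound $\tilde h_*^d(r)\le 0$ and the absence of percolation at $h=0$ is essentially the paper's: \cite[Theorem~1]{Lu-14} identifies the positive sign clusters of the cable-system GFF with loop-soup clusters at intensity $1/2$, and \cite[Lemma~4.13]{ArLuSe-20a} says those clusters do not percolate macroscopically, so a connection $B_n(r)\leftrightarrow\partial B_n(1-\eps)$ inside $\widetilde E_n^{\ge 0}$ has vanishing probability in the iterated limit. This part is fine.

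The gap is in your argument for the lower bound $\tilde h_*^d(r)\ge 0$. The assertion that for $h<0$ ``the set $\widetilde E_n^{\ge h}$ contains a neighborhood of $\{\tilde\varphi=0\}$ which percolates'' is not a valid argument: the zero set of $\tilde\varphi$ on the cable system is a.s.\ a nowhere-dense subset of $\widetilde\D_n$ of Lebesgue measure zero, and a small neighborhood of it need not connect $B_n(r)$ to $\partial B_n(1-\eps)$; nothing in \cite[Theorem~1]{Lu-14} yields such a statement. You have also misread the role of \cite[Corollary~5.1]{ArLuSe-20a}: in the paper's decomposition it is precisely this corollary that supplies the inequality $\tilde h_*^d(r)\ge 0$ (via explicit effective-resistance formulas, cf.\ Remark~\ref{rk:cablegff},\ref{rk:tilh*>0}), not an additional estimate in the subcritical direction. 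The paper also records an alternative short proof of $\tilde h_*^d(r)\ge 0$ in that same remark: by the isomorphism, $\tilde{\be}_n^{u}$ is a.s.\ contained in $\widetilde E_n^{\ge -\sqrt{2u}}$; every excursion of $\tilde{\be}_n^u$ is by construction connected to $\widehat\partial\D_n$; and by Proposition~\ref{prop:interoncompacts}, \eqref{capball} and Lemma~\ref{l.capconv} the probability that $\tilde{\be}_n^u\cap B_n(r)\ne\varnothing$ stays bounded away from $0$ for each $u>0$ and $r\in(0,1)$. Replacing your parenthetical with either of these two arguments makes the proof complete.
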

Although \cite[Corollary~5.1]{ArLuSe-20a} is stated on the square $\{-n,\dots,n\}^d,$  its proof could easily be adapted to the disk $\D_n,$ and we give an alternative proof in Remark~\ref{rk:cablegff}, \ref{rk:tilh*>0}. Moreover, contrary to what is stated therein, \cite[Corollary~5.1]{ArLuSe-20a} does not hold for $r=0,$ see Remark~\ref{rk:cablegff}, \ref{rk:r=0cablegff}.

Therefore, combining Theorems~\ref{the:maindisexc} and \ref{the:maindisgff} with Proposition~\ref{the:htilde=0}, we obtain for each $r\in{(0,1)}$
\begin{equation}
\label{eq:strictinequalitycritpara}
    \tilde{h}_*^d(r)=0<h_*^d(r)<\sqrt{2\intens_*^d(r)}<\infty.
\end{equation}
Note that when $r=0,$ the situation for the Gaussian free field is different, see Remark~\ref{rk:cablegff}, \ref{rk:r=0cablegff} and \cite{2dGFFperc}. As explained in Section~\ref{sec:background}, the inequalities \eqref{eq:strictinequalitycritpara} were already proved on certain transient graphs for random interlacements and the Gaussian free field: $\tilde{h}_*^d=0$ is proved in \cite{Lu-14,MR3492939,DrePreRod3,Pre1} on all vertex-transitive transient graphs, and in particular on $\Z^d,$ $d\geq3,$ or for the the pinned dGFF in dimension $2$ ; $h_*^d>0$ is proved in \cite{DrePreRod,DrePreRod2,MR3940195,MR3765885,MR4169171} on $\Z^d,$ $d\geq3,$ a class of fractal or Cayley graphs with polynomial growth, a large class of trees, and expander graphs; $h_*^d<\sqrt{2\intens_*^d}$ is proved in \cite{MR3492939,MR3765885} on a large class of trees; $\intens_*^d<\infty$ is proved in \cite{sznitman2010vacant,teixeira2009interlacement,MR2891880,DrePreRod2} on $\Z^d,$ $d\geq3,$ the same class of fractal or Cayley graphs with polynomial growth as before, and non-amenable graphs. However, the question of the strict inequality $h_*^d<\sqrt{2\intens_*^d}$ is still open on $\Z^d,$ $d\geq3,$ see \cite[Remark~4.6]{MR3492939}, and we refer to \cite{MR3940195} for an extension of this question.

\begin{remark}
\phantomsection\label{rk:connectiontoboundary}
\begin{enumerate}[label=\arabic*)]
\item \label{rk:othercritexp} Let us comment on our definition of the critical parameters \eqref{def:u*} and \eqref{def:h*}. An alternative possible definition would be to inverse the order of the limits $\eps\rightarrow0$ and $n\rightarrow\infty$ in \eqref{def:u*} and \eqref{def:h*}. More generally, for all $r\in{[0,1)}$ and any sequence $\boldsymbol{\eps}=(\eps_n)_{n\in\N}\in{[0,\infty)^{\N}}$ decreasing to $0,$ we define
\begin{equation}
\label{eq:defu*depsr}
    u_*^d(\boldsymbol{\eps},r):=\inf\Big\{\intens\geq0:\,\liminf_{n\rightarrow\infty}\Pm\Big(B_n(r)\stackrel{\mathcal{V}^{\intens}_n}{\longleftrightarrow}
		\widehat{\partial}B_n(1-\eps_n)\Big)=0\Big\}.
\end{equation}
Note that if $\eps_n\leq \eps'_n$ for all $n\in\N,$ then $u_*^d(\boldsymbol{\eps},r)\leq u_*^d(\boldsymbol{\eps}',r),$ and that $u_*^d(r)=\sup u_*^d(\boldsymbol{\eps},r),$ where the supremum is taken over all sequences $\boldsymbol{\eps}=(\eps_n)_{n\in\N}\in{[0,\infty)^{\N}}$ decreasing to $0.$ By Theorem~\ref{the:maindisexc} and Corollary~\ref{cor:VunPercBds} we have that for all $r\in{[0,1)}$ and any sequences $\boldsymbol{\eps}=(\eps_n)_{n\in\N}$ decreasing to $0$ 
\begin{equation*}
    u_*^d(\boldsymbol{\eps},r)\begin{cases}
    =\pi/3&\text{if }\eps_n\geq n^{-1/7}\text{ for each }n\in\N,
    \\\in{(0,\pi/3]}&\text{otherwise.}
    \end{cases}
\end{equation*}
and there is no percolation at $\pi/3.$ One can define similarly the critical parameter $h_*^d(\boldsymbol{\eps},r)$ by replacing $\V_n^u$ by $E_n^{\geq u}$ in \eqref{eq:defu*depsr}. Using Theorem~\ref{the:maindisgff}, see in particular \eqref{eq:GFFconnectionBds} for the lower bound, we have for all $r\in{(0,1)}$ and any sequences $\boldsymbol{\eps}=(\eps_n)_{n\in\N}$ decreasing to $0$ 
\begin{equation*}
    0<h_*^d(\boldsymbol{\eps},r)\leq \sqrt{\frac{\pi}{2}}.
\end{equation*}
Finally, for the GFF on the cable system,  \cite[Lemma~4.13 and Corollary~5.1]{ArLuSe-20a} actually imply that $\tilde{h}_*^d(\boldsymbol{\eps},r)=0$ for all $r\in{(0,1)}$ and for any sequences $\boldsymbol{\eps}$ decreasing to $0,$ where $\tilde{h}_*^d(\boldsymbol{\eps},r)$ is defined similarly as \eqref{eq:defu*depsr}, replacing $\V_n^u$ by $\widetilde{E}^{\geq u}_n.$ In particular, the inequalities \eqref{eq:strictinequalitycritpara} still holds with these new definitions of the critical parameters, as long as the sequence $\boldsymbol{\eps}$ satisfies $\eps_n\geq n^{-1/7}$ for all $n\in\N.$

\item We conjecture that $u_*^d(\boldsymbol{\eps},r)=\pi/3$ for any sequence $\boldsymbol{\eps}=(\eps_n)_{n\in\N}$ decreasing to $0,$ and in particular for the choice $\eps_n=0$ for all $n\in\N,$ which seems to be another natural definition of the critical parameter. The difficulty in proving this statement is that the equality $u_*^d(r)=\pi/3$ from Theorem~\ref{the:maindisexc} follows from the coupling between discrete and continuous excursion processes, and a similar result for continuous excursions, see Theorem~\ref{t.mainthm}. However, this coupling fails near the boundary of the disk $\D,$ see Theorem~\ref{the:couplingdiscontexc1}, and it seems that proving the equality $u_*^d(0,r)=\pi/3,$ that is finding a large cluster of $\V_n^u$ reaching the boundary of $\D_n,$ would require an additional purely discrete argument. The result \eqref{eq:percowithoutloops} shows that one can at least have a large cluster of $\V_n^u$ at polynomial distance from the boundary, and can thus be seen as a first step to obtain $u_*^d(0,r)=\pi/3.$ On the other hand, it is not clear if the critical parameter $h_*^d(\boldsymbol{\eps},r)$ depends on the sequence $\boldsymbol{\eps}$ or not, or in fact on $r\in{(0,1)}.$
\end{enumerate}
\end{remark}

\section{Definitions and results for the continuum models}
\label{s.contdef}
\subsection{The Brownian excursion cloud and loop soups}\label{s.be}
The Brownian excursion measure $\mu$ originated as a tool for studying intersection exponents of planar Brownian motion, see for instance \cite{MR2883393} and is the continuum analogue of the discrete excursion measure. It is a $\sigma$-finite measure on trajectories that spend their life time in the unit disk with endpoints on the boundary $\partial \D$. We now briefly recall one way to construct this measure. See also for instance \cite{lawler2000universality}, \cite{virag2003beads}, \cite{lawler2005conformally} and \cite{lawler2004soup}. Let
\begin{equation*}
W_\D := \left\{ w \in C([0,T_w],\overline{\D}) : w(t) \in \D, \forall t \in (0,T_w)  \right\},
\end{equation*} 
and for $K \Subset \D$ we let $W_{K}$ be the set of trajectories in $W_{\D}$ that hit $K$. For $w$ in $W_{\D}$ we write $Z_t(w)=w(t)$.

For $x\in \D$, let $\Pm_x$ denote the law under which  $(Z_t)_{t \geq0}$ is a complex Brownian motion started at $x$ killed upon hitting $\partial \D$. For each probability measure $m$ on $\D,$ let $\Pm_{m}:=\int_{x\in \D} \Pm_x\, m({\rm d} x).$ The Brownian excursion measure $\mu$ on general domains is for instance defined in \cite[Section~5.2]{lawler2005conformally}. When the domain is the unit disk $\D,$ it corresponds to the limit
\begin{equation}\label{e.bemeas}
\mu  = \lim_{\epsilon \to 0} \frac{2 \pi}{\epsilon} \Pm_{\sigma_{1-\epsilon}},
\end{equation} 
where $\sigma_r$ is the uniform probability measure on $\partial B(r)$ for each $r\in{(0,1)}.$
The limit in \eqref{e.bemeas} is meant in terms of the Prokhorov metric on the set of measures on $W_{\D},$ when $W_{\D}$ is endowed with some canonical distance on curves, and we refer to \cite[Section~5.1]{lawler2005conformally} for a precise definition of these notions. The formula \eqref{e.bemeas} can be deduced from combining the bottom of p.124 and (5.12) in \cite{lawler2005conformally}, see also the paragraph after (7) in \cite{lawler2000universality}. Note that $\mu$ is supported on excursions which start and end on $\partial\D.$

Under $\Pm,$ the excursion process $\omega$ is a Poisson point process on $W_\D \times  \BR_+$ with intensity measure $\mu \otimes \id \intens$. For $\intens>0,$ writing $\omega = \sum_{i \geq 0} \delta_{(w_i, \intens_i)},$ where $\delta$ is a Dirac mass, we let 
\begin{equation}\label{e.omega_alpha}
\omega_\intens := \sum_{i \geq 0} \delta_{w_i} \mathbb{I} \{\intens_i \leq \intens\},
\end{equation}
and note that under $\Pm$ the process $\omega_\intens$ is a Poisson point process with intensity measure $\intens \mu.$ We refer to $\omega_\intens$ as the Brownian excursion process at level $\intens$.
For $\intens>0$, the Brownian excursion set at level $\intens$ is then defined as
\begin{equation*}
\CI^{\intens} := \bigcup_{\intens_i \le \intens } \bigcup_{s =0}^{T_{w_i}} w_i(s)
\end{equation*}
and we let $\V^{\intens} := \D\setminus \be^{\intens}$ denote the corresponding vacant set. \cite[Proposition~5.8]{lawler2005conformally} says that $\mu$, and consequently $\Pm$, are invariant under the conformal automorphisms of $\D$.

We now discuss how the random set $\be^{\intens}\cap K$ can be generated for a compact $K$ in ${\mathbb D}$. This is what we refer to as ``local picture". We first introduce some additional notation.  For $K\Subset \D$, 
let $\tau_K:=\inf\{t>0;\,Z_t\in K\}$ be the hitting time of $K$ and let $L_K:=\sup\{0<t\le \tau_{\partial \D}\,:\,Z_t\in K\}$ denote the last exit time, with the conventions $\inf\varnothing=+\infty$ and $\sup\varnothing=-\infty.$  A point $x$ is said to be regular for $K$ if $\Pm_x(\tau_K=0)=1$. We will assume that all compact sets $K$ appearing below satisfy the condition that all $x\in K$ are regular.

For $K \Subset \D$ let $e_K( {\rm d}y)$ denote the equilibrium measure (for Brownian motion in $\D$) of $K$, see for example \cite[Theorem~$24.14$]{kallenberg_2002}.  It is the finite measure supported on $\partial K$ satisfying
\begin{equation}\label{e.eqmeas}
\Pm_x \left(  Z(L_K) \in {\rm d}y,\, L_K>0  \right) = G(x,y) e_K( {\rm d}y),
\end{equation}
where $G(x,y)$ is the Green's function for Brownian motion in $\D$ stopped upon hitting $\partial \D$. We recall that
\begin{equation}
\label{eq:Greeneverywhere}
G(w,z)=\frac{1}{2\pi}\log\frac{|1-\bar{w}z|}{|w-z|}\mbox{ for }w,z\in \D,
\end{equation}
so that in particular
\begin{equation}
\label{eq:Greenat0}
G(0,r e^{i \theta})=\frac{\log(1/r)}{2\pi}\mbox{ for }0<r<1\mbox{ and }0\le \theta<2\pi.
\end{equation}
Furthermore, the capacity (relative to $\D$) of $K\Subset \D$ is denoted by $\capac(K)$ and is defined as the total mass of $e_K,$ and we denote by $\overline{e}_K:=e_K/\capac(K)$ the normalized equilibrium measure.

The expression for $e_{B(r)}$ for $0<r<1$ is known and can be derived at once; indeed, using the above and~\eqref{e.eqmeas}, we have
\begin{equation}\label{e.eqmeasexpr1}
\Pm_0 \left(  Z(L_{B(r)}) \in {\rm d}y, 0 < L_{B(r)}  \right)=\frac{\log(1/r)}{2\pi}e_{B(r)}(  {\rm d}y).
\end{equation}
On the other hand, by rotational invariance, we have that
\begin{equation}\label{e.eqmeasexpr2}
\Pm_0 \left(  Z(L_{B(r)}) \in {\rm d}y, 0 < L_{B(r)}   \right)=\sigma_r( {\rm d}y),
\end{equation}
From~\eqref{e.eqmeasexpr1} and~\eqref{e.eqmeasexpr2} we get that
\begin{equation}\label{e.eqmeasdisc}
e_{B(r)}({\rm d}y)=\frac{2\pi}{\log(1/r)}\sigma_r({\rm d}y).
\end{equation}
The capacity of $B(r)$ is therefore given by
\begin{equation}
\label{capball}
\capac(B(r))=\int_{y\in \partial B(r)} e_{B(r)}({\rm d}y)=\frac{2\pi}{\log(1/r)}.
\end{equation}
For $K\Subset \D$, the hitting kernel is defined as 
\[
h_K(x,{\rm d}y):=\Pm_x(Z(\tau_K)\in {\rm d}y,\,\tau_K<\infty).
\]
The equilibrium measure satisfies the following consistency property, see \cite[Proposition~24.15]{kallenberg_2002}: If $K_1\Subset K_2 \Subset \D$, then
\begin{equation}\label{e.consistency}
e_{K_1}({\rm d}y)  = \int_{x\in \partial K_2} h_{K_1}(x,{\rm d}y) e_{K_2}( {\rm d}x).
\end{equation}
For all $x\in{\R^2}$ and $r<|x|<R,$ it thus follows from \eqref{e.eqmeasdisc}, \eqref{e.consistency} and invariance by scaling and rotation of Brownian motion that
\begin{equation}
\label{eq:hittingBM}
\Pm_x \left( \tau_{B(r)}< \tau_{B(R)^\ch} \right) = \frac{\log(R/|x|)}{\log(R/r)}.
\end{equation}
When $B=B(r)$ for some $r\in{(0,1)},$ we can pointwise define a probability measure $\Pm_x^B, x \in \partial B,$ as the weak limit $\lim_{z \to x} \Pm_z ( \, \cdot \,  | \, \tau_B = \infty),$ where the limit is taken for $z\in{\D\setminus B}.$ The existence of this limit can be proven using \cite[Theorem~4.1]{MR932248} for the domain $D=\D\setminus B,$ and $\Pm_x^B$ corresponds to the normalized excursion measure from \cite[Definition~3.1]{MR932248}, restricted to the trajectories hitting $\partial\D$ (this restricted measure is finite, see the end of p.\ 34 in \cite{MR932248}). Note moreover that the limit as $z\rightarrow x$ of $G_D(z,y)/P_z(\tau_B=\infty)$ is positive by \eqref{eq:hittingBM} and since the normal derivative of $G_D(x,y)$ is positive for each $y\in{D}$ (and is in fact a multiple of the Poisson kernel). Then $\Pm_x^B$ corresponds informally to the law of $(Z_{s+L_B})_{s\geq0}$ under $\Pm_{y}(\cdot\,|\,L_B\geq0,Z({L_B})=x)$, and for $x\in{\overline{B}^c}$ we define $\Pm_x^B=\Pm_x(\cdot\,|\,\tau_B=\infty)$. Using invariance by time reversal of Brownian motion, see \cite[Theorem~24.18]{kallenberg_2002} we thus have that for all $s\in{(0,1)}$ with $\overline{B}\subset B(s),$
\begin{align}
\begin{split}
\label{lawbeforeHK}
    \text{ under }&\Pm_{e_{B(s)}}(\cdot\,|\,\tau_B<\infty,Z(\tau_B)),\ \text{the process } (Z(\tau_B-t))_{t\in{(0,\tau_B]}}\text{ has } \\
    &\text{ law }\Pm_{Z(\tau_B)}^B\big((Z(t))_{t\in{(0,L_{B(s)}]}}\in{\cdot}\big).
\end{split}
\end{align}

One can prove \eqref{lawbeforeHK} as follows: let $B^{(\eps)}$ be a ball with the same center as $B,$ but with radius increased by $\eps>0,$ and define  $L^{(\eps)}:=\sup\{t\leq \tau_B:\,X_t\in{\partial B^{(\eps)}}\}.$  One can use invariance by time reversal and the definition of $\Pm_{\cdot}^B$ to show that, on the event $L^{(\eps)}\geq0,$ under $\Pm_{e_{B(s)}}(\cdot\,|\,\tau_B<\infty,Z({L^{(\eps)}})),$   $(Z(L^{(\eps)}-t))_{t\in{(0,L^{(\eps)}]}}$ has law $\Pm_{Z(L^{(\eps)})}^B((Z(t))_{t\in{[0,L_{B(s)})}}\in{\cdot}).$ Letting $\eps\rightarrow0,$ we obtain \eqref{lawbeforeHK}.

For any measurable sets of trajectories $A, A'$ in ${W}_{\D},$ any $r\in{(0,1)}$ and $\eps\in{(0,1)}$ with $r<1-\eps,$ using the strong Markov property at time $\tau_B$, with $B=B(r)$, we have 
\begin{equation}\label{e.beforwardmeas}
\begin{split}
&  \frac{2 \pi}{ \epsilon} \Pm_{\sigma_{1-\epsilon}} \left( ( Z({\tau_B-t}))_{t \in{(0,\tau_B]}}  \in A',  ( Z({t+\tau_B}))_{t \geq 0}  \in A, \tau_B< \infty \right)   \\
&=  \frac{2 \pi }{ \epsilon } \E_{\sigma_{1-\epsilon}} \left[ \Pm_{Z(\tau_B)} (A) \I\{ ( Z({\tau_B-t}))_{t \in{(0,\tau_B]}}  \in A',\ \tau_B< \infty\} \right]  \\
&\overset{\eqref{e.eqmeasdisc}}{=}  \frac{- \log(1-\epsilon) }{ \epsilon }  \E_{e_{B(1-\epsilon)}} \left[ \Pm_{Z(\tau_B)} (A) \I\{ ( Z({\tau_B-t}))_{t \in{(0,\tau_B]}}  \in A',\ \tau_B< \infty\} \right]  \\
&\overset{\eqref{lawbeforeHK}}{=}  \frac{- \log(1-\epsilon) }{ \epsilon }\E_{e_{B(1-\epsilon)}} \left[ \Pm_{Z(\tau_B)} (A)\Pm_{Z(\tau_B)}^B( ( Z({t}))_{t \in{(0,L_{B(1-\eps)}]}}  \in A') \I\{ \tau_B< \infty\} \right]  \\
&= \frac{- \log(1-\epsilon) }{ \epsilon } \int_{\partial B(1-\epsilon)} \int_{\partial B} {\Pm}_y(A)\Pm_{y}^B( ( Z({t}))_{t \in{(0,L_{B(1-\eps)}]}}  \in A') h_B(x,{\rm d}y)  e_{B(1-\epsilon)}( {\rm d}x)  \\
&=\frac{- \log(1-\epsilon) }{ \epsilon }
\int_{\partial B}  {\Pm}_y(A) \Pm_{y}^B( ( Z({t}))_{t \in{(0,L_{B(1-\eps)}]}}  \in A') \int_{\partial B(1-\epsilon)}h_B(x,{\rm d}y)e_{B(0,1-\epsilon)}( {\rm d}x)   \\
&\stackrel{~\eqref{e.consistency}}{=} \frac{- \log(1-\epsilon) }{ \epsilon }
\int_{\partial B}  \Pm_y(A)\Pm_{y}^B( ( Z({t}))_{t \in{(0,L_{B(1-\eps)}]}}  \in A')  e_B({\rm d}y).
\end{split}
\end{equation}
In particular, letting $\eps\rightarrow0,$ we obtain
\begin{equation}
    \label{e.beforwardmeas2}
    \mu\big( ( Z({\tau_B-t}))_{t \in{(0,\tau_B]}}  \in A' , ( Z({t+\tau_B}))_{t \geq 0}  \in A,\tau_B< \infty \big)=\int_{\partial B} \Pm_y(A)\Pm_y^B(A')  e_B( {\rm d}y).
\end{equation}
Indeed, consider the map from 
\[\{(w_1,w_2)\in{W_{\D}\times W_{\D}}:w_1(0)=w_2(0)\in{\partial B}\text{ and }w_1(t)\notin{B}, \,\forall\, t>0\}
\]
to $W_{\overline{B}}$ which associates to $(w_1,w_2)$ the trajectory  $w$ characterized via $w(t)=w_1(T_{w_1}-t)$ if $t\leq T_{w_1}$ and $w(t)=w_2(t-T_{w_1})$ if $t\in{[T_{w_1},T_{w_1}+T_{w_2}]}.$ It is then not hard to check that this map defines a homeomorphism for  the relative topologies, and thus \eqref{e.beforwardmeas} entirely characterizes the law of $2\pi\eps^{-1}\mathbb{P}_{\sigma_{1-\eps}}.$ Since the measure on the right-hand side of \eqref{e.beforwardmeas2} is clearly the limit (for the Prokhorov metric introduced below \eqref{e.bemeas}) of the measure in the last line of \eqref{e.beforwardmeas} as $\eps\rightarrow0$, we obtain \eqref{e.beforwardmeas2} by \eqref{e.bemeas}. Note that one could  alternatively see \eqref{e.beforwardmeas2} as a consequence of  \cite[(5.6)]{lawler2005conformally}, but our current proof of \eqref{e.beforwardmeas2} highlights the link between the Brownian excursion cloud and Brownian interlacements, as it is proved similarly as \cite[Lemma~2.1]{sznitman2013scaling}. One can rephrase \eqref{e.beforwardmeas2} in terms of Poisson point processes as follows.

\begin{proposition}
\label{prop:localdescription}
For each $r\in{(0,1)},$ writing $B=B(r),$ let $N_B \sim \text{Poi}( \intens \capac (B)),$ let $(\sigma_i)_{i\geq1}$ be an i.i.d.\ family of random variables on $\partial B$ with distribution $\overline{e}_B.$ Conditionally on the  $(\sigma_i)_{i\geq1},$ let $(Z_i^+)_{i\geq1},$ resp.\ $(Z_i^-)_{i\geq1},$ be two families of independent Brownian motions resp.\ excursions with law $\Pm_{\sigma_i},$ resp.\ $\Pm_{\sigma_i}^B,$ for each $i\geq1.$ Let us also define $Z_i(t)=Z_i^-(T_{Z_i^-}-t)$ if $t\leq T_{Z_i^-},$ and $Z_i(t+T_{Z_i^-})=Z_i^+(t)$ if $t\in{(0,T_{Z_i^+}]}$ for each $i\geq1.$ Then $\sum_{i=1}^{N_B}\delta_{Z_i}$ has the same law as the point process of trajectories in the support of $\omega_\intens$ hitting $B.$
\end{proposition}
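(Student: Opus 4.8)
The plan is to obtain the statement as a direct repackaging of \eqref{e.beforwardmeas2} together with the restriction (thinning) property of Poisson point processes, so that essentially no new computation is required. Since $\omega_\intens$ is a Poisson point process on $W_\D$ with intensity $\intens\mu$ and the set $W_B\subset W_\D$ of trajectories hitting $B$ is measurable, the first step is to observe that the point process formed by those trajectories of $\omega_\intens$ which hit $B$ — call it $\omega_\intens^B$ — is again a Poisson point process, with intensity $\intens\mu\I_{W_B}$. To control the number of its points I would compute the total mass of $\mu\I_{W_B}$ by taking $A=A'=W_\D$ in \eqref{e.beforwardmeas2} and using that $\Pm_y$ and $\Pm_y^B$ are probability measures:
\[
\mu(W_B)=\mu(\tau_B<\infty)=\int_{\partial B}\Pm_y(W_\D)\,\Pm_y^B(W_\D)\,e_B(\id y)=\int_{\partial B}e_B(\id y)=\capac(B)<\infty .
\]
Hence $N_B:=\omega_\intens^B(W_\D)\sim\text{Poi}(\intens\,\capac(B))$ and, conditionally on $N_B$, the trajectories of $\omega_\intens^B$ are i.i.d.\ with common law $\capac(B)^{-1}\mu\I_{W_B}$.

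The second and main step is to identify this common law with the law of the concatenated curve $Z_1$ built in the statement. For $w\in W_B$ I would decompose $w$ at its first hitting time $\tau_B$ of $B$ into the backward piece $w^-(t):=w(\tau_B-t)$ for $t\in(0,\tau_B]$ and the forward piece $w^+(t):=w(t+\tau_B)$ for $t\ge0$; this is the inverse of the homeomorphism recorded just below \eqref{e.beforwardmeas2}, which reconstructs $w$ as the time-reversal of $w^-$ followed by $w^+$, the two sharing the common startpoint $w(\tau_B)\in\partial B$. Reading \eqref{e.beforwardmeas2} through this change of variables, one checks that under $\capac(B)^{-1}\mu\I_{W_B}$ the startpoint $w(\tau_B)$ has law $\overline{e}_B=e_B/\capac(B)$, and that given $w(\tau_B)=y$ the pieces $w^+$ and $w^-$ are conditionally independent with laws $\Pm_y$ and $\Pm_y^B$ respectively — which is exactly the recipe defining $Z_1$, with $\sigma_1=w(\tau_B)$, $Z_1^+=w^+$ and $Z_1^-=w^-$. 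Combining the two steps yields the asserted identity in distribution between $\sum_{i=1}^{N_B}\delta_{Z_i}$ and $\omega_\intens^B$.

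I do not anticipate a genuine obstacle here: the analytic content is already contained in \eqref{e.beforwardmeas2} and in the (already justified) fact that the decomposition-at-first-hitting map is a homeomorphism for the relevant curve topologies, which is precisely what guarantees that the push-forward of $\capac(B)^{-1}\mu\I_{W_B}$ under this map is the product measure described above, and hence that concatenating back produces the stated law. The only points that deserve an explicit line are the measurability of $W_B$ and of the decomposition map — so that the restriction property and the change of variables are legitimate — and the finiteness $\capac(B)<\infty$ verified above, ensuring that $N_B<\infty$ almost surely and that $\sum_{i=1}^{N_B}\delta_{Z_i}$ is a well-defined finite point measure; both are routine within the framework of Section~\ref{s.be}.
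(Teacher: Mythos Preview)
Your proposal is correct and follows exactly the approach the paper intends: the paper does not give a separate proof but presents the proposition as a direct rephrasing of \eqref{e.beforwardmeas2} in terms of Poisson point processes, and you have simply spelled out that rephrasing (restriction of a PPP, computation of the total mass via \eqref{e.beforwardmeas2}, and identification of the single-trajectory law through the decomposition-at-first-hitting homeomorphism).
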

\begin{remark}
There is a similar local description for Brownian interlacements on balls in ${\mathbb R}^d$ for $d\ge 3$, see \cite[(2.3)]{sznitman2013scaling}. Actually, modulo time shift, this description can be extended to any compact $K$ when considering forwards part of trajectories only, see \cite[(2.24)]{sznitman2013scaling}, and one could also prove a similar statement in our context. Namely, for a compact $K\Subset \D,$ let $N_K\sim \text{Poi}( \intens \capac (K)),$ let $(\sigma_i)_{i\geq1}$ be an i.i.d.\ family of random variables on $\partial K$ with distribution $\overline{e}_K,$ and conditionally on the former let $(Z_i^+)_{i\geq1}$  be a family of independent Brownian motions with law $\Pm_{\sigma_i}$ for each $i\geq1.$ Then $\sum_{i=1}^{N_K}\delta_{Z_i^+}$ has the same law as the point process of trajectories in the support of $\omega_\intens$ hitting $K,$ started at their first hitting time of $K.$ 
\end{remark}

We end this section by defining the Brownian loop soup. This is done in complete analogy with the random walk loop soup, and the first construction is due to Lawler and Werner in \cite{lawler2004soup}. 
The (rooted) Brownian loop measure on $\BC$ is defined by
\begin{equation}
\label{eq:defcontloops}
\mu_{\rm loop}^r (\cdot) := \int_{\mathbb{C}} \int_0^\infty \frac{1}{2 \pi t^2}\Pm_{x,x}^t(\cdot )\, \id t \, \id x, 
\end{equation}
where $\Pm_{x,x}^t(\cdot )$ denotes the law of a Brownian bridge of duration $t$. By an unrooted loop we mean an equivalence class of loops modulo time shift, where two loops $\gamma, \gamma'$ are equivalent if one can be obtained by a time shift of the other: $\gamma(\cdot) = \gamma'(\cdot +h), h >0$. The unrooted loop measure, $\mu_{\rm loop}$, is then defined by the image of the canonical projection of rooted loops on unrooted loops. 

The Brownian loop soup in $\D$ with parameter $\lambda>0$ is then defined as a Poisson point process on the space of unrooted loops with intensity measure $\lambda \mu_{\rm loop}^{\D},$ where $\mu_{\rm loop}^{\D}$ is the restriction of $\mu_{\rm loop}$ to the set of unrooted loops contained in $\D$. The loops in the loop soup form clusters: two loops $\el, \el'$ are in the same cluster if there is a finite set of loops $\{\el_0, \ldots, \el_n\}$ such $\el_0=\el, \el_n=\el'$ and $\el_{j-1} \cap \el_j \neq \varnothing.$  Up to extending the underlying probability space, we moreover always assume that the Brownian loop soup is defined under $\Pm,$ and independent from the Brownian excursion process $\omega_u,$ see \eqref{e.omega_alpha}. For each $\lambda\geq0$ and $\intens>0,$ we denote by $\be^{\intens,\lambda}$ the closure of the union of all the clusters of loops, for the loop soup  at level $\lambda,$ which hit the Brownian excursion set $\be^\intens$ at level $\intens.$ Moreover, we denote by $\V^{\intens,\lambda}:=(\be^{\intens,\lambda})^{\ch}$ the corresponding vacant set. Note that $\be^{\intens,0}=\be^{\intens}$ and $\V^{\intens,0}=\V^{\intens}.$

\subsection{Conformal restriction and the Schramm-Loewner evolution}\label{s.sle}
The determination of the critical values in the continuum setting is based on a Schramm-Loewner evolution (SLE) computation and the well-known link to restriction measures. This section recalls the needed facts. While most (if not all) of this is standard material, we have chosen to give statements and provide references. See, e.g.\ \cite[Section~8]{lawler2003conformal} for further discussion. Let $\HH := \{z: \Im \, z > 0\}$ be the complex upper half-plane. Let $X_-$ be the set of continuous curves $\gamma$ connecting $0$ with $\infty$ in $\overline{\mathbb{H}}$ and with the property that $\gamma \cap \mathbb{R} \subset (-\infty,0]$. One may turn the set of curves into a metric space either by viewing them as continuous functions up to increasing reparametrization (with the associated supremum norm) or as compact sets with the Hausdorff topology after mapping to the disc. The exact point of view is not important in the present context and we will not discuss this in further detail. We say that a probability measure $\Pm=\Pm_{\mathbb{H},0, \infty}$ on $X_{-}$ satisfies one-sided conformal restriction with exponent $\alpha > 0$ if for any (relatively) compact $A$ such that $\overline{A}\cap\H=A,$ $\HH \setminus A$ is simply connected and $\overline A \cap \mathbb{R} \subset (0,\infty)$, we have that
\begin{equation}
\label{eq:onesidedrestriction}
\Pm\left( \gamma \cap A = \varnothing \right) = \phi_A'(0)^\alpha.
\end{equation}
Here $\phi_A: \HH \setminus A \to \HH$ is the conformal map fixing $0$ and satisfying $\phi_A(z) = z + o(z)$ as $z \to \infty$. A conformally invariant measure defined in some other simply connected domain with two marked boundary points is said to satisfy one-sided conformal restriction if its image in $\HH$ does so. If $A$ is as above then the law $\Pm_{\mathbb{H}, 0, \infty}$ conditioned on $\gamma \cap A = \varnothing$ is the same as $\Pm_{\mathbb{H} \setminus A, 0, \infty}$, the latter defined by push-forward via conformal transformation. 

An important example of a probability measure which satisfies one-sided conformal restriction can be obtained from the (right boundary of a) cloud sampled from a Poisson realization of the Brownian excursion measure, as we now recall. Let $D\neq\BC$ be a simply connected Jordan domain, let $\phi^D:\D\rightarrow D$ be some fixed conformal transformation, and, for each $u>0,$ denote by $\omega_u^D$ the point process obtained by replacing each excursion $w_i$ in the definition \eqref{e.omega_alpha} of $\omega_u$ by its image via $\phi^D.$  This corresponds to the usual definition of the Brownian excursion process on $D$ by  \cite[Proposition~5.8]{lawler2005conformally}, and its law does not depend on the particular choice of the conformal transformation $\phi^D.$ For two arcs $\Gamma_1,\Gamma_2\subset\partial D,$ let $\be^{\intens,D}_{\Gamma_1,\Gamma_2}$ be the union of the traces of the trajectories in $\omega_u^D$ which start on $\Gamma_1$ and end on $\Gamma_2.$ Let $\Gamma\subset \partial D$ be a Jordan arc and suppose $K\subset \overline{D}$ is a closed set whose intersection with $\partial D$ is contained in $\Gamma$. Write $\Gamma^\ch = \partial D \setminus \Gamma$ and let $F_{D,\Gamma}(K)$ be the filling of $K$ with respect to $\Gamma$, that is,  $F_{D,\Gamma}(K)$ is the union of $K$ with all $z \in \overline D$ such that $K$ separates $z$ from $\Gamma^\ch$ in $\overline D$. We write 
\begin{equation}\label{e.sepbry}
\partial_{D,\Gamma} K :=( \partial (D \setminus F_{D,\Gamma}(K)) ) \setminus \Gamma^\ch.
\end{equation}
Then, as shown in \cite[Theorem~8]{werner2005conformal} with a multiplicative constant $c\alpha,$ and \cite[Theorem~2.12]{wu2015conformal} for the explicit constant $\pi \alpha,$ we have the following.
\begin{lemma}[\cite{werner2005conformal,wu2015conformal}]
\label{lem:excarerestriction}
    For each $\alpha>0,$ $\partial_{\H,\R^-}\mathcal{I}^{\pi\alpha,\H}_{\R^-,\R^-}$ satisfies one-sided conformal restriction with exponent $\alpha.$
\end{lemma}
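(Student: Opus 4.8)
The plan is to establish Lemma~\ref{lem:excarerestriction} by verifying the defining identity~\eqref{eq:onesidedrestriction} directly for the random set $\gamma := \partial_{\H,\R^-}\mathcal{I}^{\pi\alpha,\H}_{\R^-,\R^-}$, using the Poissonian structure of the excursion cloud together with the conformal covariance of the Brownian excursion measure. First I would fix a compact $A$ with $\overline A = \overline A \cap \H$, $\H\setminus A$ simply connected and $\overline A \cap \R \subset (0,\infty)$, and observe that the event $\{\gamma \cap A = \varnothing\}$ can be read off from the cloud: since $A$ does not touch $\R^-$, the filling of the union of excursions with respect to $\R^-$ avoids $A$ precisely when \emph{no} excursion in $\omega_{\pi\alpha}^{\H}$ starting and ending on $\R^-$ intersects $A$ (an excursion that touches $A$ would be part of the filled cluster, and conversely the boundary curve can only reach $A$ through an excursion that meets $A$). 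Here one must be slightly careful that excursions with endpoints on $\R^+$ are irrelevant to $\partial_{\H,\R^-}$, and that the filling operation does not create spurious intersections with $A$ — this is where the hypothesis $\overline A \cap \R \subset (0,\infty)$ is used.

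Next I would compute the probability of this avoidance event. By the Poisson structure, $\Pm(\gamma \cap A = \varnothing) = \exp\!\big(-\pi\alpha\,\mu^{\H}_{\R^-,\R^-}(W_A)\big)$, where $\mu^{\H}_{\R^-,\R^-}$ denotes the Brownian excursion measure in $\H$ restricted to excursions with both endpoints in $\R^-$ and $W_A$ is the set of trajectories hitting $A$. So it suffices to show
\begin{equation*}
\pi\alpha\,\mu^{\H}_{\R^-,\R^-}(W_A) = -\alpha\log\phi_A'(0),
\end{equation*}
i.e.\ that the excursion-measure mass of trajectories hitting $A$ equals $-\tfrac1\pi\log\phi_A'(0)$. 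This is precisely the kind of formula computed in the conformal restriction literature; I would derive it from conformal covariance of the excursion measure under $\phi_A$. Concretely, $\phi_A$ maps $\H\setminus A$ onto $\H$ fixing $0,\infty$, and the excursion measure transforms with a boundary-derivative factor at the endpoints; the excursions that do \emph{not} hit $A$ are mapped to excursions in $\H$, and matching the total masses (suitably regularized near $0$, which is where the derivative $\phi_A'(0)$ enters) yields the displayed identity. The regularization near $0$ is handled exactly as in the construction~\eqref{e.bemeas}: approximate the excursion measure by $\tfrac{2\pi}{\epsilon}\Pm_{\sigma_{1-\epsilon}}$ (or its half-plane analogue with a small semicircle of radius $\epsilon$ around $0$), apply conformal invariance of Brownian motion away from $0$, and take $\epsilon \to 0$, the asymptotics of $\phi_A$ near $0$ producing the factor $\phi_A'(0)$.

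The main obstacle I anticipate is the bookkeeping at the marked point $0$: the excursion measure is infinite, with the divergence concentrated near the endpoints, so the identification of masses requires a careful regularization and a clean statement of how $\mu^{\H}_{\R^-,\R^-}$ transforms under a conformal map that fixes $0$ but perturbs the domain near it. This is standard (it is the heart of the Brownian-excursion proof of conformal restriction in \cite{werner2005conformal,wu2015conformal}), so rather than redo it I would invoke those references for the precise covariance statement and only spell out the two bridging steps: (i) the measurability reduction of $\{\gamma\cap A=\varnothing\}$ to the no-excursion-hits-$A$ event, and (ii) the Poisson exponential formula. Alternatively — and this is probably the cleanest route for the writeup — I would simply cite \cite[Theorem~8]{werner2005conformal} for the statement with an unspecified multiplicative constant $c\alpha$ in the exponent of~\eqref{eq:onesidedrestriction}, note that the normalization of $\mu$ fixed by~\eqref{e.bemeas} is the one under which \cite[Theorem~2.12]{wu2015conformal} identifies $c = \pi$, and conclude. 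I expect the final proof in the paper to be short, essentially this citation plus a remark reconciling normalizations, with the genuine content deferred to the cited works.
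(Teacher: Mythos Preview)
Your proposal is correct, and your anticipated ``alternative'' is exactly what the paper does: there is no proof given, only the sentence preceding the lemma, which says the result is ``as shown in \cite[Theorem~8]{werner2005conformal} with a multiplicative constant $c\alpha,$ and \cite[Theorem~2.12]{wu2015conformal} for the explicit constant $\pi\alpha$.'' Your sketch of the underlying argument (reduce $\{\gamma\cap A=\varnothing\}$ to the event that no $\R^-$-to-$\R^-$ excursion hits $A$, then apply the Poisson exponential formula and the conformal covariance of $\mu$ near $0$) is the correct content behind those citations.
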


We now discuss the link between one-sided restriction and SLE, following \cite{lawler2003conformal,werner-wu-cle}. Let $B_{t}$ be a one-dimensional standard Brownian motion and for $\kappa > 0$, set $U_{t} = B_{\kappa t}$. The SLE$_{\kappa}$ Loewner chain is defined by
\[
\partial_{t} g_{t}(z) = \frac{2}{g_{t}(z)-U_{t}}, \qquad 0 \le t < T_{z}, \qquad g_{0}(z)=z,
\]
where  $T_{z} := \inf\{t \ge 0 :  \textrm{Im}\, g_{t}(z) = 0\}$. The associated hulls are defined by $K_t = \{z: T_z \le t\}, \, t \ge 0$. For each $t$, $g_t$ is a conformal map from the unbounded connected component $H_t$ of $\mathbb{H}\setminus K_t$ onto $\mathbb{H}$. The SLE$_{\kappa}$ curve, which connects $0$ with $\infty$ in $\mathbb{H}$ can then be defined by $\gamma(t) := \lim_{y \downarrow 0} g_{t}^{-1}(U_{t} + iy), \, t\ge 0$. The SLE curve in other domains is defined by conformal transformation.

Next, for $\rho>-2$ and $v\in{\R}$ consider the SDE
\[
dW_{t} = \sqrt{\kappa}dB_{t} - \frac{\rho \, dt}{V_{t}-W_{t}}, \quad dV_{t}=\frac{2 \, dt}{V_{t}-W_{t}}, \quad (W_{0}, V_{0}) = (0,v).
\]
The SLE$_{\kappa}(\rho)$ Loewner chain with force point $v$ is the Loewner chain driven by $(W_{t})$ as above. The case of relevance to this paper is $v=0^-.$ Care is needed if $\rho$ is too large in absolute value and negative, though this is not an issue in the cases we will consider. 

Define for each $\kappa\in{[8/3,4]}$ and $\alpha > 0$ the function
\begin{equation}
    \label{eq:rhokappaalpha}
    \rho_\kappa(\alpha) := \frac{-8 + \kappa + \sqrt{
   16 + \kappa(16\alpha-8) + \kappa^2}}{2}.
\end{equation}
It was proved in \cite[Theorem~8.4]{lawler2003conformal} that the left-filling of the hulls of an SLE$_{8/3}(\rho)$ process with $\rho=\rho_{8/3}(\alpha)$ satisfies one-sided restriction with exponent $\alpha$ thereby providing a ``Brownian'' construction of SLE curves. By adding an independent CLE (conformal loop ensemble) process this can be extended to other values of $\kappa$, see \cite{werner-wu-cle}: Let $\kappa \in [8/3,4]$. Sample a Brownian loop soup in $\mathbb{D}$ of intensity $\lambda(\kappa),$ as defined below \eqref{eq:defcontloops}, where $2\lambda(\kappa),$ see \eqref{eq:defckappa}, is the corresponding \emph{central charge} parameter, which also appears in the context of Conformal Field Theory. (See the discussion of the choice of intensity on p1 of \cite{Lupu-CLE}; it differs by a factor $2$ to the choice made in \cite{lawler2004soup}.) For a domain $D,$ we define the Brownian loop soup  in $D$ with intensity $\lambda(\kappa)$ as the image by $\phi^D$ of the Brownian loop soup in $\D$ at intensity $\lambda(\kappa),$ defined below \eqref{eq:defcontloops}   We define a cluster of loops in $D$ similarly as in $\D,$ see below \eqref{eq:defcontloops}. The collection of outer boundaries of the set of clusters form a conformal loop ensemble, CLE$_\kappa$ process in $D$, see \cite{sheffield-werner}. Now consider a curve $\gamma$ from $0$ to $\infty$ in $\H$ which satisfies one-sided restriction with exponent $\alpha,$ independent of the loop soup, and let $S(\kappa,\alpha)$ be the closure of the set of loop clusters, for the loop soup in $\H$ of intensity $\lambda(\kappa),$ which hit $\gamma,$ and $\eta(\kappa,\alpha)=\partial_{\H,\R^-}S(\kappa,\alpha).$

\begin{lemma}[\cite{werner-wu-cle}]
\label{lem:etaisSLE}
 The set $\eta(\kappa,\alpha)$ has the law of the trace of an SLE$_\kappa(\rho_{\kappa}(\alpha))$ curve.  
\end{lemma}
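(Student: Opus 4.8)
The plan is to treat the extremal value $\kappa=8/3$ separately and then let the loop soup interpolate up to $\kappa=4$, following \cite{werner-wu-cle}. First I would dispose of $\kappa=8/3$: here $\lambda(8/3)=0$ by \eqref{eq:defckappa}, the loop soup is empty, so $S(8/3,\alpha)$ is just the left-filling of $\gamma$ and $\eta(8/3,\alpha)=\partial_{\H,\R^-}\gamma$. Since $\gamma$ satisfies one-sided conformal restriction with exponent $\alpha$ and these measures are unique (see \cite{lawler2003conformal}), while by \cite[Theorem~8.4]{lawler2003conformal} the left-filling of the hulls of $\mathrm{SLE}_{8/3}(\rho_{8/3}(\alpha))$ also satisfies one-sided restriction with exponent $\alpha$, we conclude $\eta(8/3,\alpha)\eqdist\mathrm{SLE}_{8/3}(\rho_{8/3}(\alpha))$. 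It is worth noting that $\rho_{8/3}(\alpha)$ in \eqref{eq:rhokappaalpha} is precisely the solution of $\alpha=(\rho+2)(\rho+6-\kappa)/(4\kappa)$ with $\kappa=8/3$, i.e.\ the equation saying that the half-plane restriction exponent of $\mathrm{SLE}_\kappa(\rho)$ equals $\alpha$; this is the relation we will match against again for general $\kappa$.

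For $\kappa\in(8/3,4]$ the content is that attaching a loop soup of intensity $\lambda(\kappa)$ (central charge $2\lambda(\kappa)$) to the exponent-$\alpha$ restriction curve promotes the restriction property to the $\mathrm{SLE}_\kappa(\rho)$ Loewner dynamics while preserving the leading ``$\phi_A'(0)^{\alpha}$'' behaviour. I would condition on the Brownian loop soup $L$; given $L$, the curve $\gamma$ is still an independent one-sided restriction sample of exponent $\alpha$. For a hull $A$ with $\overline A\cap\R\subset(0,\infty)$ and $\H\setminus A$ simply connected, the event $\{S(\kappa,\alpha)\cap A=\varnothing\}$ is the event that $\gamma$ avoids $A$ and avoids every cluster of $L$ meeting $A$. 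Combining \eqref{eq:onesidedrestriction} for $\gamma$ with the conformal covariance and the restriction property of the Brownian loop measure $\mu_{\rm loop}$ (loops of the soup in $\H$ avoiding $A$ are exactly loops of the soup in $\H\setminus A$), one should obtain an identity $\Pm(S(\kappa,\alpha)\cap A=\varnothing)=\phi_A'(0)^{\alpha}\,\Lambda_\kappa(A)$ with $\Lambda_\kappa(A)$ the deterministic factor carried by a loop soup of central charge $2\lambda(\kappa)$. This ``restriction up to a central-charge factor'' is exactly the property characterising boundaries of $\mathrm{SLE}_\kappa(\rho)$ hulls, and matching the power $\alpha$ against the half-plane exponent $(\rho+2)(\rho+6-\kappa)/(4\kappa)$ of $\mathrm{SLE}_\kappa(\rho)$ forces $\rho=\rho_\kappa(\alpha)$, i.e.\ \eqref{eq:rhokappaalpha}. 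Equivalently, I could parametrise $S(\kappa,\alpha)$ by half-plane capacity, use the domain Markov property inherited from $\gamma$ and $L$, and carry out a Girsanov/It\^{o} computation in which the loop-soup partition function is the Radon--Nikodym weight turning the $\mathrm{SLE}_{8/3}(\rho_{8/3}(\alpha))$ driving function into the $\mathrm{SLE}_\kappa(\rho_\kappa(\alpha))$ one.

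The main obstacle is the one-sidedness of the restriction property. The ``expanded obstacle'' $A\cup\bigcup\{C:C\text{ a cluster of }L,\ C\cap A\neq\varnothing\}$ need not be attached to $(0,\infty)$ only, because a loop cluster may accumulate on the free arc $(-\infty,0]$; hence \eqref{eq:onesidedrestriction} cannot be invoked verbatim and one must regularise, e.g.\ by first discarding loops of diameter below $\delta$ (or restricting to hulls at distance $\ge\delta$ from $(-\infty,0]$) and sending $\delta\to0$, the error being controlled by estimates on the Brownian loop measure of loops near $(-\infty,0]$. A secondary point, needed in order to even speak of ``the trace of an $\mathrm{SLE}_\kappa(\rho)$ curve'', is to show that $\eta(\kappa,\alpha)$ is almost surely a continuous curve; this follows from regularity of loop-soup clusters / $\mathrm{CLE}_\kappa$ for $\kappa\in(8/3,4]$. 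Granting these, the identification is forced by the classification of the corresponding loop-soup-decorated restriction measures, exactly as in \cite{werner-wu-cle}.
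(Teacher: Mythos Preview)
The paper does not give a proof of this lemma; it is quoted directly from \cite{werner-wu-cle} and used as a black box. There is therefore no in-paper argument to compare your proposal against.

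Your sketch is broadly faithful to the strategy of \cite{werner-wu-cle}. The $\kappa=8/3$ case is correct as you wrote it (empty loop soup, uniqueness of one-sided restriction measures with exponent $\alpha$, and \cite[Theorem~8.4]{lawler2003conformal}). For general $\kappa\in(8/3,4]$ the heuristic identity $\Pm(S(\kappa,\alpha)\cap A=\varnothing)=\phi_A'(0)^\alpha\,\Lambda_\kappa(A)$ with a loop-soup partition-function factor $\Lambda_\kappa$ is the right object to aim for, and matching the half-plane exponent $(\rho+2)(\rho+6-\kappa)/(4\kappa)$ against $\alpha$ does reproduce \eqref{eq:rhokappaalpha}. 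The obstacles you flag (loop clusters accumulating on the free arc $(-\infty,0]$, continuity of $\eta$) are genuine and are dealt with in that reference. As an outline of the cited argument your proposal is accurate; turning it into a self-contained proof would still require the regularisations you mention and the identification of the loop-soup correction with the SLE$_\kappa(\rho)$ restriction martingale, none of which the present paper carries out.
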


In view of Lemma~\ref{lem:excarerestriction}, if we take $\gamma$ to be the rightmost boundary of $\be_{\R^-,\R^-}^{\pi\alpha,\H}$, then adding loop clusters with intensity $\lambda(\kappa)$ to $\be_{\R^-,\R^-}^{\pi\alpha,\H},$ we obtain SLE$_\kappa(\rho_{\kappa}(\alpha)).$ This property can be extended to the restriction of the Brownian excursion process to subsets of $\D$ (or $\H$). For a domain $D\subset\D$ with $\partial D\cap\partial\D\neq\varnothing$ and $\Gamma_1,\Gamma_2\subset \partial D\cap\partial\D,$ let $\be^{\intens}_{\Gamma_1,\Gamma_2,D}$ be the union of the traces of the trajectories in $\omega_u$ entirely included in $D,$ starting on $\Gamma_1$ and ending on $\Gamma_2.$ In other words, $\be^{\intens}_{\Gamma_1,\Gamma_2,D}$ corresponds to the trajectories in $\be^{\intens,\D}_{\Gamma_1,\Gamma_2}$ which are included in $D.$ Moreover for each $\lambda>0,$ let $\be^{\intens,\lambda}_{\Gamma_1,\Gamma_2,D},$ resp.\ $\be^{\intens,\lambda,D}_{\Gamma_1,\Gamma_2},$ be the closure of the union of the loop clusters with intensity $\lambda,$ for the restriction of the Brownian loop soup in $\D$ to loops entirely included in $D,$ resp.\ for the Brownian loop soup in $D,$  which intersect $\be^{\intens}_{\Gamma_1,\Gamma_2,D},$ resp.\ $\be^{\intens,D}_{\Gamma_1,\Gamma_2}.$ Note that $\be^{\intens,\lambda}_{\partial\D,\partial\D,\D}=\be^{\intens,\lambda,\D}_{\partial\D,\partial\D}=\be^{\intens,\lambda},$ as defined below \eqref{eq:defcontloops}. We also take the convention $\be^{u,0}_{\Gamma_1,\Gamma_2,D}:=\be^{u}_{\Gamma_1,\Gamma_2,D}$ and $\be^{u,0,D}_{\Gamma_1,\Gamma_2}:=\be^{u,D}_{\Gamma_1,\Gamma_2}.$  Following \cite[Proposition~5.12]{lawler2005conformally} or  \cite[(7)]{lawler2000universality}, the Brownian excursion set satisfies the following form of restriction property.

\begin{lemma}
\label{lem:resexc}
Let $D\subset \D$ be a Jordan domain with $\partial D\cap\partial\D\neq\varnothing$, and let $\Gamma_1,$ $\Gamma_2$ be two closed arcs of $\partial D\cap\partial \D.$ Then for each $u>0$ and $\lambda\geq0,$ $\be^{\intens,\lambda}_{\Gamma_1,\Gamma_2,D}$ and $\be^{\intens,\lambda,D}_{\Gamma_1,\Gamma_2}$ have the same law.
\end{lemma}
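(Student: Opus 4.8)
The plan is to deduce Lemma~\ref{lem:resexc} from two standard restriction properties --- one for the Brownian excursion measure $\mu$, one for the Brownian loop soup --- together with the thinning property of Poisson point processes. Throughout one uses that, since $D$ is a Jordan domain with $D\cap\partial\D=\varnothing$ and $\partial D\cap\partial\D\neq\varnothing$, any excursion of $\omega_u$ entirely contained in $D$ has both of its endpoints on $\partial D\cap\partial\D$; in particular ``contained in $D$'' and ``contained in $\overline D$'' describe the same set of excursions up to a $\mu$-null set.

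First I would treat the case $\lambda=0$. By Poisson thinning, the excursions of $\omega_u$ contained in $D$ and running from $\Gamma_1$ to $\Gamma_2$ form a Poisson point process whose intensity is $\intens$ times the restriction of $\mu=\mu^{\D}$ to such excursions. By the restriction property of the Brownian excursion measure, \cite[Proposition~5.12]{lawler2005conformally} (see also \cite[(7)]{lawler2000universality}), this restriction of $\mu^{\D}$ coincides, as a $\sigma$-finite measure on curves, with the restriction of the Brownian excursion measure $\mu^D:=\mu\circ(\phi^D)^{-1}$ on $D$ to excursions with both endpoints on $\partial D\cap\partial\D$; recall that the law of the resulting process $\omega_u^D$ does not depend on the choice of $\phi^D$. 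Restricting further to excursions from $\Gamma_1$ to $\Gamma_2$ (which is meaningful since $\Gamma_1,\Gamma_2\subset\partial D\cap\partial\D$) and thinning $\omega_u^D$ in the same way, we conclude that $\be^{\intens}_{\Gamma_1,\Gamma_2,D}$ and $\be^{\intens,D}_{\Gamma_1,\Gamma_2}$ are the same measurable functional --- the union of the traces --- of two Poisson point processes with the same intensity, hence have the same law.

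For $\lambda>0$ I would run this argument jointly with the loop soup. By the conformal restriction property of the Brownian loop measure, see \cite{lawler2004soup} and the discussion below \eqref{eq:defcontloops}, one has $(\phi^D)_*\mu_{\rm loop}^{\D}=\mu_{\rm loop}^D$, where $\mu_{\rm loop}^D$ denotes the restriction of $\mu_{\rm loop}$ to loops contained in $D$; since $D\subset\D$ the latter is also the restriction of $\mu_{\rm loop}^{\D}$ to loops contained in $D$. Equivalently, the loops of the Brownian loop soup in $\D$ that lie in $D$ form a Brownian loop soup in $D$. As $\omega_u$ and the loop soup in $\D$ are independent under $\Pm$, the pair formed by the excursions of $\omega_u$ contained in $D$ from $\Gamma_1$ to $\Gamma_2$ and the loops of the $\D$-loop soup contained in $D$ has the same joint law as the pair formed by the excursions of $\omega_u^D$ from $\Gamma_1$ to $\Gamma_2$ and the loop soup in $D$, the latter two being again independent. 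Since $\be^{\intens,\lambda}_{\Gamma_1,\Gamma_2,D}$ and $\be^{\intens,\lambda,D}_{\Gamma_1,\Gamma_2}$ are produced from the respective pair by one and the same measurable operation --- form the loop clusters inside that collection of loops, keep those meeting the excursion set, and close up their union --- the two sets have the same law.

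The step requiring the most care is the first invocation of the restriction property of the excursion measure, which has to be used as an identity of $\sigma$-finite measures on curves for a general Jordan subdomain $D\subset\D$ sharing a boundary arc with $\D$: one must check that, in the disintegration of $\mu^{\D}$ over the endpoints, an excursion conditioned to remain in $D$ approaches $\partial D\cap\partial\D$ exactly as an excursion of $\mu^D$ does. This is where the boundary regularity of the Jordan domain $D$ and the fact that $\Gamma_1,\Gamma_2$ are closed arcs of the common boundary enter --- the ``corners'' at which $\partial D$ leaves $\partial\D$ being a null set for these measures. Everything else --- Poisson thinning, conformal invariance of the loop soup, independence of the loop soup and the excursion cloud, and measurability of the map sending a pair consisting of a loop configuration and a set to the union of the loop clusters meeting that set --- is routine and available from the references recalled above.
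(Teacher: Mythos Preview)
Your proof is correct and follows the same two-ingredient strategy as the paper: the restriction property of the Brownian excursion measure together with the restriction/conformal-invariance property of the loop soup, combined via Poisson thinning and independence.

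There is one small difference worth noting. You invoke the restriction property of $\mu^{\D}$ directly as an identity of $\sigma$-finite measures on curves, and then acknowledge that some care is needed near the ``corners'' where $\partial D$ leaves $\partial\D$. The paper instead sidesteps the $\sigma$-finiteness issue explicitly: it partitions $\{(x,y)\in\Gamma_1\times\Gamma_2:x\neq y\}$ into countably many products $\Gamma_1^{(n)}\times\Gamma_2^{(n)}$ of \emph{disjoint} closed arcs, applies the restriction property (in the finite-mass form of \cite[Proposition~5.2]{lawler2005conformally}) to each piece, and then uses independence of the corresponding Poisson families to reassemble. The diagonal is discarded because a.s.\ no excursion starts and ends at the same boundary point. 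So the subtlety the paper actually handles is the possible overlap of $\Gamma_1$ and $\Gamma_2$ (and the resulting infinite mass), rather than behavior at the corners of $D$. Your direct $\sigma$-finite argument is equivalent once one observes that the disjoint-arc version determines the full measure; the paper simply makes that reduction explicit.
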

\begin{proof}
Let $\Gamma_1^{(n)}$ and $\Gamma_2^{(n)}$ be two sequences of closed arcs such that $\Gamma_1^{(n)}$ and $\Gamma_2^{(n)}$ are disjoint for each $n\in\N,$ and the sets $\Gamma_1^{(n)}\times\Gamma_2^{(n)},$ $n\in\N,$ form a partition of $\{(x,y)\in{\Gamma_1\times\Gamma_2}:x\neq y\}.$ Then by \cite[ Proposition~5.2]{lawler2005conformally}, the sets $\be^{\intens}_{\Gamma_1^{(n)},\Gamma_2^{(n)},D}$ and $\be^{\intens,D}_{\Gamma_1^{(n)},\Gamma_2^{(n)}}$ have the same law, and by independence we can conclude since a.s.\ there are no trajectory starting and ending at the same point. Adding the loop soup clusters, which have the same law for both sets by \cite[Proposition~6]{lawler2004soup}, we can conclude. 
\end{proof}

Let $\Gamma\subset \partial \D$ be a Jordan arc, and abbreviate $\partial_{\Gamma}K:=\partial_{\Gamma,\D}K,$ see \eqref{e.sepbry}. For a Jordan domain $D\subset\D$ such that $\Gamma\subset\partial D\cap\partial\D,$ $\Gamma\neq\partial \D,$ we also denote by $\phi_{\Gamma}^D$ some choice of conformal transformation from $D$ to $\H,$ which maps $\Gamma$ to $\R^-.$ Combining Lemmas~\ref{lem:excarerestriction}, \ref{lem:etaisSLE} and \ref{lem:resexc}, with conformal invariance we obtain the following result, which is our main tool to study percolation of $\be^{\intens,\lambda}$ in Appendix~\ref{sec:contperco}.

\begin{lemma}\label{l.brownianexcursionandlooprestriction}
Let $D\subset \D$ be a simply connected Jordan domain with $\partial D\cap\partial\D\neq\varnothing$, and $\Gamma$ be a closed arc of $\partial D\cap\partial \D,$ $\Gamma\neq\partial \D.$ Then, for any $\kappa\in{[8/3,4]}$ and $\alpha>0,$ $\phi_{\Gamma}^D(\partial_{\Gamma}\be^{\pi \alpha,\lambda(\kappa)}_{\Gamma,\Gamma,D})$  has the same law as the trace of an $\text{SLE}_{\kappa}(\rho_{\kappa}(\alpha))$ curve.
\end{lemma}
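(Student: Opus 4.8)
The plan is to assemble Lemma~\ref{l.brownianexcursionandlooprestriction} from the three ingredients just stated, together with conformal invariance of the Brownian excursion cloud and loop soup. The strategy is: first reduce to the half-plane model via a conformal map, then use the restriction property (Lemma~\ref{lem:resexc}) to replace the excursions confined to $D$ by the excursions of the loop soup/excursion process intrinsically defined on $D$, and finally transport the statement through $\phi_\Gamma^D$ to the standard half-plane picture where Lemmas~\ref{lem:excarerestriction} and~\ref{lem:etaisSLE} apply.

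Concretely, first I would fix the conformal transformation $\psi := \phi_\Gamma^D : D \to \H$ which maps $\Gamma$ to $\R^-$. By Lemma~\ref{lem:resexc} applied with $\Gamma_1 = \Gamma_2 = \Gamma$, the set $\be^{\pi\alpha,\lambda(\kappa)}_{\Gamma,\Gamma,D}$, defined via the Brownian excursion cloud in $\D$ restricted to excursions and loops staying in $D$, has the same law as $\be^{\pi\alpha,\lambda(\kappa),D}_{\Gamma,\Gamma}$, the analogous object built intrinsically from the Brownian excursion cloud and loop soup in $D$. Then, by conformal invariance of the Brownian excursion measure \cite[Proposition~5.8]{lawler2005conformally} and of the Brownian loop soup \cite[Proposition~6]{lawler2004soup}, the image $\psi(\be^{\pi\alpha,\lambda(\kappa),D}_{\Gamma,\Gamma})$ has the same law as $\be^{\pi\alpha,\lambda(\kappa),\H}_{\R^-,\R^-}$, the object built from the Brownian excursion cloud and loop soup in $\H$ with both endpoints on $\R^-$. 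Since $\psi$ maps $D \setminus F_{D,\Gamma}(\cdot)$ conformally onto $\H \setminus F_{\H,\R^-}(\cdot)$ and fixes the "outside" arc $\Gamma^\ch$ in the appropriate sense, it intertwines the boundary-filling operations, so $\psi(\partial_\Gamma \be^{\pi\alpha,\lambda(\kappa)}_{\Gamma,\Gamma,D}) = \partial_{\H,\R^-}\be^{\pi\alpha,\lambda(\kappa),\H}_{\R^-,\R^-}$ in law.

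It then remains to identify the law of $\partial_{\H,\R^-}\be^{\pi\alpha,\lambda(\kappa),\H}_{\R^-,\R^-}$. By Lemma~\ref{lem:excarerestriction}, the curve $\gamma := \partial_{\H,\R^-}\be^{\pi\alpha,\H}_{\R^-,\R^-}$ (the $\lambda = 0$, i.e.\ loop-free case) satisfies one-sided conformal restriction with exponent $\alpha$; moreover $\gamma$ is a deterministic functional of the excursion cloud alone, hence independent of the loop soup. Adding the loop clusters of intensity $\lambda(\kappa)$ which hit $\be^{\pi\alpha,\H}_{\R^-,\R^-}$ — equivalently, by the same restriction argument as in the proof of Lemma~\ref{lem:resexc}, the loop clusters which hit $\gamma$ — and taking the right boundary of the union is precisely the construction of $\eta(\kappa,\alpha)$ in the setup of Lemma~\ref{lem:etaisSLE}, so by Lemma~\ref{lem:etaisSLE} this set has the law of the trace of an $\SLE_\kappa(\rho_\kappa(\alpha))$ curve. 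Chaining the identifications in law gives the claim.

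The main obstacle I expect is the bookkeeping around the filling/boundary operation $\partial_{D,\Gamma}$ and checking that it truly commutes with the conformal map $\psi$ and with the operation of "attaching the loop clusters that hit the excursion set" — in particular that the right boundary of the excursion set equals the right boundary of "excursion set union loop clusters hitting it" at the level needed to invoke Lemma~\ref{lem:etaisSLE}, and that the loops confined to $D$ versus the loops of the intrinsic loop soup on $D$ match up once one conditions on which clusters intersect the interface. This is essentially the content already packaged in Lemmas~\ref{lem:resexc} and~\ref{lem:etaisSLE}, together with \cite[Proposition~6]{lawler2004soup}, but some care is needed to ensure the arcs $\Gamma_1, \Gamma_2$ can be taken equal to $\Gamma$ (a null set of coincident endpoints is discarded as in the proof of Lemma~\ref{lem:resexc}) and that $\phi_\Gamma^D$ being only "some choice" of conformal map is harmless, since the laws in question are rotation/Möbius invariant about the relevant boundary configuration.
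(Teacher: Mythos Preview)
Your proposal is correct and follows exactly the approach the paper indicates: the paper's own ``proof'' is simply the sentence ``Combining Lemmas~\ref{lem:excarerestriction}, \ref{lem:etaisSLE} and \ref{lem:resexc}, with conformal invariance we obtain the following result,'' and you have spelled out precisely that combination. One small remark: the step where you pass from ``loop clusters hitting $\be^{\pi\alpha,\H}_{\R^-,\R^-}$'' to ``loop clusters hitting $\gamma$'' is justified not by the restriction argument of Lemma~\ref{lem:resexc} but by the purely topological observation that any cluster hitting the excursion set but not $\gamma$ lies entirely inside the filling $F_{\H,\R^-}(\be^{\pi\alpha,\H}_{\R^-,\R^-})$ and therefore does not alter $\partial_{\H,\R^-}$; you effectively acknowledge this in your final paragraph.
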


Let us finish this section with the following consequence of  \cite[Lemma~8.3]{lawler2003conformal} and \eqref{eq:rhokappaalpha}.

\begin{lemma}[\cite{lawler2003conformal}]
\label{lem:SLE-ka-r}
For $\alpha > 0$, let $\gamma=\gamma([0,\infty))$ be the trace of the SLE$_{\kappa}(\rho_{\kappa}(\alpha))$ curve in $\mathbb{H}.$ 
If $0 < \alpha < (8-\kappa)/16$ then almost surely
$\gamma$ intersects $(-\infty, 0),$ and if $\alpha \ge (8-\kappa)/16$ then almost surely $\gamma$ does not intersect $(-\infty, 0)$.
 \end{lemma}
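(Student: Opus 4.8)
The plan is to reduce the claim to the standard description of how an $\mathrm{SLE}_\kappa(\rho)$ chain behaves near its force point, and then to translate the resulting threshold on $\rho$ into the threshold on $\alpha$ using the explicit formula \eqref{eq:rhokappaalpha}. For the $\mathrm{SLE}_\kappa(\rho)$ Loewner chain with force point at $0^-$ driven by $(W_t,V_t)$ as in Section~\ref{s.sle}, the force point lies to the left of the tip, so $X_t:=W_t-V_t\ge 0$; subtracting the two driving SDEs gives $dX_t=\sqrt{\kappa}\,dB_t+(\rho+2)\,X_t^{-1}\,dt$, so that $X_t/\sqrt{\kappa}$ is a Bessel process of dimension $\delta=1+2(\rho+2)/\kappa$. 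The trace $\gamma$ intersects $(-\infty,0)$ precisely when this Bessel process returns to $0$, which occurs almost surely if $\delta<2$ and never if $\delta\ge 2$; since $\delta<2\iff\rho<\kappa/2-2$, this is exactly the content of \cite[Lemma~8.3]{lawler2003conformal}, which I would quote rather than reprove. It then remains to verify that $\rho_\kappa(\alpha)<\kappa/2-2$ if and only if $\alpha<(8-\kappa)/16$, with the two thresholds matching.

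For this I would rewrite \eqref{eq:rhokappaalpha}, using $16-8\kappa+\kappa^2=(\kappa-4)^2$, as
\[
\rho_\kappa(\alpha)=\tfrac12\Big((\kappa-8)+\sqrt{(\kappa-4)^2+16\kappa\alpha}\,\Big).
\]
Then $\rho_\kappa(\alpha)<\kappa/2-2=(\kappa-4)/2$ is equivalent to $\sqrt{(\kappa-4)^2+16\kappa\alpha}<4$; the left-hand side being nonnegative and the right-hand side positive, squaring is legitimate and gives $(\kappa-4)^2+16\kappa\alpha<16$, i.e. $\kappa(\kappa-8+16\alpha)<0$, i.e. (since $\kappa>0$) $\alpha<(8-\kappa)/16$. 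At $\alpha=(8-\kappa)/16$ one computes $\sqrt{(\kappa-4)^2+16\kappa\alpha}=4$, hence $\rho_\kappa(\alpha)=\kappa/2-2$, which falls in the non-touching regime $\rho\ge\kappa/2-2$; this gives the closed condition $\alpha\ge(8-\kappa)/16$ for non-intersection, as stated.

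Finally I would note two routine sanity checks: for $\alpha>0$ and $\kappa\in[8/3,4]$ the discriminant $(\kappa-4)^2+16\kappa\alpha$ is strictly positive, so $\rho_\kappa(\alpha)\in\R$; and since $\sqrt{(\kappa-4)^2+16\kappa\alpha}>\sqrt{(\kappa-4)^2}=4-\kappa$, one gets $\rho_\kappa(\alpha)>\tfrac12\big((\kappa-8)+(4-\kappa)\big)=-2$, so that $\mathrm{SLE}_\kappa(\rho_\kappa(\alpha))$ is well defined and \cite[Lemma~8.3]{lawler2003conformal} genuinely applies. I do not anticipate a real obstacle here: the entire substance is the elementary equivalence $\kappa(\kappa-8+16\alpha)<0\iff\alpha<(8-\kappa)/16$, and the only subtlety is bookkeeping the open versus closed boundary case correctly.
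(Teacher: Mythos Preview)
Your argument is correct and is precisely the derivation the paper has in mind: the text introducing the lemma states it is ``a consequence of \cite[Lemma~8.3]{lawler2003conformal} and \eqref{eq:rhokappaalpha}'', and you have simply spelled out that consequence by translating the threshold $\rho<\kappa/2-2$ into the stated condition on $\alpha$. The algebra and the handling of the boundary case $\alpha=(8-\kappa)/16$ are accurate.
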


\subsection{Statements of continuum results}
For $r\in{[0,1)}$, recall that $B(r){\longleftrightarrow}\partial\D\text{ in }\CV^{\intens,\lambda}$ corresponds to the event that the closure of a component of $\CV^{\intens,\lambda}$ intersecting $B(r)$ also intersects $\partial\D$. When $\lambda=0,$ define the critical parameter
\begin{equation}
\label{def:u*c}
\intens_*^c(r) := \inf \left\{ \intens \ge 0 :  \Pm\left(B(r)\stackrel{\V^{\intens}}{\longleftrightarrow}\partial\D\right)=0 \right\}.
\end{equation}

Recall the definition of $\lambda(\kappa)$ from \eqref{eq:defckappa}.

\begin{theorem}\label{t.mainthm}
For all $\kappa\in{[8/3,4]}$ and $r\in{[0,1)}$
\begin{equation*}
    \Pm\big(B(r)\stackrel{\CV^{\intens,\lambda(\kappa)}}{\longleftrightarrow}\partial\D\big)=0\text{ if and only if }u\geq\frac{(8-\kappa)\pi}{16}.
\end{equation*}
In particular, the critical value for percolation in $\CV^{\intens}$ satisfies $u_*^c(r)=\pi/3.$
\end{theorem}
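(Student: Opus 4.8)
The value $(8-\kappa)\pi/16$ is just the image under $u=\pi\alpha$ of the threshold $(8-\kappa)/16$ appearing in Lemma~\ref{lem:SLE-ka-r}, and $\pi/3$ is the case $\kappa=8/3$ (for which $\lambda(8/3)=0$ by \eqref{eq:defckappa}); the whole statement will therefore reduce to deciding, via Lemmas~\ref{l.brownianexcursionandlooprestriction} and \ref{lem:SLE-ka-r}, whether the $\mathrm{SLE}_\kappa(\rho_\kappa(u/\pi))$ curve produced by a boundary-arc sub-cloud of excursions touches that arc. I would fix the diameter $[-1,1]$, let $\Gamma^{\pm}$ be the two closed semicircular arcs it bounds, and put $K^{\pm}:=\mathcal{I}^{u,\lambda(\kappa)}_{\Gamma^{\pm},\Gamma^{\pm},\D}$, a sub-family of the full cloud (excursions plus attached loop clusters), so that $\mathcal{V}^{u,\lambda(\kappa)}\subseteq\D\setminus K^{\pm}$. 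By Lemma~\ref{l.brownianexcursionandlooprestriction} (with $D=\D$, $\Gamma=\Gamma^{\pm}$), $\phi^{\D}_{\Gamma^{\pm}}(\partial_{\Gamma^{\pm}}K^{\pm})$ has the law of an $\mathrm{SLE}_\kappa(\rho_\kappa(u/\pi))$ trace, which touches the relative interior of $\Gamma^{\pm}$ if and only if $u<(8-\kappa)\pi/16$ by Lemma~\ref{lem:SLE-ka-r}; and, unwinding the filling in the definition of $\partial_{\Gamma^{\pm}}$, this touching is equivalent to $\Gamma^{\mp}$ being connected to $\Gamma^{\pm}$ inside $\D\setminus K^{\pm}$ (a path in $\D\setminus K^{\pm}$ out of $\Gamma^{\mp}$ cannot enter a pocket of $K^{\pm}$, hence lies in the single component $\D\setminus F_{\D,\Gamma^{\pm}}(K^{\pm})$, whose closure reaches the interior of $\Gamma^{\pm}$ exactly when the inner boundary of the filling does).

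\textbf{Upper bound} ($u\ge(8-\kappa)\pi/16$). Suppose a component $\mathcal{K}$ of $\mathcal{V}^{u,\lambda(\kappa)}$ meets $B(r)$ and $\overline{\mathcal{K}}\ni q\in\partial\D$; up to a null event $q$ lies in the relative interior of $\Gamma^{-}$ or of $\Gamma^{+}$, say the former. Since $\mathcal{K}\subseteq\D\setminus K^{+}$ with $K^{+}$ attached only to $\Gamma^{+}$, and since $\overline{\mathcal{K}}\ni q\in\Gamma^{-}$, the component of $\D\setminus K^{+}$ containing $\mathcal{K}$ cannot be a pocket (a pocket is separated from $\Gamma^{-}$), so it is $\D\setminus F_{\D,\Gamma^{+}}(K^{+})$, which is connected and meets $\Gamma^{+}$. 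Hence $\Gamma^{-}$ is connected to $\Gamma^{+}$ in $\D\setminus K^{+}$, an event of probability $0$ for $u\ge(8-\kappa)\pi/16$ by the first paragraph and Lemma~\ref{lem:SLE-ka-r}. Thus $\Pm\bigl(B(r)\leftrightarrow\partial\D\text{ in }\mathcal{V}^{u,\lambda(\kappa)}\bigr)=0$; in particular $u_*^c(r)\le\pi/3$ in \eqref{def:u*c}. The only subtlety here is to localize the landing point $q$ on the arc \emph{opposite} to the sub-cloud one controls.

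\textbf{Lower bound} ($u<(8-\kappa)\pi/16$), and the main obstacle. Now $\partial_{\Gamma^{-}}K^{-}$ touches $\Gamma^{-}$ almost surely, so there is a.s.\ a vacant-for-$K^{-}$ channel from $\Gamma^{+}$ to $\Gamma^{-}$; the real task is to produce, with positive probability, such a channel that is vacant for the \emph{entire} configuration and meets $B(r)$. I would (i) remove the a.s.\ finitely many excursions meeting $\overline{B(r)}$ (their count is $\mathrm{Poi}(u\,\mathrm{cap}(B(r)))$ by Proposition~\ref{prop:localdescription}) together with the finitely many loop clusters joining $\overline{B(r)}$ to the excursion cloud, by a finite-energy / insertion--deletion argument, reducing to whether the residual vacant set joins $\partial B(r)$ to $\partial\D$ --- this, with the obvious monotonicity of percolation in $r$, also yields the stated independence on $r$; and (ii) localize inside a Jordan domain $D\subseteq\D$ with $B(r)$ behind the crosscut $\partial D\cap\D$ and $\Gamma_{0}:=\partial D\cap\partial\D$ a short arc, on the positive-probability event (independence of the Poisson pieces) that the excursions of the full cloud leaving $D$ and the loops not contained in $D$ avoid the relevant region, so that the envelope of $\mathcal{I}^{u,\lambda(\kappa)}_{\Gamma_{0},\Gamma_{0},D}$ --- an $\mathrm{SLE}_\kappa(\rho_\kappa(u/\pi))$ curve by Lemmas~\ref{lem:resexc}, \ref{l.brownianexcursionandlooprestriction} and conformal invariance --- touches $\Gamma_{0}$, yielding a vacant fjord from the crosscut to $\partial\D$ that I then prolong across the vacant $\overline{B(r)}$. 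The hard part throughout is exactly that the $\mathrm{SLE}$ input only controls a one-arc sub-cloud: upgrading a vacant channel of that sub-cloud to one vacant for the full excursion cloud plus loop soup, and routing it through $B(r)$, requires the restriction-property localization together with quantitative control of the finitely many ``foreign'' excursions and loop clusters that could obstruct the fjord, plus the finite-energy bookkeeping near $B(r)$; I expect Section~\ref{sec:contperco} to concentrate its effort there.
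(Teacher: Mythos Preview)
Your upper-bound argument has a genuine gap. After placing the vacant component $\mathcal{K}$ in the unfilled region $\D\setminus F_{\D,\Gamma^{+}}(K^{+})$, you assert that this region ``meets $\Gamma^{+}$'' and conclude $\Gamma^{-}\leftrightarrow\Gamma^{+}$ in $\D\setminus K^{+}$. But by your own equivalence in the first paragraph, the unfilled region reaches $\mathring{\Gamma}^{+}$ \emph{precisely} when the SLE curve does --- which, for $u\ge(8-\kappa)\pi/16$, it does not. In that regime the unfilled region is bounded by the SLE chord and by $\Gamma^{-}$ only; it may perfectly well contain $B(r)$ and reach $q\in\mathring{\Gamma}^{-}$ without ever approaching $\mathring{\Gamma}^{+}$. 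The existence of a $\D\setminus K^{+}$-path from $B(r)$ to $\mathring{\Gamma}^{-}$ therefore yields no contradiction, and bringing in $K^{-}$ does not help: $\mathcal{K}$ lying in a pocket of $K^{-}$ whose closure meets $\mathring{\Gamma}^{-}$ is fully compatible with both SLE curves staying off their respective arcs. With semicircular arcs and the full disc as domain, the two SLE chords from $-1$ to $1$ need not enclose $B(r)$ at all.

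The paper's fix (Lemma~\ref{lem:2}) is to work in annular-sector subdomains $D_k\subset\D\setminus\overline{B(r)}$ with \emph{overlapping} arcs $\Gamma_k$ (each of length $3\pi/2$) whose interiors cover $\partial\D$. By the restriction property (Lemma~\ref{lem:resexc}), $\partial_{\Gamma_k}\mathcal{I}^{u,\lambda(\kappa)}_{\Gamma_k,\Gamma_k,D_k}$ is still an SLE$_\kappa(\rho_\kappa(u/\pi))$ chord, now confined to $D_k$ and hence automatically disjoint from $\overline{B(r)}$. Since $\D\setminus D_k$ is connected and touches $\Gamma_k^{\ch}$, the ball $B(r)$ is forced onto the $\Gamma_k^{\ch}$ side of this chord; when the chord avoids $\mathring{\Gamma}_k$ it therefore separates $B(r)$ from $\mathring{\Gamma}_k$ a.s. Two such chords then disconnect $B(r)$ from all of $\partial\D$.

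Your lower-bound sketch is along correct lines but organised differently from the paper (Lemma~\ref{lem:1}). There the argument is run at $r=0$ only (general $r$ follows by monotonicity): one first produces, on a positive-probability event $E_1$, a vacant-for-$\mathcal{I}^{u,\lambda(\kappa)}_{\Gamma_1,\Gamma_1,\D}$ path $\gamma$ from $0$ to $\mathring{\Gamma}_1$, using that the SLE touches $\Gamma_1$ together with the event that the $\Gamma_1$-cloud misses a fixed segment $A$ from $0$ to $\mathring{\Gamma}_2$. The foreign excursions are then handled by a resampling trick rather than finite energy: an independent copy of the $\Gamma_1$--cloud is introduced, the one-sided restriction formula \eqref{eq:onesidedrestriction} is applied to show a $\Gamma_3$--$\Gamma_3$ cloud (for a random arc $\Gamma_3\supset\Gamma_2$ disjoint from $\gamma$) misses the loop-thickened $\gamma$ with positive probability, and the finitely many excursions from $\Gamma_2$ to $\Gamma_3^{\ch}$ are killed at positive Poisson cost. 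The insertion--deletion reduction you propose does appear in the paper, but only in Remark~\ref{rk:othercontperco},\ref{finiteenergy}, as an a posteriori explanation of the $r$-independence.
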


The statement in Theorem~\ref{t.mainthm} is given in \cite[Section~5]{werner-qian}, and the main ingredients of the proof are found there. In Appendix~\ref{sec:contperco}, for the convenience of the reader we complete the details of the proof. A consequence of Theorem~\ref{t.mainthm} and the result on visibility from \cite{elias2017visibility} is that for $\intens\in [\pi/4,\pi/3)$, we have $ \Pm\big(0{\leftrightarrow}\partial\D\text{ in }\CV^{\intens}\big)>0$, but a.s.\ no visibility to infinity from the origin, as well as no percolation in $\V^{u,1/2}.$

The percolative properties of $\V^{\intens,\lambda(\kappa)}$ are particularly interesting for two special values of $\kappa$: first $\kappa=8/3,$ with $\lambda(8/3)=0,$ which simply corresponds to $\V^{\intens}$ and gives us the equality $u_*^c=\pi/3.$ The other value of special interest is $\kappa=4,$ and $\lambda(4)=1/2,$ which is linked to the Gaussian free field (GFF). Indeed, for each $h>0$ denote by $\mathbb{A}_{-h}$ the first passage set of the GFF on $\D$ with zero-boundary condition, as defined in \cite{ArLuSe-20b}, which informally corresponds to the set of points in $\D$ which can be connected to $\partial\D$ by a path above level $-h$ for the continuous GFF on $\D.$ Then by \cite[Proposition~5.3]{ArLuSe-20a}, $\mathbb{A}_{-h}$ has the same law as $\be^{\frac{h^2}{2},1/2}\cup\partial\D$. (Note that $\be^{\frac{h^2}2}$ corresponds to the Brownian excursion set at level $h$ in the parametrization of \cite{ArLuSe-20a}.) We thus directly deduce the following from Theorem~\ref{t.mainthm}.

\begin{corollary}
\label{cor:percocontGFF}
For each $r\in{[0,1)},$ the probability that $B(r)$ intersects a connected component of the complement of $\mathbb{A}_{-h}$ intersecting $\partial \D$, is $0$ if and only if $h\geq\sqrt{\pi/2}.$
\end{corollary}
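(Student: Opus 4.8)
The plan is to deduce Corollary~\ref{cor:percocontGFF} directly from Theorem~\ref{t.mainthm} using the identification in law $\mathbb{A}_{-h} \overset{d}{=} \be^{h^2/2,1/2}\cup\partial\D$ provided by \cite[Proposition~5.3]{ArLuSe-20a}. The first step is simply to translate the connection event: for $B(r)$ to intersect a connected component of $\D\setminus\mathbb{A}_{-h}$ that touches $\partial\D$ is, under this coupling, exactly the event that $B(r)$ lies in a component of $\D\setminus(\be^{h^2/2,1/2}\cup\partial\D)$ whose closure meets $\partial\D$. Since $\be^{h^2/2,1/2}$ is by definition a subset of $\D$ (the Brownian excursions and loops live in the open disk, and we only take closures of loop clusters), removing the extra $\partial\D$ from the complement does not change which components of the vacant set are ``connected to $\partial\D$'' in the sense of \eqref{def:u*c}: a component of $\V^{h^2/2,1/2} = \D\setminus\be^{h^2/2,1/2}$ has closure meeting $\partial\D$ if and only if the corresponding component of $\D\setminus(\be^{h^2/2,1/2}\cup\partial\D)$ does. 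Hence the event in the corollary has the same probability as $\Pm\big(B(r)\stackrel{\V^{h^2/2,1/2}}{\longleftrightarrow}\partial\D\big)$.

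The second step is to apply Theorem~\ref{t.mainthm} with $\kappa=4$, so that $\lambda(\kappa)=\lambda(4)=1/2$ by \eqref{eq:defckappa}, and with intensity $u=h^2/2$. The theorem states that $\Pm\big(B(r)\stackrel{\V^{u,1/2}}{\longleftrightarrow}\partial\D\big)=0$ if and only if $u\geq (8-4)\pi/16 = \pi/4$. Substituting $u=h^2/2$, this is the condition $h^2/2\geq \pi/4$, i.e.\ $h\geq\sqrt{\pi/2}$ (using $h>0$). Combining the two steps gives exactly the claimed statement: the probability in the corollary is zero if and only if $h\geq\sqrt{\pi/2}$.

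There is essentially no obstacle here beyond being careful about two bookkeeping points. First, one should check that the normalization of the GFF and of the Brownian excursion cloud used in \cite[Proposition~5.3]{ArLuSe-20a} matches ours, so that the first passage set at level $-h$ indeed corresponds to intensity $h^2/2$ rather than some other multiple of $h^2$; this is a matter of comparing conventions, and the parenthetical remark after Corollary~\ref{cor:percocontGFF} already flags that the precise constant depends on normalization. Second, one should confirm that the topological reformulation in the first step is legitimate, namely that adjoining $\partial\D$ to the obstacle set does not merge or split any relevant components of the vacant set in a way that affects the connection-to-$\partial\D$ event; this follows since $\be^{h^2/2,1/2}\subset\D$ and the event only concerns whether the closure of a vacant component reaches the boundary. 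Once these are settled the corollary is immediate, so I would present it as a short paragraph invoking \cite[Proposition~5.3]{ArLuSe-20a} and Theorem~\ref{t.mainthm} with $\kappa=4$.
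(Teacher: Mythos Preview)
Your proposal is correct and follows exactly the paper's approach: the paper states the identification $\mathbb{A}_{-h}\overset{d}{=}\be^{h^2/2,1/2}\cup\partial\D$ from \cite[Proposition~5.3]{ArLuSe-20a} immediately before the corollary and then says the result is ``directly deduced'' from Theorem~\ref{t.mainthm}. Your write-up is in fact more explicit than the paper's, which gives no formal proof; the two bookkeeping points you flag (normalization and the harmlessness of adjoining $\partial\D$) are reasonable to mention but are not elaborated in the paper either.
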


\begin{remark}
\begin{enumerate}[label=\arabic*)]
\phantomsection\label{rk:othercontperco}
\item \label{rk:differentnormalization}
The exact value $u_*^c(r)=u_*^d(r)=\pi/3,$ as well as the bound $h_*^d(r)\leq\sqrt{\pi/2}$ from Theorem~\ref{the:main} depend on our choice of normalization for the definition of the Gaussian free field and the excursion clouds, but the inequalities $0<h_*^d(r)<\sqrt{2u_*^d(r)}$ do not (as long as the normalization is consistent). We shortly explain how these values would change for a different choice of normalization. Indeed, if one divides by some $t>0$ the  discrete Green function \eqref{eq:killedGreen}, multiplies by $t$ the discrete boundary Poisson kernel \eqref{eq:defdisPoisson}, and also multiplies by $t$ the measure $\mu$ of Brownian excursion from \eqref{e.bemeas}, then we would obtain $u_*^c(r)=u_*^d(r)=\pi/(3t),$ as well as the bound $h_*^d(r)\leq\sqrt{\pi/(2t)}.$ Our specific choice of normalization is consistent with the usual literature on the Brownian excursion cloud, and thus for instance consistent with the normalization from \cite{ArLuSe-20a}, but another possible natural choice of normalization from the discrete point of view would be to take $t=1/4$ to remove the division by $4$ in \eqref{eq:killedGreen} or the multiplication by $4$ in \eqref{eq:defdisPoisson}. This corresponds to considering $\D_n$ as a weighted graph with total weight $1$ instead of $4$ at each vertex $x\in{\D_n},$  see Remark~\ref{rk:defexcursion},\ref{rk:choiceofweight}.
\item Using \eqref{eq:defckappa}, one could also equivalently phrase Theorem~\ref{t.mainthm} in terms of the intensity $\lambda$ of the Brownian loop soup. Namely for all $\intens>0$ and $\lambda\in{[0,1/2]},$ if either $\intens<\pi/4$ or
 \begin{equation*}
     \intens<\frac\pi3\text{ and }\lambda<\frac{(\pi-3\intens)(8\intens-\pi)}{\pi(\pi-2\intens)},
 \end{equation*}
 then the probability that $0$ is in an infinite component of the vacant set $\V^{\intens,\lambda}$ is strictly positive, and otherwise this probability is $0.$ 

Note that if $\lambda>1/2,$ then $\mathcal{V}^{u,\lambda}=\varnothing$ a.s.\ for all $u>0,$ see \cite[Lemma~9.4 and Proposition~11.1]{sheffield-werner}.

\item \label{finiteenergy}By Theorem~\ref{t.mainthm}, the positivity of the probability that $B(r)$ is connected to $\partial\D$ in the vacant set $\V^{\intens,\lambda}$ does not depend on the choice of $r\in{[0,1)}.$ This fact could actually be proved directly using a type of finite energy property for $\be^{\intens,\lambda}.$ Indeed, on the event that $B(r)$ is connected to $\partial\D$ in $\V^{\intens,\lambda}$ and that $B(r)\cap \be^{\intens,\lambda}=\varnothing,$ we have that $0$ is connected to $\partial\D$ in $\V^{\intens}.$  Since the probability that $B(r)\cap \be^{\intens,\lambda}=\varnothing$ is positive (this is simply the probability that $\be^{\intens}$ avoids all the loop clusters hitting $B(r),$ which is compact), we thus obtain by the FKG inequality that 
\begin{equation*}
    c\Pm(B(r)\stackrel{\CV^{\intens,\lambda}}{\longleftrightarrow}\partial\D)\leq\Pm(0\stackrel{\CV^{\intens,\lambda}}{\longleftrightarrow}\partial\D)\leq\Pm(B(r)\stackrel{\CV^{\intens,\lambda}}{\longleftrightarrow}\partial\D),
\end{equation*}
for some constant $c=c(u,\lambda,r),$ and we can conclude. 
\end{enumerate}
\end{remark}

\section{Coupling random walk and Brownian excursions}\label{s.coupling}

In this section, we prove in Theorem~\ref{the:couplingdiscontexc1} that one can couple the random walk excursion cloud, defined above \eqref{eq:VunDef}, with the Brownian excursion cloud, defined in \eqref{e.omega_alpha}. Convergence for the corresponding excursion measures, for excursions starting on some arc $\Gamma\subset\partial\D$ and ending on some disjoint arc $\Gamma'\subset\partial\D,$ was first proved in \cite{kozdron_scaling_2006}. Convergence for the full excursion set has been proved in \cite[Lemma~4.6.2]{ArLuSe-20a}. However, these two results are not explicit at all on the exact distance between the random walk and the Brownian motion excursions, as well as on the exact distance from the boundary of $\D$ at which this coupling is valid. They are therefore not adequate for proving quantitative results for percolation near the boundary such as \eqref{eq:percowithoutloops}, see also Remark~\ref{rk:connectiontoboundary}. When the domain is the unit disk $\D,$ we remedy this problem in this section, as well as present a more detailed proof of the coupling in \cite{ArLuSe-20a}.

There are three main steps in our proof of the coupling between the random walk excursions and the Brownian excursions. The first is to show that with high probability, the same number of trajectories from random walk excursions and Brownian excursions hit a fixed ball $B,$ see Lemma~\ref{l.capconv}. The second is to show that we can couple the corresponding hitting distributions, see Lemma~\ref{l.EQcoup}. The third is to show that the trajectories inside the ball $B$ can be coupled so that they are close to each other, which is done in Appendix~\ref{sec:KMT}. Moreover, in Theorems~\ref{the:couplingloops} and \ref{the:couplingwithloops}, we explain how to add clusters of loops to the coupling using results from \cite{MR2255196,ArLuSe-20a}. 

\subsection{Convergence of capacities and equilibrium measures}
Recall that for a compact set $K \Subset \D$ the (continuous) equilibrium measure of $K$ is denoted by $e_K$ while for $K' \subset \D_n$ we denote the discrete equilibrium measure by $e_{K'}^{(n)},$ see \eqref{defequiandcap} and \eqref{e.eqmeas}. Combining  \cite[Theorem~4.4.4, (6.11) and Proposition~6.3.5]{MR2677157} with \eqref{eq:killedGreen} and \eqref{eq:Greenat0} we see that for all $y\in{\D_n},$
\begin{equation}\label{e.greenest}
 | G(0,y)-G^{(n)}(0,y)| \leq O \left(\frac{1}{|y|n} \right).
\end{equation}
Our first result states that the sequence of discrete approximations of the  capacity of a ball at mesoscopic distance from the boundary indeed converges to the capacity of the continuous ball.

\begin{lemma}\label{l.capconv}
There exists $c<\infty$ such that for all $n\in\N$ and $r\in{(2/n,1)},$  
 \begin{equation}
\label{eq:approcap}
\left| \capac^{(n)}(B_n(r))-\capac(B(r)) \right| \leq c \frac{\capac^{(n)}(B_n(r))\capac(B(r))}{rn}.
\end{equation}
\end{lemma}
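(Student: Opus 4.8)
The plan is to derive \eqref{eq:approcap} from a single last‑exit identity for the random walk started at the origin, together with the Green's function comparison \eqref{e.greenest} and the explicit formula \eqref{eq:Greenat0}. Throughout fix $r\in(2/n,1)$ and set $K:=B_n(r)$. First I would observe that \eqref{eq:lastexitdis}, read together with the fact that both of its sides vanish when $y\in K\setminus\widehat\partial K$ (the right side by definition \eqref{eq:interior}, the left side because if the last visit of $X^{(n)}$ to $K$ occurs at $y$ then from $y$ the walk reaches $\partial\D_n$ without ever returning to $K$, so $e_K^{(n)}(y)>0$), yields upon summation over $y\in\widehat\partial K$ the identity
\begin{equation*}
\Pm_0^{(n)}\bigl(L_K^{(n)}>0\bigr)=\sum_{y\in\widehat\partial K}G^{(n)}(0,y)\,e_K^{(n)}(y).
\end{equation*}
Since $r>2/n$, every lattice neighbour of $0$ lies at distance $1/n<r$ from $0$, hence in $B(r)\cap n^{-1}\Z^2\subset\D_n$, i.e.\ in $K$; therefore $X_1^{(n)}\in K$ under $\Pm_0^{(n)}$, so $L_K^{(n)}\ge 1$ and the left‑hand side above equals $1$. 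The same observation gives $e_K^{(n)}(0)=0$, so the vertex $y=0$ never appears on the right and \eqref{e.greenest} is applicable to every term.

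The next step is to control $G^{(n)}(0,y)$ uniformly over $y\in\widehat\partial K$. Such a $y$ has a neighbour $y'\notin K$; since $y'\in n^{-1}\Z^2$ and $B(r)\cap n^{-1}\Z^2\subset\D_n$, this forces $y'\notin B(r)$, i.e.\ $|y'|\ge r$, whence $r-1/n\le|y|<r$, and in particular $|y|\ge r/2$ because $rn>2$. Then \eqref{e.greenest} gives $|G(0,y)-G^{(n)}(0,y)|\le c_1/(|y|n)\le 2c_1/(rn)$, while by \eqref{eq:Greenat0}, $G(0,y)=\tfrac1{2\pi}\log(1/|y|)$ with
\begin{equation*}
0\le\log\tfrac1{|y|}-\log\tfrac1r=\log\tfrac{r}{|y|}\le\log\tfrac1{1-1/(rn)}\le\tfrac{2}{rn},
\end{equation*}
using $1/(rn)<1/2$. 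Combining these bounds, there is an absolute constant $c_2$ with $\bigl|G^{(n)}(0,y)-\tfrac1{2\pi}\log(1/r)\bigr|\le c_2/(rn)$ for every $y\in\widehat\partial K$.

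Finally, inserting this estimate into the identity of the first step and using $\sum_{y\in\widehat\partial K}e_K^{(n)}(y)=\capac^{(n)}(K)$ gives
\begin{equation*}
\Bigl|\,1-\tfrac1{2\pi}\log(1/r)\,\capac^{(n)}(K)\,\Bigr|\le\frac{c_2}{rn}\,\capac^{(n)}(K).
\end{equation*}
By \eqref{capball} one has $\capac(B(r))=2\pi/\log(1/r)$, i.e.\ $\tfrac1{2\pi}\log(1/r)=1/\capac(B(r))$; multiplying the displayed inequality through by $\capac(B(r))$ turns it into exactly \eqref{eq:approcap} with $c=c_2$.

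The argument is short; the only slightly delicate point I anticipate is the regime where $r$ is close to $1$, in which $\widehat\partial K$ is governed by proximity to $\partial\D_n$ rather than to $\partial B(r)$. This is absorbed by the two‑sided bound $r-1/n\le|y|<r$ (hence $|y|\ge r/2$) on $\widehat\partial K$, which holds regardless of that distinction, so that \eqref{e.greenest} and \eqref{eq:Greenat0} apply uniformly; everything else is bookkeeping of absolute constants, together with the two routine almost‑sure facts used above ($L_K^{(n)}\ge 1$ and that the last‑exit vertex lies in $\widehat\partial K$), both immediate from $r>2/n$ and the definitions \eqref{defequiandcap}, \eqref{eq:interior}.
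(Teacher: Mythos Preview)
Your proof is correct and follows essentially the same route as the paper: both start from the last-exit identity $1=\sum_{y\in\widehat\partial B_n}G^{(n)}(0,y)e_{B_n}^{(n)}(y)$, control $G^{(n)}(0,y)-\tfrac1{2\pi}\log(1/r)$ via \eqref{e.greenest} and \eqref{eq:Greenat0} together with $||y|-r|\le 1/n$, and then multiply through by $\capac(B(r))$ using \eqref{capball}. You are slightly more careful than the paper in justifying $\Pm_0^{(n)}(L_{B_n}^{(n)}>0)=1$ and in arguing that $y\in\widehat\partial B_n(r)$ forces a neighbour outside $B(r)$ even when $r$ is close to $1$, but the argument is otherwise identical.
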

\begin{proof}
Abbreviate $B=B(r) \subset \D$ and $B_n = B_n(r).$ First, recall the last exit decomposition for the simple random walk \eqref{eq:lastexitdis}, which implies
\begin{align*}
1=\Pm_0^{(n)}\big(L_{B_n}^{(n)}>0\big)= \sum_{y \in \widehat{\partial} B_n} G^{(n)}(0,y) e_{B_n}^{(n)}(y).
\end{align*}
Using \eqref{eq:Greenat0} and \eqref{capball} we thus obtain by rearranging 
\begin{align*}
\frac1{2\pi}\log(1/r)\left(\mathrm{cap}(B)-\mathrm{cap}^{(n)}(B_n)\right)&=\sum_{y\in{\widehat{\partial}B_n}}e_{B_n}^{(n)}(y)\left(G^{(n)}(0,y)-\frac1{2\pi}\log(1/r)\right)
\\&=\sum_{y\in{\widehat{\partial}B_n}}e_{B_n}^{(n)}(y)\left(G^{(n)}(0,y)-G(0,y)\right)
\\&\quad+\frac1{2\pi}\sum_{y\in{\widehat{\partial}B_n}}e_{B_n}^{(n)}(y)\log(r/|y|).
\end{align*}
Using \eqref{e.greenest} and the fact that $||y|-r|\leq 1/n$ for all $y\in{\widehat{\partial}B_n}$ we thus obtain taking absolute values that
\begin{align*}
\left|\mathrm{cap}(B)-\mathrm{cap}^{(n)}(B_n)\right|\leq \frac{c\,\mathrm{cap}^{(n)}(B_n)}{rn\log(1/r)},
\end{align*}
and we can conclude by \eqref{capball}.
\end{proof}

\begin{remark}
If we let $B= B(1- \epsilon_n)$ and $B_n=B_n(1-\epsilon_n)$, where $\epsilon_n \to 0$ as $n \to \infty, $ then \eqref{capball} and the last lemma~provides an estimate of the form 
\[
\left| \capac^{(n)}(B_n) - \capac(B) \right| \leq c \frac{1}{n\epsilon_n^2},
\]
implying that the capacities are close as long as $\epsilon_n\sqrt{n}\gg 1$.
\end{remark}
As a corollary~we obtain an estimate of the total variation norm for the last exit distribution and the normalized equilibrium measure in the case of  balls. 
\begin{corollary}\label{cor.LEEQCoup}
There exists $c<\infty$ such that for all $n\in\N,$ $r\in{(2/n,1)}$ and $y\in{\widehat{\partial}B_n(r)}$ 
\begin{equation*}
    \left|\Pm_0^{(n)}\left( X_{L_{B_n}} =y \right) - \overline{e}_{B_n}^{(n)}(y)\right|\leq   \frac{c{e}^{(n)}_{B_n}(y)}{rn},
\end{equation*}
where $B_n=B_n(r).$ In particular,
\begin{equation}
\label{eq:equiexit}
    \left\|  \Pm_0^{(n)}\left( X_{L_{B_n}} \in \cdot  \right)-\overline{e}^{(n)}_{B_n}(\cdot) \right\|_{TV}\leq  \frac{c\,\mathrm{cap}^{(n)}(B_n)}{rn}.
\end{equation}
\end{corollary}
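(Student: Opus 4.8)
The plan is to derive everything from the last exit decomposition \eqref{eq:lastexitdis} applied at the origin. Since $r>2/n$, the origin lies strictly inside $B_n:=B_n(r)$ (all its lattice neighbours are at distance $1/n<r$), so $0\notin\widehat{\partial}B_n$, the walk is in $B_n$ at time $1$, and hence $\Pm_0^{(n)}\big(L_{B_n}^{(n)}>0\big)=1$. Thus by \eqref{eq:lastexitdis}, for every $y\in\widehat{\partial}B_n$ we have $\Pm_0^{(n)}(X_{L_{B_n}}=y)=G^{(n)}(0,y)\,e_{B_n}^{(n)}(y)$, while $\overline{e}_{B_n}^{(n)}(y)=e_{B_n}^{(n)}(y)/\mathrm{cap}^{(n)}(B_n)$, so
\[
\Pm_0^{(n)}(X_{L_{B_n}}=y)-\overline{e}_{B_n}^{(n)}(y)=e_{B_n}^{(n)}(y)\Bigl(G^{(n)}(0,y)-\tfrac{1}{\mathrm{cap}^{(n)}(B_n)}\Bigr).
\]
The whole statement therefore reduces to the uniform bound $\big|G^{(n)}(0,y)-1/\mathrm{cap}^{(n)}(B_n)\big|\le c/(rn)$ for $y\in\widehat{\partial}B_n$.

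To obtain this, I would use that $1/\capac(B(r))=\log(1/r)/(2\pi)=G(0,y')$ for any $y'$ on the circle $\{|z|=r\}$, by \eqref{eq:Greenat0} and \eqref{capball}, and split
\[
G^{(n)}(0,y)-\tfrac{1}{\mathrm{cap}^{(n)}(B_n)}=\bigl(G^{(n)}(0,y)-G(0,y)\bigr)+\tfrac{1}{2\pi}\log\tfrac{r}{|y|}+\Bigl(\tfrac{1}{\capac(B(r))}-\tfrac{1}{\mathrm{cap}^{(n)}(B_n)}\Bigr).
\]
Since a vertex of $\widehat{\partial}B_n$ must have a neighbour outside $B_n$, one has $r-1/n\le|y|<r$, and because $r>2/n$ this gives $|y|\ge r/2$. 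Then the first term is $O(1/(|y|n))=O(1/(rn))$ by \eqref{e.greenest}; the second equals $\tfrac{1}{2\pi}\log\!\big(1+(r-|y|)/|y|\big)$, which is $O((r-|y|)/|y|)=O(1/(rn))$; and the third equals $\big(\mathrm{cap}^{(n)}(B_n)-\capac(B(r))\big)/\big(\capac(B(r))\,\mathrm{cap}^{(n)}(B_n)\big)$, which is $\le c/(rn)$ directly by Lemma~\ref{l.capconv}. Adding these three bounds gives the pointwise estimate. Summing it over $y\in\widehat{\partial}B_n$ and using $\sum_{y}e_{B_n}^{(n)}(y)=\mathrm{cap}^{(n)}(B_n)$ then yields \eqref{eq:equiexit}, the factor $1/2$ in the definition of the total variation norm being absorbed into the constant.

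This is essentially a bookkeeping argument combining \eqref{eq:lastexitdis}, \eqref{e.greenest} and Lemma~\ref{l.capconv}, so I do not expect a genuine obstacle. The only point requiring care is the middle term: one must invoke, exactly as in the proof of Lemma~\ref{l.capconv}, that the support $\widehat{\partial}B_n$ of the discrete equilibrium measure lies within lattice distance $1/n$ of the circle $\{|z|=r\}$, so that $G(0,y)$ differs from the constant value $\log(1/r)/(2\pi)$ it takes on that circle by only $O(1/(rn))$.
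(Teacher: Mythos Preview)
Your proof is correct and follows essentially the same approach as the paper: both arguments combine the last exit decomposition \eqref{eq:lastexitdis} with the Green function estimate \eqref{e.greenest} and Lemma~\ref{l.capconv}, splitting $G^{(n)}(0,y)-1/\mathrm{cap}^{(n)}(B_n)$ via the intermediate value $1/\capac(B(r))$. The paper's proof is organized into two estimates \eqref{e.greenapprox} and \eqref{e.capquot} corresponding exactly to your first two terms together and your third term, respectively.
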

Note that the corresponding statement for the Brownian motion is exact, see \eqref{e.eqmeasexpr1} and \eqref{capball}. 
\begin{proof}
The proof follows by \eqref{e.greenest} in combination with Lemma~\ref{l.capconv} as follows. By \eqref{e.greenest}, \eqref{eq:Greenat0} and \eqref{capball}, together with the fact that  $|y| \in(r-2/n,r+2/n) $  we have 
\begin{equation}\label{e.greenapprox}
\left|G^{(n)}(0,y)-\frac1{\mathrm{cap}(B)}\right| \leq \left|G^{(n)}(0,y)-G(0,y)\right|+\frac1{2\pi}|\log(r/|y|)|=O\left( \frac{1}{rn} \right).
\end{equation}
Moreover, in light of Lemma~\ref{l.capconv} we deduce that 
\begin{equation}\label{e.capquot}
    \left|\frac{1}{\mathrm{cap}^{(n)}(B_n)}-\frac1{\mathrm{cap}(B)}\right|\overset{\eqref{eq:approcap}}{=}O\left( \frac{1}{rn} \right).
\end{equation}
Combining \eqref{eq:lastexitdis}, \eqref{e.greenapprox} and \eqref{e.capquot} we obtain
\begin{align*}
\left|\Pm^{(n)}_0\left( X_{L_{B_n}^{(n)}}=y \right)-\overline{e}^{(n)}_{B_n}(y)\right|&= \left|G^{(n)}(0,y)  -\frac1{\capac^{(n)}(B_n)}\right|{e}^{(n)}_{B_n}(y) \\    & {=} O\left( \frac{{e}^{(n)}_{B_n}(y)}{rn} \right),
\end{align*}  
and \eqref{eq:equiexit} follows by summing over $y\in{\widehat{\partial}B_n}.$
\end{proof}

We now combine Corollary~\ref{cor.LEEQCoup} with a coupling result between the last exit time of a ball by a Brownian motion and a random walk, see Lemma~\ref{l.gtype}, to obtain the desired coupling between the normalized equilibrium measures.

\begin{lemma}\label{l.EQcoup}
There exist $s_0>0$ and $c<\infty$ such that for all $r\in{(\frac12,1)}$ and $n\in{\N},$ writing $B=B(r)$ and $B_n=B_n(r),$ there exists a coupling $\Q_r$ between random variables $E_{B_n}^{(n)}$ and  $E_B$  with  distributions $\overline{e}^{(n)}_{B_n}$ and $\overline{e}_B,$ respectively, satisfying for all $s\geq s_0$ that
\begin{equation}
\label{e.boundapproequi}
    \Q_r \left(\left| E_{B_n}^{(n)}- E_B\right| \geq \frac{s \log(n)}{n}\right) \leq \frac{c}{s}+ \frac{c\log(n)}{n(1-r)}.
\end{equation}
\end{lemma}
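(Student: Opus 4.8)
The plan is to realise both normalized equilibrium measures as last exit points of coupled processes. Throughout write $B=B(r)$ and $B_n=B_n(r)$; we may assume the quantity $c\log(n)/(n(1-r))$ on the right-hand side of \eqref{e.boundapproequi} is $<1$, as otherwise there is nothing to prove, and in particular $n$ is large compared with $1/(1-r)$. Then by Lemma~\ref{l.capconv}, \eqref{capball} and the elementary bound $\log(1/r)\ge 1-r$ valid for $r\in{(1/2,1)}$, we get $\capac^{(n)}(B_n)\le c/(1-r)$.

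First I would record the two endpoints of the coupling. On the continuum side, \eqref{e.eqmeasdisc} gives $\overline{e}_B=\sigma_r$, and \eqref{e.eqmeasexpr2} together with the fact that $L_B>0$ almost surely (since $0\in B$) shows that under $\Pm_0$ the last exit point $Z(L_B)$ has law exactly $\sigma_r=\overline{e}_B$; hence it is enough to couple $Z(L_B)$ to a random variable with law $\overline{e}_{B_n}^{(n)}$. On the discrete side, Corollary~\ref{cor.LEEQCoup}, the bound $r>1/2$ and the capacity estimate above give
\[
\big\|\Pm_0^{(n)}\big(X_{L_{B_n}}^{(n)}\in\cdot\big)-\overline{e}_{B_n}^{(n)}\big\|_{TV}\le\frac{c\,\capac^{(n)}(B_n)}{rn}\le\frac{c\log(n)}{n(1-r)},
\]
so, after enlarging the probability space, one can construct $E_{B_n}^{(n)}\sim\overline{e}_{B_n}^{(n)}$ on the same space as $X^{(n)}$ with $\Pm\big(E_{B_n}^{(n)}\ne X_{L_{B_n}}^{(n)}\big)\le c\log(n)/(n(1-r))$.

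The main step is to couple $X^{(n)}$ and $Z$, both started at $0$ and killed on $\partial\D_n$, resp.\ $\partial\D$, so that their last exit points from $B_n$, resp.\ $B$, differ by at most $s\log(n)/n$. For this I would invoke the dyadic (KMT) coupling from Appendix~\ref{sec:KMT}: after a time change it makes the two trajectories uniformly $O(\log(n)/n)$-close off an event of probability $O(n^{-10})$ (using that $X^{(n)}$ performs at most $O(n^2\log n)$ steps before being killed, off such an event), and Lemma~\ref{l.gtype} turns this into control of the discrepancy between the last exit time of $B$ by $Z$ and of $B_n$ by $X^{(n)}$. Combining, there are $s_0>0$ and $c<\infty$ so that for every $s\ge s_0$, off an event of probability at most $c/s+c\log(n)/(n(1-r))$ one has $|X_{L_{B_n}}^{(n)}-Z(L_B)|\le s\log(n)/n$.

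It then remains to assemble the pieces. Let $\Q_r$ be the joint law of $(E_{B_n}^{(n)},E_B)$ with $E_B:=Z(L_B)$; by the previous two paragraphs $E_{B_n}^{(n)}\sim\overline{e}_{B_n}^{(n)}$ and $E_B\sim\overline{e}_B$, and a union bound yields, for $s\ge s_0$,
\[
\Q_r\Big(\big|E_{B_n}^{(n)}-E_B\big|\ge\tfrac{s\log(n)}{n}\Big)\le\Pm\big(E_{B_n}^{(n)}\ne X_{L_{B_n}}^{(n)}\big)+\Q_r\Big(\big|X_{L_{B_n}}^{(n)}-Z(L_B)\big|\ge\tfrac{s\log(n)}{n}\Big)\le\frac{c}{s}+\frac{c\log(n)}{n(1-r)},
\]
which is \eqref{e.boundapproequi}. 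The genuinely nontrivial ingredient is the third paragraph: the dyadic coupling only controls the paths away from the killing boundary, so the real content (carried out in Appendix~\ref{sec:KMT}) is to show that, except with probability $c/s+c\log(n)/(n(1-r))$, the random walk's last excursion out of $B_n$ cannot return close to $\partial B(r)$ much later than the Brownian last exit from $B$; the factor $1/(1-r)$ reflects that $B(r)$ may nearly touch $\partial\D$, while the polynomial (rather than exponential) decay in $s$ reflects a near-boundary oscillation estimate. Everything else is bookkeeping with total variation and the triangle inequality.
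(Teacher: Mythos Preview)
Your proof is correct and follows essentially the same route as the paper: set $E_B=Z(L_B)$ (which has law $\overline{e}_B$ by rotational invariance), correct the discrete last-exit point $X^{(n)}_{L_{B_n}}$ to a variable with law $\overline{e}^{(n)}_{B_n}$ via the total-variation bound of Corollary~\ref{cor.LEEQCoup}, and then invoke Lemma~\ref{l.gtype} from Appendix~\ref{sec:KMT} for the coupling of the two last-exit points. The only cosmetic difference is that the paper records the TV error as $c\,\capac^{(n)}(B_n)/n\le c'/(n(1-r))$ without the extra $\log(n)$ factor you carry, but your bound is of course still within the target \eqref{e.boundapproequi}.
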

\begin{proof}
Assume that $X^{(n)}$ and $Z$ 
at times ${{L}_{B_n}^{(n)}}$ and ${L_{B }}$, respectively,
are coupled as in Lemma~\ref{l.gtype} under $\Pm_{0,0}^{(n)}.$ We let $E_B{=} Z_{L_B},$ which has law $\overline{e}_B$ by rotational invariance. By Corollary~\ref{cor.LEEQCoup}, up to increasing $\Pm_{0,0}^{(n)}$ to a bigger probability space with probability measure denoted by $\Q_r,$ there is a coupling of a random variable $E_{B_n}^{(n)}$ with $X_{L_{B_n}^{(n)}}^{(n)}$  such that $E_{B_n}^{(n)}$ has law $\overline{e}_{B_n}^{(n)}$ and
\[
\Q_r \left( E_{B_n}^{(n)} \neq  X_{L_{B_n}^{(n)}}^{(n)} \right) \leq c \frac{\capac^{(n)}(B_n)}{n}.
\]
Using \eqref{e:boundlastexit}, we can easily conclude since $\capac^{(n)}(B_n)\leq c\,\capac(B)\leq c'/(1-r)$ by \eqref{eq:approcap} and \eqref{capball} if $r\geq1/2$, and since we can assume w.l.o.g.\ that $1-r\geq c/n$.
\end{proof}

\subsection{The coupling}
\label{sec:coupling}

Using Lemmas~\ref{l.capconv}, \ref{l.EQcoup} and \ref{lem:KMTuntilboundary}, we obtain the following coupling between the Brownian excursion cloud and the random walk excursion cloud. For each $r\in{[0,1)},$ $n\in\N$ and $\intens>0,$ we denote by 
\begin{equation}
\label{eq:defOmega}
    \Omega_{\intens}^{(n)}(r):=\left\{\big(e(2n^2t+\tau_{B_n(r)}^{(n)}(e))\big)_{t\in{\Big[0,\frac{t_e-\tau_{B_n(r)}^{(n)}(e)}{2n^2}\Big]}}:\,e\in{\text{supp}(\omega_{\intens}^{(n)})},\,\tau_{B_n(r)}^{(n)}(e)<\infty\right\}
\end{equation}
the set of all excursions in $\omega_u^{(n)}$ that hit $B_n(r)$  started from their location at the first time of hitting $B_n(r)$ with time rescaled by $2n^2.$ 
The value of $e(t)$ for non-integer $t$ is obtained by linear interpolation, and for $w\in{\Omega_{\intens}^{(n)}(r)}$ we take $w(t)=\Delta$ for all $t\geq (t_e-\tau_{B_n(r)}^{(n)}(e))/(2n^2),$ where $\Delta$ is some cemetery point. We similarly define $\Omega_{\intens}(r)$ as the set of trajectories in the support of $\omega_{\intens}$ hitting $B(r),$ started from the first time of hitting $B(r)$, and equal to $\Delta$ after hitting $\partial\D.$ We moreover take the convention $|\Delta-x|=\infty$ for all $x\in{\D}.$

\begin{theorem}
\label{the:couplingdiscontexc1}
There exist constants $c,C>0$ and $s_0<\infty$ such that for all $n\in\N,$  $1/2\leq r\leq 1-C/n,$ $u>0$ and $s\geq s_0$ there is a coupling between $\omega^{(n)}_{\intens}$ and $\omega_{\intens}$ such that on an event $\mathcal{E}_1^{(n)}$ with probability at least
\begin{equation}
\label{eq:probacoupling}
    1-\frac{c\intens}{(1-r)} \left( \frac{1}{s}+ \sqrt{\frac{\log(n)}{n(1-r)}}\right),
\end{equation}
there exists a bijection $F:\Omega_{\intens}(r)\rightarrow\Omega_{\intens}^{(n)}(r)$ such that
\begin{equation}
\label{eq:boundcoupling}
    \sup_{t\in{[0,\overline{L}_{B(r)}(w)]}}|w(t)-F(w)(t)|\leq \frac{s\log(n)}{n}\text{ for all $w\in{\Omega_{\intens}(r)}$},
\end{equation}
where $\overline{L}_{B(r)}(w):=\sup\{t\geq0:|w(t)|\wedge|F(w)(t)|\leq r\}$ is the last time at which either $w$ or $F(w)$ are in $B(r).$
\end{theorem}

\begin{proof}
Abbreviate $B=B(r)$ and $B_n=B_n(r).$ By Lemma~\ref{l.EQcoup}, one can find an i.i.d.\ sequence of random variables $(\sigma_n^{(i)},\sigma^{(i)}),$ $i\in\N,$ such that for each $i\in\N$ the law of $\sigma_n^{(i)}$ is $\overline{e}_{B_n}^{(n)},$ the law of $\sigma^{(i)}$ is $\overline{e}_B,$ and for all $s\geq s_0,$
\begin{equation}
\label{eq:couplingentrancepoint}
    \Pm\Big(|{\sigma}_n^{(i)}-\sigma^{(i)}|\geq \frac{s\log(n)}{2n}\Big)\leq \frac{c}{s}+ \frac{c\log(n)}{n(1-r)}.
\end{equation}
Now, using the KMT coupling from Lemma~\ref{lem:KMTuntilboundary}, we can produce a sequence of independent pairs $\big(\widehat{X}^{(n,i)},Z^{(i)} \big),$ $i\in{\N},$ of rescaled simple random walks on $\D_n$ and Brownian motions on $\D$ such that for each $i\in\N,$ $\widehat{X}^{(n,i)}$ starts in ${\sigma}_n^{(i)},$ $Z^{(i)}$ starts in ${\sigma}^{(i)},$ and
\begin{equation}
\label{eq:KMTforallwalks1}
\Pm \left(\sup_{t\in{[0,\overline{L}_{B(r)}^{(i)}]}} |\widehat{X}^{(n,i)}_t-Z^{(i)}_t| \geq   \frac{s\log(n)}{n} \,\Big|\,|\sigma_n^{(i)}-\sigma^{(i)}|\leq \frac{s\log(n)}{2n}\right) \leq \frac{cs\log(n)}{n(1-r)},
\end{equation}
where $\overline{L}_{B(r)}^{(i)}:=\sup\{t\geq0:|\widehat{X}^{(n,i)}_t|\wedge|Z^{(i)}_t|\leq r\}.$

Let us now couple the number of discrete and continuous excursions. Since $1-r\geq C/n,$ for a large enough constant $C,$ then combining \eqref{capball} and \eqref{eq:approcap} one infers that $\mathrm{cap}^{(n)}(B_n)\leq2\mathrm{cap}(B).$  Therefore, using \eqref{eq:approcap} again, there exists a standard coupling between Poisson random variables $Y_n\sim \text{Poi}\left(\intens\mathrm{cap}^{(n)}(B_n)\right)$ and $Y\sim \text{Poi}\left(\intens\mathrm{cap}(B)\right)$ such that
\begin{equation}
\label{eq:1stcouplingPoisson}
\Pm \left( Y_n \neq Y \right)\leq \frac{c\intens\mathrm{cap}(B)^2}{n}.
\end{equation}
By \eqref{capball} we moreover have
\begin{equation}
    \label{eq:boundoncap1}
    \mathrm{cap}(B(r))=\frac{2\pi}{\log(1/r)}\leq\frac{2\pi}{1-r}.
\end{equation}
Using \eqref{eq:1stcouplingPoisson}, combining \eqref{eq:couplingentrancepoint}, \eqref{eq:KMTforallwalks1} and \eqref{eq:boundoncap1} with a union bound, noting that $Y$ has mean $u\mathrm{cap}(B(r))$ and that $\{{Z^{(i)}},\,i\in{\{1,\dots,Y\}}\}$ and $\{{\widehat{X}^{(n,i)}},\,i\in{\{1,\dots,Y_n\}}\}$ have respectively the same law as $\Omega_{\intens}(r)$ and $\Omega_{\intens}^{(n)}(r),$ we obtain a bijection $F$ satisfying \eqref{eq:boundcoupling} with probability 
\begin{equation*}
    1-\frac{c\intens}{(1-r)} \left( \frac{1}{s}+ \frac{s\log(n)}{n(1-r)}\right).
\end{equation*}
Noting that the probability to find a coupling satisfying \eqref{eq:boundcoupling} is increasing in $s,$ one can replace $s$ by $\sqrt{n(1-r)/\log(n)}$ whenever $s\geq\sqrt{n(1-r)/\log(n)},$ and we obtain the probability \eqref{eq:probacoupling}.
\end{proof}

\begin{remark}
For fixed $r,u>0,$ Theorem~\ref{the:couplingdiscontexc1} allows us to approximate continuous excursions by discrete excursions with high probability as $n\rightarrow\infty$ for $s$ large enough. However, if $r=r(n),$ our approximation is only valid with high probability as $n\rightarrow\infty$ only if $r\leq 1-(c^2\log(n)/n)^{1/3}$ for a large constant $c$ (and $s=(cn/\log(n))^{1/3}$ for instance) i.e., the coupling fails once one gets too close to the boundary of the unit disk.
If one is only interested in coupling the excursions in a small region $A\subset B(r),$ Theorem~\ref{the:couplingdiscontexc1} could be improved by changing the factor $u/(1-r)$ in \eqref{eq:probacoupling} by  $u\mathrm{cap}(A).$ 
\end{remark}

As a direct consequence of Theorem~\ref{the:couplingdiscontexc1}, one can moreover couple the discrete and continuous excursion sets. We denote by $d_H(A,A')$ the Hausdorff distance between two sets $A,A'\subset\D,$ that is $d_H(A,A')$ is the smallest $\delta>0$ such that $A\subset A'+B(\delta)$ and $A'\subset A+B(\delta).$

\begin{corollary}
\label{cor:couplingexcursions}
For all $n\in\N,$ $u>0$ and $s\geq s_0,$ there exists a coupling between $\tilde{\be}_n^{\intens}$ and $\be^{\intens}$ such that, 
\begin{equation*}
    d_H(\tilde{\be}_n^{\intens},\be^{\intens})\leq \frac{s\log(n)}{n}\text{ with probability at least } 1-\frac{cun}{s^{3/2}\log(n)}.
\end{equation*}
\end{corollary}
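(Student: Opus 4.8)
The plan is to deduce the statement from Theorem~\ref{the:couplingdiscontexc1} by patching together the coupling over a sequence of annuli that exhaust $\D$. First I would fix $n$, $u$, and $s\geq s_0$, and choose a radius $r_0=r_0(n,s)$ close to $1$; the natural choice is $1-r_0$ of order $(s\log(n)/n)^{2/3}$, since with this choice the failure probability in \eqref{eq:probacoupling} for the coupling of $\omega_u^{(n)}$ and $\omega_u$ inside $B(r_0)$ becomes
\[
\frac{cu}{1-r_0}\left(\frac{1}{s}+\sqrt{\frac{\log(n)}{n(1-r_0)}}\right)=O\!\left(\frac{un}{s^{3/2}\log(n)}\right),
\]
after optimizing the two error terms against each other and against the deterministic constraint $s\le\sqrt{n(1-r)/\log(n)}$. (Strictly speaking one must first check that $1-r_0\geq C/n$, which holds provided $s\log(n)\gg n^{-1/2}$, i.e.\ for $n$ large; small $n$ can be absorbed into the constant by making $c$ large.) On the good event of Theorem~\ref{the:couplingdiscontexc1} applied at radius $r=r_0$, every excursion of $\omega_u$ hitting $B(r_0)$ is matched with an excursion of $\omega_u^{(n)}$ hitting $B_n(r_0)$, and the two trajectories stay within $s\log(n)/n$ of each other up to the last time either of them is inside $B(r_0)$.

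The next step is to control the parts of the excursion sets lying in the annulus $\D\setminus B(r_0)$ — that is, the ``near the boundary'' contributions that the coupling does not see. Here one uses two facts. On the continuous side, with overwhelming probability $\be^u\cap(\D\setminus B(r_0))$ is within Hausdorff distance $O(1-r_0)=O(s\log(n)/n)$ of $\partial\D\cup(\be^u\cap\partial B(r_0))$: indeed every Brownian excursion hitting the annulus either comes from an excursion that also dips into $B(r_0)$ (already handled) or stays in the thin annulus, and in the latter case it is within distance $1-r_0$ of $\partial B(r_0)$. On the discrete side, the analogous statement for $\tilde\be_n^u$ holds for the same reason, using that the mesh $1/n$ is much smaller than $1-r_0$. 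Since $B(r_0)\subset \tilde\be^u_n + B(s\log n/n)$ and conversely once we know $B(r_0)$ itself is within $O(1-r_0)$ of both sets' ``deep'' parts, combining these with the trajectory-wise bound \eqref{eq:boundcoupling} on the matched excursions gives $d_H(\tilde\be_n^u,\be^u)\le s\log(n)/n$ (after absorbing constants, or replacing $s$ by $s/c'$) on an event whose complement has probability $O(un/(s^{3/2}\log n))$.

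The main obstacle I anticipate is bookkeeping the near-boundary annulus cleanly: Theorem~\ref{the:couplingdiscontexc1} only couples excursions \emph{after} their first hitting of $B(r_0)$ and only \emph{up to} the last exit of $B(r_0)$, so one must argue separately (i) that the pieces of matched trajectories outside $[0,\overline L_{B(r_0)}(w)]$ do not spoil the Hausdorff bound — which is fine because those pieces lie in the annulus, itself of width $O(s\log n/n)$ — and (ii) that no excursion in either process contributes a ``stray'' far-from-everything piece, which again follows because any excursion trace is connected and touches $\partial\D$, so if it reaches depth $r_0$ into the disk it must cross the annulus and hence the whole trace is within $1-r_0$ plus the coupling error of the matched continuous trace. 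One should also double-check that the number of continuous excursions hitting $B(r_0)$ is not too large, e.g.\ by a crude first-moment/Markov bound using $\capac(B(r_0))=O(1/(1-r_0))$; this contributes another term of the same order $O(un/(s^{3/2}\log n))$ and is harmless. Modulo these routine but slightly delicate geometric checks, the corollary follows directly.
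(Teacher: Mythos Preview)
Your plan contains an internal inconsistency that is fatal for the argument as written. You state that ``the natural choice is $1-r_0$ of order $(s\log(n)/n)^{2/3}$,'' but later you write ``$O(1-r_0)=O(s\log(n)/n)$'' as if these were the same. They are not: for large $n$ one has $(s\log(n)/n)^{2/3}\gg s\log(n)/n$, so with your stated choice the annulus $\D\setminus B(r_0)$ has width much larger than the target Hausdorff distance $s\log(n)/n$. A point $z\in\be^u$ (or $\tilde\be_n^u$) lying in the middle of that annulus need not be within $s\log(n)/n$ of anything on the other side; your near-boundary step (i) relies precisely on the annulus being no wider than the target distance, and it fails with the radius you chose. (There is also no ``constraint $s\le\sqrt{n(1-r)/\log(n)}$'' in Theorem~\ref{the:couplingdiscontexc1}; that inequality appears only in its proof as the threshold beyond which the bound stops improving in $s$, not as a hypothesis.)

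The fix is to take $1-r_0$ \emph{equal} to $s\log(n)/n$ (up to a harmless constant), which is exactly what the paper does. With $r=1-s\log(n)/n$ the failure probability in \eqref{eq:probacoupling} becomes
\[
\frac{cu}{s\log(n)/n}\left(\frac{1}{s}+\sqrt{\frac{\log(n)}{n\cdot s\log(n)/n}}\right)=\frac{cun}{s\log(n)}\left(\frac{1}{s}+\frac{1}{\sqrt{s}}\right)\le \frac{2cun}{s^{3/2}\log(n)},
\]
which is already the stated bound. On the good event, matched excursions agree to within $s\log(n)/n$ inside $B(r)$, while anything in the annulus is within $1-r=s\log(n)/n$ of $\partial\D$, and $\partial\D\subset\overline{\be^u}$ a.s.\ and $\partial\D\subset B(\tilde\be_n^u,1/n)$. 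That is the entire paper's proof: one application of Theorem~\ref{the:couplingdiscontexc1} at this single radius. The ``sequence of annuli exhausting $\D$,'' the first-moment/Markov bound on the number of excursions, and the separate treatment of stray pieces are all unnecessary once $r$ is chosen correctly.
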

\begin{proof}
This is a direct consequence of Theorem~\ref{the:couplingdiscontexc1} for $r=1-s\log(n)/n,$ noting that $d_H(\be_n^u,\tilde{\be}_n^u)\leq 1/n,$ $\partial\D\subset B(\tilde{\be}^u_n,1/n)$ and $\partial\D\subset \overline{{\be}^u}$ a.s. 
\end{proof}

Let us now explain how to couple connected components of loop soups, following \cite{MR2255196} and \cite{MR3485399}. Recall the definition of the loop soups on the cable system $\widetilde{\D}_n$ introduced at the end of Section~\ref{sec:cableSystemExc}, whose restriction to $\D_n$ is the random walk loop soup introduced below \eqref{eq:defdisloop}, and of the Brownian loop soup from below \eqref{eq:defcontloops}. We call $e$ an excursion of the Brownian loop soup if there exists a loop $\ell$ in the Brownian loop soup and some stopping times $T_1<T_2$ for $\ell$ such that $e=(\ell(t))_{t\in{[T_1,T_2]}},$ and we denote by $\text{trace}(e)\subset\D$ the set $\{\ell(t),\,t\in{[T_1,T_2]}\}.$ Moreover two excursions $e=(\ell(t))_{t\in{[T_1,T_2]}}$ and $e'=(\ell'(t))_{t\in{[T_1',T_2']}}$ are called disjoint if either $\ell$ and $\ell'$ are two different loops, or $\ell=\ell'$ and $[T_1,T_2]\cap[T_1',T_2']=\varnothing.$  The following Theorem~is tailored to our purpose in Section~\ref{sec:discretecritpara}. 

\begin{theorem}
\label{the:couplingloops}
For each $\lambda>0,$ there exist constants $c,c'>0$ and for each $k\in{\N}$, there is a coupling between a Brownian loop soup on $\D$ and a cable system loop soup on $\widetilde{\D}_k$ at level $\lambda$, as well as an event $\mathcal{E}_2^{(k)}$ with probability at least $1-c/\sqrt{k},$ such that the following holds true. For each  disjoint family of excursions $(e_i)_{i\leq E},$ with $E\in{\N},$ in the Brownian loop soup such that
\begin{equation*}
    \mathcal{L}=\bigcup_{i\leq E}\text{trace}(e_i)
\end{equation*}
is connected, there exists $N\in{\N}$ (depending on the choice of the family $(e_i)_{i\leq E}$) so that for all $n\geq N$, on the event $\mathcal{E}_2^{(n)}$ there is a connected subset $\mathcal{L}^{(n)}$ of the trace on $\widetilde{\D}_n$ of the cable system loop soup satisfying
\begin{equation}
\label{eq:couplingloops}
    d_H(\mathcal{L},\mathcal{L}^{(n)})\leq \frac{c'\log(n)}{n}.
\end{equation}
\end{theorem}
\begin{proof}
By \cite[Corollary~5.4]{MR2255196} with $\theta\in{(5/3,2)}$, on an event $\mathcal{E}_2^{(n)}$ with probability at least $1-c/\sqrt{n},$ we can couple each Brownian loop soup $\ell$ with time duration $t_{\ell}\geq n^{-1/3}$ with a time-changed discrete-time random walk loop $\ell^{(n)}$ with time duration $t_{\ell^{(n)}}\geq n^{-1/3}$ such that 
\begin{equation}
\label{eq:loopsareclose}
    \big|\ell(st_{\ell})-\ell^{(n)}(st_{\ell^{(n)}})\big|\leq \frac{c'\log(n)}{n}\text{ for all }s\in{[0,1]},
\end{equation}
where $\ell(s)$ is obtained for $s\notin{\N}$ by linear interpolation. Note that since $E<\infty,$ there exists $\delta>0$ such that the time duration of each loop in $\mathcal{L}$ is at least $\delta,$ and we assume from now on that $n$ is large enough so that $n^{-1/3}\leq \delta.$ If $e_i=(\ell_i(s))_{s\in{[T_{i,1},T_{i,2}]}},$ we write $e_i^{(n)}=(\ell_i^{(n)}(st_{\ell_i^{(n)}}/t_{\ell_i}))_{s\in{[T_{i,1},T_{i,2}]}}.$ Let us denote by $\mathcal{L}^{(n)}$ the union of the traces of the cable system excursions corresponding to $(e_i^{(n)})_{i\leq E}.$ Using \cite[(2.4)]{LJ-11}, one knows that discrete time loops from \cite{MR2255196} and continuous time loops from \eqref{eq:defdisloop} have the same trace on $\D_n,$ and since the distance between a random walk soup and its corresponding cable system loop is at most $1/n,$  \eqref{eq:couplingloops} clearly holds.

We now show that $\mathcal{L}^{(n)}$ is connected for $n$ large enough. Proceeding similarly as in the proof of \cite[Lemma~2.7]{MR3485399}, the following is a consequence of the conditions $\mathcal{C}_j,$ ${j\in{\N}},$ in \cite[Lemma~2.6]{MR3485399}: for each loop $\ell$ in the Brownian loop soup, and each stopping time $T$ for $\ell,$ there exists a.s.\ a sequence $(\eps_j)_{j\in{\N}}$ decreasing to $0$ such that, for any continuous functions $f:[0,t_{\ell}]\rightarrow\D$ with $\|f\|_{\infty}\leq \eps_j/12$ and any connected paths $\gamma$ such that $B(\ell(T),\eps_j/2)\leftrightarrow B(\ell(T),\eps_j)^{\ch}$ in $\gamma,$ we have $\gamma\cap A\neq\varnothing,$ where $A=\{\ell(s)+f(s):\,s\in{[T,H]}\}$ and $H=\inf\{t\geq T:\ell(t)\in{B(\ell(T),\eps_j)^{\ch}}\}.$ In other words, if $\gamma$ is a path starting close to $\ell(T)$ going far enough from $\ell(T),$ and $A$ is a set close enough to $\ell$ in a neighborhood of $\ell(T),$ then $\gamma$  intersects $A.$ 

Since $\mathcal{L}$ is connected, there exists for each $i,j\in{\{1,\dots,E\}}$ a sequence $k_1,\dots,k_p$ such that $k_1=i,$ $k_p=j$ and $\text{trace}(e_{k_i})\cap\text{trace}(e_{k_{i+1}})\neq\varnothing$ for each $i<p.$ For each $k,k'\in{\{1,\dots,E\}}$ such that $\text{trace}(e_{k})\cap\text{trace}(e_{k'})\neq\varnothing,$ define $H_{k,k'}$ as the hitting time of $\text{trace}(e_{k'})$ by $e_{k}.$ There exists a sequence $(\eps_j^{(k,k')})_{j\in{\N}}$ decreasing to $0,$ such that for each $j\in{\N}$ with $\text{trace}(e_{k'})\cap B(e_{k}(H_{k,k'}),2\eps_j^{(k,k')})^{\ch}\neq\varnothing,$ we have $\text{trace}(e_{k}^{(n)})\cap\text{trace}(e_{k'}^{(n)})\neq\varnothing$ if $c'\log(n)/n\leq \eps_j^{(k,k')}/2,$ since then $B(e_{k}(H_{k,k'}),\eps_j^{(k,k')}/2)\leftrightarrow B(e_{k}(H_{k,k'}),\eps_j^{(k,k')})^{\ch}$ in $\text{trace}(e_{k'}^{(n)})$ by \eqref{eq:loopsareclose}. Fixing for each $k,k'$ some $j$ large enough so that $\text{trace}(e_{k'})\cap B(e_{k}(H_{k,k'}),2\eps_j^{(k,k')})^{\ch}\neq\varnothing,$ and $n$ large enough so that $c'\log(n)/n\leq \eps_j^{(k,k')}/2$ uniformly in $k,k'$ (there are at most $E^2$ such $k,k'$), we thus have $\text{trace}(e_{k_i}^{(n)})\cap\text{trace}(e_{k_{i+1}}^{(n)})\neq\varnothing$ for each $i<p,$ and thus the set $\mathcal{L}^{(n)}$ is connected for $n$ large enough.
\end{proof}

Recall the definition of the sets of excursion plus loops $\tilde{\be}^{u,\lambda}_n$ from Section~\ref{sec:cableSystemExc} and $\be^{u,\lambda}$ from below \eqref{eq:defcontloops}. Combining Theorems~\ref{the:couplingdiscontexc1} and \ref{the:couplingloops}, one can show that each loop soup cluster $\mathcal{L}$ hitting $\be^{u}$ can be approximated by a set $\mathcal{L}^{(n)}$ of cable system loops which is connected and intersects  $\tilde{\be}^{u,\lambda}_n$ for $n$ large enough. In other words, one can show that $\be^{u,\lambda}\subset B(\tilde{\be}^{u,\lambda}_n,\eps_n)$ for $n$ large enough and a sequence $\eps_n\rightarrow0.$ However, the reverse inclusion is more difficult to obtain, since the clusters of small loops on the cable system cannot be well approximated by Brownian motion loops, and thus might be asymptotically strictly larger than the Brownian motion loop clusters. This problem is solved in  \cite{ArLuSe-20a} using the non-percolation of the loop soup clusters on general domains, see \cite[Lemma~4.13]{ArLuSe-20a}. More precisely, recalling the definition of $\tilde{\be}^{u,\lambda}_n$ from Section~\ref{sec:cableSystemExc} and of $\be^{u,\lambda}$ from below \eqref{eq:defcontloops}, the following follows from \cite[Proposition~4.11]{ArLuSe-20a} and Skorokhod's representation theorem.

\begin{theorem}[\cite{ArLuSe-20a}]
\label{the:couplingwithloops}
For each $u,\lambda>0,$ there exist for all $n\in\N$ a coupling of $\be^{\intens,\lambda}$ and $\widetilde{\be}_n^{\intens,\lambda},$ such that $d_H(\be^{\intens,\lambda},\widetilde{\be}_n^{\intens,\lambda})\rightarrow0$ as $n\rightarrow\infty.$ 
\end{theorem}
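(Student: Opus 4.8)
The plan is to derive Theorem~\ref{the:couplingwithloops} from a convergence-in-distribution statement together with Skorokhod's representation theorem, as indicated just above the statement. First I would fix a common ambient space: embedding the cable graph $\widetilde{\D}_n$ in the plane by identifying each cable $I_{\{x,y\}}$ with the Euclidean segment $[x,y]$, the random set $\widetilde{\be}_n^{\intens,\lambda}$ becomes a random closed subset of the fixed compact set $\overline{B(0,2)}\supset\overline{\D}$, while $\be^{\intens,\lambda}$ is a random closed subset of $\overline{\D}$. The collection of closed subsets of $\overline{B(0,2)}$ is a compact, hence Polish, metric space for the Hausdorff distance $d_H$, so Skorokhod's representation theorem applies as soon as one knows $\widetilde{\be}_n^{\intens,\lambda}\indist\be^{\intens,\lambda}$ in this space; it then produces a probability space carrying $\mathcal{A}_n\eqdist\widetilde{\be}_n^{\intens,\lambda}$ and $\mathcal{A}\eqdist\be^{\intens,\lambda}$ with $d_H(\mathcal{A}_n,\mathcal{A})\to0$ almost surely, and relabelling these random variables gives the desired coupling. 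Note that the time-parametrization convention of \cite{ArLuSe-20a} differs from ours by a deterministic time change (cf.\ Section~\ref{sec:cableSystemExc}), but this plays no role here since $\be^{\intens,\lambda}$ and $\widetilde{\be}_n^{\intens,\lambda}$ are sets, not parametrized curves.

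All the substance is thus in the weak convergence $\widetilde{\be}_n^{\intens,\lambda}\indist\be^{\intens,\lambda}$ for $d_H$, which is the content of \cite[Proposition~4.11]{ArLuSe-20a}. That proposition is stated there for the square $\{-n,\dots,n\}^2$, and I would re-run its proof with $\D_n$ in place of the square; it has two halves. The first, the inclusion $\be^{\intens,\lambda}\subset B\big(\widetilde{\be}_n^{\intens,\lambda},\eps_n\big)$ with $\eps_n\downarrow0$, is precisely the statement sketched in the paragraph preceding Theorem~\ref{the:couplingwithloops}: combine the excursion coupling of Theorem~\ref{the:couplingdiscontexc1} (equivalently Corollary~\ref{cor:couplingexcursions}) with the loop-cluster coupling of Theorem~\ref{the:couplingloops}, using that a loop cluster $\mathcal{L}$ meeting $\be^{\intens}$ is matched by a connected set $\mathcal{L}^{(n)}$ of cable loops within Hausdorff distance $O(\log(n)/n)$, which therefore also meets $\tilde{\be}_n^{\intens}$ and hence lies in $\widetilde{\be}_n^{\intens,\lambda}$. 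The second half, the reverse inclusion $\widetilde{\be}_n^{\intens,\lambda}\subset B\big(\be^{\intens,\lambda},\eps_n\big)$, cannot be obtained from these couplings alone, because the mesoscopic and microscopic cable loops have no Brownian counterpart and could a priori fuse the discrete loop clusters into a set macroscopically larger than the Brownian limit; this is exactly where \cite{ArLuSe-20a} uses the non-percolation of the subcritical loop soup on general domains (\cite[Lemma~4.13]{ArLuSe-20a}) to bound the size of the discrete cluster attached to a given macroscopic loop.

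Accordingly, the main obstacle is not the probabilistic machinery — which reduces to Skorokhod's theorem plus the couplings already constructed in this section — but the verification that \cite[Proposition~4.11]{ArLuSe-20a} and the non-percolation input \cite[Lemma~4.13]{ArLuSe-20a} behind it go through for the disk in place of the square. I expect this to be routine, since those proofs rely only on conformal invariance of the Brownian excursion and loop soup measures and on soft, shape-insensitive properties of loop clusters (local finiteness, non-percolation); hence I would state Theorem~\ref{the:couplingwithloops} with a citation to \cite{ArLuSe-20a} and merely point out the minor changes needed for $\D$, rather than reproducing that argument in full.
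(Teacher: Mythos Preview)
Your proposal is correct and matches the paper's approach exactly: the paper does not give a proof of Theorem~\ref{the:couplingwithloops} but simply states that it ``follows from \cite[Proposition~4.11]{ArLuSe-20a} and Skorokhod's representation theorem,'' which is precisely the route you outline. Your additional discussion of the Polish-space setup and of the two halves of the argument (including the role of \cite[Lemma~4.13]{ArLuSe-20a} for the reverse inclusion) is accurate and mirrors the paragraph immediately preceding the theorem.
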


Note that, contrary to Corollary~\ref{cor:couplingexcursions} and Theorem~\ref{the:couplingloops}, the coupling from Theorem~\ref{the:couplingwithloops} does not give explicitly the rate at which $\tilde{\be}_n^{u,\lambda}$ converges to  $\be^{u,\lambda}.$ One could try to make the arguments from \cite{ArLuSe-20a} more explicit in $n,$ but this would not lead to any significant improvement of our main results, see Remark~\ref{rk:polycloseboundary}.

\section{Discrete critical values}
\label{sec:discretecritpara}
In this section, we prove that the discrete percolation parameters are asymptotically the same as the critical values obtained for the continuous percolation in Appendix~\ref{sec:contperco}. Our main tool will be the couplings between the continuous and discrete models from the previous section, that is Theorem~\ref{the:couplingdiscontexc1} for excursions, Theorem~\ref{the:couplingloops} for loop soups, and Theorem~\ref{the:couplingwithloops} for sets of excursions plus loops.  We first recall the Beurling estimate, which will also be useful in Section~\ref{s:gfflvlperc}.

\begin{lemma}
\label{lem:beurling}
There exists $c<\infty$ such that for all $n\in{\N},$ all connected sets $A\subset \D_n,$ $R>0$ and  $x\in{\D_n}$ with $\varnothing\neq A\cap\partial B_n(x,R)\subset{\D}_n,$
\begin{equation}
\label{eq:disbeurling}
    \Pm_x^{(n)}\big(\tau_A>\tau_{\partial B_n(x,R)}\big)\leq c\left(\frac{d(x, A)}{R}\right)^{1/2}.
\end{equation}
Moreover, the same result holds for the continuum case, that is, when replacing $\D_n$ by $\D,$ $B_n(x,t)$ by $B(x,R),$ and $\Pm_x^{(n)}$ by $\Pm_x.$ 
\end{lemma}
A proof of Lemma~\ref{lem:beurling} can be found in \cite[Lemma~2.3]{MR1249129} for random walks, and is in fact a consequence of \cite[Lemma~2.5.2]{MR2985195}; and in \cite[Proposition~3.79]{lawler2005conformally} for Brownian motion. We now present an application of the Beurling estimate for certain sets, tailored to our future purposes.  For $r\in [0,1),$ $r'\in{(r,1)},$ $\eps\in{(0,(1-r')/6)},$ and $j\in{\{0,1,2\}}$ let 
\begin{equation}
    \label{eq:defHi+}
    \begin{split}
        \H^{+}_{j,\eps}:=&\Big\{x\in{{B}(1-(j+1)\eps)\setminus \overline{B}(r+j(r'-r)/2)}:\,\text{arg}(x)\in{\big((j-7)\pi/28,-j\pi/28\big)}\Big\}
        \\\cup&\Big\{x\in{{B}(1-(j+1)\eps)\setminus \overline{B}(r+j(r'-r)/2)}:\,\text{arg}(x)\in{\big(\pi+j\pi/28,\pi+(7-j)\pi/28\big)}\Big\}
        \\\cup&\Big\{x\in{{B}(1-(j+4)\eps)\setminus \overline{B}(r+j(r'-r)/2)}:\,\text{arg}(x)\in{\big(-j\pi/28,\pi+j\pi/28\big)}\Big\}.
    \end{split}
\end{equation}
Note that $\H^{\pm}_{2,\eps}\subset\H^{\pm}_{1,\eps}\subset\H^{\pm}_{0,\eps}$ and that we made the dependency of $\H^{\pm}_{j,\eps}$ on $r,r'$ implicit to simplify notation.  Moreover, for $j\in{\{0,2\}}$ we let
\begin{equation}
\label{eq:defS+}
\begin{gathered}
    S_{j,\eps}^{+,{r}}:=\Big\{x\in{\partial B(1-(4-j/4)\eps)}:\,\text{arg}(x)\in{\big((j-7)\pi/28,-j\pi/28\big)}\Big\},
    \\    S_{j,\eps}^{+,{l}}:=\Big\{x\in{\partial B(1-(4-j/4)\eps)}:\,\text{arg}(x)\in{\big(\pi+j\pi/28,\pi+(7-j)\pi/28\big)}\Big\},
    \\\overline{S}_{j,\eps}^{+,r}:=\Big\{x\in{B(1-(j+4)\eps)\setminus \overline{B}(r+j(r'-r)/2)}:\,\text{arg}(x)=-j\pi/28\Big\},
    \\    \overline{S}_{j,\eps}^{+,l}:=\Big\{x\in{B(1-(j+4)\eps)\setminus \overline{B}(r+j(r'-r)/2)}:\,\text{arg}(x)=\pi+j\pi/28\Big\}.
    \end{gathered}
\end{equation}
We also define the sets $\H_{j,\eps}^-$, $S_{j,\eps}^{-,l}$, $S_{j,\eps}^{-,r}$, $\overline{S}_{j,\eps}^{-,l}$ and $\overline{S}_{j,\eps}^{-,r}$  as the respective reflections through $\R$ of the sets $\H_{j,\eps}^+$, $S_{j,\eps}^{+,r}$, $S_{j,\eps}^{+,l}$, $\overline{S}_{j,\eps}^{+,r}$ and $\overline{S}_{j,\eps}^{+,l}$; here, superscripts $l$ and $r$ stand for left and right, and in particular should not be confused with the radius $r$. We refer to Figure \ref{F:discretecritpar} below for an illustration of all these sets when $r=0$. For $A\subset \D$, $j\in{\{0,2\}}$ and $\pm\in{\{-,+\}}$ let us introduce the events 
\begin{equation}
\label{eq:defFi+-}
\begin{gathered}
    F_{j,\eps}^{\pm}(A)=\left\{
    \text{$A$ contains a path included in $\H_{j,\eps}^{\pm}$ and hitting both $S_{j,\eps}^{\pm,r}$ and $S_{j,\eps}^{\pm,l}$}
    \right\},
    \\    \overline{F}_{j,\eps}^{\pm}(A)=\left\{
    \text{$A$ contains a path included in $\H_{j,\eps}^{\pm}$ and hitting both $\overline{S}_{j,\eps}^{\pm,r}$ and $\overline{S}_{j,\eps}^{\pm,l}$}
    \right\}.
\end{gathered}
\end{equation}
In the rest of the section, when we write that $F_{j,\eps}^{\pm}$ satisfies some property, it means that both $F_{j,\eps}^+$ and $F_{j,\eps}^-$ satisfy this property. Let us quickly explain the reason for introducing these events, and the strategy of the proof of Theorem~\ref{the:maindisexcloops}. At first glance, it might seem enough, in view of Theorem~\ref{t.mainthm} and the couplings from Section~\ref{sec:coupling}, to prove  that equivalently on the continuum and on the cable system, there is a path of excursions plus clusters of loops in $\{x\in{\D}:\,\pm\Im(x)\geq0\}$ which hits both $\{x:\text{arg}(x)=\pi\}$ and $\{x:\text{arg}(x)=0\}$. However, one additionally needs that the union of the previous excursions plus clusters of loops in the upper and lower part of the disk  form together a surface blocking $0$ from $\partial \D$, which is not always the case (think for instance of a spiral around $0$). To avoid this problem, it will be easier to consider the events $F_{j,\eps}^\pm$ and $\overline{F}_{j,\eps}^{\pm}$ instead, as we now explain.

 When $u\geq (8-\kappa)\pi/16$, one can show by a similar reasoning as in the proof of Lemma~\ref{lem:2} that the event $F_{2,0}^{\pm}(\be^{u,\lambda})$ occurs a.s, and thus $F_{2,\eps}^{\pm}(\be^{u,\lambda})$ occurs with high probability for $\eps$ small enough. Under the appropriate couplings of discrete and continuous excursions and loops in $B(1-\eps)$ from Section~\ref{sec:coupling}, one deduces that $F_{0,\eps}^{\pm}(\tilde{\be}_n^{u,\lambda})$ also occurs with high probability, see \eqref{eq:approxconnection1}. Since for any $A\subset\D$ the event $F_{0,\eps}^{\pm}(A)$ implies that there is a path in $A$ disconnecting $B(r)$ from $\{x\in{\partial B(1-4\eps)}:\,\pm\Im(x)\geq0\}$ in $B(1-4\eps)$, see the left-hand side of Figure~\ref{F:discretecritpar}, we have that
\begin{equation}
\label{eq:interestofF}
\text{ if }F_{0,\eps}^{+}(A)\text{ and }F_{0,\eps}^{-}(A)\text{ both occur, then }B(r)\not\leftrightarrow \partial B(1-4\eps)\text{ in }A^{\ch}.
\end{equation}
By combining the previous observations we can conclude the proof in the case $u\geq (8-\kappa)\pi/16$ of Theorem~\ref{the:maindisexcloops}. On the other hand, when $\intens<(8-\kappa)\pi/16$, by a similar reasoning as in the proof of Lemma~\ref{lem:1}, one can show that $\overline{F}_{0,0}^\pm(\be^{u,\lambda})^{\ch}$ occurs with positive probability, and thus $\overline{F}_{0,\eps}^\pm(\be^{u,\lambda})^{\ch}$ as well for $\eps$ small enough. Moreover, under a similar coupling as before, see \eqref{eq:approxconnection2}, we deduce that $\overline{F}_{2,\eps}^\pm(\tilde{\be}_n^{u,\lambda})^{\ch}$ also occurs with positive probability. Finally by the observation that for any $A\subset\D$
\begin{equation}
\label{eq:interestofFbar}
    \text{ if }\overline{F}_{2,\eps}^{\pm}(A)^{\ch}\text{ occurs, then }B(r')\leftrightarrow \partial B(1-6\eps)\text{ in }A^{\ch},
\end{equation}
see the right-hand side of Figure~\ref{F:discretecritpar}, we are able to finish the proof of Theorem~\ref{the:maindisexcloops}. Note that in the proof of Theorem~\ref{the:maindisexcloops}, we will actually use an easier argument based on the coupling from Theorem~\ref{the:couplingwithloops} in the case $\intens<(8-\kappa)\pi/16$, but the previous reasoning relying on \eqref{eq:interestofFbar} will still be useful when trying to get precise control on the dependency of $\eps$ on $n$ in the case $\lambda=0$ from \eqref{eq:percowithoutloops}.

Let us now give the details of the previous strategy. One of the main obstacles is to prove that if there is a path of connected excursions and loop clusters in the continuum, then there is also such a similar connected path in the cable system. Indeed, the couplings from Section~\ref{sec:coupling} only imply that when two continuous excursions, or an excursion and a loop cluster, intersect each other, then the cable system excursions, or excursion and loop cluster, are close to one another, but do not necessarily intersect each other. As we shall see in Lemma~\ref{Lem:simpleRWresult}, using Lemma~\ref{lem:beurling}, one can actually show that they will intersect each other with high probability. 

Moreover, for each $n\in\N,$ denote by ${X}^{\pm}$ under $\Pm_x^{(n)},$ $x\in{\H_{1,\eps}^{\pm}},$ the trace on ${\D}_n$ of the random walk ${X}$ on  ${\D}_n,$ killed on hitting $(\H_{0,\eps}^{\pm})^{\ch}.$ We define similarly $Z^{\pm}$ as the trace on $\D$ of the Brownian motion $Z$ on $\D$ killed on hitting $(\H_{0,\eps}^{\pm})^{\ch}.$
\begin{lemma}
\label{Lem:simpleRWresult}
    There exists a constant $c<\infty$ such that for all $r\in [0,1),$ $r'\in{(r,1)},$  $\eps\leq c'$ (for some constant $c'>0$ depending only on $r,r'$), $\delta>0,$ $n\in\N,$ $x\in{\H_{0,\eps}^{\pm}},$ and any connected set $\mathcal{C}\subset\H_{0,\eps}^{\pm}\cap \D_n$ with diameter at least $\eps/4$ intersecting $\H_{1,\eps}^{\pm},$ 
	\begin{equation}
	\label{eq:simpleRWresult}
	    \Pm_x^{(n)}\Big({X}^{(\pm)}\cap \mathcal{C}=\varnothing,\ d\big(X^{(\pm)},\, \mathcal{C}\cap\H_{1,\eps}^{\pm}\big)\leq\delta\Big)\leq c\left(\frac{\delta}{\eps}\right)^{1/2}.
	\end{equation}
	Moreover, the same result still holds for the continuum setting $n=\infty,$ that is when replacing $\D_n$ by $\D,$ $\Pm_x^{(n)}$ by $\Pm_x$ and $X^{\pm}$ by $Z^{\pm}.$
\end{lemma}
\begin{proof}
We do the proof for the random walk $X$ on  ${\D}_n$; the proof for the continuum case proceeds analogously. Let $H_{\delta}$ be the first time $X$ hits $B_n(\mathcal{C}\cap\H_{1,\eps}^{\pm},\delta),$ following the standard notation 
$H_{\delta} = \infty$ if $X \cap B_n(\mathcal{C}\cap\H_{1,\eps}^{\pm},\delta) = \varnothing.$ Without loss of generality, in order to prove \eqref{eq:simpleRWresult} we can and will assume from now on that $H_{\delta}<\infty$ and $\delta<\eps/40.$
Note that $\H_{1,\eps}^{\pm}\neq\varnothing$ and  $B_n(\H_{1,\eps}^{\pm},\eps/10)\subset \H_{0,\eps}^{\pm}$ for $\eps$ small enough, depending on the choice of $r,r',$ and that $\mathcal{C}\cap \partial B_n(X(H_{\delta}),\eps/20)\neq\varnothing.$ Due to the Markov property, conditionally on $H_{\delta}$ and $X(H_{\delta}),$  the process $(X(t))_{t\geq H_{\delta}}$ until the first time it exits $B_n(X(H_{\delta}),\eps/20)(\subset\H_{0,\eps}^{\pm})$ has the same law as a random walk on $\D_n$ started in $X(H_{\delta})$ until the first time it exits $B_n(X(H_{\delta}),\eps/20).$ As a consequence,  by the Beurling estimate, see Lemma~\ref{lem:beurling}, we have that there exists a constant $c<\infty$ such that for all $n\in\N,$
\begin{equation*}
    \Pm_x^{(n)}\Big((X(t))_{t\geq H_{\delta}} \text{ leaves }B_n(X(H_{\delta}),\eps/20)\text{ before hitting }\mathcal{C}\,\big|\,H_{\delta},X(H_{\delta})\Big)\leq c\left(\frac{\delta}{\eps}\right)^{1/2}.
\end{equation*}
Integrating, \eqref{eq:simpleRWresult} follows.
\end{proof}

We begin with the following lemma, which essentially controls the probability to have a connection in $\V^{\intens,\lambda}$ but not in $\widetilde{\V}^{\intens,\lambda}_n,$ and vice versa.

\begin{lemma}
\label{lem:approxconnection}
For all $\intens>0,$ $\lambda\geq0$ and $\eps\in{(0,1)}$ there exists a coupling $\Q^{u,\lambda,\eps}_n$ between $\tilde{\V}_n^{u,\lambda}$ and $\V^{u,\lambda}$ such that the following holds: for all  $r\in [0,1-6\eps)$ and $r'\in{(r,1-6\eps)}$,
\begin{equation}
\label{eq:approxconnection1}
    \lim\limits_{n\rightarrow\infty}\Q^{u,\lambda,\eps}_n\big(F_{0,\eps}^{\pm}(\tilde{\be}_n^{u,\lambda})^{\ch},F_{2,\eps}^\pm({\be}^{u,\lambda})\big)=0,
\end{equation}
and for all $r\in [0,1)$ and $r'\in{(r,1)}$, letting $\eps_n=7n^{-1/7}$,
\begin{equation}
\label{eq:approxconnection2}
   \lim\limits_{n\rightarrow\infty}\Q^{u,0,\eps_n}_n\big(\overline{F}_{0,\eps_n}^{\pm}({\be}^{u})^{\ch},\overline{F}_{2,\eps_n}^\pm(\tilde{\be}_n^{u})\big)=0.
\end{equation}
\end{lemma}
\begin{proof}
We first describe how the coupling $\Q_n^{u,\lambda,\eps}$ is constructed, and then explain why this coupling satisfies \eqref{eq:approxconnection1} and \eqref{eq:approxconnection2}. First couple the discrete excursions $\be^{\intens}_n$ and the continuous excursion $\be^{\intens}$ as in Theorem~\ref{the:couplingdiscontexc1} for $s=\log(n)/\eps$ on an event $\mathcal{E}_1^{(n)}$ with probability at least $1-c(\log(n))^{-1}-\sqrt{c\log(n)/(n\eps^3)},$ for some large enough constant $c=c(u),$ so that, there exists
a bijection $F:\Omega_{\intens}(1-\eps)\rightarrow\Omega_{\intens}^{(n)}(1-\eps)$ (see \eqref{eq:defOmega} and below) such that
\begin{equation}
\label{eq:boundcoupling2}
    \sup_{t\in{[0,\overline{L}_{B(1-\eps)}(w)]}}|w(t)-F(w)(t)|\leq \frac{c\log(n)^2}{\eps n}=:f(n,\eps)\text{ for all $w\in{\Omega_{\intens}(1-\eps)}$}.
\end{equation}
We also couple the cable system excursion process $\tilde{\be}^u_n$ with $\be^u_n$ so that the trace of $\tilde{\be}^u_n$ on $\D_n$ is $\be^u_n,$ and for each $w\in{\Omega_u(1-\eps)},$ we denote by $\tilde{F}(w)$ the cable system excursion corresponding to $F(w),$ see Section~\ref{sec:cableSystemExc}. 
Moreover, if $\lambda\neq0,$ we couple the cable system loop soup and the Brownian loop soup at level $\lambda$ as in Theorem~\ref{the:couplingloops}, and in particular there is an event $\mathcal{E}_2^{(n)}$  with probability at least $1-o_n(1)$ under which \eqref{eq:couplingloops} is satisfied.

We start with the proof of \eqref{eq:approxconnection1}.  Let $(w_i)_{i\in{\{1,\dots,N_{\eps}\}}}$ be some enumeration of the Brownian excursions hitting $B(1-\eps),$ and let $w'_i$ be the part of $w_i$ in $\Omega_u(1-\eps).$ For each $i\leq N_{\eps},$ we decompose the cable system trajectory $\tilde{F}(w_i')$ into subexcursions in $\H_{0,\eps}^{+}$, starting and ending in $\partial \H_{0,\eps}^{+}$. We denote by $(E_{i,j}^{(n),+})_{j=1,\dots,K_i^{\pm}}$ the subexcursions which hit $\H_{1,\eps}^{+}$. We decompose the trajectory $\tilde{F}(w_i')$ again, this time into subexcursions in $\H_{0,\eps}^{-}$ starting and ending in $\partial \H_{0,\eps}^{-}$, and denote by $(E_{i,j}^{(n),-})_{j=1,\dots,K_i^{\pm}}$ the subexcursions which hit $\H_{1,\eps}^{-}$. We remark that some (parts) of the excursions $(E_{i,j}^{(n),-})_{j}$ and $(E_{i,j}^{(n),+})_{j}$ may coincide. Note that upon choosing $\eps>0$ small enough, $\partial\H_{0,\eps}^{\pm}$ and $\H_{1,\eps}^{\pm}$ are at positive distance. Therefore $K_i^{\pm}$ is a.s.\ finite, and  it is possible that $\H_{1,\eps}^{\pm}$ is never visited by $\tilde{F}(w_i'),$ and in this case $K_i^{\pm}=0.$ Note also that $\tilde{F}(w_i')$ could hit $B(r),$ but its trajectory inside ${B}(r)$ does not appear in the decomposition $(E_{i,j}^{(n),\pm})_{j=1,\dots,K_i^{\pm}}.$

For each  $i\in{\{1,\dots,N_{\eps}}\}$ and $j\in{\{1,\dots,K_i^{\pm}\}}$ let $E^{\pm}_{i,j}$ be the subtrajectory of $w_i',$ whose starting and ending time are the same as the ones of $E^{(n),\pm}_{i,j}$ for $\tilde{F}(w_i').$ Note that $E^{\pm}_{i,j}$ starts and end at random times which depend on the random walk excursions, and thus might not be Markovian. Then using \eqref{eq:boundcoupling2} and noting that cable system excursions and discrete excursions are at Hausdorff distance at most $1/n,$  on the event $\mathcal{E}_1^{(n)},$ $d_H\big(E^{\pm}_{i,j},E^{(n),\pm}_{i,j}\big)\leq f(n,\eps)$ (up to changing the constant $c$ in \eqref{eq:boundcoupling2}), where we identify trajectories with their trace on $\D,$ and the Hausdorff distance is defined above Corollary~\ref{cor:couplingexcursions}. Moreover, in view of \eqref{eq:defHi+}, if $\eps\geq cf(\eps,n)$ and $r'-r\geq cf(\eps,n),$ each excursion of $w_i$ in $\H_{2,\eps}^{\pm}$ is part of $E^{\pm}_{i,j}$ for some $j\leq K_i^{\pm}.$

If $\lambda>0,$  we moreover denote by $(e_j^{\pm})_{j\leq E_{\delta,\eps}^{\pm}}$ the excursions in $\H_{1,\eps}^{\pm}$ which intersect $\H_{2,\eps}^{\pm}$ and come from loops with diameter at least $\delta,$ in the Brownian loop soup at level $\lambda,$ where $\delta>0$ is a parameter we will choose in \eqref{eq:choicedelta}. 
The sets $(\text{trace}(e_j^{\pm}))_{j\leq E_{\delta,\eps}^{\pm}}$ form connected components in $\H_{1,\eps}^{\pm}$, that we denote by $(\mathcal{L}_i^{\pm})_{i\leq L_{\delta,\eps}^{\pm}}.$  Then, since our choice of $\delta$ will not depend on $n,$ see \eqref{eq:choicedelta}, on the coupling event $\mathcal{E}_2^{(n)},$ for $n$ large enough and for each $i\leq L_{\delta,\eps}^{\pm}$ there exists a.s.\ a connected set $\mathcal{L}_i^{(n),\pm}$ included in the trace on $\widetilde{\D}_n$ of the cable system loop soup, and such that $d_{H}(\mathcal{L}_i^{(n),\pm},\mathcal{L}_i^{\pm})\leq c\log(n)/n\leq f(n,\eps).$

Let $\mathcal{C}^{(n),\pm}_k,$ $k\leq C^{\pm}_{\eps,\delta},$ be some enumeration of the excursions $E_{i,j}^{(n),\pm},$ $i\in{\{1,\dots,N_{\eps}\}}$ and $j\in{\{1,\dots,K_i^{\pm}\}},$ plus (if $\lambda\neq0$)  the connected clusters of loop excursions $\mathcal{L}_i^{(n),\pm},$ $i\leq L_{\delta,\eps}^{\pm}.$ For each $k\in{\{1,\dots,C_{\eps,\delta}^{\pm}\}},$ there is a continuous excursion, or cluster of loops, $\mathcal{C}_k^{\pm}$ at Hausdorff distance at most $f(n,\eps)$ from $\mathcal{C}_k^{(n),\pm},$ and all the parts in  $\H_{2,\eps}^{\pm}$ of continuous loop clusters, of loops with diameter at least $\delta$, and of excursions are in some $\mathcal{C}_k^{\pm}$ if $\eps\wedge(r'-r)\geq cf(n,\eps).$ We define for each $i\in{\{1,\dots,N_{\eps}\}},$ $j\in{\{1,\dots,K_i^{\pm}\}}$ and $k\in{\{1,\dots,C^{\pm}_{\eps,\delta}\}}$ the events
\begin{equation}
\label{eq:defApmnij}
    A_{i,j,k}^{(n),\pm}:=\left\{d\big(E^{(n),\pm}_{i,j},\mathcal{C}^{(n),\pm}_k\cap\H_{1,\eps}^{\pm}\big)>2f(n,\eps)\right\}\cup\left\{E^{(n),\pm}_{i,j}\cap \mathcal{C}^{(n),\pm}_k\neq\varnothing\right\}
\end{equation}
and 
\begin{equation}
\label{eq:defApmn}
    A^{(n),\pm}_{\eps,\delta}:=\bigcap_{i\in{\{1,\dots,N_{\eps}\}}}\bigcap_{j\in{\{1,\dots,K_i^{\pm}\}}}\bigcap_{ k\in{\{1,\dots,C_{\eps,\delta}^{\pm}\}}}A_{i,j,k}^{(n),\pm}.
\end{equation}
Note that on this event, any two excursions, or any excursion and loop cluster, which are close enough will intersect each other. We will now argue that, on this event, the event $F_{2,\eps}^{\pm}$ for Brownian excursions plus loops imply $F_{0,\eps}^{\pm}$ for cable system excursions plus loops when they are close to each other, see \eqref{eq:ifAnepsdeltathendiscretedisconnection}. 
Let $\be^{\intens,\lambda,\delta}$ be the union of the clusters, consisting of continuous loops with diameter at least $\delta$ for the Brownian loop soup with intensity $\lambda,$ which hit $\be^{\intens}$. On the event $F_{2,\eps}^\pm({\be}^{u,\lambda,\delta}),$ see \eqref{eq:defFi+-}, there exist $M^{\pm}_{\eps,\delta}\in\N_0$ and $k_0,\dots,k_{M^{\pm}_{\eps,\delta}}$ such that 
\begin{enumerate}[i)]
    \item $\mathcal{C}_{k_{i-1}}^{\pm}\cap\mathcal{C}_{k_i}^{\pm}\cap \H_{2,\eps}^{\pm}\neq\varnothing$ for all $i\in\{1,\dots,M^{\pm}_{\eps,\delta}\},$
    \item if $\mathcal{C}_{k_{i-1}}^{\pm}$ is a connected cluster of loop excursions for some $i\in\{1,\dots,M^{\pm}_{\eps,\delta}\},$ then $\mathcal{C}_{k_i}^{\pm}$ is a Brownian excursion,
    \item $\mathcal{C}_{k_0}^{+}\cap S_{2,\eps}^{\pm,l}\neq\varnothing$ and $\mathcal{C}_{k_{M^{+}_{\eps,\delta}}}^{+}\cap S_{2,\eps}^{\pm,r}\neq\varnothing$.
\end{enumerate}

\begin{figure}[ht]
  \centering 
  \includegraphics[scale=0.76]{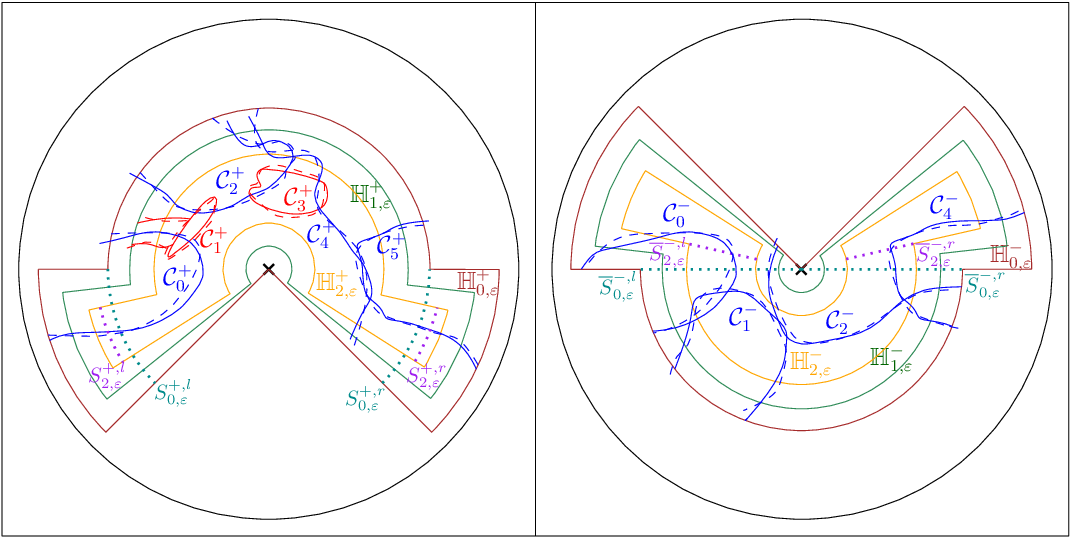}
 \caption{Blue lines correspond to excursions and red sets to loop clusters. Dashed lines represent the sets $\mathcal{C}_k^{(n),\pm}$, that is excursions and loop clusters  on the cable system $\widetilde{\D}_n,$ whereas continuous lines represent the sets $\mathcal{C}_k^{\pm}$, that is excursions and loop clusters on $\D.$ The dotted purple lines correspond to the sets $S_{j,\eps}^{+,l}$ and $S_{j,\eps}^{+,r}$ on the left, and to the sets $\overline{S}_{j,\eps}^{+,l}$ and $\overline{S}_{j,\eps}^{+,r}$ on the right, $j\in{\{0,2\}}$. On the left the events $F_{2,\eps}^+(\be^{u,\lambda,\delta})$ and $F_{0,\eps}^+(\tilde{\be}^{u,\lambda}_n)$ both occur. On the right, the event $\overline{F}_{2,\eps}^-(\be^{u})$ occurs, but not the event $\overline{F}_{0,\eps}^-(\tilde{\be}^{u}_n)$ since the dashed excursions $\mathcal{C}_1^{(n),-}$ and $\mathcal{C}_2^{(n),-}$ are close and do not intersect.} 
  \label{F:discretecritpar}
\end{figure}

We refer to the left-hand side of Figure~\ref{F:discretecritpar} for details. If $\eps\wedge (r'-r)\geq cf(n,\eps),$ see \eqref{eq:boundcoupling2}, for each $i\in\{1,\dots,M^{\pm}_{\eps,\delta}\},$ one can easily deduce from i) that $d\big(\mathcal{C}^{(n),\pm}_{k_{i-1}}\cap\H_{1,\eps}^{\pm},\mathcal{C}^{(n),\pm}_{k_i}\cap\H_{1,\eps}^{\pm}\big)\leq 2f(n,\eps)$ on the event $\mathcal{E}_1^{(n)}\cap\mathcal{E}_2^{(n)},$ and thus by ii) $\mathcal{C}^{(n),\pm}_{k_{i-1}}\cap \mathcal{C}^{(n),\pm}_{k_i}\neq\varnothing$ on the event $A_{\eps,\delta}^{(n),\pm}.$ On the intersection of these events, $\bigcup_{k=0}^{M^{\pm}_{\eps,\delta}}\mathcal{C}^{(n),\pm}_{k_i}$ is thus a set connected in $\H_{0,\eps}^{\pm}$ for $n$ large enough, and, by \eqref{eq:defS+} as well as iii), $\bigcup_{k=0}^{M^{+}_{\eps,\delta}}\mathcal{C}^{(n),+}_{k_i}$ intersects both $\{x\in{\D\setminus B(1-4\eps)}:\,\text{arg}(x)\in{[-\pi/4,0]}\}$ and $\{x\in{\D\setminus B(1-4\eps)}:\,\text{arg}(x)\in{[\pi,5\pi/4]}\}$. By a simple geometric argument, see Figure~\ref{F:discretecritpar}, one deduces that  $\bigcup_{k=0}^{M^{+}_{\eps,\delta}}\mathcal{C}^{(n),+}_{k_i}$ intersects both $S_{0,\eps}^{+,r}$ and $S_{0,\eps}^{+,l}$. Let us denote by $\mathcal{A}^{(n),+}_{\eps,\delta}$ the event that  $\bigcup_{k=0}^{M^{+}_{\eps,\delta}}\mathcal{C}^{(n),+}_{k_i}$ intersects $\tilde{\be}_n^u$, and is thus included in $\tilde{\be}_n^{u,\lambda}$ by definition.  Proceeding similarly for $\bigcup_{k=0}^{M^{-}_{\eps,\delta}}\mathcal{C}^{(n),-}_{k_i}$, for each $\delta>0,$ $\eps>0,$ and $n$ large enough so that $\eps\wedge(r'-r)\geq cf(n,\eps)$ we therefore have 
\begin{equation}
\label{eq:ifAnepsdeltathendiscretedisconnection}
    \mathcal{E}_1^{(n)}\cap\mathcal{E}_2^{(n)}\cap A_{\eps,\delta}^{(n),\pm}\cap \mathcal{A}_{\eps,\delta}^{(n),\pm}\cap F_{2,\eps}^\pm({\be}^{u,\lambda,\delta})\subset F_{0,\eps}^\pm(\widetilde{\be}_n^{u,\lambda}).
\end{equation}

We first consider the event $\mathcal{A}^{(n),\pm}_{\eps,\delta}$, which trivially always occurs on the event $\mathcal{E}_1^{(n)}\cap\mathcal{E}_2^{(n)}\cap A_{\eps,\delta}^{(n),\pm}$ in case $M_{\eps,\delta}^\pm\geq2$ since one of the sets $\mathcal{C}^{(n),\pm}_{k_i}$ is then included in $\tilde{\be}_n^u$, and their union is connected. On the previous event, the only possibility for the event $\mathcal{A}^{(n),\pm}_{\eps,\delta}$ to not occur is when $\mathcal{C}_{k_0}^{\pm}$ is a continuous loop cluster that intersects both $S_{2,\eps}^{\pm,r}$ and $S_{2,\eps}^{\pm,l}$. Since $\mathcal{C}_{k_0}^{\pm}\subset\be^{u,\lambda,\delta}$, the loop cluster $\mathcal{L}$ of loops in $\D$ with diameter at least $\delta$ which contains $\mathcal{C}_{k_0}^{\pm}$ intersects some excursion $w$ in $\omega_u$. The excursion $w$ belongs to $\Omega_u(1-n^{-1/4})$ for $n$ large enough, and is thus at distance less than $n^{-1/2}$ from a discrete excursion $w^{(n)}$ in $\Omega_u^{(n)}(1-n^{-1/4})$ with high probability as $n\rightarrow\infty$ by Theorem~\ref{the:couplingdiscontexc1}. Moreover, the set $\mathcal{L}$ is at distance less than $\log(n)/n$ from a connected set of discrete loops $\mathcal{L}^{(n)}$ with high probability as $n\rightarrow\infty$ by Theorem~\ref{the:couplingloops}. If the event $\mathcal{A}^{(n),\pm}_{\eps,\delta}$ does not occur, we then have that $w^{(n)}$ and $\mathcal{L}^{(n)}$ do not intersect each other, but since they are both at distance at most $n^{-1/2}$ from a given point which does not depend of $n$, we deduce from \eqref{eq:disbeurling} that for all $\delta,\eps>0$
\begin{equation*}
    \lim\limits_{n\rightarrow\infty}\mathbb{P}\big(\mathcal{E}_1^{(n)}\cap\mathcal{E}_2^{(n)}\cap A_{\eps,\delta}^{(n),\pm}\cap \big(\mathcal{A}^{(n),\pm}_{\eps,\delta}\big)^{\ch}\big)=0.
\end{equation*}

Let us now prove that the event $A_{\eps,\delta}^{(n),\pm}$ occurs with high probability. Each subexcursion $E_{i,j}^{(n),\pm}$ intersects $\H_{1,\eps}^{\pm}$ and thus has diameter at least $\eps/10$ and, assuming that $\delta\wedge\eps\geq cf(n,\eps),$   each cluster $\mathcal{L}_i^{(n),\pm}$ intersects $\H_{1,\eps}^{\pm}$ and has diameter at least $\delta/2$ since $\mathcal{L}_i^{\pm}$ intersects $\H_{2,\eps}^{\pm}$ and has diameter at least $\delta.$ Note that if the discrete excursions intersect each other then the cable system excursions also intersect each other. Therefore, using a union bound and the strong Markov property, Lemma~\ref{Lem:simpleRWresult} implies that for any $p,m,n\in\N,$ on the event that the number of Brownian excursions is smaller than $p$ and the number of loop soup excursions is smaller than $m,$ if $\delta\wedge\eps\geq cf(n,\eps),$ one has 
\begin{equation}
\label{eq:consbeurling}
\begin{split}
\Pm\Big((A_{\eps,\delta}^{(n),\pm})^\ch,\sum_{i=1}^{N_{\eps}}K_i^{\pm}\leq p,L_{\delta,\eps}^{\pm}\leq m,\,\Big|\,x_{i,j}^{(n),\pm},&i\in{\{1,\dots,N_b^{\pm}\}},j\in{\{1,\dots,K_i^{\pm}\}}\Big)
    \\&\leq cp(p+m)\left(\frac{f(n,\eps)}{\eps\wedge\delta}\right)^{1/2}
\end{split}
\end{equation}
where $x_{i,j}^{(n),\pm}$ denotes the hitting point of $\H_{1,\eps}^{\pm}$ for the subexcursion $E_{i,j}^{(n),\pm}.$  Note also that if  $\eps\wedge(r'-r)\geq cf(n,\eps)$, on the event $\mathcal{E}_1^{(n)}$ one can upper bound $K_i^{\pm}$ by the number of subexcursions in $B(1-4\eps-\eps/3)$ for $w_i$ which hit $B(1-4\eps-2\eps/3)$, plus the number of subexcursions in $B(1-\eps-\eps/3)$ for $w_i$ which hit $B(1-\eps-2\eps/3)$, plus the number of subexcursions $\D\setminus B(r+(r'-r)/6)$ for $w_i$ which hit $\D\setminus B(r+(r'-r)/3)$, plus the number of subexcursions in $\{x\in{\D\setminus B((r+r')/4)}:\pm\text{arg}(x)\in{[-20\pi/84,\pi+20\pi/84]} \}$ for $w_i$ which hit $\{x\in{\D\setminus B((r+r')/4)}:\pm\text{arg}(x)\in{[-19\pi/84,\pi+19\pi/84]}$ \}, plus the number of subexcursions in $\{x\in{\D\setminus B((r+r')/4)}:\pm\text{arg}(x)\in{[-\pi+\pi/84,-\pi/84]} \}$ for $w_i$ which hit $\{x\in{\D\setminus B((r+r')/4)}:\pm\text{arg}(x)\in{[-\pi+2\pi/84,-2\pi/84]} \}$. In view of \eqref{eq:hittingBM}, we deduce that $K_i^{\pm}$ can be upper bounded by a sum of five geometric random variables with constant parameters, depending only on $r,r'$. Combining this with \eqref{capball} and recalling that $N_{\eps}$ is a Poisson random variable with parameter $u\mathrm{cap}(B(1-\eps)),$ one can thus find a constant $C<\infty,$  depending only on $u,r,r',$ such that if  $\eps\wedge(r'-r)\geq cf(n,\eps)$ then the total number of Brownian excursions we consider satisfies
\begin{equation*}
    \E\Big[\sum_{i=1}^{N_{\eps}}K_i^{\pm}\I\big\{\mathcal{E}_1^{(n)}\big\}\Big]\leq C\eps^{-1}.
\end{equation*}
Markov's inequality then yields for all $t>0$
\begin{equation}
\label{eq:poissonbound}
    \Pm\left(\sum_{i=1}^{N_{\eps}}K_i^{\pm}\geq (t\eta\eps)^{-1},\,\mathcal{E}_1^{(n)}\right)\leq t\eta\eps \E\Big[\sum_{i=1}^{N_{\eps}}K_i^{\pm}\I\big\{\mathcal{E}_1^{(n)}\big\}\Big]\leq Ct\eta.
\end{equation}
If $\lambda=0$ take $\delta=1$ and otherwise, for each $\eta>0,$ take $\delta=\delta(\eta,\eps,r,r')>0$ small enough so that
\begin{equation}
\label{eq:choicedelta}
    \Pm\big(F_{2,\eps}^\pm({\be}^{u,\lambda,\delta})^\ch,F_{2,\eps}^\pm({\be}^{u,\lambda})\big)\leq \eta/7.
\end{equation}
The existence of such a $\delta$ follows from the fact that if $F_{2,\eps}^\pm(\be^{u,\lambda})$ occurs, then there is a continuous path $\pi$ in $\mathcal{I}^{\intens,\lambda}\cap \H_{2,\eps}^{\pm}$ which hits both $S_{2,\eps}^{\pm,l}$ and $S_{2,\eps}^{\pm,r}$ and which consists of finitely many loops and excursions. Since the details are slightly cumbersome, we defer the proof of this fact to the end of this proof.

Further choose $t$ small enough so that \eqref{eq:poissonbound} is bounded by $\eta/7$,
$m=m(\eta,\eps,r,r')$ large enough (if $\lambda\neq0$) so that, with the previous choice of $\delta,$ the number of loop excursions satisfies
\begin{equation}
\label{eq:choicem}
    \Pm(L_{\delta,\eps}^{\pm}> m)\leq\eta/7,
\end{equation}
and $n=n(\eta,\eps,r,r')$ large enough so that recalling \eqref{eq:boundcoupling2}, $\eps\geq cf(n,\eps),$ $r'-r\geq cf(n,\eps)$ and  $\delta\geq cf(n,\eps)$ (for $\delta$ as in \eqref{eq:choicedelta}), as required for \eqref{eq:ifAnepsdeltathendiscretedisconnection}, \eqref{eq:consbeurling} and \eqref{eq:poissonbound} to hold; and so that \eqref{eq:consbeurling} (for $p=(t\eta\eps)^{-1}$, $\delta$ as in \eqref{eq:choicedelta} and $m$ as in \eqref{eq:choicem}), $\Pm\big(\mathcal{E}_1^{(n)}\cap\mathcal{E}_2^{(n)}\cap A_{\eps,\delta}^{(n),\pm}\cap\big(\mathcal{A}^{(n),\pm}_{\eps,\delta}\big)^{\ch}\big)$, $\Pm\big((\mathcal{E}_1^{(n)})^\ch\big)$ and $\Pm\big((\mathcal{E}_2^{(n)})^\ch\big)$ are all bounded by $\eta/7$. It then follows from \eqref{eq:ifAnepsdeltathendiscretedisconnection} that the probability in \eqref{eq:approxconnection1} is smaller than $\eta,$ and \eqref{eq:approxconnection1} follows readily.

The proof of \eqref{eq:approxconnection2} is similar to the proof of \eqref{eq:approxconnection1} when $\lambda=0,$ that is when there is no loops, but exchanging the roles of the cable system excursions and Brownian excursions on $\mathbb{D}$: first now define $E^{\pm}_{i,j}$ as the subexcursions of $w_i'$ in $\H_{0,\eps}^{\pm}$ hitting $\H_{1,\eps}^{\pm}$, and $E_{i,j}^{(n),\pm}$ as the part of $\tilde{F}(w_i')$ close to $E_{i,j}^{\pm}.$ Then defining $A_{\eps}^{\pm}$ similarly as in \eqref{eq:defApmnij} and \eqref{eq:defApmn} but for the excursions $E^{\pm}_{i,j},$ and forgetting about the loops, one has similarly as in \eqref{eq:ifAnepsdeltathendiscretedisconnection} that $\mathcal{E}_1^{(n)}\cap\mathcal{E}_2^{(n)}\cap A_{\eps}^{\pm}\cap \overline{F}_{2,\eps}^\pm(\widetilde{\be}^{u}_n)\subset \overline{F}_{0,\eps}^\pm({\be}^{u}).$ We refer to the right-hand side of Figure~\ref{F:discretecritpar} for an illustration. Moreover, the bound \eqref{eq:consbeurling} still holds for $A_{\eps}^{\pm}$ by Lemma~\ref{Lem:simpleRWresult}, considering the hitting points $x_{i,j}^{\pm}$ of $\H_{1,\eps}^{\pm}$ for $E_{i,j}^{\pm}$ instead of $x_{i,j}^{(n),\pm}.$ Note additionally that  when $\eps=\eps_n$ and $n$ is large enough, then $\eps\geq cf(n,\eps),$ see \eqref{eq:boundcoupling2}, and both the right-hand side of \eqref{eq:consbeurling}, for $m=0$, $\delta=1$ and $p=c\eps^{-1},$ as allowed by \eqref{eq:poissonbound}, and $\Pm\big((\mathcal{E}_1^{(n)})^{\ch}\big)$, converge to zero.  This finishes the proof of \eqref{eq:approxconnection2}.

It remains to prove \eqref{eq:choicedelta}. If $F_{2,\eps}^\pm({\be}^{u,\lambda})$ occurs, then by \eqref{eq:defFi+-} there is a continuous path $\pi$ in $\mathcal{I}^{\intens,\lambda}\cap \H_{2,\eps}^{\pm}$ which hits both $S_{2,\eps}^{\pm,l}$ and $S_{2,\eps}^{\pm,r}$. Moreover since  $\H_{2,\eps}^{\pm}$ is open,  $\pi$ is at positive distance $s$ from $\partial \H_{2,\eps}^{\pm}$. By local finiteness of loop clusters, see Lemmas~9.4 and~9.7 as well as Propositions~10.3 and~11.1 in \cite{sheffield-werner}, the clusters of loops included in $\H_{2,\eps}^{\pm}$ which reach distance at least $s/2$ from  $\partial \H_{2,\eps}^{\pm}$ are all at positive distance $s'$ from $\partial \H_{2,\eps}^{\pm}$. We decompose the excursions in $\omega_u$ into subexcursions in $\H_{2,\eps}^{\pm}$, as well as the loops at level $\lambda$ which intersect $\partial \H_{2,\eps}^{\pm}$ and whose loop cluster in $\D$ intersect an excursion in $\omega_u$, into subexcursions in $\H_{2,\eps}^{\pm}$. We denote by $w_1,\dots,w_P$, the previous subexcursions of excursions or loops which reach distance $s'$ from $\partial \H_{2,\eps}^{\pm}$.  Note that there are only finitely many such subexcursions by Proposition~\ref{prop:localdescription} and local finiteness of loops, as well as properties of Brownian motion. We further decompose the loops entirely included in $\H_{2,\eps}^{\pm}$ into loop clusters, and we write $\mathbf{C}_i$, $1\leq i\leq P$, for the union of $w_i$ and all those loop clusters which intersect $w_i$, as well as $\overline{\mathbf{C}}_i$ for its closure. Then by construction any point in $\H_{2,\eps}^{\pm}$ at distance at least $s/2$ from $\partial \H_{2,\eps}^{\pm}$, which belongs to a loop whose loop cluster in $\D$ intersect an excursion of $\omega_u$, belongs to one of the sets $\mathbf{C}_i$, $1\leq i\leq P$. In particular, $\pi$ is included in the union of the sets $\overline{\mathbf{C}}_i$ for $1\leq i\leq P$.

Moreover, for $1\leq i\neq j\leq P$, if $\overline{\mathbf{C}}_i$ intersects $\overline{\mathbf{C}}_j$ in some point $x\in{\H_{2,\eps}^{\pm}}$, then a.s.\ $\mathbf{C}_i$ intersects $\mathbf{C}_j$. Indeed, either $w_i$ or $w_j$ intersects $x$, and then the previous statement follows from properties of Brownian motion on $\D$; or both $w_i$ and $w_j$ are at positive distance from $x$, and then $x$ is in the closure of a loop cluster (for the loops included in $\H_{2,\eps}^{\pm}$) intersecting $w_i$, resp.\ $w_j$, by local finiteness of loop clusters, and these two clusters a.s.\ actually coincide since the outer boundaries of distinct loop clusters  are a.s.\ always at positive distance from one another by \cite[Proposition~10.3 and~11.1]{sheffield-werner}, see also p.\ 1899 therein. Define recursively on $k\geq1$ the number $i_k$ as  the smallest index $i\in{\{1,\dots,P\}\setminus \{i_1,\dots,i_{k-1}\}}$ such that $\pi$ is in $\overline{\mathbf{C}}_i$ when first exiting the union of $\overline{\mathbf{C}}_{i_{k'}}$, $1\leq k'\leq k-1$.  Assuming that $P'$ clusters are explored during the previous recursion, then for each $1\leq k\leq P'$, we have by definition that $\overline{\mathbf{C}}_{i_k}\cap\overline{\mathbf{C}}_{{j_k}}\neq\varnothing$ for some $j_k<i_k$, and thus a.s.\  ${\mathbf{C}}_{i_k}\cap {\mathbf{C}}_{j_{k}}\neq\varnothing$. By definition of loop clusters, see below \eqref{eq:defcontloops}, one can moreover connect any point of $\mathbf{C}_{j_k}$ to $\mathbf{C}_{i_k}$ using a finite number of loops and excursions in $\H_{2,\eps}^{\pm}$. Iterating, we obtain that one can connect any two points in the union of $\mathbf{C}_{i_j}$, $1\leq j\leq P'$, using only finitely many loops and subexcursions therein. Since the union of $\overline{\mathbf{C}}_{i_j}$, $1\leq j\leq P'$, intersect both $S_{2,\eps}^{\pm,l}$ and $S_{2,\eps}^{\pm,r}$, this is a.s.\ also the case for the union of ${\mathbf{C}}_{i_j}$, $1\leq j\leq P'$. Therefore, one can a.s.\ find a path in $\H_{2,\eps}^{\pm}$ connecting $S_{2,\eps}^{\pm,l}$ to $S_{2,\eps}^{\pm,r}$ using a finite number of loops and subexcursions in the union of ${\mathbf{C}}_{k}$, $1\leq k\leq P$. In particular, there exists $\delta>0$ such that all the loops along this path have diameter at least $\delta$, and such that any loop among the subexcursions $w_1,\dots,w_P$ is connected to $\be^u$ using only loops of diameter at least $\delta$. In particular, the event $F_{2,\eps}^\pm({\be}^{u,\lambda,\delta})$ occurs, which finishes the proof of \eqref{eq:choicedelta}.

\end{proof}

\begin{remark}
    In the proof of Lemma~\ref{lem:approxconnection}, the main strategy when $\lambda=0$ is the following: if $F_{2,\eps}^{\pm}(\be^u)\cap\mathcal{E}_1^{(n)}$ occurs then there is a sequence of cable system excursions in $\H_{0,\eps}^{\pm}$ almost connecting $S_{0,\eps}^{\pm,r}$ to $S_{0,\eps}^{\pm,l}$,  that is with only finitely many small gaps between the excursions, and we then show that these excursions actually fill these gaps with high probability by Lemma~\ref{Lem:simpleRWresult}, and vice versa. Another possible approach would be to proceed similarly as in \cite[Lemma~2.7]{MR3485399}, see also \cite[Theorem~5.1]{MR3547746} for a similar approach : if $F_{2,\eps}^{\pm}(\be^u)$ occurs then there is a finite set of continuous excursions in $\H_{2,\eps}^{\pm}$ almost connecting $S_{2,\eps}^{\pm,r}$ to $S_{2,\eps}^{\pm,l}$, and then with high probability these excursions are strongly entangled, that is any set close enough (with respect to the Hausdorff distance) to these excursions still connects $S_{0,\eps}^{\pm,r}$ to $S_{0,\eps}^{\pm,l}$ in $\H_{0,\eps}^{\pm}$, and thus $F_{0,\eps}^{\pm}(\tilde{\be}_n^u)$ occurs on the coupling event $\mathcal{E}_1^{(n)}$. However this argument seems more complicated to implement for the other direction, that is when starting with $F_{2,\eps}^{\pm}(\tilde{\be}_n^u)$, and in particular it is hard to have good control on the connectivity of these excursions near the boundary, hence our different approach to prove Lemma~\ref{lem:approxconnection}. 
\end{remark}

We can now establish Theorem~\ref{the:maindisexcloops}.
\begin{proof}[Proof of Theorem~\ref{the:maindisexcloops}]
Let us first consider the case $u\geq (8-\kappa)\pi/16$, which uses an argument similar to the proof of Lemma~\ref{lem:2}. By Lemma~\ref{l.brownianexcursionandlooprestriction} for $\Gamma=\big\{x\in{\partial\D}:\,\pm\arg(x)\in{\big[-4\pi/28,4\pi/28+\pi\big]}\big\}$ and $D=\H^{\pm}_{2,0}$, combined with Lemma~\ref{lem:SLE-ka-r}, the event $F_{2,0}^\pm({\be}^{u,\lambda}\big)$ occurs with probability one. Since the liminf of the events $F_{2,\eps}^\pm({\be}^{u,\lambda}\big)$ as $\eps\searrow0$ is contained in $F_{2,0}^\pm({\be}^{u,\lambda}\big)$, combining the previous observation with \eqref{eq:approxconnection1} we have for all $\eps\in{(0,1/6)}$, $r\in [0,1-6\eps)$ and $r'\in{(r,1-6\eps)}$
\begin{equation}
\label{eq:subcriticalproof}
    \limsup_{\eps\rightarrow0}\limsup_{n\rightarrow\infty}\Pm\big(F_{0,\eps}^{\pm}(\tilde{\be}_n^{u,\lambda})^{\ch}\big)\leq \limsup_{\eps\rightarrow0}\Pm\big(F_{2,\eps}^\pm({\be}^{u,\lambda}\big)^{\ch}\big)=0,
\end{equation}
Using \eqref{eq:interestofF}, one easily deduces \eqref{eq:percowithloops} for $u\geq (8-\kappa)\pi/16$.

Let us now turn to the case $u< (8-\kappa)\pi/16$, which follows from the following simple observation: for all  $\eps\in{(0,1)},$
\begin{equation}
\label{supercriteqc2}
    \liminf_{n\rightarrow\infty}\Pm\big(0\stackrel{\tilde{\mathcal{V}}_n^{\intens,\lambda(\kappa)}}{\longleftrightarrow}{\partial}B_n(1-\eps)\big)\geq \Pm\big(0\stackrel{\mathcal{V}^{\intens,\lambda(\kappa)}}{\longleftrightarrow}\partial\D\big).
\end{equation}
Indeed, if $0\leftrightarrow \partial\D$ in $\V^{\intens,\lambda},$ then there exists a path $\pi$ between $0$ and $\partial B(1-\eps/2)$ and $\delta\in{(0,\eps)}$ so that $B(\pi,\delta)\subset \V^{\intens,\lambda}.$ Then, under the coupling from  Theorem~\ref{the:couplingwithloops}, $\pi\subset \widetilde{\V}_n^{\intens,\lambda}$ for $n$ large enough, which implies \eqref{supercriteqc2}. Combined with Theorem~\ref{t.mainthm}, this proves \eqref{eq:percowithloops} for $u< (8-\kappa)\pi/16$.
\end{proof}

\begin{proof}[Proof of Theorem~\ref{the:maindisexc}]
The equality \eqref{eq:percowithoutloopssub} follows from \eqref{eq:percowithloops} for $\kappa=8/3,$ and we now prove \eqref{eq:percowithoutloops} using an argument similar to the proof of Lemma~\ref{lem:1} for $\lambda=0$, from which we will use the notation. The only difference is that instead of using Lemmas~\ref{l.brownianexcursionandlooprestriction} and~\ref{lem:SLE-ka-r}  for $D=\D$ to build a path $\gamma$ in $\D\setminus (\be^{\intens,\lambda(\kappa)}_{\mathbb{T}_+,\mathbb{T}_+,\D})$ from $0$ to $\mathring{\mathbb{T}}_+$ on the event $E_1$ from \eqref{eq:defE1}, we use them for $D=\{x\in{\D}:\,\Im(x)>0\}$, which imply that we can a.s.\ build a path $\gamma$ now in $D\setminus (\be^{\intens,\lambda(\kappa)}_{\mathbb{T}_+,\mathbb{T}_+,D})$, still from $0$ to $\mathring{\mathbb{T}}_+$. We then replace the event $E_1$ by the event 
\begin{equation*}
    E'_1=\big\{(\be^{\intens}_{\mathbb{T}_+,\mathbb{T}_+,\D}\setminus \be^{\intens}_{\mathbb{T}_+,\mathbb{T}_+,D})\cap\gamma=\varnothing\big\}.
\end{equation*}
Note that $E'_1$ is contained in the intersection of the independent events $E'_2=\big\{{\be}^{\intens}_{\Gamma,\Gamma,\D}\cap \gamma=\varnothing\big\}$ and $E'_3$ that there is no excursions starting or ending in $\partial \D\setminus\Gamma$ which hit $\D\setminus D$, where we recall that $\Gamma$ is a closed arc in $\partial\D$ not intersecting $\gamma$ and containing $\mathbb{T}_-$ in its interior. One can show similarly as for $E_2$ and $E_3$ defined in \eqref{eq:defE2} and below that the events $E'_2$ and $E'_3$ have positive probability, and hence $E'_1$ as well. Moreover, on the event $E'_1$, $\gamma$ is a path in $D\setminus (\be^{\intens}_{\mathbb{T}_+,\mathbb{T}_+,\D})$ from $0$ to $\mathring{\mathbb{T}}_+$. Since the event $E'_1$ is measurable with respect to $\be^{\intens}_{\mathbb{T}_+,\mathbb{T}_+,\D}$, one can follow the proof Lemma~\ref{lem:1} replacing the event $E_1$ by $E'_1$ to deduce that $\gamma$ is a path in $D\setminus (\be^{\intens})$ from $0$ to $\mathring{\mathbb{T}}_+$ with positive probability, which implies that $\overline{F}_{0,0}^+({\be}^{u}\big)^{\ch}$ occurs with positive probability. Since the events $\overline{F}_{0,\eps}^+({\be}^{u}\big)$ are decreasing to $\overline{F}_{0,0}^+({\be}^{u}\big)$ as $\eps\searrow0$, combining the previous observation with \eqref{eq:approxconnection1} we have for all $r\in [0,1)$ and $r'\in{(r,1)}$
\begin{equation}
\label{eq:supercriticalproof}
    \liminf_{n\rightarrow\infty}\Pm\big(\overline{F}_{2,\eps_n}^{+}(\tilde{\be}_n^{u})^{\ch}\big)\geq \liminf_{n\rightarrow\infty}\Pm\big(\overline{F}_{0,\eps_n}^+({\be}^{u}\big)^{\ch}\big)>0,
\end{equation}
Using \eqref{eq:interestofFbar} and noting that $7\eps_n=n^{-1/7}$, we deduce  \eqref{eq:percowithoutloops} for $2r'(>0)$ instead of $r$. It remains to prove \eqref{eq:percowithoutloops} for $r=0$. We have
\begin{equation*}
    \Pm\big(0\stackrel{\tilde{\mathcal{V}}_n^{\intens}}{\longleftrightarrow}{\partial}B_n(1-n^{-1/7})\big)\geq \Pm\big(B_n(r)\stackrel{\tilde{\mathcal{V}}_n^{\intens}}{\longleftrightarrow}{\partial} B_n(1-n^{-1/7})\big)-\Pm\big(B_n(r)\cap \tilde{\mathcal{I}}_n^{\intens}\neq\varnothing\big).
\end{equation*}
It follows from \eqref{capball} and Proposition~\ref{prop:localdescription} that a.s.\ $B(r')\cap\be^u=\varnothing$ for $r'$ small enough.  Therefore by Corollary~\ref{cor:couplingexcursions} we obtain
\begin{equation*}
    \lim_{r\rightarrow0}\limsup_{n\rightarrow\infty}\Pm\big(B_n(r)\cap\tilde{\mathcal{I}}_n^{\intens}\neq\varnothing\big)=\lim_{r\rightarrow0}\Pm\big(B(r)\cap {\mathcal{I}}^{\intens}\neq\varnothing\big)=0,
\end{equation*}
and \eqref{eq:percowithoutloops} for $r=0$ then follows from \eqref{eq:percowithoutloops} and \eqref{eq:supercriticalproof} for $r=0$.  Since it is harder to connect $B_n(r)$ to ${\partial}B_n(1-n^{-1/7})$ than to ${\partial}B_n(1-\eps)$ for $n$ large enough, we conclude that $u_*^d(r)=\pi/3.$
\end{proof}

\begin{remark}
\label{rk:polycloseboundary}
In \eqref{eq:percowithloops}, we only obtain connection at distance $\eps>0$ from the boundary in the supercritical phase when $\kappa\in{(8/3,4]},$ and not at polynomial distance from the boundary as when $\kappa=8/3,$ see \eqref{eq:percowithoutloops}. In order to obtain connection at polynomial distance from the boundary when $\kappa\in{(8/3,4]},$ one would first need to extend Theorem~\ref{the:couplingwithloops} to obtain a coupling at polynomial distance from the boundary of the disk between discrete and continuous loops plus excursions, similarly as in Corollary~\ref{cor:couplingexcursions}, but that result would not be so interesting here. Indeed, we are mainly interested in two particular values of $\kappa$: first $\kappa=8/3,$ that is removing the loops, which is already covered in Theorem~\ref{the:couplingdiscontexc1}, and then $\kappa=4,$ due to its link with the GFF \eqref{eq:iso}. In our main dGFF result, Theorem~\ref{the:maindisgff}, we only use the isomorphism \eqref{eq:iso} to obtain the inequality $h_*^d(r)\leq \sqrt{\pi/2}.$ But this already implies $h_*^d(\boldsymbol{\eps},r)\leq \sqrt{\pi/2}$ for any sequences $\boldsymbol{\eps}$ decreasing to $0,$ see Remark~\ref{rk:connectiontoboundary}, \ref{rk:othercritexp}, and thus is already as strong as we want. In other words, improving Theorem~\ref{the:couplingwithloops} to obtain a coupling at polynomial distance from the boundary mainly leads to better percolation results in the supercritical phase of $\tilde{\V}^{u,\lambda}_n$ for $\lambda>0$, but we are mainly interested in the subcritical phase of $\tilde{\V}^{u,\lambda}_n$ for $\lambda=1/2$ due its link with the GFF.
\end{remark}

\section{Discrete Gaussian free field}\label{s:gfflvlperc}

In this section we prove Theorem~\ref{the:maindisgff}. Since $\lambda(4)=1/2$, the bound $h_*^d(r)\leq\sqrt{\pi/2}$ is a simple consequence of Theorem~\ref{the:maindisexcloops} for $\kappa=4$ and the isomorphism theorem, Proposition~\ref{prop:BEisom}, and we thus focus on the proof of \eqref{eq:GFFconnectionBds}, that is we prove that one can find $h> 0$ so that with probability bounded away from zero for  $n$ large,  $B_n(r),$ $r\in{(0,1)},$ can be connected to the boundary of  $\D_n$ in $E^{\ge h}_n.$ Using a simple consequence of the  isomorphism~(cf.\ Proposition~\ref{prop:BEisom}), we also deduce Corollary~\ref{cor:VunPercBds}. We conclude this section by some remarks about percolation for the Gaussian free field on the cable system, see Remark~\ref{rk:cablegff}.

The proof of \eqref{eq:GFFconnectionBds} is based on methods tracing back to \cite{BrLeMa-87} at least. These have been significantly extended in \cite{DiLi-18} and \cite{DiWiWu-20} by  the use of martingale theory. In the latter, a weak  version of \eqref{eq:GFFconnectionBds}, i.e., an analogue of
\[
\liminf_{n\to \infty} \Pm \big(B_n(r) \stackrel{E_n^{\ge 0}}{\longleftrightarrow} \partial \widehat{\partial}\D_n \big)>0
\]
is proven, where however $B_n(r)$ and $\partial \D_n$ are replaced by the left and right boundary of a discrete rectangle, respectively. The approach of \cite{DiWiWu-20} will also serve as a guideline for our proof.  However, quite a number of technical adaptations are in order due to the difference of connecting a macroscopic subset of $\D_n$ to the boundary $\partial \D_n$ in $E_n^{\ge h}$ for some $h>0$ instead of connecting the left boundary of a rectangle to the right boundary in $E_n^{\ge 0}.$ Since for us it is more adequate and  convenient to work in the setting of the unit disk as explained at the end of Section~\ref{sec:background} (see e.g.\ the proof Lemma~\ref{Lem:bigcaponboundary}), we provide a self-contained proof of \eqref{eq:GFFconnectionBds} in the rest of this section.

 For $r\in{(0,1)}$ as well as $n \in \N$ fixed, we will define for each $h\in{\R}$ the exploration process $(\widetilde{\mathcal E}_t^{\ge h}),$ $t \in [0,\infty),$ which will take values in the set of measurable subsets of $\widetilde \D_n.$ More precisely, let  
 \begin{equation} \label{eq:E=K}
  \widetilde{\mathcal{E}}_0^{\ge h} := \widetilde B_n(r),
 \end{equation} 
 and for $t >0$ define $\widetilde{\mathcal E}_t^{\ge h} \subset \widetilde \D_n$ to be the union of $\widetilde{\mathcal{E}}_0^{\ge h}$ with the set consisting of all points $y \in \widetilde \D_n$ for which there exists $t_\gamma \in [0,t]$ and a continuous path  $\gamma:[0,t_{\gamma}]\rightarrow\widetilde \D_n$ parametrized by arc length (attributing entire cables $I_e$ length $1/2$ when $e$ is an edge of $\D_n,$   with linear interpolation in between, and infinite length when $e$ is an edge between $\D_n$ and $\partial\D_n$), with the following properties:
 \begin{enumerate}
 \item  $\gamma(0) \in \widetilde{\mathcal{E}}_0^{\ge h};$
 
 \item    $\gamma(t_{\gamma})  = y;$
 
 \item  for all $x \in \{\gamma(s):\,s\in{(0,t_{\gamma})}\}\cap \D_n $  we have $\varphi_x \ge h.$ 
\end{enumerate}
In other words, $(\widetilde{\mathcal{E}}_t^{\ge h})_{t\geq0}$ continuously explores $\widetilde{\D}_n$ starting from $\widetilde{\mathcal{E}}_0^{\ge h},$ stopping the exploration along a path whenever a vertex $x\in{\D_n}$ with $\varphi_x<h$ is reached. Note that in order to simplify notation, we made the dependence of $\widetilde{\mathcal{E}}_t^{\ge h}$ on $n$ implicit, and we will do so for all the notation introduced in this section. 

For any $K\subset\widetilde{\D}_n$ set
\begin{equation} \label{eq:Mdef}
 M_{K} := \sum_{x\in{\widehat \partial K}}e_{K}^{(n)}(x)\varphi_x,
\end{equation} 
  where $\widehat{\partial}K$ is defined as in \eqref{eq:interior} for subsets $K$ of the cable system, and the cable system equilibrium measure has been introduced in Section~\ref{sec:cableSystemExc}.
Writing $\mathcal M_t^{\ge h} := M_{{\mathcal E}_t^{\ge h}}$
we have that  $(\mathcal M_t^{\ge h})_{t\geq0}$ 
is a martingale, the so-called  exploration martingale; see  \cite[Section~IV.6]{kups11574} for further details.

In addition, since $\widetilde{\mathcal E}_t^{\ge h}$ is increasing in $t \in [0,\infty)$, the limit
\begin{equation} \label{eq:calEInfty}
\widetilde{\mathcal E}_\infty^{\ge h} := \bigcup_{t \in [0,\infty)} \widetilde{\mathcal E}_t^{ \ge h}
\end{equation} 
is well-defined. When $\widetilde{\mathcal{E}}_{\infty}^{\ge h}$ is compact, that is when $\widetilde{\mathcal{E}}_{\infty}^{\ge h}$ does not contain an entire cable between $\D_n$ and $\partial\D_n$ (since these cables are half-open by definition of $\widetilde{\D}_n$ in Section~\ref{sec:notation}), we note that $\widetilde{\mathcal{E}}_t^{\ge h}=\widetilde{\mathcal{E}}_{\infty}^{\ge h}$ for $t$ large enough, and that $\widetilde{\mathcal{E}}_{\infty}^{\ge h}$ is a union of entire cables. Moreover, $\widetilde{\mathcal{E}}_{\infty}^{\ge h}$ is non-compact if and only if an edge between $\widehat{\partial}\D_n$ and $\partial\D_n$ is explored, which happens if and only if $B_n(r)$ is  connected to $\widehat{\partial}\D_n$ in $E_n^{\geqslant h}$ by definition. Therefore,
\[
\mathcal M_\infty^{\ge h} := \lim_{t \to \infty} \mathcal M_t^{\ge h}
\]
is well-defined when $B_n(r)$ is not connected to $\widehat \partial \D_n$ in $E_n^{\ge h},$ and is then equal to $M_{\widetilde{\mathcal{E}}_{\infty}^{\ge h}}$. We also write
\begin{equation*}
{\mathcal{E}}_t^{\ge h} := 
\widetilde{\mathcal{E}}_t^{\ge h} \cap \D_n\text{ for all }t\in{[0,\infty)\cup\{\infty\}}.
\end{equation*}
Since $\tilde{\mathcal{E}}_{\infty}^{\ge h}$ is a union of entire cables, including the endpoints, the equilibrium measure of  $\tilde{\mathcal{E}}_{\infty}^{\ge h}$ and ${\mathcal{E}}_{\infty}^{\ge h}$ are equal when $\tilde{\mathcal{E}}_{\infty}^{\ge h}$ is compact, and so
\begin{equation}
\label{eq:equalityCaps}
    \M_{\infty}^{\ge h}=M_{\mathcal{E}_{\infty}^{\ge h}}\text{ on }\big\{B_n(r)\stackrel{E^{\ge h}_n}{\longleftrightarrow}\widehat \partial \D_n\big\}^\ch.
\end{equation}
We now introduce some further notation that will prove useful in the rest of this section. For $r\in{(0,1)}$ we consider $K$ such that
\begin{equation} \label{eq:Lass}
K\text{ is a connected subset of } \D_n \text{ with }   B_n(r)\subset K
\end{equation} 
and $F$ such that
\begin{equation}  \label{eq:LK}
F\subset K \subset \D_n \text{ such that } F \supset \widehat \partial K\text{ and }\forall\,x\in{\widehat{\partial}K},\ \exists\ y\sim x\text{ with }y\in{K\setminus F}.
\end{equation} 
Note that the sets $F$ and $K$ depend implicitly on $n.$ In addition, let
\begin{equation} \label{eq:configDef}
\mathcal B^{h}_{K,F} := \big \{\varphi_x < h  \text{ for all } x \in F, \, \varphi_x \ge h\,  \text{ for all } x \in K \setminus F\big \}.
\end{equation}
Intuitively, configurations of the form $\mathcal B^{h}_{K,F}$ will play the role of (discretized) final configurations of the exploration process in case the clusters of $E^{\ge h}_n$ intersecting $B_n(r)$ do not connect $B_n(r)$ to $\widehat\partial \D_n$ in $E^{\ge h}_n,$ see \eqref{proof0connect2} below.

We now formulate some results which are modifications of findings from \cite{DiWiWu-20}, and which will prove useful in the following. For this purpose, recalling the definition of the interior boundary of a set from \eqref{eq:interior} as well as the equilibrium measure and capacity from \eqref{defequiandcap}, we let
\begin{equation} \label{eq:supHarmMeas}
{\rm Es}^{(n)}(K) 
:= \frac{\sup_{y \in \widehat{\partial}(K\setminus \widehat{\partial}K)} e_{ K\setminus \widehat{\partial}{K}}^{(n)}(y)}{\mathrm{cap}^{(n)}(K)}.
\end{equation}

\begin{proposition} \label{prop:condNegative}

There exists a function $\varepsilon :  (0,\infty)  \to (0,\infty)$ with $\varepsilon(t) \to 0$ as $t \searrow 0$ and a constant $\Cl[const]{repul}\in (0,1)$ such that for all $n \in \N,$ $r\in{(0,1)},$ $h\leq  \Cr{repul}$ and all $K$ and $F$ as in \eqref{eq:Lass} and \eqref{eq:LK}, we have
\begin{align} \label{eq:condNegative}
\begin{split}
 \Pm\big(M_{K}\le - \Cr{repul} \mathrm{cap}^{(n)}(K)\, | \, \mathcal B^{h}_{K,F} \big)
\ge 1-\varepsilon \big({\rm Es}^{(n)}(K)\big).
\end{split}
\end{align}
\end{proposition}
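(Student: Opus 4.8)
\textbf{Proof plan for Proposition~\ref{prop:condNegative}.}

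The plan is to condition on the values $(\varphi_x)_{x\in F}$ and exploit the Markov property of the dGFF. Conditionally on the event $\mathcal{B}^h_{K,F}$ we know that $\varphi_x<h$ for all $x\in F\supset\widehat\partial K$, so all the ``boundary weights'' $e^{(n)}_K(x)$, $x\in\widehat\partial K$, which are nonnegative and sum to $\mathrm{cap}^{(n)}(K)$, are multiplied by quantities $\varphi_x<h\le \Cr{repul}$. Hence deterministically on $\mathcal{B}^h_{K,F}$ we have $M_K=\sum_{x\in\widehat\partial K}e^{(n)}_K(x)\varphi_x < \Cr{repul}\,\mathrm{cap}^{(n)}(K)$, so the content of \eqref{eq:condNegative} is the stronger statement that $M_K$ is actually below $-\Cr{repul}\,\mathrm{cap}^{(n)}(K)$ with high probability, i.e.\ that the $\varphi_x$'s on $\widehat\partial K$ are typically bounded away from $0$ from below, not merely below $h$. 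The first step is therefore to make precise, using \eqref{eq:Mdef}, that it suffices to show that with probability at least $1-\varepsilon(\mathrm{Es}^{(n)}(K))$ we have $\sum_{x\in\widehat\partial K}e^{(n)}_K(x)\varphi_x\le -\Cr{repul}\,\mathrm{cap}^{(n)}(K)$.

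The key step is a second-moment / FKG-type argument for the conditional law of $(\varphi_x)_{x\in\widehat\partial K}$ given $\mathcal{B}^h_{K,F}$. Conditioning on $\mathcal{B}^h_{K,F}$ forces $\varphi_x<h$ on $F$, and since $\widehat\partial K\subset F$ and every $x\in\widehat\partial K$ has a neighbour in $K\setminus F$ where $\varphi\ge h$, the field just inside $\widehat\partial K$ is ``pushed down''. The cleanest way to capture this is to compare with the \emph{unconditioned} field restricted to $F$: write $\varphi=\varphi^F+\psi^F$ where $\varphi^F$ is the dGFF on $\D_n$ with boundary values on $\partial(\D_n\setminus F)$ frozen, decoupling the inside and outside of $F$; then on $\mathcal{B}^h_{K,F}$ one controls the harmonic part by the hitting distribution of random walk started from $\widehat\partial K$ and stopped on exiting $K\setminus\widehat\partial K$, which is exactly why the normalised quantity $\mathrm{Es}^{(n)}(K)$ from \eqref{eq:supHarmMeas} appears: it measures how spread-out (hence small) each individual contribution $e^{(n)}_{K\setminus\widehat\partial K}(y)/\mathrm{cap}^{(n)}(K)$ is. When $\mathrm{Es}^{(n)}(K)$ is small, no single vertex of $\widehat\partial K$ carries a macroscopic share of the equilibrium mass, so the weighted sum $\sum_{x\in\widehat\partial K}e^{(n)}_K(x)\varphi_x$ concentrates around its (negative) conditional mean by a Chebyshev/Paley--Zygmund estimate, with the fluctuation term controlled by $\mathrm{Es}^{(n)}(K)\cdot(\mathrm{cap}^{(n)}(K))^2$. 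One then fixes $\Cr{repul}$ small enough that the conditional mean is below $-2\Cr{repul}\,\mathrm{cap}^{(n)}(K)$ (using that $\varphi_x$ on $\widehat\partial K$ has conditional expectation uniformly negative, bounded away from $0$, because of the pushdown effect and $h\le\Cr{repul}$), and sets $\varepsilon(t)$ to be the resulting Chebyshev bound, which tends to $0$ as $t\to0$.

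The main obstacle I expect is obtaining the \emph{uniform} (in $n$, $K$, $F$) lower bound on $-\mathbb{E}[\varphi_x\mid \mathcal{B}^h_{K,F}]$ for $x\in\widehat\partial K$, i.e.\ quantifying the pushdown: conditioning on a whole ``slab'' $F$ of the field being below $h$ does force the field near $\widehat\partial K$ to be negative, but turning this into a constant $\Cr{repul}$ independent of the geometry of $K$ requires the condition \eqref{eq:LK} (each $x\in\widehat\partial K$ has a neighbour in $K\setminus F$ forced to be $\ge h$, so the conditional law of $\varphi_x$ is that of a Gaussian truncated to $(-\infty,h)$ but ``attracted'' upward only through that $\ge h$ neighbour) together with a comparison of conditional variances $\mathrm{Var}(\varphi_x\mid\mathcal{B}^h_{K,F})\ge c$ uniformly, which follows from the fact that the Green function $G^{(n)}_{\{x\}}(x,x)$ of the walk killed on leaving a single site is bounded below by a positive constant (here the factor $1/4$ normalisation in \eqref{eq:killedGreen} matters). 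I would handle this by a direct Gaussian conditioning computation: the conditional law of $(\varphi_x)_{x\in\widehat\partial K}$ given $(\varphi_y)_{y\in F\setminus\widehat\partial K}$ and given $\{\varphi_z\ge h:z\in K\setminus F\}$ is a truncated Gaussian whose mean is harmonic in the data and whose covariance is the Green function of the walk on $K\setminus F$-reflected / killed appropriately, and then average over $\mathcal{B}^h_{K,F}$, extracting the sign from the truncations. Everything else — the reduction at the start, and the final Chebyshev step — is routine once this quantitative pushdown estimate is in place.
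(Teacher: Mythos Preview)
The paper does not prove this proposition from scratch; it invokes \cite[Proposition~4.1]{DiWiWu-20} as a black box (with $\lambda=1$, $U=\widehat\partial\D_n$, $I=K$, $I^-=F$, $I^+=K\setminus F$, boundary condition $f=-h$), obtaining that a harmonic-measure-weighted sum $Y=\sum_{x\in\widehat\partial K}\sum_{y\in\widehat\partial\D_n}\Pm_y^{(n)}(X_{H_K}=x,H_K<\infty)(\varphi_x-h)$ satisfies $Y\le -8\Cr{repul}\,\mathrm{Hm}^{(n)}(K)$ with conditional probability at least $1-r(\xi^{(n)}(K))$. The passage from $Y$ to $M_K$ and from $(\mathrm{Hm}^{(n)},\xi^{(n)})$ to $(\mathrm{cap}^{(n)},\mathrm{Es}^{(n)})$ is then a two-line application of the sweeping identity together with the elementary bound $e^{(n)}_{\D_n}(y)\in[1,4]$. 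So your proposal is in effect a re-proof of the cited result.

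Your outline (conditional mean below $-2\Cr{repul}\,\mathrm{cap}^{(n)}(K)$, then Chebyshev concentration) is the natural first attempt, but the step you yourself flag as the main obstacle really is a gap. You want $\mathbb E[\varphi_x\mid\mathcal B^h_{K,F}]\le -2\Cr{repul}$ uniformly over $x\in\widehat\partial K$ and over all admissible $K,F$; but $\mathcal B^h_{K,F}$ pulls both ways, since by \eqref{eq:LK} every $x\in\widehat\partial K$ has a neighbour $y\in K\setminus F$ on which $\varphi_y\ge h$ is imposed, and by positive association that constraint alone pushes $\varphi_x$ \emph{up}. In two dimensions neighbouring values are very strongly correlated (their difference has $O(1)$ variance while each has variance of order $\log n$), so the competing constraints $\varphi_x<h$, $\varphi_y\ge h$ can pin $\varphi_x$ close to $h$ rather than force it uniformly negative; your ``direct Gaussian conditioning computation'' together with a single-site conditional-variance lower bound does not confront this competition. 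The argument in \cite{DiWiWu-20} does not proceed via vertex-wise pushdown followed by Chebyshev; it works with the weighted sum directly and uses the smallness of the spread parameter in a more structural way. If you want a self-contained proof here, you should go through their argument rather than the route you sketch.
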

\begin{proof}
We use \cite[Proposition~4.1]{DiWiWu-20} applied with $\lambda=1,$ $\tilde{K}=\widetilde{\D}_n,$ $U=\widehat{\partial}\D_n,$ $I=K,$ $I^-=F,$ $I^+=K\setminus F,$ boundary condition $f=-h.$ Note that the set of points in $K$ which can be reached by the random walk starting from $\widehat{\partial}\D_n$ is $\widehat{\partial}K,$ see \eqref{eq:interior}, and that the set of points in $K\setminus \widehat{\partial}K$ in which the random walk starting from $\widehat{\partial}\D_n$ can hit $K\setminus \widehat{\partial}K$ for the first time is given by $\widehat{\partial}(K\setminus \widehat{\partial}K),$ cf.\ \eqref{eq:configDef}. Then, writing
\begin{equation*}
    Y:=\!\!\sum_{x\in{\widehat{\partial}}K}\sum_{y\in{\widehat{\partial}\D_n}}\!\!\Pm_y^{(n)}(X_{H_K}=x,H_K<\infty)(\varphi_x-h)\text{ and }{\rm Hm}^{(n)}(K):=\!\!\sum_{y\in{\widehat{\partial}\D_n}}\!\Pm_y^{(n)}(H_K<\infty),
\end{equation*}
there exists $\Cr{repul}\in{(0,1)}$ and a function $r$ converging to $0$ at $0$ such that uniformly in $h\in{[0,1)},$ $n \in \N$ and $K$ and $F$ as in \eqref{eq:Lass} and \eqref{eq:LK},
\begin{equation*}
     \Pm\big(Y\le - 8\Cr{repul} {\rm Hm}^{(n)}(K)\, | \, \mathcal B^{h}_{K,F} \big)
\ge 1-r \big(\xi^{(n)}(K)\big),
\end{equation*}
where
\begin{equation*}
    \xi^{(n)}(K):=\frac{1}{{\rm Hm}^{(n)}(K)}\sup_{x\in{\widehat{\partial}(K\setminus \widehat{\partial}K)}}\sum_{y\in{\widehat{\partial}\D_n}}\Pm_y^{(n)}\big(X_{H_{K\setminus \widehat{\partial}K}}=x,H_{K\setminus \widehat{\partial}K}<\infty\big).
\end{equation*}
We  now observe that for each $x\in{\widehat{\partial}\D_n},$ we have that  $e_{\D_n}^{(n)}(x)=|\{y\in{\partial \D_n }:\,y\sim x\}|\in{[1,4]}$ by \eqref{defequiandcap}, since $\{\tau_{\partial \D_n}<\widetilde{\tau}_{\widehat{\partial}\D_n}\}$ can only occur if $X$ jumps directly from $x$ to $\partial\D_n.$ Therefore, on the event $\mathcal{B}_{K,F}^h$ we have that
\begin{equation*}
    Y\geq \sum_{x\in{\widehat{\partial}}K}\sum_{y\in{\widehat{\partial}\D_n}}e_{\D_n}^{(n)}(y)\Pm_y^{(n)}(X_{H_K}=x,H_K<\infty)(\varphi_x-h)=M_K-h\mathrm{cap}^{(n)}(K),
\end{equation*}
where we used the discrete sweeping identity, see for instance \cite[(1.59)]{MR2932978}, as well as \eqref{eq:Mdef} in the last equality. Using again the discrete sweeping identity, we can upper bound similarly $\mathrm{cap}^{(n)}(K)$ by $4{\rm Hm}^{(n)}(K)$ and $\xi^{(n)}(K)$ by $4{\rm Es}^{(n)}(K).$ Taking $h\leq \Cr{repul}$ and $\eps(t)=r(4t)$ for all $t>0,$ we can conclude.
\end{proof}

In order to apply Proposition~\ref{prop:condNegative}, we will use the following fact.
\begin{proposition} \label{prop:supHM}
For each $r\in{(0,1)}$ there exists a function $\eps':\N \to(0,\infty)$ with $\eps'(n)\rightarrow0$ for $n \to \infty$ such that for all $n \in \N$ and $K$ as in \eqref{eq:Lass} we have that
\begin{equation} \label{eq:supHM}
{\rm Es}^{(n)}(K) \leq \eps'(n).
\end{equation}
\end{proposition}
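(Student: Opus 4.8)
\textbf{Plan for the proof of Proposition~\ref{prop:supHM}.}

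The quantity $\mathrm{Es}^{(n)}(K)$ compares the largest mass of a single point under the equilibrium measure of the ``shrunken'' set $K\setminus\widehat\partial K$ with the \emph{total} mass $\mathrm{cap}^{(n)}(K)$ of the equilibrium measure of $K$. Since $B_n(r)\subset K$ by \eqref{eq:Lass}, monotonicity of capacity gives the uniform lower bound $\mathrm{cap}^{(n)}(K)\geq \mathrm{cap}^{(n)}(B_n(r))$, and by Lemma~\ref{l.capconv} together with \eqref{capball} this is bounded below by a positive constant $c(r)>0$ for all $n$ large (in fact $\mathrm{cap}^{(n)}(B_n(r))\to 2\pi/\log(1/r)$). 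So it suffices to show that the numerator, $\sup_{y\in\widehat\partial(K\setminus\widehat\partial K)}e^{(n)}_{K\setminus\widehat\partial K}(y)$, is $o(1)$ uniformly over all admissible $K$. This is where the main work lies.

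First I would reduce the numerator to a statement about a single point. For $y\in\widehat\partial(K\setminus\widehat\partial K)$, by \eqref{defequiandcap} we have $e^{(n)}_{K\setminus\widehat\partial K}(y)=4\Pm_y^{(n)}(\tau_{\partial\D_n}<\widetilde\tau_{K\setminus\widehat\partial K})$. By definition of $\widehat\partial K$ (the support of the equilibrium measure of $K$ as seen from $\partial\D_n$), every point of $\widehat\partial K$ has a neighbour in $K$, so $K$ has no isolated points of its boundary; more importantly, a random walk started at $y\in K\setminus\widehat\partial K$ that wants to reach $\partial\D_n$ before returning to $K\setminus\widehat\partial K$ must in particular escape from a neighbourhood of $y$ without hitting $K\setminus\widehat\partial K$. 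The key geometric input is that $y\in K\setminus\widehat\partial K$ while $K$ is connected and contains $B_n(r)$: hence $K$, hence also $K\setminus\widehat\partial K$ restricted to a ball around $y$, is a ``fat'' connected set on the scale $1/n$ near $y$ — unless $y$ itself lies near the outer boundary of $K$, in which case it is close to $\widehat\partial K$ and one argues directly that the walk must traverse an annulus. In both cases the Beurling estimate, Lemma~\ref{lem:beurling}, applied to the connected set $K\setminus\widehat\partial K$ (or to $B_n(r)$ when $y$ is far from it), gives a bound of the form $\Pm_y^{(n)}(\tau_{\partial\D_n}<\widetilde\tau_{K\setminus\widehat\partial K})\le \Pm_y^{(n)}(\tau_{\partial B_n(y,R)}<\widetilde\tau_{K\setminus\widehat\partial K})\leq c(d(y,K\setminus\widehat\partial K)/R)^{1/2}$ for an appropriate radius $R$; taking $R$ a fixed fraction of the distance from $y$ to $\partial\D_n$ (or $R\asymp 1-r$ when $y$ lies near the bulk) and noting $d(y,K\setminus\widehat\partial K)\leq c/n$ since $y$ has a neighbour in $K\setminus\widehat\partial K$ (here I would need to check this holds, or else replace $K\setminus\widehat\partial K$ by the slightly larger set including $y$'s cable), we get $e^{(n)}_{K\setminus\widehat\partial K}(y)\le c/\sqrt{n\cdot\mathrm{dist}(y,\partial\D_n)}$. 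Combined with the lower bound on $\mathrm{cap}^{(n)}(K)$ this yields \eqref{eq:supHM} with $\eps'(n)=c(r)/\sqrt{n}$ away from the boundary.

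\textbf{The main obstacle} is handling points $y\in\widehat\partial(K\setminus\widehat\partial K)$ that are \emph{close to $\partial\D_n$}, where the crude Beurling bound above degenerates because $\mathrm{dist}(y,\partial\D_n)$ can be as small as $1/n$. The point is that such $y$ cannot in fact occur unless $K$ itself reaches near the boundary, but even then one must argue more carefully: a walk from $y\in K\setminus\widehat\partial K$ near $\partial\D_n$ that hits $\partial\D_n$ before returning to $K\setminus\widehat\partial K$ is possible with non-small probability, so one cannot bound $e^{(n)}_{K\setminus\widehat\partial K}(y)$ pointwise by $o(1)$. The resolution is that there can be only $O(1)$ such boundary-proximate points contributing, OR — more likely what the authors intend — one observes that if $y$ is within distance $c_0$ of $\partial\D_n$ and $y\notin\widehat\partial K$, then $K$ must contain a point of $\widehat\partial K$ between $y$ and $\partial\D_n$, so the walk from $y$ is \emph{forced through} a piece of $\widehat\partial K\subset K\setminus\widehat\partial K$... wait, $\widehat\partial K$ is exactly what we removed, so this needs care. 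I would resolve it by using that $\widehat\partial(K\setminus\widehat\partial K)$ consists of points the walk from $\partial\D_n$ can reach \emph{first}, and for $y$ near $\partial\D_n$ the relevant escape probability is controlled by the discrete harmonic measure of a small boundary arc, which is $O(1/n)$ times a bounded factor by standard estimates for simple random walk in a half-plane-like domain (e.g.\ \cite[Lemma~2.3]{MR1249129} or gambler's-ruin/Beurling in the annulus between $B_n(r)$ and $\partial\D_n$). Making this dichotomy precise and uniform in $K$ — in particular proving the purely combinatorial claim that points of $\widehat\partial(K\setminus\widehat\partial K)$ are either at macroscopic distance from $\partial\D_n$ or else ``screened'' — is the technical heart, and I expect it to occupy the bulk of the argument, with Lemma~\ref{lem:beurling} and the capacity lower bound from Lemma~\ref{l.capconv} as the two main quantitative inputs.
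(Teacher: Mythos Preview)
Your plan for the bulk case is essentially the paper's: bound the numerator via the Beurling estimate (Lemma~\ref{lem:beurling}) and the denominator via $\mathrm{cap}^{(n)}(K)\ge\mathrm{cap}^{(n)}(B_n(r))\ge c(r)>0$ (Lemma~\ref{l.capconv}). The paper obtains exactly your bound $e^{(n)}_{K\setminus\widehat\partial K}(x)\le C\bigl(n\,d(x,\widehat\partial\D_n)\bigr)^{-1/2}$ and hence $\mathrm{Es}^{(n)}(K)\le Cn^{-1/4}$ whenever $d(K,\widehat\partial\D_n)\ge n^{-1/2}$.

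Where you go wrong is the boundary case. You correctly observe that when some $y\in\widehat\partial(K\setminus\widehat\partial K)$ has $d(y,\widehat\partial\D_n)=O(1/n)$, the Beurling bound on the numerator is useless: $e^{(n)}_{K\setminus\widehat\partial K}(y)$ can genuinely be of order $1$ (think of $y$ adjacent to $\partial\D_n$, where the walk escapes in one step with probability $\ge 1/4$). None of your proposed fixes work: the supremum over $y$ cares about a single bad point, not a count; there is no screening mechanism forcing the walk through $K\setminus\widehat\partial K$ (as you yourself note, $\widehat\partial K$ is exactly what was removed); and half-plane harmonic-measure estimates do not control escape from a single interior site adjacent to the boundary.

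The paper's resolution goes in the opposite direction: rather than improve the \emph{numerator} near the boundary, it improves the \emph{denominator}. If some point of $K$ lies within $n^{-1/2}$ of $\widehat\partial\D_n$, then $K$ (connected, containing $B_n(r)$) stretches from $B_n(r)$ out to within $n^{-1/2}$ of the boundary, and Lemma~\ref{Lem:bigcaponboundary} gives $\mathrm{cap}^{(n)}(K)\ge c\log n$. Since the numerator is trivially at most $4$, this yields $\mathrm{Es}^{(n)}(K)\le 4/(c\log n)$ in that case. Combining the two cases gives $\eps'(n)=C\max(n^{-1/4},(\log n)^{-1})$. The missing ingredient in your sketch is precisely Lemma~\ref{Lem:bigcaponboundary}; you should replace the entire ``screening'' discussion by this capacity lower bound for connected sets reaching near $\partial\D_n$.
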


The proof of Proposition~\ref{prop:supHM} is provided at the end of this section. We now explain how combining Proposition~\ref{prop:supHM}  with Proposition~\ref{prop:condNegative} entails Theorem
\ref{the:maindisgff}. For this purpose, we consider the quadratic variation process $\langle \mathcal M^{\ge h} \rangle_t$ of the continuous square integrable martingale $\mathcal M_t^{\ge h},$ cf.\ also \cite[(2.3)--(2.5), (4.6)]{DiWiWu-20}. Following \cite[Lemma~IV.6.1]{kups11574} one can easily prove that
\begin{equation} \label{eq:quadVarUB}
\langle \mathcal M^{\ge h} \rangle_t = \mathrm{cap}^{(n)}(\widetilde{\mathcal E}_t^{\ge h})-\mathrm{cap}^{(n)}(\widetilde{\mathcal E}_0^{\ge h}) \quad \text{for all $t \ge 0.$}
\end{equation}

Next, we have the following standard result on continuous martingales from \cite[Chapter~V, Theorem~1.7]{ReYo-99}.
\begin{proposition} \label{prop:BMconvMart}
Let $\mathcal L_t$ be a continuous square integrable martingale, $T_t:= \inf \{s \in [0, \infty)\, : \, \langle \mathcal L\rangle_s > t\}$ its parametrization by quadratic variation and $(W_t)_{t\geq0}$ be an independent Brownian motion starting in the origin. Then the process
\begin{align*}
B_t:= 
\left\{ \begin{array}{ll}
\mathcal L_{T_t} - \mathcal L_0, \quad & t < \langle \mathcal L\rangle_\infty,\\
W_{t-\langle \mathcal L\rangle_{\infty}}+\mathcal L_\infty - \mathcal L_0, \quad & t \ge \langle \mathcal L\rangle_\infty,
\end{array} \right.
\end{align*}
where $\langle \mathcal L\rangle_\infty:= \lim_{t\to \infty} \langle \mathcal L\rangle_t$,
is a Brownian motion starting in the origin and stopped at time $\langle \mathcal L \rangle_\infty.$
\end{proposition}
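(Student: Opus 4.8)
This is the Dambis--Dubins--Schwarz representation of a continuous martingale as a time-changed Brownian motion, and the plan is to follow the classical argument (as in \cite[Chapter~V, Theorems~1.6 and~1.7]{ReYo-99}), paying attention to the extension past the terminal time $\langle \mathcal L\rangle_\infty$ by means of the auxiliary independent Brownian motion $W$. Throughout, let $(\mathcal F_t)$ be the underlying right-continuous complete filtration, write $\langle \mathcal L\rangle$ for the quadratic variation of the continuous square integrable martingale $\mathcal L$, and set $\mathcal G_t := \mathcal F_{T_t}$; each $T_t$ is an $(\mathcal F_t)$-stopping time since $\langle \mathcal L\rangle$ is adapted and continuous.

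First I would treat the process on $\{t < \langle \mathcal L\rangle_\infty\}$, where $T_t<\infty$ and $B_t := \mathcal L_{T_t} - \mathcal L_0$ is well defined. Using optional stopping applied to $T_{s\wedge m}\le T_{t\wedge m}$ together with the uniform integrability coming from square integrability, one checks that $(B_{t\wedge m})$ is a $(\mathcal G_t)$-martingale, hence so is $(B_t)$ up to time $\langle \mathcal L\rangle_\infty$; similarly $(B_t^2 - t)$ is a $(\mathcal G_t)$-martingale, because $\langle \mathcal L\rangle_{T_t} = t$ on $\{t<\langle \mathcal L\rangle_\infty\}$ by continuity of $\langle \mathcal L\rangle$. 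Here the continuity of $\mathcal L$ is used again to guarantee that $\mathcal L$ is constant on every interval on which $\langle \mathcal L\rangle$ is flat, so the time change is consistent and $t\mapsto B_t$ is continuous on $[0,\langle \mathcal L\rangle_\infty)$. By L\'evy's characterization, $(B_t)_{0\le t < \langle \mathcal L\rangle_\infty}$ is a standard Brownian motion run up to the (possibly finite) time $\langle \mathcal L\rangle_\infty$.

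Next I would handle the terminal behaviour. On $\{\langle \mathcal L\rangle_\infty < \infty\}$ the process $(B_t)_{t<\langle \mathcal L\rangle_\infty}$ is an $L^2$-bounded Brownian motion run for a finite time, so $\lim_{t \uparrow \langle \mathcal L\rangle_\infty} B_t$ exists a.s.; pulling this back through the time change shows that $\mathcal L_\infty := \lim_{s\to\infty} \mathcal L_s$ exists a.s.\ on $\{\langle \mathcal L\rangle_\infty < \infty\}$ and equals $\mathcal L_0 + \lim_{t\uparrow \langle \mathcal L\rangle_\infty} B_t$ (on $\{\langle \mathcal L\rangle_\infty = \infty\}$ there is nothing to extend). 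To obtain a Brownian motion defined on all of $[0,\infty)$ one splices in $W$: for $t \ge \langle \mathcal L\rangle_\infty$ set $B_t := W_{t - \langle \mathcal L\rangle_\infty} + (\mathcal L_\infty - \mathcal L_0)$. Since $\langle \mathcal L\rangle_\infty$ is $\mathcal F_\infty$-measurable and $W$ is independent of $\mathcal F_\infty$, conditioning on $\mathcal F_\infty$ and applying the strong Markov property for $W$ shows that the concatenated path has continuous trajectories and stationary independent Gaussian increments with respect to the enlarged filtration $\mathcal G_t \vee \sigma\big(W_s : s \le (t-\langle \mathcal L\rangle_\infty)^+\big)$, i.e.\ it is a Brownian motion from the origin, frozen (modulo the added $W$) at time $\langle \mathcal L\rangle_\infty$. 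Reading the definition backwards gives $\mathcal L_{T_t} - \mathcal L_0 = B_{t \wedge \langle \mathcal L\rangle_\infty}$, hence $\mathcal L_s = \mathcal L_0 + B_{\langle \mathcal L\rangle_s}$.

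The only genuinely delicate point is the extension past $\langle \mathcal L\rangle_\infty$: one must verify that $\mathcal L_\infty$ exists on $\{\langle \mathcal L\rangle_\infty<\infty\}$, that $B$ extends continuously there, and that gluing an independent Brownian motion at the $\mathcal F_\infty$-measurable random time $\langle \mathcal L\rangle_\infty$ preserves independence of increments; the remaining verifications (optional stopping, L\'evy's characterization, constancy of $\mathcal L$ on flat pieces of $\langle \mathcal L\rangle$) are routine. Since all of this is carried out in full detail in \cite[Chapter~V, \S1]{ReYo-99}, in the text I would simply invoke that reference.
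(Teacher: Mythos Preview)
Your proposal is correct and matches the paper's approach exactly: the paper does not give a proof at all but simply states the result as a standard fact from \cite[Chapter~V, Theorem~1.7]{ReYo-99}, which is precisely what you end up invoking after sketching the Dambis--Dubins--Schwarz argument. Your outline of the proof (time change, L\'evy's characterization, constancy on flat pieces of $\langle \mathcal L\rangle$, and gluing an independent Brownian motion past $\langle \mathcal L\rangle_\infty$) is the standard one and is more detailed than anything in the paper.
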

 With the above results at hand, we are now ready to prove Theorem~\ref{the:maindisgff}.
\begin{proof}[Proof of Theorem~\ref{the:maindisgff}]
First note that the bound $h_*^d(r)\leq\sqrt{\pi/2}$ is a simple consequence of Theorem~\ref{the:maindisexcloops} for $\kappa=4$ and the isomorphism theorem, Proposition~\ref{prop:BEisom}. 

We will now prove the first inequality in \eqref{eq:GFFconnectionBds}, which will also imply the first inequality in \eqref{eq:h*ineq}. On the event that $B_n(r)$ is not connected to $\widehat{\partial}\D_n$ in $E_n^{\geq h},$ taking $K={\mathcal{E}}_{\infty}^{\geq h}$ and $F=\{x\in{K}:\,\varphi_x< h\},$ it is clear by definition of the exploration that \eqref{eq:Lass} and \eqref{eq:LK} are satisfied, and that the event $\mathcal{B}_{K,F}^h$ occurs. Taking advantage of \eqref{eq:equalityCaps} and above, we obtain that for $h\leq \Cr{repul}$ we have
\begin{equation}\begin{split}
\label{proof0connect2} 
&\Pm\Big(\mathcal M_t^{\ge h} > - \Cr{repul} \mathrm{cap}^{(n)}(\widetilde{\mathcal E}_t^{\ge h})\text{ for all }t\geq0, \, \{B_n(r) \stackrel{E^{\ge h}_n}{\longleftrightarrow} \widehat \partial \D_n \}^\ch\Big)\\&
    \leq\Pm\Big(\mathcal M_\infty^{\ge h} > - \Cr{repul} \mathrm{cap}^{(n)}({\mathcal E}_\infty^{\ge h}), \, \{B_n(r) \stackrel{E^{\ge h}_n}{\longleftrightarrow} \widehat \partial \D_n \}^\ch\Big)
    \\&\leq \sum_{K,F}\Pm\Big(M_K > - \Cr{repul} \mathrm{cap}^{(n)}(K), \, \mathcal{B}_{K,F}^h \Big) \\
    &\leq \sum_{K,F} \eps\big({\rm Es}^{(n)}(K)\big) \Pm( \mathcal{B}_{K,F}^h),
    \end{split}
\end{equation}
where the sum is over all $K,\, F$ as in  \eqref{eq:Lass} and \eqref{eq:LK}, and we used Proposition~\ref{prop:condNegative} to obtain the last inequality.
 Regarding the right-hand side of the previous display, it follows from Proposition~\ref{prop:supHM} -- in particular the uniformity of the bound \eqref{eq:supHM} for $K$ as in \eqref{eq:Lass} -- and the fact that $\varepsilon'(r) \to 0$ as $r \searrow 0$, that
\begin{equation} \label{eq:EsConv0}
     \limsup_{n\rightarrow \infty}
    \sum_{K,F} \eps\big({\rm Es}^{(n)}( K)\big) \Pm( \mathcal{B}_{K,F}^h)=0,
\end{equation}
since the events $\mathcal{B}_{K,F}^h,$ for $K,\, F$ as in  \eqref{eq:Lass} and \eqref{eq:LK}, are pairwise disjoint. Displays \eqref{proof0connect2} and \eqref{eq:EsConv0} immediately entail that there exists $h>0$ such that
\begin{align}
\label{eq:probaEgoesto0}
\lim_{n\to \infty}\Pm\Big(\mathcal M_t^{\ge h} > - \Cr{repul} \mathrm{cap}_A^{(n)}(\widetilde{\mathcal E}_t^{\ge h})\text{ for all }t\geq0, \, \{B_n(r) \stackrel{E^{\ge h}_n}{\longleftrightarrow} \widehat \partial \D_n \}^\ch \Big) =0.
\end{align}
Next, for $a,b>0$ let us denote by 
\[
p(a,b) := \Pm\big (W_t > -at - b \, \forall t \ge 0\big)
\] 
the probability that the standard Brownian motion $(W_t)_{t\geq0}$ stays above the line $-at-b$  for  all $t\geq0.$ From Excercise $2.16$ in~\cite{PeresBM} we know that
\begin{equation} \label{eq:persProbPos}
p(a,b)=1-e^{-2 a b}>0.
\end{equation} 
 In order to derive a lower bound on $\Pm(B_n(r) \stackrel{E^{\ge h}_n}{\longleftrightarrow} \widehat \partial \D_n  ),$ we now construct a lower bound for the unrestricted probability $\Pm\big(\mathcal M_\infty^{\ge h} > - \Cr{repul} \mathrm{cap}^{(n)}({\mathcal E}_\infty^{\ge h}) \big).$ For this purpose, denote by $(B_t^{\ge h})$ the Brownian motion defined as the time change of the continuous martingale $\M^{\ge h}_t$ through  Proposition~\ref{prop:BMconvMart}, with $\M^{\ge h}_t$ playing the role of $\mathcal L_t$. Using \eqref{eq:quadVarUB} and the continuity of the quantities involved we infer that
\begin{align} \label{proof0connect1} 
\begin{split}
 &\Pm\Big(\mathcal M_t^{\ge h} > - \Cr{repul} \mathrm{cap}^{(n)}(\widetilde{\mathcal E}_t^{\ge h})\text{ for all }t\geq0 \Big)\\
 &\quad = \Pm\Big(\mathcal M_{T_t}^{\ge h} >-\Cr{repul}\mathrm{cap}^{(n)}(\widetilde{\mathcal{E}}_{T_t}^{\ge h}) \text{ for all }t\geq0\text{ such that }\,T_t<\infty\Big)\\
& \quad \ge \Pm\Big(B_t^{\ge h}>-\Cr{repul}\big(t+\mathrm{cap}^{(n)}({\mathcal E}_0^{\ge h})\big)-\M_0^{\ge h}\text{ for all }t\geq0\Big).
\end{split}
\end{align}
Proceeding similarly as in  \cite[(IV.6.3)]{kups11574}, one can easily show that $(B_t^{\ge h})_{t\geq0}$ is independent of $\M_0^{\ge h},$ and since $\Pm(\M_0^{\ge h}\geq0)=\frac12$, we can lower bound the right-hand side of \eqref{proof0connect1}  by 
\[
\frac12 \Pm\Big(B_t^{\ge h}>-\Cr{repul}\big(t+\mathrm{cap}^{(n)}({\mathcal E}_0^{\ge h})\big)\text{ for all }t\geq0\Big). 
\]
Furthermore, since $r\in{(0,1)},$ one can easily deduce from Lemma~\ref{l.capconv} that there exists a constant $\Cl[const]{ccapE0}=\Cr{ccapE0}(r)$ such that
\begin{equation*} 
\mathrm{cap}^{(n)}(\widetilde{\mathcal{E}}_0^{\ge h})
\ge \mathrm{cap}^{(n)}(B_n(r)) \geq \Cr{ccapE0} \quad \text{ for all }n>0. 
\end{equation*}
Therefore, we have for all $n>0$ that
\begin{align}
\label{proof0connect4} 
\begin{split}
    &\frac12p(\Cr{repul}, \Cr{repul}\Cr{ccapE0})
     \leq\Pm\Big(B_t^{\ge h}>-\Cr{repul} \big(t+\mathrm{cap}_A^{(n)}(\widetilde{\mathcal E}_0^{\ge h} )\big)-\M_0^{\ge h}\text{ for all }t\geq0\Big).
\end{split}
\end{align}
Displays \eqref{proof0connect1} and \eqref{proof0connect4} then supply us with
\begin{equation}
\label{eq:proof0connect5}
\frac12p(\Cr{repul},\Cr{repul}\Cr{ccapE0}) \le
\Pm\Big(\mathcal M_t^{\ge h} > - \Cr{repul} \mathrm{cap}_A^{(n)}(\widetilde{\mathcal E}_t^{\ge h})\text{ for all }t\geq0 \Big).
\end{equation}
Combining \eqref{eq:probaEgoesto0}, \eqref{eq:persProbPos} and \eqref{eq:proof0connect5} we finally infer that 
\begin{equation*}
    \liminf_{n\rightarrow0}\Pm\big(B_n(r) \stackrel{E^{\ge h}_n}{\longleftrightarrow} \widehat \partial \D_n \big )\geq \frac12p(\Cr{repul},\Cr{repul}\Cr{ccapE0}) > 0,
\end{equation*}
which finishes the proof of the first inequality in \eqref{eq:GFFconnectionBds}. 

The second inequality in \eqref{eq:GFFconnectionBds} is trivial, so we proceed with proving the third inequality. If there is a single excursion of the random walk excursion process at level $\intens$ which encloses the set $B_n(r)$, then by duality there can be no nearest-neighbor path -- in fact, not even a $*$-connected one --  from $B_n(r)$ to $\widehat{\partial} \D_n$ in $\mathcal{\V}_n^{\intens},$ and so there is also no such path in $E^{\ge \sqrt{2\intens}}_n$ by Proposition~\ref{prop:BEisom}.
 Now for any $\intens > 0$, one can easily deduce from the coupling in Theorem~\ref{the:couplingdiscontexc1} that this random walk excursion event has a probability bounded away from $0$, uniformly in $n \in \N.$ As a consequence, the inequality follows.
\end{proof}

\begin{remark}
\label{rk:exactcritparaGFF}
Let us denote by $\mathbb{A}_{h}^{(n)}$ the union of all the clusters of $\widetilde{E}^{\ge h}_n$ hitting $\partial\D_n.$ One can use the full isomorphism between the GFF and the discrete excursion process, \cite[Proposition~2.4]{ArLuSe-20a}, to show that $\tilde{\V}_n^{u,1/2}$ (see \eqref{eq:BuuVuuDef} for definition) has the same law as the complement of $\mathbb{A}_{-\sqrt{2u}}^{(n)}.$ Therefore, we deduce from Theorem~\ref{the:maindisexcloops} with $\kappa=4$ that the probability that $B_n(r)$ is connected to $\partial B_n(1-\eps)$ in the complement of $\mathbb{A}_{-h}^{(n)}$ goes to $0$ as $n\rightarrow\infty$ and $\eps\rightarrow0$ if and only if $h\geq\sqrt{\pi/2}.$ See also Corollary~\ref{cor:percocontGFF} for the respective result in the continuous setting.

Since $E^{\ge h}_n$ is included in the complement of $\mathbb{A}_{-h}^{(n)},$ the inequality $h_*^d(r)\leq\sqrt{\pi/2}$ can be seen as a simple consequence of this result, and we conjecture that this inequality is strict. The percolation problem of $E^{\ge h}_n$ is more classical in higher dimension than the one of the complement of $\mathbb{A}_{-h}^{(n)},$ and we have thus chosen to focus on it in this article.  
\end{remark}

Let us now turn to the proof of Proposition~\ref{prop:supHM}. We are first going to need the following estimate on the capacity of sets close to $A_{n}.$

\begin{lemma}
\label{Lem:bigcaponboundary}
For each $r\in{(0,1)},$ there exists a constant $c>0$ such that for all $n\in\N$ and $K$ as in \eqref{eq:Lass} with $d(K,\widehat{\partial}\D_n)\leq n^{-\frac1{2}}$ we have
\begin{equation*}
    \mathrm{cap}^{(n)}(K)\geq {c\log(n)}.
\end{equation*}
\end{lemma}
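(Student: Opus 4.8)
The plan is to deduce the capacity lower bound from the already-established convergence of discrete capacities to continuous ones, together with a purely continuous computation of the capacity of an ``annular'' set reaching close to $\partial\D$. The key point is that since $K\supset B_n(r)$ is connected and comes within distance $n^{-1/2}$ of $\widehat\partial\D_n$, the set $K$ must contain a connected path from $B_n(r)$ out to a point at distance $\approx n^{-1/2}$ from $\partial\D$; in particular $K$ contains a connected set whose diameter crosses every annulus $\{s<|x|<t\}$ with $r<s<t<1-n^{-1/2}$. First I would recall that capacity is monotone in the set (adding vertices can only increase $\capac^{(n)}$), so it suffices to lower-bound $\capac^{(n)}$ of such a crossing path. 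By the comparison estimate behind Lemma~\ref{l.capconv} (or more precisely by the last-exit decomposition \eqref{eq:lastexitdis} combined with the Green function bound \eqref{e.greenest}), the discrete capacity of a set is comparable to its continuum capacity up to errors of relative order $1/n$ times logarithmic factors, as long as the set stays at distance $\gg 1/n$ from the boundary; here the relevant set stays at distance $\geq c n^{-1/2}\gg 1/n$, so this comparison applies.

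The continuum input I would use is the following: if $\gamma\subset\D$ is a connected set containing a point in $\overline B(r)$ and a point at distance $n^{-1/2}$ from $\partial\D$ (hence at modulus $\geq 1-n^{-1/2}$), then $\capac(\gamma)\geq c\log n$. This is standard: by conformal invariance and the explicit Green's function \eqref{eq:Greeneverywhere}, one can estimate $\capac(\gamma)$ from below by the capacity of a radial slit $\{t e^{i\theta_0}: r'\le t\le 1-n^{-1/2}\}$ that $\gamma$ must ``cross'' in every annulus, and the capacity of such a radial slit relative to $\D$ behaves like $c/\log\big(1/(1-n^{-1/2})\big)\sim c n^{1/2}$, which is far more than $\log n$; alternatively, and more robustly, one uses the lower bound $\capac(\gamma)\ge c\big(\log\tfrac{1}{1-|z^*|}\big)\big/\big(\log\tfrac{1}{1-|z^*|}\big)$-type estimate coming from the fact that Brownian motion started near $\partial\D$ hits $\gamma$ before $\partial\D$ with probability $\gtrsim 1/\log n$ by the Beurling estimate, Lemma~\ref{lem:beurling}, applied in the disk. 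Concretely: place the farthest point $z^*\in\gamma$ with $1-|z^*|\le n^{-1/2}$; for Brownian motion started uniformly on $\partial B(1-\epsilon)$ with $\epsilon=2n^{-1/2}$, Beurling's estimate gives that it hits $\gamma$ before $\partial\D$ with probability at least $c$ (since $\gamma$ reaches into $B(z^*, c n^{-1/2})$ while also extending a macroscopic distance inward, $\gamma$ disconnects locally), hence by \eqref{e.eqmeasdisc} and the relation $\capac(B(1-\epsilon))=2\pi/\log\frac1{1-\epsilon}\sim \pi n^{1/2}$ together with $\Pm_{\overline e_{B(1-\epsilon)}}(\tau_\gamma<\tau_{\partial\D})\ge c$, one gets $\capac(\gamma)\ge c\,\capac(B(1-\epsilon))\cdot c\gtrsim n^{1/2}\gg\log n$. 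I would work out which of these two routes is cleanest; the radial-slit comparison via \eqref{e.consistency} and \eqref{eq:Greeneverywhere} is probably the most self-contained.

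Then I would assemble the pieces: take $K$ as in the hypothesis, let $\gamma_K\subset\D$ be a continuum connected set obtained by thickening the discrete path in $K$ joining $B_n(r)$ to a vertex within $n^{-1/2}$ of $\widehat\partial\D_n$ (so $\gamma_K$ is at distance $\ge c n^{-1/2}$ from $\partial\D$ on most of its length, with only a short piece closer, which we can truncate at modulus $1-2n^{-1/2}$ without losing the ``crossing'' property down to some annulus, and that truncated set still has continuum capacity $\ge c\log n$ by the above with $n^{-1/2}$ replaced by $2n^{-1/2}$). Apply the discrete-to-continuum capacity comparison (Lemma~\ref{l.capconv}-type argument, valid since we stay at distance $\gtrsim n^{-1/2}$) to conclude $\capac^{(n)}(K)\ge \capac^{(n)}(\gamma_K\cap\D_n)\ge \tfrac12\capac(\gamma_K)\ge c\log n$ for $n$ large, and absorb small $n$ into the constant.

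\textbf{Main obstacle.} The delicate point is that Lemma~\ref{l.capconv} as stated is only for balls $B(r)$, not for arbitrary connected sets $K$, so I cannot cite it directly; I will need to reprove the comparison for the specific ``crossing path'' sets at hand, controlling the error uniformly over all such $K$ and over the family of admissible $K$ in \eqref{eq:Lass}. The uniformity and the handling of the part of $K$ that genuinely comes within $n^{-1/2}$ (but not closer than $1/n$) of the boundary is where the real care is needed; the Green's function bound \eqref{e.greenest} degrades like $1/(|y|n)$, which is still $o(1)$ relative to the capacity even for $|y|\approx 1-n^{-1/2}$, so the estimate should close, but this is the step requiring the most attention.
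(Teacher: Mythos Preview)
Your overall strategy---extract a connected path in $K$ from $B_n(r)$ out to a point at distance $\sim n^{-1/2}$ from $\partial\D$, lower-bound its continuum capacity, then transfer to the discrete setting---matches the paper's approach, and the paper does supply the general discrete-to-continuous capacity comparison you correctly flag as the main obstacle (Proposition~\ref{t.gencapconv} in Appendix~\ref{app:convcap}, valid for arbitrary connected compacts staying at distance $\gg 1/n$ from the boundary).

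The genuine gap is in your continuum lower bound. In route (a), the radial slit $\{te^{i\theta_0}:r'\le t\le 1-n^{-1/2}\}$ has capacity of order $\log n$, not $n^{1/2}$; the formula $c/\log(1/(1-n^{-1/2}))$ you wrote is \eqref{capball} for the \emph{ball} $B(1-n^{-1/2})$, not the slit. More importantly, you give no argument that an \emph{arbitrary} connected crossing set $\gamma$ has capacity at least that of the slit; the phrase ``by conformal invariance'' does not supply one. In route (b), the claim that Brownian motion started uniformly on $\partial B(1-2n^{-1/2})$ hits $\gamma$ before $\partial\D$ with probability $\ge c$ is false: $\gamma$ may have arbitrarily small angular extent, so from most starting points on that circle the walk exits $\D$ (which is only at distance $n^{-1/2}$) long before reaching $\gamma$. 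The correct average hitting probability is of order $n^{-1/2}\log n$, and establishing even that for a general $\gamma$ requires work you have not done. The paper's device, which your proposal lacks, is measure-theoretic and handles all crossing paths at once: parametrize $\pi_n$ by modulus via $s\mapsto\pi_n(u_s)$ with $u_s=\inf\{t:|\pi_n(t)|\ge s\}$, and push forward the measure $\nu(ds)=ds/(1-s)$ to obtain a measure $\mu$ on $\pi_n$. Since $G(0,x)=\frac{1}{2\pi}\log(1/|x|)$ depends only on $|x|$, the integral $\int G(0,x)\,d\mu(x)=\frac{1}{2\pi}\int \frac{\log(1/s)}{1-s}\,ds$ is bounded by a constant \emph{independent of $n$ and of the path}, while $\mu(\pi_n)\ge\int_c^{1-n^{-1/2}}\frac{ds}{1-s}\ge c'\log n$. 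A standard variational lower bound for capacity (\cite[Ch.~2, Prop.~4.2]{sznitman1998brownian}) then gives $\capac(\pi_n)\ge c''\log n$.
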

\begin{proof}
For some $s_0 > 0$ let us define $\pi_n:[0,s_0]\rightarrow\widetilde{\D}_n$  a continuous and injective path starting in $B_n(r),$ ending at a vertex at distance at most $n^{-1/2}$ from $\widehat{\partial}\D_n,$ and such that $\pi_n(s)$ is on a cable between two vertices in $K$ for all $s\in{[0,s_0]}.$ Define also the map $p_n:[|\pi_n(0)|,|\pi_n(s_0)|]\rightarrow\widetilde{\D}_n$ via $s\mapsto\pi_n(u_s),$ where $u_s=\inf\{r\geq0:\,|\pi_n(r)|\geq s\}.$ Note that $p_n$ is measurable since, for each segment $I$ included in one of the cables crossed by $\pi_n,$ the set $p_n^{-1}(I)$ is a finite union of  intervals. Since $r\in{(0,1)},$ we can moreover assume that $|\pi_n(0)|\geq c$ for some constant $c>0$ only depending on $r.$ Let us define the measure
\begin{equation*}
    \mu:\mathcal{B}(\D)\ni A\mapsto\nu\big(p_n^{-1}(A)\big),\text{ where }\nu(\mathrm{d}t):=\frac{\mathrm{d}t}{1-t},
\end{equation*}
which is supported on $\pi_n([0,s_0]).$  Moreover, by \eqref{eq:Greenat0} we have
\begin{equation*}
    \int_{\D} G(0,x)\, \mathrm{d}\mu(x)\leq \frac{1}{2\pi}\int_{c}^{1}\frac{\log(1/t)}{1-t}\, \mathrm{d}t \, (<\infty).
\end{equation*}
Therefore, by \cite[Chapter~2, Proposition~4.2]{sznitman1998brownian} there exists a constant $c'>0$ such that
\begin{equation*}
    \mathrm{cap}(\pi_n([0,s_0]))\geq c'\mu(\pi_n([0,s_0]))\geq \frac{c'}{2\pi}\int_{c}^{1-n^{-1/2}}\frac{1}{1-t}\, \mathrm{d}t\geq c''\log(n).
\end{equation*}
It moreover follows from Proposition~\ref{t.gencapconv} that
\begin{equation*}
    \mathrm{cap}^{(n)}(K)\geq \mathrm{cap}^{(n)}(\pi_n([0,s_0])\cap \D_n)\geq c\cdot\mathrm{cap}(\pi_n([0,s_0])),
\end{equation*}
and we can conclude.
\end{proof}

\begin{proof}[Proof of Proposition~\ref{prop:supHM}]
Using the Beurling estimate Lemma~\ref{lem:beurling} and recalling $e_{K\setminus \widehat{\partial}K}^{(n)}$ as well as ${\rm cap}^{(n)}(K)$ from \eqref{defequiandcap}, for all $x\in{\widehat{\partial} (K\setminus \widehat{\partial}K)}$ we have the upper bound
\begin{equation} \label{eq:hmUB}
e_{K\setminus \widehat{\partial}K}^{(n)}(x) =\sum_{y\sim x}\Pm_y^{(n)}(\tau_{K\setminus \widehat{\partial}K}> \tau_{\partial \D_n})\leq C\left(\frac{1}{nd(x,\widehat{\partial}\D_n)}\right)^{\frac12}.
\end{equation}
Since $B_n(r)\subset K$ and $r\in{(0,1)},$ by Lemma~\ref{l.capconv} there exists a constant $c>0,$ depending on $r$ only, such that $\mathrm{cap}^{(n)}(K)\geq c.$ Moreover $d(\widehat{\partial}(K\setminus \widehat{\partial}K),\widehat{\partial}\D_n)\geq d(K,\widehat{\partial}\D_n),$ and so in view of
\eqref{eq:hmUB},
\begin{equation*}
    {\rm Es}^{(n)}(K)\leq Cn^{-\frac14} \quad \text{ if }d(K,\widehat{\partial}\D_n)\geq n^{-\frac12}.
\end{equation*}
On the other hand, if $d(K,\widehat{\partial}\D_n)\leq n^{-\frac12}$ then we can easily conclude since in view of Lemma~\ref{Lem:bigcaponboundary} the denominator in the definition \eqref{eq:supHarmMeas} of ${\rm Es}^{(n)}(K)$  is larger than $c\log(n)$ while the numerator is bounded from above by $4.$
\end{proof}

We conclude with the following.  
\begin{proof}[Proof of Corollary~\ref{cor:VunPercBds}]
When $r\in{(0,1)},$ the first inequality follows from \eqref{eq:GFFconnectionBds} and the isomorphism theorem, Proposition~\ref{prop:BEisom}. When $r=0,$ similarly as in Remark~\ref{rk:othercontperco},\ref{finiteenergy}, one can prove that there is a type of finite energy property for the percolation of $\V_n^{\intens},$ that is for each $r\in{(0,1)},$ $\Pm\big(0\stackrel{\mathcal{V}_{n}^{\intens}}{\longleftrightarrow}
		\widehat{\partial}\D_n\big)$ is larger than  $c\Pm\big(B_n(r) \stackrel{\mathcal{V}_{n}^{\intens}}{\longleftrightarrow}
		\widehat{\partial}\D_n\big)$ for a constant $c=c(r,u)$ not depending on $n$ by Proposition~\ref{prop:interoncompacts} and Lemma~\ref{l.capconv}.

The second inequality is trivial. The last inequality of \eqref{eq:ineqsVacantSetPerc} follows from the fact that with probability uniformly bounded away from $0,$ for all $n$ large enough there exists a single excursion enclosing $B_n(r)$ and hence separating it from $\widehat{\partial} \D_n,$ which follows from the coupling with continuous excursions Theorem~\ref{the:couplingdiscontexc1}.  
\end{proof}

\begin{remark}
\label{rk:h*dependonr}
Note that the finite energy property used in the proof of Corollary~\ref{cor:VunPercBds} implies that $\intens_*^d(r)$ does not depend on $r\in{[0,1)}$ -- for $h_*^d(r)$ this does not seem to be clear.
\end{remark}

We finish this section by some remarks on the percolation for the level sets of the GFF on the cable system.

\begin{remark}
\phantomsection\label{rk:cablegff}
\begin{enumerate}[label=\arabic*)]
    \item \label{rk:tilh*>0}In \cite[Corollary~5.1]{ArLuSe-20a}, the inequality $\tilde{h}_*^d(r)\geq0$ for $r\in{(0,1)}$ is proved using the explicit formulas on the effective resistance between $0$ and $\widetilde{E}_n^{\geq h}$ for $h<0$ from  \cite[Corollary~14]{MR3827222}. In fact, they prove the following stronger statement: for all $h<0$ and $r\in{(0,1)}$
    \begin{equation}
        \label{eq:connectiontboundarycable}
            0 < \liminf_{n \to \infty} \Pm \Big (B_n(r) \stackrel{\widetilde{E}^{\ge h}_n}{\longleftrightarrow} \widehat{\partial}\D_n \Big) \le \limsup_{n \to \infty} \Pm \Big (B_n(r) \stackrel{\widetilde{E}^{\ge h}_n}{\longleftrightarrow}\widehat{\partial} \D_n \Big) < 1,
    \end{equation}
    which corresponds to the statement \eqref{eq:GFFconnectionBds} for the cable system at negative levels. Here $\longleftrightarrow$ denotes connection on the cable system $\widetilde{\D}_n,$ and not discrete connection in $\D_n.$ Let us present another short proof of the first inequality in \eqref{eq:connectiontboundarycable}: it follows from Proposition~\ref{prop:interoncompacts}, \eqref{capball} and Lemma~\ref{l.capconv} that for each $u>0$ and $r\in{(0,1)},$ the limit as $n\rightarrow\infty$ of the probability that the Brownian excursion set $\be_n^{u}$ intersect $B_n(r)$ is positive, which implies the first inequality in \eqref{eq:connectiontboundarycable} by the isomorphism \eqref{eq:iso}.
    
    \item \label{rk:r=0cablegff} When $r=0,$ one can easily prove that \eqref{eq:connectiontboundarycable} does not hold, contrary to what is stated in  \cite[Corollary~5.1]{ArLuSe-20a}. Indeed, by either \cite[Theorem~3.7]{DrePreRod3} or  \cite[Corollary~1]{MR3827222} we have for all $h>0$
    \begin{equation*}
        \Pm\Big(0\stackrel{\widetilde{E}^{\ge -h}_n}{\longleftrightarrow}\partial\D_n\Big)=\Pm\left(\varphi_0^{(n)}\in{(-h,h)}\right)\tend{n}{\infty}0
    \end{equation*}
    since $G^{(n)}(0,0),$ the variance of $\varphi_0^{(n)},$ diverges as $n\rightarrow\infty.$ Moreover, if $0\leftrightarrow\widehat{\partial}\D_n$ in $\widetilde{E}_n^{\geq -h},$ then $0\leftrightarrow{\partial}\D_n$ in $\widetilde{E}_n^{\geq -h}$ with positive probability, since the probability to cross the last edge between $\widehat{\partial}\D_n$ and $\partial\D_n$ is positive conditionally on $\varphi^{(n)}_{|\D_n}.$
    
    In fact, using the asymptotic $G^{(n)}(0,0)\geq c\log(n)$ as $n\rightarrow\infty,$ one can show that the probability that $0\leftrightarrow\widehat{\partial}\D_n$ in $\widetilde{E}_n^{\geq -(\log(n))^a}$ goes to $0$ as $n\rightarrow\infty$ for all $a<1/2.$ This indicates that,  for the level sets of the GFF, it is way harder to connect $0$ to the boundary of $\D_n,$ than $B_n(r)$ to the boundary of $\D_n$ for $r\in{(0,1)}.$ Therefore, it would not be surprising that $h_*^d(0)\leq0$ for the dGFF, hence the restriction to $r>0$ in Theorem~\ref{the:maindisgff}, contrary to the case of the excursion vacant set in Theorem~\ref{the:maindisexc}. In \cite{2dGFFperc} it is actually argued that $h_*^d(0)=0.$
\end{enumerate}
\end{remark}

\appendix
\section{Appendix: Continuum critical values}
\label{sec:contperco}
\renewcommand*{\thetheorem}{A.\arabic{theorem}}
\renewcommand{\theequation}{A.\arabic{equation}}

In this appendix, we prove Theorem~\ref{t.mainthm}. Recall the definitions of $\V^{\intens,\lambda}$ from below \eqref{eq:defcontloops}, of $\lambda(\kappa)$ from \eqref{eq:defckappa}, and of $B(r)\stackrel{\CV^{\intens,\lambda(\kappa)}}{\longleftrightarrow}\partial\D$ from above \eqref{def:u*c}. Given the link to the SLE process the idea for the computation of the critical values is very simple: consider independent excursion clouds starting and ending on the upper and lower parts of the unit circle $\mathbb{T}_{\pm}=\{z\in{\partial\D}:\pm\Im(z)\geq0\}.$ By Lemma~\ref{l.brownianexcursionandlooprestriction} and conformal invariance, the boundaries of the excursion clouds plus Brownian loop clusters $\partial_{\mathbb{T}_\pm}\mathcal{I}^{u,\lambda(\kappa)}_{\mathbb{T}_\pm,\mathbb{T}_\pm,\D}$ are SLE$_{\kappa}(\rho)$ paths in $\mathbb{D}$ connecting $-1$ with $1$.
The $\rho(\intens)$ relation is such that the SLE paths intersect the boundary away from $-1$ and $1$ if and only if $\intens < (8-\kappa)\pi/16$ (Lemma~\ref{lem:SLE-ka-r}), and the excursions plus loops do not separate $0$ from $\partial \mathbb{D}$ with positive probability. We refer to Figure \ref{fig:superandsubcrit} for a simulation when there is no loop soup, that is when $\kappa=8/3.$ 
\begin{figure}[ht!]
\setlength\arrayrulewidth{0.8pt}
\noindent\begin{tabular}{|c|c|} 
\hline\rule{0pt}{42.4ex}\hspace{-1.4mm}
 \includegraphics[width=6.53cm]{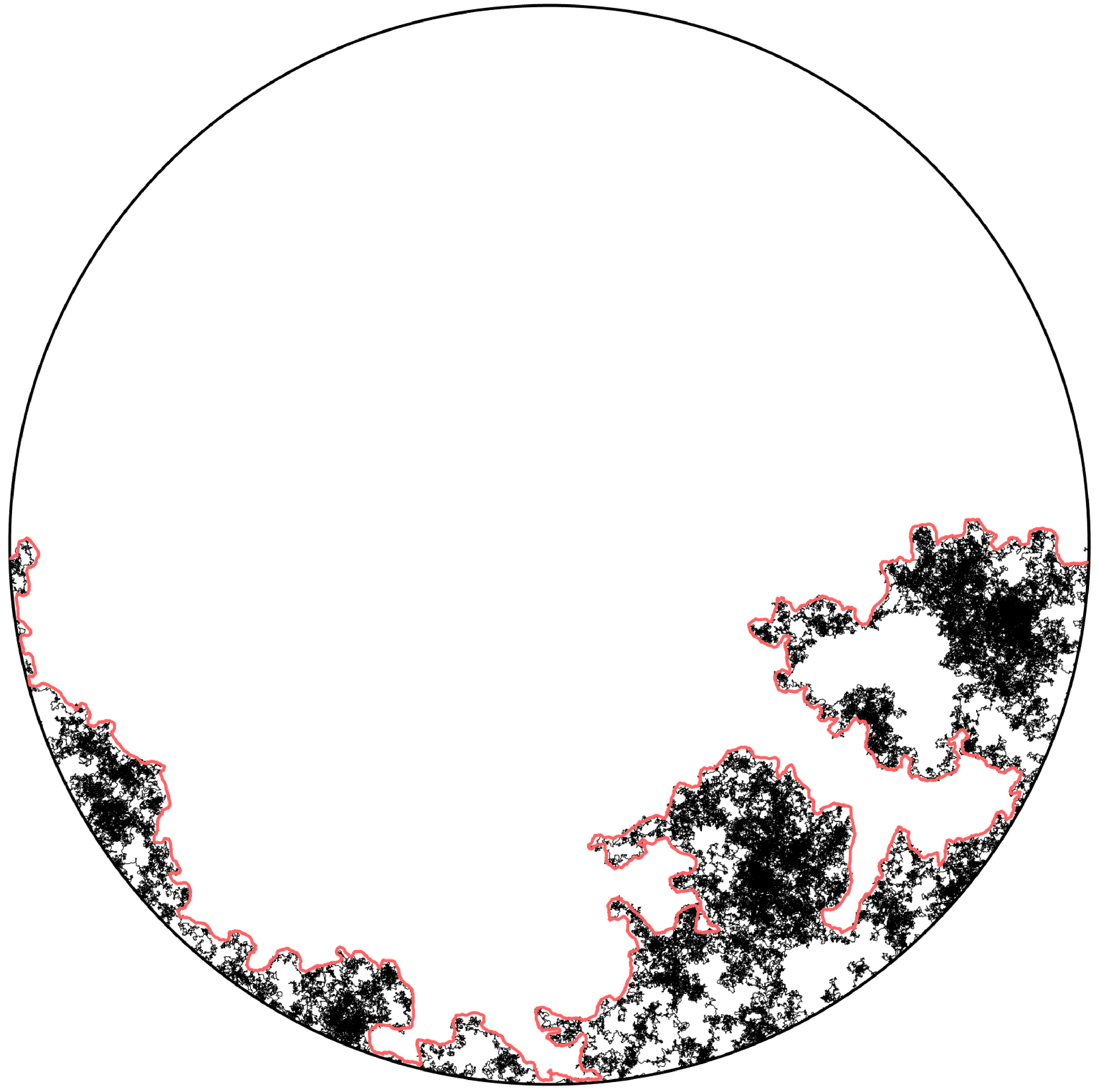} & 
\includegraphics[width=6.53cm]{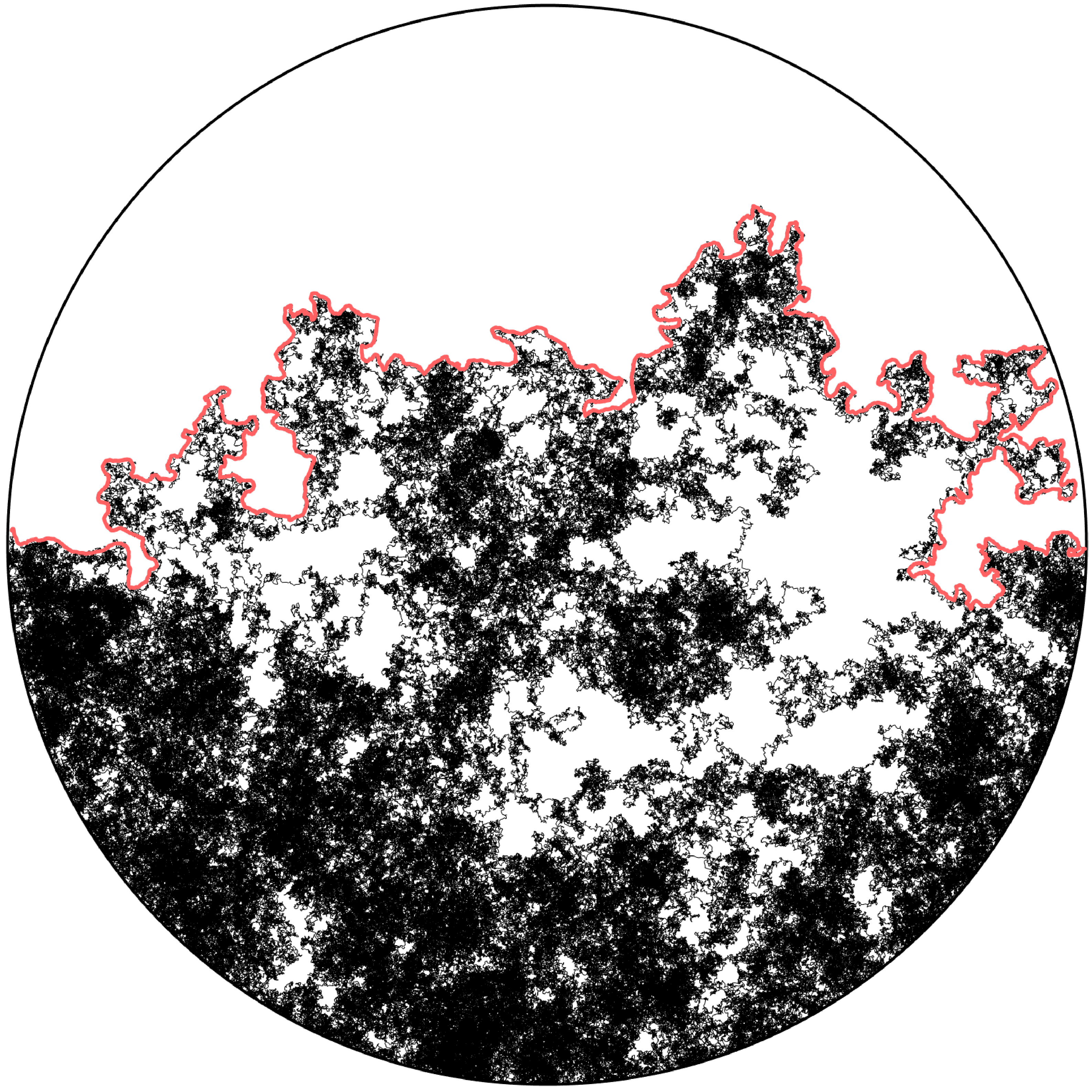} 
\\ \hline
\end{tabular}
\caption{A simulation of the Brownian excursion set $\mathcal{I}^u_{\mathbb{T}_-,\mathbb{T}_-,\D}$ in black, and the corresponding $\text{SLE}_{8/3}(\rho_{8/3}(u/\pi))$ curve $\partial_{\mathbb{T}_-}\mathcal{I}^u_{\mathbb{T}_-,\mathbb{T}_-,\D}$ in red.  On the left the supercritical case where $u<\pi/3,$ and on the right the subcritical case where $u\geq\pi/3.$}
\label{fig:superandsubcrit}
\end{figure}

This reasoning suggests that the critical parameter associated to $\mathcal{V}^{u,\lambda(\kappa)}$ is equal to $(8-\kappa)\pi/16,$ as noted in \cite[Section~5]{werner-qian}. But some points need to be dealt with in order to make this rigorous. For instance, if $u<(8-\kappa)\pi/16,$ then even if $0$ is not disconnected from $\partial\D$ by  the SLE paths $\partial_{\mathbb{T}_-}\mathcal{I}^{u,\lambda(\kappa)}_{\mathbb{T}_-,\mathbb{T}_-,\D}$ and $\partial_{\mathbb{T}_+}\mathcal{I}^{u,\lambda(\kappa)}_{\mathbb{T}_+,\mathbb{T}_+,\D},$ $0$ could still be disconnected from $\partial\D$ by $\mathcal{I}^{u,\lambda(\kappa)},$ via trajectories starting in $\mathbb{T}_+$ and ending in $\mathbb{T}_-.$ Similarly, if $u\geq (8-\kappa)\pi/16,$ even if $\partial_{\mathbb{T}_-}\mathcal{I}^{u,\lambda(\kappa)}_{\mathbb{T}_-,\mathbb{T}_-,\D}$ does not hit $\partial\D,$ $0$ could be connected to $\mathbb{T}_-$ in $\mathcal{V}^{u,\lambda(\kappa)}$ if the curve $\partial_{\mathbb{T}_-}\mathcal{I}^{u,\lambda(\kappa)}_{\mathbb{T}_-,\mathbb{T}_-,\D}$ passes ``above" $0.$ 

\begin{lemma}\label{lem:1}
Suppose $\kappa\in{[8/3,4]}$ and $0<\intens<(8-\kappa)\pi/16,$ then $\Pm\big(0\stackrel{\CV^{\intens,\lambda(\kappa)}}{\longleftrightarrow}\partial\D\big)>0.$
\end{lemma}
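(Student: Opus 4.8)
The plan is to show that, with positive probability, the excursions-and-loops cloud $\be^{\intens,\lambda(\kappa)}$ fails to surround $0$, the key input being the $\SLE_\kappa(\rho)$ description of the cloud generated by a single boundary arc together with the boundary-touching dichotomy of Lemma~\ref{lem:SLE-ka-r}. Let $\Gamma=\{z\in\partial\D:\Im z\le 0\}$ be the lower semicircle and $\check\Gamma$ the upper one, and split the excursion cloud $\omega_\intens$ as a sum of the three independent clouds $\omega^{\Gamma\Gamma}$, $\omega^{\check\Gamma\check\Gamma}$, $\omega^{\mathrm{mix}}$ according to whether both endpoints lie on the open arc $\Gamma$, both on the open arc $\check\Gamma$, or one on each (excursions with an endpoint at $\pm1$ being $\mu$-null), all three being independent of the loop soup. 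Applying Lemma~\ref{l.brownianexcursionandlooprestriction} with $D=\D$ and $\alpha=\intens/\pi$, the curve $\eta:=\partial_\Gamma\be^{\intens,\lambda(\kappa)}_{\Gamma,\Gamma,\D}$ satisfies: $\phi_\Gamma^{\D}(\eta)$ is an $\SLE_\kappa(\rho_\kappa(\intens/\pi))$ trace, so $\eta$ is a simple curve from $-1$ to $1$ with $\eta\subset\be^{\intens,\lambda(\kappa)}_{\Gamma,\Gamma,\D}\subset\be^{\intens,\lambda(\kappa)}$, $\eta\cup\check\Gamma$ is a Jordan curve, and $U:=\D\setminus F_{\D,\Gamma}\big(\be^{\intens,\lambda(\kappa)}_{\Gamma,\Gamma,\D}\big)$ is the Jordan domain it bounds on the $\check\Gamma$-side. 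Since $\intens<(8-\kappa)\pi/16$, i.e.\ $\alpha<(8-\kappa)/16$, Lemma~\ref{lem:SLE-ka-r} shows that $\eta$, hence $\overline U$, a.s.\ meets $\Gamma\setminus\{-1,1\}$.

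Next I would extract a positive-probability scenario. Using the standard fact that an $\SLE_\kappa(\rho)$-type curve stays, with positive probability, inside any prescribed tubular neighbourhood (within $\overline\D$) of a fixed reference curve joining its two endpoints — here one running from $-1$ to $1$ inside the closed lower half-disk and at uniformly positive distance from $0$ — and combining with the a.s.\ statement just recalled, one obtains a positive-probability event $E$ on which $\eta$ hugs the bottom of $\D$ (so that $0\in U$) and meets $\Gamma$ at a point $q$ in the interior of $\Gamma$ bounded away from $\pm1$. On $E$ the cloud $\be^{\intens,\lambda(\kappa)}_{\Gamma,\Gamma,\D}$ does not separate $0$ from $q$, and it remains to show that, with positive probability, the rest of $\be^{\intens,\lambda(\kappa)}$ — namely the traces of $\omega^{\check\Gamma\check\Gamma}$ and $\omega^{\mathrm{mix}}$ meeting $U$, together with the loop clusters meeting these but no $\omega^{\Gamma\Gamma}$-trajectory — does not disconnect $0$ from $\partial\D$ inside $U$. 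Conditionally on the loop soup and on $\omega^{\Gamma\Gamma}$ (hence on $\eta$, $q$, $U$) the clouds $\omega^{\check\Gamma\check\Gamma}$ and $\omega^{\mathrm{mix}}$ remain Poissonian; the $\mu$-mass of their trajectories that enter a fixed compact subset of $U$ kept at positive distance from $\check\Gamma$ and from $\overline\D\setminus\D$ is finite, so such entries can be excluded with positive conditional probability, and since $q$ is a fixed interior point of $\Gamma$ the $\mu$-mass of trajectories coming within $\delta$ of $q$ tends to $0$ as $\delta\to0$ by Beurling-type estimates; the finitely many macroscopic loop clusters that approach the relevant part of $U$ can likewise each be left inactivated with positive conditional probability. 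Feeding this into the restriction property of Lemma~\ref{lem:resexc} to control what survives of the other sub-clouds inside $U$, and reconnecting a neighbourhood of $q$ in $U$ to a vacant region around $0$, and integrating over $E$, one concludes that the component of $\CV^{\intens,\lambda(\kappa)}$ containing $0$ has $\partial\D$ in its closure with positive probability, which is the assertion.

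The step I expect to be the main obstacle is this last one: Lemma~\ref{l.brownianexcursionandlooprestriction} only describes the cloud generated by a single arc, whereas $\be^{\intens,\lambda(\kappa)}$ also contains the mixed excursions and — through the common loop soup — loop clusters linking the three sub-clouds, so the three contributions are genuinely dependent and one must condition on the loops before using independence of the remaining excursions. Closely related is the fact that $\mu$ has infinite mass near every boundary point, so that $\be^{\intens,\lambda(\kappa)}$ meets \emph{every} deterministic curve from $0$ to $\partial\D$: the required connection in $\CV^{\intens,\lambda(\kappa)}$ cannot be exhibited as an explicit path and the vacant set near $\partial\D$ is genuinely fractal, which is why the touching point $q$ is kept at positive distance from $\pm1$ and the restriction property is used instead of routing a path by hand, and it is where the Beurling-type estimates enter. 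The remaining ingredients — the $\SLE_\kappa(\rho)$ tube estimate, the Jordan-domain topology, and the translation of the geometric event into the connectivity event of \eqref{def:u*c} using that $\be^{\intens,\lambda(\kappa)}$ is closed — are routine.
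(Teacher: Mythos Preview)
Your overall strategy---split the excursion cloud by boundary arcs, use the $\SLE_\kappa(\rho)$ description for one arc, then argue the remainder does not disconnect---matches the paper's. Two technical choices, however, differ from the paper and make your argument harder to close.

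First, you invoke a ``tube estimate'' for $\SLE_\kappa(\rho)$ to force $\eta$ into the lower half-disk. This is not standard for $\SLE_\kappa(\rho)$ with a boundary-touching force point and is not supplied by the paper. The paper avoids it entirely: it fixes a deterministic segment $A$ from $0$ to the \emph{opposite} arc $\Gamma_2$, lets $A_{\lambda(\kappa)}$ be the (random, compact) union of loop clusters meeting $A$, and sets $E_1=\{\be^{\intens}_{\Gamma_1,\Gamma_1,\D}\cap A_{\lambda(\kappa)}=\varnothing\}$. Conditioning on the loop soup, this is exactly an avoidance event for a one-sided restriction measure, and \eqref{eq:onesidedrestriction} gives $\Pm(E_1)>0$ directly. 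On $E_1$, since $\eta$ a.s.\ touches $\Gamma_1$ but not $A$, there is a (random) path $\gamma$ from $0$ to $\mathring\Gamma_1$ in $(\be^{\intens,\lambda(\kappa)}_{\Gamma_1,\Gamma_1,\D})^\ch$. No tube estimate is needed.

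Second, your handling of the remaining excursions via ``Beurling-type estimates'' near the touching point $q$ is the genuine gap. Beurling bounds the probability that \emph{one} trajectory avoids a set; they do not give positive probability that the whole Poisson cloud of mixed excursions (with an endpoint arbitrarily close to $q$) avoids a neighbourhood of $q$. The paper's device is cleaner: choose a (random) closed arc $\Gamma_3\supset\Gamma_2$ whose interior contains $\Gamma_2$ and which misses $\gamma$; resample an independent $\widehat{\be}^{\intens}_{\Gamma_1,\Gamma_1,\D}$; then set $E_2=\{\widehat{\be}^{\intens,\lambda(\kappa)}_{\Gamma_3,\Gamma_3,\D}\cap\gamma=\varnothing\}$ and $E_3=\{$no excursion of $\widehat{\be}^{\intens}$ from $\Gamma_2$ to $\Gamma_3^\ch\}$. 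On $E_3$ every excursion not in the $\Gamma_1$-to-$\Gamma_1$ cloud has both endpoints in $\Gamma_3$, so its loop cluster lies in $\widehat{\be}^{\intens,\lambda(\kappa)}_{\Gamma_3,\Gamma_3,\D}$, which avoids $\gamma$ on $E_2$. Positivity of $\Pm(E_2)$ comes again from \eqref{eq:onesidedrestriction} (conditionally on the loop soup and $\gamma$, with the compact $\gamma_{\lambda(\kappa)}$ to avoid), and $\Pm(E_3)>0$ because $\mu$ restricted to excursions between two disjoint closed arcs is finite. This replaces your Beurling step entirely and requires no estimate near the boundary.
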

\begin{proof}
Let $A\subset\D$ be an arbitrary choice of deterministic line segment connecting $0$ with the interior of $\mathbb{T}_-$ (seen as a subset of $\partial\D$) inside $\D$ and let $A_{\lambda(\kappa)}$ be the closure of the set of clusters of loops at intensity $\lambda(\kappa)$ intersecting $A.$ By Lemmas~\ref{l.brownianexcursionandlooprestriction} and \ref{lem:SLE-ka-r}, $\eta : = \partial_{\mathbb{T}_+}\be^{\intens,\lambda(\kappa)}_{\mathbb{T}_+,\mathbb{T}_+,\D}$ is an SLE$_\kappa(\rho)$ 
curve in $\D$ from $-1$ to $1$ which intersects $\mathbb{T}_+$ away from the end points a.s.\ and does not intersect $\mathbb{T}_-$. Therefore, on the event 
\begin{equation}
\label{eq:defE1}
    E_1:=\big\{\be^{\intens,\lambda(\kappa)}_{\mathbb{T}_+,\mathbb{T}_+,\D}\cap A=\varnothing\big\}=\big\{\be^{\intens}_{\mathbb{T}_+,\mathbb{T}_+,\D}\cap A_{\lambda(\kappa)}=\varnothing\big\},
\end{equation}
the path $\eta$ does not separate $0$ from $\mathbb{T}_-$ in $\D$ and so on $E_1$ there exists a (random) closed path $\gamma$ in $(\be^{\intens,\lambda(\kappa)}_{\mathbb{T}_+,\mathbb{T}_+,\D})^\ch$ from $0$ to $\mathring{\mathbb{T}}_+.$ 
 
 We now claim that $\be^{\intens,\lambda(\kappa)}\setminus \be^{\intens,\lambda(\kappa)}_{\mathbb{T}_+,\mathbb{T}_+,\D}$ avoids $\gamma$ with positive probability. Let $\Gamma$ be a (random) closed arc in $\partial\D$ not intersecting $\gamma$ and containing $\mathbb{T}_-$ in its interior. We will resample $\be^{\intens}_{\mathbb{T}_+,\mathbb{T}_+,\D}$ in order to be able to consider conditionally independent ``good'' events on which $\be^{\intens,\lambda(\kappa)}\setminus \be^{\intens,\lambda(\kappa)}_{\mathbb{T}_+,\mathbb{T}_+,\D}$ avoids $\gamma$. The probabilities of the good events are strictly positive using the restriction formula \eqref{eq:onesidedrestriction}. Now for the details. Let $\widehat{\be}^{\intens}_{\mathbb{T}_+,\mathbb{T}_+,\D}$ be an independent random  set with the same law as ${\be}^{\intens}_{\mathbb{T}_+,\mathbb{T}_+,\D},$ and let $\widehat{\be}^{\intens}$ be the union of $\widehat{\be}^{\intens}_{\mathbb{T}_+,\mathbb{T}_+,\D}$ and the set of  points intersected by an excursion in the support of $\omega_u$ which does not start and end in $\mathbb{T}_+$. Then $\widehat{\be}^{\intens}$  has the same law as $\be^{\intens}.$ For any $\Gamma'\subset\partial\D,$ we also define $\widehat{\be}_{\Gamma',\Gamma',\D}^{\intens},$ $\widehat{\be}^{\intens,\lambda(\kappa)}$ and $\widehat{\be}_{\Gamma',\Gamma',\D}^{\intens,\lambda(\kappa)}$ analogously to  ${\be}_{\Gamma',\Gamma',\D}^{\intens},$ $\be^{\intens,\lambda(\kappa)}$ and $\be^{\intens,\lambda(\kappa)}_{\Gamma',\Gamma',\D},$ but for the excursions associated with $\widehat{\be}^{\intens}$ instead of ${\be}^{\intens}.$ On the event that $\gamma$ exists, let $\gamma_{\lambda(\kappa)}$ be the closure of the set of clusters of loops at intensity $\lambda(\kappa)$ hitting $\gamma,$ and define
\begin{equation}
\label{eq:defE2}
    E_2:=\big\{\widehat{\be}^{\intens,\lambda(\kappa)}_{\Gamma,\Gamma,\D}\cap \gamma=\varnothing\big\}=\big\{\widehat{\be}^{\intens}_{\Gamma,\Gamma,\D}\cap \gamma_{\lambda(\kappa)}=\varnothing\big\}.
\end{equation}
 Let $E_3$ be the event that $\widehat{\be}^{\intens}$ contains no excursions starting on $\mathbb{T}_-$ and ending on $\Gamma^\ch,$ or starting on $\Gamma^\ch$ and ending on $\mathbb{T}_-.$ Each loop cluster intersecting $\be^u$ but not $\be^u_{\mathbb{T}_+,\mathbb{T}_+,\D}$ intersects an excursion of $\widehat{\be}^u$ which does not start and end on $\mathbb{T}_+,$ and thus also intersects $\widehat{\be}^{u}_{\Gamma,\Gamma,\D}$ on the event $E_3.$ Therefore, the  path $\gamma$ does not intersect $\be^{\intens,\lambda(\kappa)}\setminus \be^{\intens,\lambda(\kappa)}_{\mathbb{T}_+,\mathbb{T}_+,\D}$ on the event $E_2\cap E_3,$ and so $\gamma$ is included in $\V^{\intens,\lambda(\kappa)}$ on the event $E_1\cap E_2\cap E_3.$ We refer to Figure~\ref{F:Lemma4.1} for details.
 
 \begin{figure}[ht!]
  \centering 
  \includegraphics[scale=0.76]{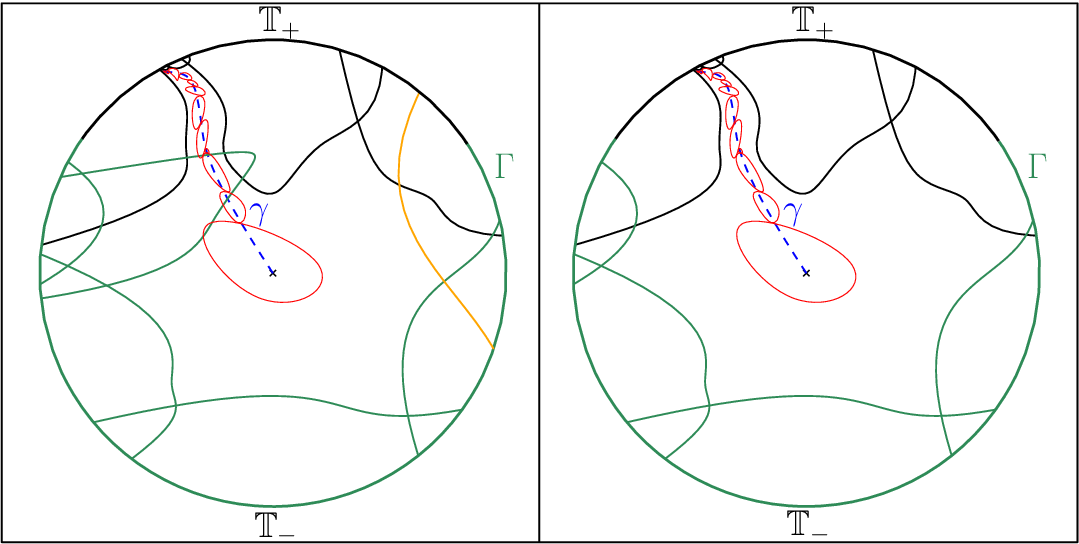}
  \caption{Some selected Brownian excursions, in black for $\be^{\intens}_{\mathbb{T}_+,\mathbb{T}_+,\D},$ in green for $\widehat{\be}^{\intens}_{\Gamma,\Gamma,\D},$ and in orange for the excursions of $\widehat{\be}^{\intens}$ starting on $\mathbb{T}_-$ and ending on $\Gamma^\ch,$ or starting on $\Gamma^\ch$ and ending on $\mathbb{T}_-.$ In blue the path $\gamma$ and in red the clusters of the loop soup at level $\lambda(\kappa)$ hitting $\gamma.$ On the left the event $E_2$ does not occur since there is a green trajectory hitting a red cluster, and the event $E_3$ also does not occur since there is an orange trajectory. On the right the events $E_2$ and $E_3$ occur, and thus $\gamma$ is a path from $0$ to $\partial\D$ in  $\V^{\intens,\lambda(\kappa)}.$} 
  \label{F:Lemma4.1}
\end{figure}

Note that it follows from the local finiteness of loop clusters, see \cite[Lemma~9.7 and Proposition~11.1]{sheffield-werner}, that both $\overline{A_{\lambda(\kappa)}}\cap\partial\D\subset\mathring{\mathbb{T}}_-$ and $\overline{\gamma_{\lambda(\kappa)}}\cap\partial\D\subset\mathring{\Gamma^\ch}.$ Therefore, conditionally on the loop soup, by Lemma~\ref{lem:excarerestriction}, the probabilities of $E_1,$ and of $E_2$ conditionally on $\be^{\intens}_{\mathbb{T}_+,\mathbb{T}_+,\D},$ can both be computed using \eqref{eq:onesidedrestriction} (with a random set to avoid), and in particular we see that these probabilities are strictly positive. Moreover, the probability of $E_3$ conditionally on the loop soup and $\be^{\intens}_{\mathbb{T}_+,\mathbb{T}_+,\D}$ is positive since the restriction of the excursion measure $\mu$ to excursions starting on a closed arc and ending on a disjoint closed arc is finite, see below (5.10) in \cite{lawler2005conformally}. Since the events $E_2$ and $E_3$ are independent conditionally on $\be^{\intens}_{\mathbb{T}_+,\mathbb{T}_+,\D}$ and the loop soup, we are done.
\end{proof}

We now turn to the subcritical case. As we now explain, when $u\geq(8-\kappa)\pi/16$ it is easy to construct from Lemmas~\ref{l.brownianexcursionandlooprestriction} and \ref{lem:SLE-ka-r} a curve in $\be^{\intens,\lambda(\kappa)}$ which surrounds $B(r).$ 

\begin{lemma}\label{lem:2}
Suppose $\kappa\in{[8/3,4]}$ and $\intens\geq(8-\kappa)\pi/16.$ Then $\Pm\big(B(r)\stackrel{\CV^{\intens,\lambda(\kappa)}}{\longleftrightarrow}\partial\D\big)=0$ for all $r\in{[0,1)}.$ 
\end{lemma}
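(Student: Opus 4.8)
The plan is to show that almost surely $\be^{\intens,\lambda(\kappa)}$ contains a curve that encircles $B(r)$, so that the connected component of $\CV^{\intens,\lambda(\kappa)}$ containing $B(r)$ cannot reach $\partial\D$. The curve will be the boundary of a suitably chosen excursion cloud: by Lemmas~\ref{l.brownianexcursionandlooprestriction} and \ref{lem:SLE-ka-r}, when $\intens/\pi\geq(8-\kappa)/16$ this boundary is an $\mathrm{SLE}_\kappa(\rho)$ curve that a.s.\ does not touch the boundary arc on which the cloud is based, and the point is to pick the domain so that ``not touching that arc'' becomes ``winding around $B(r)$''.

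To do this I would work in a slit domain. Fix $\beta\in(0,\pi/4)$ and, for $\theta\in\{0,\pi\}$, set $J_\theta:=\{e^{i\psi}:\psi\in[\theta-\beta,\theta+\beta]\}$ (so $\overline{J_0}\cap\overline{J_\pi}=\varnothing$), $\Gamma_\theta:=\partial\D\setminus\mathring{J_\theta}$, $S_\theta:=\{te^{i\theta}:r\leq t\leq1\}$, and $D_\theta:=\D\setminus(\overline{B(r)}\cup S_\theta)$. Since $\overline{B(r)}\cup S_\theta$ is a connected compact set meeting $\partial\D$ only at $e^{i\theta}$, $D_\theta$ is a simply connected Jordan domain with $\overline{D_\theta}=\overline\D\setminus B(r)$ and $\Gamma_\theta$ a closed arc of $\partial D_\theta\cap\partial\D$ different from $\partial\D$. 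Applying Lemma~\ref{l.brownianexcursionandlooprestriction} with $D=D_\theta$, $\Gamma=\Gamma_\theta$, the image under $\phi_{\Gamma_\theta}^{D_\theta}$ of $\eta_\theta:=\partial_{\Gamma_\theta}\be^{\intens,\lambda(\kappa)}_{\Gamma_\theta,\Gamma_\theta,D_\theta}$ has the law of an $\mathrm{SLE}_\kappa(\rho_\kappa(\intens/\pi))$ trace, so by Lemma~\ref{lem:SLE-ka-r} it a.s.\ misses $(-\infty,0)=\phi_{\Gamma_\theta}^{D_\theta}(\mathring{\Gamma_\theta})$; hence a.s.\ $\eta_\theta\cap\mathring{\Gamma_\theta}=\varnothing$. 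Moreover $\eta_\theta\subset\be^{\intens,\lambda(\kappa)}_{\Gamma_\theta,\Gamma_\theta,D_\theta}\subset\be^{\intens,\lambda(\kappa)}$ (the excursions and loops defining the former are among those defining the latter) and $\eta_\theta\subset\overline{D_\theta}=\overline\D\setminus B(r)$; and, since $\rho_\kappa(\intens/\pi)>-2$ and $\kappa\leq4$, $\eta_\theta$ is a simple curve joining the two endpoints $e^{i(\theta\pm\beta)}\in\partial\D$ of $\Gamma_\theta$, i.e.\ a crosscut of $\D$.

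Next I would check, by the same kind of planar‑topology reasoning as in the proof of Lemma~\ref{lem:1} (with the sub‑ and supercritical cases interchanged), that $\eta_\theta$ separates $\overline{B(r)}$ from $\partial\D\setminus\overline{J_\theta}$: by \eqref{e.sepbry}, $\eta_\theta$ bounds the filling $F_{D_\theta,\Gamma_\theta}(\be^{\intens,\lambda(\kappa)}_{\Gamma_\theta,\Gamma_\theta,D_\theta})$, which a.s.\ contains $\mathring{\Gamma_\theta}$ (the excursion endpoints being dense there) but does not cover all of $\partial B(r)\subset\partial D_\theta\setminus\Gamma_\theta$; together with $\eta_\theta\cap\mathring{\Gamma_\theta}=\varnothing$ this forces the component $U_\theta$ of $\D\setminus\eta_\theta$ containing $B(r)$ to satisfy $\overline{U_\theta}\cap\partial\D\subset\overline{J_\theta}$. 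To conclude, recall (above \eqref{def:u*c}) that the event $B(r)\stackrel{\CV^{\intens,\lambda(\kappa)}}{\longleftrightarrow}\partial\D$ holds iff the closure of the component $W$ of $\CV^{\intens,\lambda(\kappa)}=\D\setminus\be^{\intens,\lambda(\kappa)}$ containing $B(r)$ meets $\partial\D$. Since $\be^{\intens,\lambda(\kappa)}$ is closed, $W$ is open; being connected, disjoint from $\eta_0\cup\eta_\pi\subset\be^{\intens,\lambda(\kappa)}$, and containing $B(r)\subset U_0\cap U_\pi$, it lies in $U_0\cap U_\pi$, so $\overline W\cap\partial\D\subset\overline{J_0}\cap\overline{J_\pi}=\varnothing$ a.s., giving $\Pm\big(B(r)\stackrel{\CV^{\intens,\lambda(\kappa)}}{\longleftrightarrow}\partial\D\big)=0$ for all $r\in[0,1)$.

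I expect the step needing the most care to be the planar‑topology verification in the third paragraph: that the filling boundary $\eta_\theta$ genuinely goes \emph{around} $B(r)$, i.e.\ that of the two sides of the crosscut $\eta_\theta$ the one carrying $B(r)$ is precisely the one whose boundary arc on $\partial\D$ is the small arc $\overline{J_\theta}$. This is exactly what the slit $S_\theta$, reaching from $\partial B(r)$ to $\partial\D$, is designed to force, since it makes $\partial B(r)$ a part of $\partial D_\theta\setminus\Gamma_\theta$. Using two slits at $\theta=0,\pi$ rather than one is just a convenient device to upgrade the bound $\overline W\cap\partial\D\subset\overline{J_\theta}$ to $\overline W\cap\partial\D=\varnothing$. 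Two routine points also need noting along the way: that $D_\theta$ is an admissible domain for Lemma~\ref{l.brownianexcursionandlooprestriction}, and that $\rho_\kappa(\alpha)>-2$ for every $\alpha>0$ (immediate from \eqref{eq:rhokappaalpha}), which is what makes the $\mathrm{SLE}_\kappa(\rho)$ curve simple.
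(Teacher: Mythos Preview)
Your approach is the same as the paper's: restrict to a simply connected subdomain of the annulus $\D\setminus\overline{B(r)}$, identify the separating boundary with an $\mathrm{SLE}_\kappa(\rho_\kappa(u/\pi))$ curve via Lemma~\ref{l.brownianexcursionandlooprestriction}, invoke Lemma~\ref{lem:SLE-ka-r} to see it avoids $\mathring\Gamma$, and use two such constructions to cover all of $\partial\D$. The paper simply takes the annular sectors $D_k=\{tx:t\in(r,1),\,x\in\Gamma_k\}$ with $\Gamma_k=\{e^{i\theta}:\theta\in[-\pi/4,5\pi/4]+k\pi\}$, $k\in\{0,1\}$; since the chord lies in $\overline{D_k}$ it is forced to go the long way around $B(r)$, and $\mathring\Gamma_0\cup\mathring\Gamma_1=\partial\D$ finishes in one line.

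One correction to your version: $D_\theta=\D\setminus(\overline{B(r)}\cup S_\theta)$ is \emph{not} a Jordan domain --- its topological boundary $\partial\D\cup S_\theta\cup\partial B(r)$ is not a simple closed curve (the slit is traversed twice in the prime-end boundary). Thus Lemma~\ref{l.brownianexcursionandlooprestriction} as stated does not apply, and your assertion ``$D_\theta$ is a simply connected Jordan domain'' is false. This is easy to repair (thicken the slit to a thin wedge, or just use annular sectors as the paper does), and the rest of your argument --- in particular the separation step, which amounts exactly to ``the chord cannot cross the slit, hence must encircle $B(r)$'' --- is correct.
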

\begin{proof}
For $k\in{\{0,1\}},$ let $\Gamma_k=\{e^{i\theta}:\,\theta\in{[-\pi/4,\, 5\pi/4]+k\pi}\}$ and $D_k=\{tx, \, t\in{(r,1)}, \, x\in{\Gamma_i}\}.$ By Lemmas~\ref{l.brownianexcursionandlooprestriction} and \ref{lem:SLE-ka-r}, the chord $\partial_{\Gamma_k}\be_{\Gamma_k,\Gamma_k,D_k}^{\intens,\lambda(\kappa)}$ almost surely does not intersect $\Gamma_k$ except at the start and end points, and it does not intersect $B(r)$ by definition of $D_k.$ Therefore, it therefore separates $B(r)$ from $\mathring{\Gamma_k},$ and since $\mathring{\Gamma_0}\cup\mathring{\Gamma_1}=\partial\D,$ we are done. 
\end{proof}

Note that while our proof of Lemma~\ref{lem:2} exploits the restriction property from Lemma~\ref{lem:resexc}, the proof of Lemma~\ref{lem:1} does not use it.

\begin{proof}[Proof of Theorem~\ref{t.mainthm}] The proof follows immediately from Lemma~\ref{lem:1} and Lemma~\ref{lem:2} noting that $\Pm\big(B(r)\stackrel{\CV^{\intens,\lambda(\kappa)}}{\longleftrightarrow}\partial\D\big)\geq\Pm\big(0\stackrel{\CV^{\intens,\lambda(\kappa)}}{\longleftrightarrow}\partial\D\big).$
\end{proof}

\section{Appendix: Coupling of random walk and Brownian motion}
\label{sec:KMT}
\renewcommand*{\thetheorem}{B.\arabic{theorem}}
\renewcommand{\theequation}{B.\arabic{equation}}

In order to obtain our approximation result of Brownian excursions by random walk excursions, see Theorem~\ref{the:couplingdiscontexc1}, one first needs to approximate one Brownian motion by one random walk. To do this we shall utilize the KMT coupling \cite{MR375412}, and refer to \cite[Theorem~7.6.1]{MR2677157} for the version that we use here. For a random walk $X,$ we define $X_t$ for non-integer $t$ by linear interpolation. 
\begin{theorem}
\label{the:KMT}
There exists $c<\infty$ and a coupling $\Pm$ between a Brownian motion $B$ on $\R^2$ and a random walk $Y$ on $\Z^2$ both starting at $0$ satisfying for all $n\in\N$
\begin{equation*}
    \Pm \left(\max_{0 \leq t \leq 2n^3 } \left|\frac{1}{\sqrt{2}} B_t-Y_t  \right| > c\log(n) \right) \leq \frac{c}{n}.
\end{equation*}
\end{theorem}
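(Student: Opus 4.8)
The final statement to prove is Theorem~\ref{the:KMT}, the KMT coupling for a $2$-dimensional Brownian motion and a $2$-dimensional random walk over the time horizon $[0,2n^3]$.

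The plan is to deduce this two-dimensional statement from the standard one-dimensional KMT (Komlós--Major--Tusnády) strong approximation theorem, which in the form we need is stated in \cite[Theorem~7.6.1]{MR2677157}. First I would recall the one-dimensional result: there is a coupling of a one-dimensional Brownian motion $(\beta_t)_{t\ge0}$ with a one-dimensional simple random walk $(S_k)_{k\ge0}$ (with the standard variance normalization, so that $S_k$ has variance $k$ and $\beta_1$ has variance $1$), such that for some absolute constants $c_1,c_2,c_3$ and all $N$ and all $x>0$,
\begin{equation*}
\Pm\Big(\max_{0\le k\le N}|S_k-\beta_k|> c_1\log N + x\Big)\le c_2 e^{-c_3 x}.
\end{equation*}
Here $\beta_t$ for non-integer $t$ is the interpolated process; since Brownian motion and the linearly interpolated walk both have modulus of continuity controlled on unit intervals, passing from the discrete-time bound to the bound with $\max_{0\le t\le N}$ costs only a constant factor and an extra additive term absorbed into $c_1\log N$ (for $N$ at least $2$, say). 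Taking $x = c\log N$ for a suitably large constant $c$, the right-hand side becomes at most $c_2 N^{-c c_3}$, and choosing $c$ large enough makes the exponent exceed $1$; this is the one-dimensional analogue of the claimed bound.

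Next I would build the two-dimensional coupling by running two \emph{independent} copies of the one-dimensional coupling, one for each coordinate. Write $B_t=(B_t^{(1)},B_t^{(2)})$ and $Y_t=(Y_t^{(1)},Y_t^{(2)})$. A $2$-dimensional standard Brownian motion has i.i.d.\ coordinates, each a standard one-dimensional Brownian motion, so coupling $\tfrac{1}{\sqrt2}B^{(i)}_t$ to $Y^{(i)}_t$ via the one-dimensional KMT for each $i\in\{1,2\}$ independently produces a valid $2$-dimensional Brownian motion and a valid $2$-dimensional simple random walk $Y$ on $\Z^2$ (here one must check that the pair of independent one-dimensional lazy/nonlazy walks assembled coordinatewise indeed gives the usual nearest-neighbor walk on $\Z^2$ in the convention used by \cite{MR2677157}, or else absorb the discrepancy into the constant; in the convention where the $2$-dimensional walk moves each coordinate by an independent $\pm1$, this is immediate, and the factor $1/\sqrt2$ in the statement is exactly the right normalization so that $\tfrac{1}{\sqrt 2}B_t$ has the same coordinate variance as $Y_t$). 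Applying the one-dimensional tail bound above with $N=2n^3$, so that $\log N = \log 2 + 3\log n \le c'\log n$ for $n\ge2$, gives for each coordinate
\begin{equation*}
\Pm\Big(\max_{0\le t\le 2n^3}\big|\tfrac{1}{\sqrt2}B^{(i)}_t - Y^{(i)}_t\big| > c''\log n\Big) \le \frac{c''}{n}
\end{equation*}
for a suitable constant $c''$, after choosing the multiple of $\log n$ large enough. Finally, since $|z|\le |z^{(1)}|+|z^{(2)}|$ for $z\in\R^2$, a union bound over the two coordinates gives
\begin{equation*}
\Pm\Big(\max_{0\le t\le 2n^3}\big|\tfrac{1}{\sqrt2}B_t - Y_t\big| > 2c''\log n\Big) \le \frac{2c''}{n},
\end{equation*}
which is the claimed estimate after renaming the constant $c$.

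There is no serious obstacle here: the content is entirely in the cited one-dimensional KMT theorem, and the rest is bookkeeping. The only points requiring a little care are (i) matching the normalization conventions — the variance-$2$ Brownian motion of \cite{MR2677157} versus the standard one, which is precisely why the factor $1/\sqrt 2$ appears and why we should state the one-dimensional input in the normalization where walk and (rescaled) Brownian motion have matching increments; (ii) upgrading the discrete-time maximum to a continuous-time maximum over $[0,2n^3]$, handled by the uniform modulus of continuity of Brownian motion on unit time intervals together with the fact that the interpolated walk has increments of size $1$ per unit time, both of which contribute only $O(\log n)$ (in fact $O(1)$ for the walk and $O(\sqrt{\log n})$ for Brownian fluctuations) to the error; and (iii) converting the exponential tail $c_2 e^{-c_3 x}$ into the $c/n$ bound by taking $x$ a large enough multiple of $\log n$. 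None of these is more than routine, so I would present the proof compactly, citing \cite[Theorem~7.6.1]{MR2677157} for the one-dimensional statement and carrying out the coordinatewise assembly and union bound explicitly.
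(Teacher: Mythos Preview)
The paper does not prove this statement at all; it simply quotes it as \cite[Theorem~7.6.1]{MR2677157}, which already gives the two-dimensional coupling in the form stated. So there is nothing to compare your argument against.

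That said, your attempted derivation from the one-dimensional KMT theorem contains a genuine gap. The two coordinates of the \emph{nearest-neighbor} simple random walk on $\Z^2$ used throughout the paper are not independent: at each step exactly one of them moves. Hence you cannot run two independent one-dimensional KMT couplings ``coordinatewise'' and declare the resulting pair $(Y^{(1)},Y^{(2)})$ to be the required walk. Two independent one-dimensional simple random walks assembled coordinatewise give the diagonal walk with steps $(\pm1,\pm1)$, not the nearest-neighbor walk, and this is not a discrepancy that can be ``absorbed into the constant''. The standard fix, which is what underlies the cited reference, is the $45^{\circ}$ rotation: set $U_k:=Y_k^{(1)}+Y_k^{(2)}$ and $V_k:=Y_k^{(1)}-Y_k^{(2)}$, note that $(U_k)$ and $(V_k)$ \emph{are} independent one-dimensional simple random walks, apply two independent one-dimensional KMT couplings to obtain $\sup_{t\le 2n^3}|\beta^U_t-U_t|\vee|\beta^V_t-V_t|\le c\log n$ with probability $\ge 1-c/n$, and then rotate back by setting $B^{(1)}_t:=(\beta^U_t+\beta^V_t)/\sqrt2$, $B^{(2)}_t:=(\beta^U_t-\beta^V_t)/\sqrt2$. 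This $B$ is a standard two-dimensional Brownian motion, and since $Y^{(1)}=(U+V)/2$, $Y^{(2)}=(U-V)/2$ one gets exactly $\big|\tfrac{1}{\sqrt2}B_t-Y_t\big|\le c'\log n$. With this correction, the rest of your bookkeeping (passing to the continuous-time maximum, choosing $x$ a large multiple of $\log n$) is fine.
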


We now turn Theorem~\ref{the:KMT} into a coupling between Brownian motions on $\D,$ that is killed on hitting $\partial\D,$ and random walks on $\D_n,$ that is killed on hitting $\partial\D_n.$

\begin{corollary}
\label{cor:KMTonD}
There exists $c<\infty$ such that, for all $n\in\N,$ $z\in{\D}$ and $x\in{\D_n},$ there exists a coupling $\Pm_{z,x}^{(n)}$ between a two-dimensional Brownian motion $Z$ on $\D$ starting in $z$ and a random walk $X^{(n)}$ on $\D_n$ starting in $x$ satisfying
\begin{equation*}
    \Pm_{z,x}^{(n)}\left(\sup_{t\in{[0,\overline{\tau}_{\partial\D}]}}|Z_t-\widehat{X}_t^{(n)}|>|z-x|+ \frac{c\log(n)}{n}\right)\leq \frac{c}{n},
\end{equation*}
where $\widehat{X}_t^{(n)}:=X_{2tn^2}^{(n)}$ for all $t\geq0$ and $\overline{\tau}_{\partial\D}:=\inf\{t\geq0:|\widehat{X}_t^{(n)}|\vee|Z_t|=\infty\}$ is the first time at which either $\widehat{X}^{(n)}$ or $Z$ are killed, with the convention that, after being killed, $X$ and $Z$ are both equal to a cemetery point $\Delta$ with $|\Delta|=\infty.$
\end{corollary}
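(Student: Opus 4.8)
\textbf{Proof plan for Corollary~\ref{cor:KMTonD}.}
The plan is to transfer the KMT coupling of Theorem~\ref{the:KMT} from the whole plane to the disk by simply running the processes until the first exit. First I would apply Theorem~\ref{the:KMT} to obtain, on a common probability space, a two-dimensional Brownian motion $B$ on $\R^2$ and a random walk $Y$ on $\Z^2$, both started at $0$, with
\begin{equation*}
    \Pm\Big(\max_{0\le t\le 2n^3}\big|\tfrac1{\sqrt2}B_t - Y_t\big| > c\log(n)\Big)\le \frac{c}{n}.
\end{equation*}
Then I would set $Z_t := z + \tfrac1{\sqrt2}B_{t}$ (run until it hits $\partial\D$) and, after the usual rescaling, $\widehat X^{(n)}_t := x + n^{-1}Y_{2tn^2}$ (run until it hits $\partial\D_n$); note $n^{-1}Y_{2\,\cdot\,n^2}$ is exactly a rescaled simple random walk on $n^{-1}\Z^2$ by Brownian scaling of the time parametrization, and linear interpolation of $Y$ passes through the rescaling. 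With these definitions, on the good event of probability $\ge 1-c/n$ we have, for the relevant range of $t$,
\begin{equation*}
    |Z_t - \widehat X^{(n)}_t| \le |z-x| + \big|\tfrac1{\sqrt2}B_{2tn^2} - Y_{2tn^2}\big| \le |z-x| + \frac{c\log(n)}{n},
\end{equation*}
after dividing the KMT bound by $n$ to account for the spatial rescaling.

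The one point that needs care is the time horizon and the definition of $\overline\tau_{\partial\D}$ together with the cemetery convention. I would argue that, with overwhelming probability (say $\ge 1-c/n$, absorbing the constant), a random walk on $\D_n$ exits $\D_n$ before time $2n^3$ in rescaled time, i.e.\ before $Y$ makes $2n^3\cdot 2n^2$ steps — this is a crude exit-time bound for a bounded domain, following for instance from the fact that the number of steps to exit a ball of radius $n$ has exponential tails at scale $n^2$, so $2n^5$ steps is far more than enough. Similarly for $Z$. Hence on the intersection of this event with the KMT good event, the displacement estimate holds up to $\overline\tau_{\partial\D}$, where beyond the exit time the convention $|\Delta|=\infty$ makes the bound vacuous on the side that has been killed while the other process, by the uniform closeness, is then forced to be within $|z-x|+c\log(n)/n$ of $\partial\D$ (so it too is killed almost immediately or the bound is trivially maintained by the cemetery convention on whichever coordinate is $\Delta$). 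Finally I would take a union bound over the two bad events and relabel constants.

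The main obstacle, such as it is, is bookkeeping rather than mathematics: one must check that the convention ``$|Z_t - \Delta| = \infty$'' is handled consistently, i.e.\ that $\overline\tau_{\partial\D}$ is the \emph{first} killing time so that up to that time neither process is yet at $\Delta$ and the genuine KMT estimate applies; and that after $\overline\tau_{\partial\D}$ nothing needs to be proved. A secondary technical point is that the KMT coupling is stated for the walk started at a lattice point and the Brownian motion at the origin, so the $|z-x|$ term is introduced purely by the fixed initial offset and is not affected by the coupling; this is immediate from the triangle inequality as displayed above. I do not anticipate needing any properties of $\D$ beyond boundedness, so the argument would go through verbatim for any bounded domain, which is consistent with the way Theorem~\ref{the:couplingdiscontexc1} is later used.
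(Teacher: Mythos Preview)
Your approach is exactly the paper's: rescale the KMT coupling, then argue that the first exit from $\D$ (resp.\ $\D_n$) occurs within the KMT time window with probability $\ge 1-c/n$. Two bookkeeping slips should be fixed. First, your definition $Z_t:=z+\tfrac{1}{\sqrt2}B_t$ is neither a \emph{standard} Brownian motion nor on the same clock as $\widehat X^{(n)}$; the correct choice (and what your displayed inequality silently uses) is $Z_t:=z+\tfrac{1}{\sqrt2\,n}B_{2tn^2}$, which is a standard planar Brownian motion and makes $|Z_t-\widehat X^{(n)}_t|\le |z-x|+\tfrac{1}{n}\big|\tfrac{1}{\sqrt2}B_{2tn^2}-Y_{2tn^2}\big|$ honest. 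Second, the KMT window $0\le t\le 2n^3$ for $(B,Y)$ becomes rescaled time $[0,n]$ (i.e.\ $2n^3$ walk steps), not rescaled time $2n^3$ or $4n^5$ steps; so what you must check is that $\overline\tau_{\partial\D}\le n$ with probability $\ge 1-c/n$, which indeed follows from the sub-Gaussian/exponential tail of the exit time from a bounded domain (the paper simply bounds $\Pm(\overline\tau_{\partial\D}\ge n)$ via the Brownian motion). Finally, since the supremum in the statement runs only over $[0,\overline\tau_{\partial\D}]$, nothing needs to be argued ``beyond'' the first killing time; your paragraph about the cemetery convention is unnecessary.
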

\begin{proof}
    Note that, under the coupling from Theorem~\ref{the:KMT}, $B^{(n)}:=(\frac1{\sqrt{2}n}B_{2tn^2}+z)_{t\geq0}$ is a Brownian motion starting in $z,$  $Y^{(n)}:=(\frac1nY_k+x)_{k\geq0}$ is a two-dimensional random walk on $\frac1n\Z^2$ starting in $x,$ and
\begin{equation*}
    \Pm\left(\sup_{t\in{[0,n]}}|B^{(n)}_t-\widehat{Y}_t^{(n)}|>|x-y|+ \frac{c\log(n)}{n}\right)\leq \frac{c}{n},
\end{equation*}
where $\widehat{Y}_t^{(n)}:=Y^{(n)}_{2tn^2}$ for all $t\geq0.$ Letting $\overline{\tau}_{\partial\D}:=\inf\{t\geq0:|B^{(n)}_t|\vee|\widehat{Y}_t^{(n)}|\geq1\},$ we moreover have
\begin{equation*}
\Pm(\overline{\tau}_{\partial\D}\geq n)\leq \Pm(|B^{(n)}_n|\leq1)\leq\exp(-cn).
\end{equation*}
We can conclude by defining under some probability $\Pm_{z,x}^{(n)}$ the processes $(Z,X^{(n)})$  with the same law as $(B^{(n)},Y^{(n)})$ killed respectively on hitting $\partial \D$ and $\partial \D_n$ under $\Pm.$
\end{proof}

In order to couple discrete and continuous excursions, we need to use the coupling  from Corollary~\ref{cor:KMTonD} until the last exit time $\overline{L}_{K}:=\sup\{t\geq0:\widehat{X}_t^{(n)}\in{K}\text{ or }Z_t\in{K}\}$ of a set $K\Subset\D$ by both $\widehat{X}^{(n)}$ and $Z,$ with the convention $\sup\varnothing=0.$ Recall also the convention $Z_t=\Delta$ for all $t$ after $Z$ has been killed with $|\Delta|=\infty,$ and similarly for $X^{(n)}.$

\begin{lemma}
\label{lem:KMTuntilboundary}
Under the coupling from Corollary~\ref{cor:KMTonD}, there exists $c>0$ and $s_0>0$ such that for all $K\Subset\D$ $s\geq s_0,$ $z\in{\D},$ $x\in{\D_n}$ with $|z-x|\leq \frac{s\log(n)}{2n},$ 
\begin{equation}
\label{eq:KMTuntilboundary}
    \Pm_{z,x}^{(n)}\left(\sup_{t\in{[0,\overline{L}_{K}]}}|Z_t-\widehat{X}_t^{(n)}|> \frac{s\log(n)}{n}\right)\leq \frac{cs\log(n)}{n(1-r)},
\end{equation}
where $r=\sup\{|z|:\,z\in{K}\}.$
\end{lemma}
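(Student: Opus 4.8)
The plan is to run the coupling from Corollary~\ref{cor:KMTonD} and to control the excursion of both processes \emph{after} $\tau_K$ (more precisely, after the last time either one is in $K$) using the fact that far away from $K$, i.e.\ in the annulus $\{r<|x|<1\}$, a Brownian motion and its KMT partner are close and cannot re-enter $K$ once they have wandered off. First I would invoke Corollary~\ref{cor:KMTonD} to get, on an event $\Omega_1$ of probability at least $1-c/n$, the uniform bound $\sup_{t\in[0,\overline\tau_{\partial\D}]}|Z_t-\widehat X_t^{(n)}|\le |z-x|+c\log(n)/n\le (s/2+c)\log(n)/n$, which is $\le s\log(n)/n$ once $s\ge s_0:=2c$. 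On $\Omega_1$ the bound \eqref{eq:KMTuntilboundary} would be immediate if $\overline L_K\le\overline\tau_{\partial\D}$ always held; the issue is that $\overline L_K$ refers to the \emph{last} visit of $K$ by either $\widehat X^{(n)}$ or $Z$, and a priori this could happen after one of the two processes has already been killed on the boundary and set to $\Delta$ (so $|Z_t-\widehat X_t^{(n)}|=\infty$). So the real content is to show that, with the stated probability, $\overline L_K\le\overline\tau_{\partial\D}$, i.e.\ neither process visits $K$ after the first time one of them hits $\partial\D$ resp.\ $\partial\D_n$.

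The key step is therefore a ``no return to $K$ after exiting the annulus'' estimate. On $\Omega_1$, if $\widehat X^{(n)}$ hits $\partial\D_n$ at some time $t_0=\overline\tau_{\partial\D}$, then at that moment $Z_{t_0}$ is within $s\log(n)/n$ of $\partial\D$, hence at distance $\ge 1-r-s\log(n)/n$ from $K$; I would then need the probability that the Brownian motion $Z$ (run from a point near the boundary) hits $K$ before being killed on $\partial\D$ is small. By the gambler's-ruin-type formula \eqref{eq:hittingBM} (applied with concentric discs around $0$ containing $K$), this probability is of order $\log(1/(1-s\log(n)/n))/\log(1/r)\asymp s\log(n)/(n(1-r))$, using $r$ bounded away from $1$ would give the clean bound; for general $r<1$ one keeps the $\log(1/r)$ in the denominator, which is $\gtrsim 1-r$. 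The symmetric statement (the roles of $Z$ and $\widehat X^{(n)}$ exchanged, using the discrete hitting estimate, which follows from \eqref{e.greenest} or a discrete gambler's ruin, or simply from the Beurling estimate Lemma~\ref{lem:beurling}) handles the case where $Z$ is killed first. Taking a union bound over these two events and $\Omega_1^c$ gives the total failure probability $\le c/n + cs\log(n)/(n(1-r))\le c's\log(n)/(n(1-r))$ since $s\ge s_0$ and $(1-r)\le 1$.

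I expect the main obstacle to be a bookkeeping one rather than a deep one: carefully setting up which process is ``responsible'' for the last visit to $K$ and making sure the estimate is applied at the right stopping time. One clean way to organize it: let $T$ be the first time that $\max(|Z_t|,|\widehat X_t^{(n)}|)\ge \frac{1+r}{2}$; on $\Omega_1$, at time $T$ both processes are at distance $\ge \frac{1-r}{2}-s\log(n)/n$ from $K$, and then one estimates, separately for the Brownian motion and for the random walk, the chance of returning to $K$ before absorption, each via \eqref{eq:hittingBM} and its discrete analogue/Beurling. Since after $T$ neither process returns to $K$ (on the good event), $\overline L_K\le T\le \overline\tau_{\partial\D}$, and \eqref{eq:KMTuntilboundary} follows from the Corollary~\ref{cor:KMTonD} bound restricted to $[0,\overline\tau_{\partial\D}]$. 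The only genuinely quantitative input is the elementary inequality $-\log(1-\delta)\le c\delta$ for $\delta$ bounded away from $1$, applied with $\delta$ of order $s\log(n)/n$ (which we may assume $<1/2$, else the claimed bound exceeds $1$ and there is nothing to prove).
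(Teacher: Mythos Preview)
Your core idea in the second paragraph is correct and essentially matches the paper's proof: reduce to showing $\overline{L}_K \le \overline{\tau}_{\partial\D}$ with the stated probability, and control this via a gambler's-ruin bound for the surviving process started within $s\log(n)/n$ of $\partial\D$. The paper organizes this exactly so: on the Corollary~\ref{cor:KMTonD} event, $\{\overline{L}_K \ge \overline{\tau}_{\partial\D}\}$ forces either $\{L_K(Z) \ge \tau_{B(1-\eps_n)^c}(Z)\}$ or $\{L_{K_n}^{(n)} \ge \tau^{(n)}_{B_n(1-\eps_n)^c}\}$ with $\eps_n = s\log(n)/n$, and each is bounded via the strong Markov property and~\eqref{eq:hittingBM} (resp.\ its discrete analogue from \cite{MR3737923}) by $c\eps_n/(1-r)$.

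However, your ``clean way to organize it'' in the third paragraph contains a genuine error. With intermediate radius $(1+r)/2$, the return probability
\[
\Pm_{(1+r)/2}\big(\tau_{B(r)} < \tau_{\partial\D}\big) \;=\; \frac{\log\!\big(2/(1+r)\big)}{\log(1/r)}
\]
is a positive constant depending only on $r$, not $O(s\log(n)/(n(1-r)))$. Hence the event ``neither process returns to $K$ after $T$'' fails with probability bounded away from $0$, and this organization does not yield the bound. The fix is precisely what you already wrote in your second paragraph: the intermediate radius must be $1-\eps_n$ (or similarly within $O(\eps_n)$ of the boundary), so that the gambler's-ruin numerator is $-\log(1-\eps_n)\asymp \eps_n$ rather than a constant. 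With that single correction your argument coincides with the paper's.
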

\begin{proof}
Abbreviate $\eps_n=s\log(n)/n,$ and note that we can assume without loss of generality that $1-r\geq \eps_n.$ By Corollary~\ref{cor:KMTonD}, we have for all $s$ large enough
\begin{equation*}
    \Pm_{z,x}^{(n)}(\overline{L}_{K}\geq \overline{\tau}_{\partial\D})\leq \Pm_z\left(L_{K}\geq \tau_{B(1-\eps_n)^\ch}\right)+ \Pm_x^{(n)}\left(L_{K_n}^{(n)}\geq \tau_{B_n(1-\eps_n)^\ch}^{(n)}\right)+\frac{c}{n}.
\end{equation*}
 It moreover follows from \eqref{eq:hittingBM} and the strong Markov property that
\begin{equation*}
    \Pm_z\left(L_{B(r)}\geq \tau_{ B(1-\eps_n)^\ch}\right)\leq\frac{\log(1-\eps_n)}{\log(r)}\leq \frac{cs\log(n)}{n(1-r)}
\end{equation*}
since $1-x \leq \log(1/x), x \in (0,1)$ and $|\log(1-x)|<2x$ for $x$ small enough. Using a similar formula for the random walk, see for instance \cite[Lemma~2.1]{MR3737923}, we have similarly
\begin{equation*}
    \Pm_x^{(n)}\left(L_{B_n(r)}^{(n)}\geq \tau_{B_n(1-\eps_n)^\ch}^{(n)}\right)\leq\frac{\log(1-\eps_n)+c/(nr)}{\log(r)}\leq \frac{cs\log(n)}{n(1-r)}
\end{equation*}
for all $r\geq\frac12.$ Since the above probability is increasing in $r$ and $K\subset B(r),$
 we can conclude.
\end{proof}

Finally, to establish a coupling between the normalized equilibrium measures, we need a bound on the distance between the last exit time of a ball for $Z$ and for $X^{(n)},$ which bears some similarities with \cite[Proposition~7.7.1]{MR2677157}. Recall the definition of $L_B$ from above \eqref{e.eqmeas} and $L_{B_n}^{(n)}$ from \eqref{eq:deftauLn}.

\begin{lemma}\label{l.gtype}
Under the coupling from Corollary~\ref{cor:KMTonD}, there exists $c>0$ and $s_0<\infty$ such that for all $r\in{(1/2,1)}$ and $s\geq s_0,$ 
\begin{equation}
\label{e:boundlastexit}
\Pm_{0,0}^{(n)}\left(\left| {X}^{(n)}_{{L}_{B_n}^{(n)}}-Z_{L_{B }}\right|\geq \frac{s \log(n)}{n}  \right) \leq \frac{c }{s }+\frac{c\log(n)}{n(1-r)},
\end{equation}
where $B=B(r)$ and $B_n=B\cap\D_n.$
\end{lemma}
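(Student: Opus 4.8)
The goal is to couple the last exit point from $B(r)$ of a Brownian motion $Z$ with that of a random walk $X^{(n)}$, both started at the origin, under the KMT coupling of Corollary~\ref{cor:KMTonD}. The natural strategy is to run the coupling until the common last exit time $\overline{L}_{B(r)}$ from $B(r)$ (hit by either process), use Lemma~\ref{lem:KMTuntilboundary} to control the spatial discrepancy up to that time, and then argue that after $\overline{L}_{B(r)}$ the two processes cannot re-enter a slightly smaller ball $B(r-\delta_n)$ with $\delta_n$ of order $s\log(n)/n$, so that both genuine last exit points $Z_{L_{B(r)}}$ and $X^{(n)}_{L_{B_n}^{(n)}}$ are located in the thin annulus $B(r)\setminus B(r-\delta_n)$ and within $\delta_n$ of the position at time $\overline{L}_{B(r)}$.

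First, I would set $\epsilon_n = s\log(n)/(2n)$ and invoke Lemma~\ref{lem:KMTuntilboundary} with $K = \overline{B}(r)$ (and $x=z=0$, so the hypothesis $|z-x|\le \epsilon_n$ is trivially satisfied) to obtain, on an event $\mathcal{G}_1$ of probability at least $1 - cs\log(n)/(n(1-r))$, the bound $\sup_{t\le \overline{L}_{B(r)}}|Z_t - \widehat{X}^{(n)}_t|\le s\log(n)/(2n)$. Since $X^{(n)}$ is a nearest-neighbour walk on $n^{-1}\Z^2$, its increments over a single step are of size $1/n$, so up to a harmless additive $1/n$ (absorbed into constants) the discretized positions also stay this close. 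In particular the position $p := \widehat{X}^{(n)}_{\overline{L}_{B(r)}}$ and $q := Z_{\overline{L}_{B(r)}}$ satisfy $|p-q|\le s\log(n)/(2n)$, and by definition of $\overline{L}_{B(r)}$ at least one of $p,q$ lies on (or within $1/n$ of) $\partial B(r)$; hence both lie in the annulus $\overline{B}(r)\setminus B(r - s\log(n)/n)$.

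Second, I would control what happens after $\overline{L}_{B(r)}$. By the strong Markov property at time $\overline{L}_{B(r)}$, the process $Z$ started from $q$ (with $|q|\ge r - s\log(n)/n$) must avoid $B(r - 2s\log(n)/n)$ forever on the event that this is genuinely the Brownian last exit from $B(r)$ — but more to the point, the event that $Z$ from $q$ ever hits $B(r - 2s\log(n)/n)$ before $\partial\D$ has probability at most $\log(1-\epsilon_n)/\log(r-2\epsilon_n)$-type quantity by \eqref{eq:hittingBM}, which is $O(s\log(n)/(n(1-r)))$ for $r > 1/2$; the analogous bound for $X^{(n)}$ follows from the discrete hitting estimate (e.g.\ \cite[Lemma~2.1]{MR3737923}) as in the proof of Lemma~\ref{lem:KMTuntilboundary}. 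Call $\mathcal{G}_2$ the intersection of these two ``no re-entry'' events. On $\mathcal{G}_1\cap\mathcal{G}_2$, the true last exit point $Z_{L_{B(r)}}$ lies within $2s\log(n)/n$ of $q$ along the Brownian path (since $Z$ stays outside $B(r - 2s\log(n)/n)$ after $\overline{L}_{B(r)}$ and the last time it is in $\overline{B}(r)$ it is in this thin annulus), and likewise $X^{(n)}_{L_{B_n}^{(n)}}$ lies within $2s\log(n)/n$ of $p$; combined with $|p-q|\le s\log(n)/(2n)$ this gives $|Z_{L_{B(r)}} - X^{(n)}_{L_{B_n}^{(n)}}| \le cs\log(n)/n$ after adjusting the constant, which is not quite the form \eqref{e:boundlastexit} — so in fact I would run the whole argument with $s$ replaced by $s/c$ from the start, or equivalently rescale, to land on the clean statement. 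The probability of the bad event is at most $\Pm(\mathcal{G}_1^c) + \Pm(\mathcal{G}_2^c) + c/n \le cs\log(n)/(n(1-r)) + c/n$; the extra $c/s$ term in \eqref{e:boundlastexit} is needed to dominate the regime where $s$ is large and the $\log n$-free part of the discrepancy (genuinely of order $1/n$ times $s$) must be bounded — more precisely, the clean way to get both $c/s$ and $c\log(n)/(n(1-r))$ is to split: if $s \ge \sqrt{n(1-r)/\log n}$ the bound $c\log(n)/(n(1-r))$ is vacuous and one instead uses a crude $c/s$ estimate, exactly as in the final paragraph of the proof of Theorem~\ref{the:couplingdiscontexc1}.

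\textbf{Main obstacle.} The delicate point is the ``no re-entry after $\overline{L}_{B(r)}$'' step: one must be careful that the event of small discrepancy up to $\overline{L}_{B(r)}$ (which is a random time depending on \emph{both} paths and is \emph{not} a stopping time for either path alone) does not spoil the Markovian argument applied afterwards. The way around this is to condition on the pair $(p,q)$ and the value of $\overline{L}_{B(r)}$: given these, the future of $Z$ is a Brownian motion from $q$ and the future of $X^{(n)}$ is a fresh walk from $p$ (independence of the future given the position at a fixed time, then integrating over the random time), so the hitting estimates \eqref{eq:hittingBM} and its discrete analogue apply directly with $q,p$ in the thin annulus. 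A secondary technical nuisance is matching the discrete last exit time $L_{B_n}^{(n)}$ (defined via the integer-time walk) with the interpolated continuous-time process $\widehat{X}^{(n)}$; since consecutive integer-time positions differ by $1/n$, the interpolated path is in $\overline{B}(r)$ iff (up to $1/n$) the walk is, and the error is absorbed into the constants. With these two points handled, the estimate \eqref{e:boundlastexit} follows by assembling $\Pm(\mathcal{G}_1^c)$, $\Pm(\mathcal{G}_2^c)$, the $c/n$ from Corollary~\ref{cor:KMTonD}, and the $c/s$ term from the large-$s$ regime.
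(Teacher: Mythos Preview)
Your argument has a genuine gap in the second step. By definition of $\overline{L}_{B(r)}=\sup\{t:|\widehat X^{(n)}_t|\wedge|Z_t|\le r\}$, \emph{both} individual last exit times satisfy $L_B,\widehat L_{B_n}^{(n)}\le\overline{L}_{B(r)}$; in particular, after $\overline{L}_{B(r)}$ neither process ever returns to $B(r)$, so your event $\mathcal G_2$ (``no re-entry into $B(r-2s\log n/n)$ after $\overline{L}_{B(r)}$'') is trivially satisfied and carries no information. The real difficulty is entirely \emph{before} $\overline{L}_{B(r)}$: if, say, $\widehat L_{B_n}^{(n)}<L_B=\overline{L}_{B(r)}$, then on $\mathcal G_1$ the walk stays outside $B_n(r)$ on $[\widehat L_{B_n}^{(n)},L_B]$ and hence $Z$ stays outside $B(r-s\log n/n)$ there, but $Z$ is free to wander in the annulus $\D\setminus B(r-s\log n/n)$ and could travel a macroscopic angular distance before returning to $\partial B(r)$ at time $L_B$. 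Knowing that $Z_{L_B}\in\partial B(r)$ and $X^{(n)}_{L_{B_n}^{(n)}}\in\widehat\partial B_n(r)$ only pins down the \emph{radial} coordinate; it says nothing about the angular separation, so your conclusion $|Z_{L_B}-X^{(n)}_{L_{B_n}^{(n)}}|\le cs\log n/n$ does not follow from $\mathcal G_1\cap\mathcal G_2$.

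What the paper actually does is sandwich both last exit times between $\gamma_n^-:=L_{B(\rho_n^-)}(Z)$ and $\gamma_n^+:=L_{B(\rho_n^+)}(Z)$ with $\rho_n^\pm=r\pm 3c\log n/n$ (this holds on the coupling event), and then bound $\sup_{\gamma_n^-\le t\le\gamma_n^+}|Z_t-Z_{\gamma_n^+}|$. This oscillation bound is the substantive estimate: by time-reversal it becomes the probability that a Brownian motion started on $\partial B(\rho_n^+)$, conditioned to hit $B(\rho_n^-)$ before $\partial\D$, exits a ball of radius $s\log n/n$ before hitting $B(\rho_n^-)$; after mapping via $z\mapsto\log z$ this reduces to a gambler's-ruin computation yielding the $c/s$ term. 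So the $c/s$ is not an artefact of a large-$s$ splitting as in Theorem~\ref{the:couplingdiscontexc1} but comes from a genuine Brownian estimate that your outline omits entirely.
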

\begin{proof}
First recall that by Lemma~\ref{lem:KMTuntilboundary} we can couple a time-changed random walk $X^{(n)}$ on $\D_n$ and a Brownian motion $Z$ on $\D$ such that 
\begin{equation}\label{e.lekmt}
    |\widehat{X}_t^{(n)}-Z_t|\leq  \frac{c\log(n)}{n} , \forall t \leq \overline{L}_{B(r)}=\sup\{t\geq0:|\widehat{X}_t^{(n)}|\wedge|Z_t|\leq r\},
\end{equation}
with probability at least $1-  \frac{c\log(n)}{n(1-r)}.$
Let us denote by $\widehat{L}_{B_n}^{(n)}:=\sup\{t\geq0:\widehat{X}^{(n)}_t\in{B_n}\}$ the last exit time of $B_n$ by $\widehat{X}^{(n)},$ and then $\widehat{X}^{(n)}_{\widehat{L}_{B_n}^{(n)}}=X^{(n)}_{L_{B_n}^{(n)}}.$ Let
\begin{equation}
\label{e:defrhoL}
\rho_n^{\pm}:=r\pm\frac{3c\log(n)}{n}\text{ and } \gamma_n^{\pm} :=L_{B(\rho_n^{\pm})}.
\end{equation}
Note that $\widehat{L}_{B_n}^{(n)} \in [\gamma_n^-,\gamma_n^+]$ on the event \eqref{e.lekmt}, and $L_B\in [\gamma_n^-,\gamma_n^+].$ In particular, on the event \eqref{e.lekmt},
\begin{align*}
|{X}^{(n)}_{{L}_{B_n}^{(n)}}-Z_{L_{B}}| &\leq 
\sup_{\gamma_n^- \leq t \leq \gamma_n^+ } |Z_t-Z_{L_{B }}|+|Z_{\widehat{L}_{B_n}^{(n)}}-\widehat{X}^{(n)}_{\widehat{L}_{B_n}^{(n)}}|
\\&\leq 2 \sup_{\gamma_n^- \leq t \leq \gamma_n^+  }|Z_t-Z_{\gamma_n^+}|+\frac{c\log(n)}{n}.
\end{align*}
Hence
\begin{equation}\label{e:bmgambler}
\begin{split}
\Pm_{0,0}^{(n)}&\left( \left| {X}^{(n)}_{{L}_{B_n}^{(n)}}-Z_{L_{B}}\right| \geq \frac{(2s+c) \log(n)}{n}  \right) \\ 
&\leq \Pm_{0}\left( \sup_{\gamma_n^- \leq t \leq \gamma_n^+} | Z_t-Z_{\gamma_n^+}|\geq \frac{s \log(n)}{n}  \right) +\frac{c\log(n)}{n(1-r)}.
\end{split}
\end{equation}

According to \cite[p.74]{MR521533}, conditionally on $Z_{\gamma_n^-},$ the law of $(Z_t)_{t\geq \gamma_n^-}$ is independent of $(Z_t)_{t\leq \gamma_n^-}.$ Since $\sigma_{\rho_n^-},$ the uniform measure on $\partial B(\rho_n^-),$ is the law of $Z_{\gamma_n^-}$ starting from either $0$ or $\sigma_{\rho_n^+}$ by rotational invariance, the probability on the last line of \eqref{e:bmgambler} is equal to
\begin{align*}
&\Pm_{\sigma_{\rho_n^+}}\left( \sup_{\gamma_n^- \leq t \leq \gamma_n^+} | Z_t-Z_{\gamma_n^+}\,|\,\geq \frac{s \log(n)}{n}\,\Big|\,\gamma_n^->0\right)
\\&= \Pm_{\sigma_{\rho_n^+}} \left( \sup_{0 \leq t \leq  \tau_{B(\rho_n^-)} } | Z_t-Z_{0}|\geq \frac{s \log(n)}{n} \,\Big|\, \tau_{B(\rho_n^-)}<\tau_{\partial \D } \right),
\end{align*}
where the last equality follows from invariance under time-reversal of the Brownian motion, see for instance \cite[Theorem~24.18]{kallenberg_2002}.
Therefore, using again rotational invariance and changing notation, which makes the rest of the argument more clear, we obtain
\begin{equation}
\label{e:proofboundonexit0}
    \Pm_0\left( \sup_{\gamma \leq t \leq \bar{\gamma}} | Z_t-Z_{\bar{\gamma}}|\geq \frac{s \log(n)}{n} \right)=\Pm_{{\rho_n^+}}\left( Z[0,\tau_{B(\rho_n^-)}] \not\subset B\big(\rho_n^+,\frac{s \log(n)}{n}\big) \,\Big |\, \tau_{B(\rho_n^-)} < \tau_{\partial \D} \right).
\end{equation}
First note that by \eqref{eq:hittingBM},
\begin{equation}
\label{e:proofboundonexit3}
\Pm_{\rho^+_n}(\tau_{B(\rho_n^-)} < \tau_{\partial \D})=\frac{|{\log( \rho^+_n)}|}{{|\log( \rho^-_n)}|}\geq c
\end{equation}
if $\log(n)/n\leq c|\log(r)|,$ which we can assume without loss of generality since $1-r\leq |\log(r)|.$ We now bound the right-hand side of \eqref{e:proofboundonexit0} without the conditioning. Using conformal invariance of Brownian motion under $z\mapsto \log(z),$ see for instance \cite[Theorem~2.2]{lawler2005conformally}, we moreover have
\begin{equation}
    \label{e:proofboundonexit00}
\begin{split}
\Pm_{{\rho_n^+}}&\left( Z[0,\tau_{B(\rho_n^-)}] \not\subset B(\rho_n^+,\frac{s \log(n)}{n}) , \tau_{B(\rho_n^-)} < \tau_{\partial \D} \right) \\
&\leq  \Pm_{\log(\rho_n^+)} \left( W[0,\tau_{ \log( \rho_n^- )}'] \not\subset \log\Big(B\big(\rho_n^+, \frac{s\log(n)}{n}\big)\Big) \right)
\end{split}
\end{equation}
where $W$ is a Brownian motion started at $\log(\rho_n^+)$ and killed upon hitting $\{z\in{\mathbb{C}}:\Re(z)<0,\Im(z)\in{(-\pi,\pi)}\}^{\ch},$  and $\tau_a'$ is the minimum between the killing time of $W$, and the first hitting time of $\left\{ \Re(z) = a\right\}$ by $W$ for each $a<0.$ Now using the inequality $|e^x-e^y|\leq C|x-y|$ for all $x,y\in{\{z:|z|\leq |\log(r)|+1\}},$ one can find a constant $c>0$ such that, 
\begin{equation}
\label{e:inclusionballs}
B\left(\log(\rho_n^+), \eps_n \right) \subset \log \left( B\Big(\rho_n^+, \frac{s \log(n)}{n}\Big)\right),\text{ where }\eps_n=\frac{cs\log(n)}{n}.
\end{equation}
Hence we only need to estimate the probability that
\(
\{ W[0,\tau_{\log( \rho_n^- )}'] \subset B(W_0, \eps_n) \},
\) conditionally on $\tau'_{\log(\rho_n^-)}<\tau'_0,$
and the remainder part of the proof is to obtain a lower bound on this probability. We begin with some elementary geometrical observations. First by \eqref{e:defrhoL} the distance between the two points $\log(\rho_n^\pm)$ is $\log(\rho_n^+/\rho_n^-)$ and satisfies
\begin{equation}
\label{e:boundrho}
\log(\rho_n^+/\rho_n^-) \leq \frac{c\log(n)}{n}.
\end{equation}
Thus for $s$ large enough, the ball $B\left(\log(\rho_n^+), \eps_n\right)$ intersects the line 
\(
\{\Re (z) = \log( \rho_n^-) \},
\)
and, letting $h$ denote the distance from the point $\log(\rho_n^+)+\eps_n/2$ and one of the points $\{\Re(z)= \log( \rho_n^+)+\eps_n/2 \} \cap B\left(\log(\rho_n^+), \eps_n\right),$ and $h'$ the distance between $\log( \rho_n^-)$ and one of the points $\{\Re(z)= \log( \rho_n^{-} ) \} \cap B\left(\log(\rho_n^+), \eps_n\right)$ we have for $s$ large enough
\begin{align}
\label{e:defh*}
h'= \sqrt{\eps_n^2 -\log\left( \frac{\rho_n^+}{\rho_n^-} \right)^2  
} \geq \frac{\sqrt{3}}{2}\eps_n=  h.
\end{align}

Now we can estimate the probability as follows. For each $t$ before $W$ is killed on $\{\Im(z)\in{[-\pi,\pi]^\ch}\},$ we can write $W_t := Y^1_t + \im Y^2_t,$ where $Y^1$ and $Y^2$ are two independent one-dimensional Brownian motions, and then, assuming without loss of generality that $s\log(n)/n$ is small enough so that $h<\pi,$ we have under $\Pm_{\log(\rho_n^+)}$
\begin{align*}
&\left\{ \tau_{\log(\rho^-_n)}(Y^1)< \tau_{ \log(\rho_n^+)+\eps_n/2   }(Y^1) \right\} \cap \{ \tau_{\log(\rho^-_n)}(Y^1)< \tau_{h}(Y^2)\wedge \tau_{-h}(Y^2)  \}  \\
& \subset
\left\{ W[0, \tau_{ \log( \rho_n^- )}'] \subset B\left(W_0, \eps_n\right)\right\}.
\end{align*}
Therefore,
\begin{equation}
\label{e:proofboundonexit1}
\begin{split}
\Pm_{\log( \rho^+_n)} \left(  W[0, \tau_{ \log( \rho^-_n )}'] \not \subset B\left(W_0, \eps_n\right)\right) &\leq \Pm_{\log( \rho^+_n)} \left(\tau_{\log(\rho^-_n)}(Y^1)> \tau_{ \log(\rho_n^+)+\frac{\eps_n}{2}   }(Y^1)  \right) \\
&\ + \Pm_{\log( \rho^+_n)} \left( \tau_{\log(\rho^-_n)}(Y^1)> \tau_{h}(Y^2)\wedge \tau_{-h}(Y^2)  \right).
\end{split}
\end{equation}
Using the well-known formula for one-dimensional hitting times, see for instance Part II.1, Equation 2.1.2 in  \cite{MR1912205}, we see that by \eqref{e:inclusionballs} and \eqref{e:boundrho} 
\begin{equation}
\label{e:proofboundonexit2}
\begin{split}
    &\Pm_{\log( \rho^+_n)} \left(\tau_{\log(\rho^-_n)}(Y^1)> \tau_{\log(\rho_n^+)+ \frac{\eps_n}{2}   }(Y^1)  \right)= \frac{|\log(\rho_n^-/\rho_n^+)|}{|\log(\rho_n^-/\rho_n^+)|+ \eps_n/2} \leq\frac{c}{s}
\end{split}
\end{equation} 
and by \eqref{e:defrhoL}
The second term in \eqref{e:proofboundonexit1} can be bounded by
\begin{equation}\label{e:bmhit1}
\begin{split}
&\Pm_{\log( \rho^+_n)} \left( \tau_{\log(\rho^-_n)}(Y^1)> \tau_{h}(Y^2)\wedge \tau_{-h}(Y^2)  \right)\leq 2\Pm_{0} \left( \tau_{\log(\rho^+_n/\rho^-_n)}(Y^1)> \tau_{h}(Y^2)\right)
\\&\quad\leq 4 \E_0\left[1-\Phi\left( \frac{h}{\sqrt{\tau_{\log(\rho_n^+/\rho_n^-)}(Y^1)}} \right)\right]
\leq 4 \E_0\left[1-\Phi\left( \frac{cs}{\sqrt{\tau_{1}(Y^1)}} \right)\right],
\end{split}
\end{equation}
where $\Phi(x)$ denotes the CDF of a standard normal random variable, we used the reflection principle in the second inequality, and scaling invariance as well as \eqref{e:boundrho} and \eqref{e:defh*} in the last inequality. Using that $1-\Phi(y) \leq \frac{1}{\sqrt{2\pi}y}\e^{-y^2/2}$ for all $y>0,$ as well as a formula for the density of $\tau_{1},$ see for instance \cite[Part II.1, Equation 2.0.2]{MR1912205}, we moreover have
\begin{equation}
    \label{e:bXYsmall}
    \E_0\left[\left(1-\Phi\left( \frac{cs}{\sqrt{\tau_{1}(Y^1)}}\right)\right)\I_{\tau_{1}(Y^1)<s^{2}} \right]\leq \int_0^{s^{2}}\frac{c}{st}e^{-\frac{cs^{2}}{t}}\mathrm{d}t=\frac{c'}{s},
\end{equation}
where the last equality follows from the change of variable $t\mapsto s^{-2}t.$ Combining \eqref{e:proofboundonexit3}, \eqref{e:proofboundonexit1}, \eqref{e:proofboundonexit2},  \eqref{e:bmhit1}, \eqref{e:bXYsmall} together with the bound $\Pm_0(\tau_1(Y^1)\geq s^{2})=2\Phi(1/s)-1\leq C/s$ we thus obtain 

\[
\Pm_{\log(\rho_n^+)} \left( W[0,\tau_{ \log( \rho_n^- )}'] \not \subset B\big(\log(\rho_n^+), \eps_n\big)\, \middle|\, \tau'_{ \log( \rho_n^- )}<\tau_0' \right) \leq  \frac{c}{s},
\]
which, in view of \eqref{e:bmgambler}, \eqref{e:proofboundonexit0}, \eqref{e:proofboundonexit00} and \eqref{e:inclusionballs}, completes the proof.
\end{proof}

\section{Appendix: Convergence of capacities}
\label{app:convcap}
\renewcommand*{\thetheorem}{C.\arabic{theorem}}
\renewcommand{\theequation}{C.\arabic{equation}}

In this section, we prove convergence of the discrete capacity to the continuous capacity of general connected compacts $K,$ which generalizes Lemma~\ref{l.capconv}, and is used in the proof of Lemma~\ref{Lem:bigcaponboundary}. Recall that for $K\subset \D$ we write $K_n:= \D_n \cap K.$

\begin{proposition}
\label{t.gencapconv}
There exists $c<\infty$ such that for all connected sets $K\Subset \D$ and $n\in{\N}$ satisfying $K\subset B(K_n,2/n),$ letting $r= \sup_{x \in K} |x|,$ we have
\begin{equation}
\label{e.gencapconv}
\left| \capac(K)-\capac^{(n)}(K_n)\right| \leq 
  \frac{c}{\sqrt{1-r}} \left(\left(1+\mathrm{cap}(K)+\mathrm{cap}^{(n)}(K_n)\right)^2\frac{\log(n)}{n} \right)^{1/3}
\end{equation}
\end{proposition}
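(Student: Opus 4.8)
\textbf{Proof plan for Proposition~\ref{t.gencapconv}.}

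The plan is to compare discrete and continuous capacities through their variational/last-exit characterizations, exactly as in the proof of Lemma~\ref{l.capconv}, but now for a general connected compact $K$ rather than a ball. First I would set up the last-exit identities: for Brownian motion in $\D$ and any point $z$ we have $\Pm_z(L_K>0)=\int_{\partial K}G(z,y)\,e_K(\d y)$, and the discrete analogue $\Pm_x^{(n)}(L_{K_n}^{(n)}>0)=\sum_{y\in\widehat\partial K_n}G^{(n)}(x,y)\,e_{K_n}^{(n)}(y)$. Summing (resp.\ integrating) against a suitable reference point or measure will let me express $\capac(K)$ and $\capac^{(n)}(K_n)$ in a comparable form. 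Since $0$ may lie inside $K$ for a general compact, I would instead test at a fixed point $z_0$ on $\partial\D$ region away from $K$ (or integrate against $\sigma_{1-\eps}$ and use \eqref{e.eqmeasexpr1}--\eqref{capball}), using that $K\subset\overline B(r)$ with $r=\sup_{x\in K}|x|$ so that $1-r$ controls the distance of $K$ to $\partial\D$; the Green function values $G(z_0,y)$ for $y\in K$ are then bounded above and below by constants depending only on $1-r$, and likewise $G^{(n)}(z_0,y)$ by \eqref{e.greenest}.

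The key estimates to assemble are: (i) the Green function comparison \eqref{e.greenest}, $|G(0,y)-G^{(n)}(0,y)|\le O(1/(|y|n))$, which after a conformal shift of base point gives $|G(z_0,y)-G^{(n)}(z_0,y)|\le c/n$ uniformly for $y\in K$ with the constant depending on $1-r$; (ii) a spatial-continuity estimate: if $y\in\partial K$ and $y'\in K_n$ with $|y-y'|\le 2/n$, then $|G(z_0,y)-G(z_0,y')|\le c|y-y'|\le c/n$, which follows from the explicit formula \eqref{eq:Greeneverywhere} together with $K\subset\overline B(r)$; (iii) the Beurling estimate, Lemma~\ref{lem:beurling} (both discrete and continuum versions), to control the mass that the equilibrium measures place near points where the two sets $K$ and $K_n$ differ --- this is where the factor $((1-r)^{-1/2}(\cdot)^{1/3})$ will come from. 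Concretely, I would cover the "mismatch region" $B(K_n,2/n)\setminus K$ (and vice versa) at scale $\delta=(c\log n/n)^{1/3}$ or so, and bound the equilibrium mass of $K$ in a $\delta$-neighbourhood of $\partial K_n$ using Beurling: a random walk/Brownian motion started on $\partial K$ near such a point has probability $\lesssim (\,\text{dist}/\delta)^{1/2}$ of escaping to $\partial\D$ before returning, so $e_K$ of that $\delta$-tube is $\lesssim \capac(K)(1/n\delta)^{1/2}\,\cdot(\text{number of scales})$, or more cleanly $\lesssim \capac(K)\sqrt{1/(n\delta)}$. Balancing the additive error $\sim(1+\capac)^2\log(n)/n\cdot\delta^{-1}$ against the Beurling error $\sim(1+\capac)^2\sqrt{\log(n)/(n\delta)}\cdot(1-r)^{-1/2}$ (roughly) over $\delta$ produces the exponent $1/3$ and the stated bound.

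The main obstacle I anticipate is handling the two-sided set discrepancy cleanly: $K_n$ need not be connected even if $K$ is, and $K$ need not contain $K_n$ (only $K\subset B(K_n,2/n)$ is assumed), so I cannot directly use monotonicity of capacity. The fix is to sandwich: compare $\capac^{(n)}(K_n)$ with $\capac^{(n)}$ of a slightly fattened discrete set $\widehat K_n:=B(K,2/n)\cap\D_n\supset K_n$, and $\capac(K)$ with $\capac$ of $B(K,\delta)$, using that fattening a connected compact by $\delta$ changes its capacity by at most $O(\delta\cdot(1+\capac)^2/(1-r))$ --- again via Beurling/last-exit, since the extra mass lives in a thin tube. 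Once both quantities are pinched between the capacities of genuinely nested sets whose boundaries are within $\delta$ of each other, steps (i)--(iii) above bound the gap, and \eqref{e.gencapconv} follows. I would also invoke Proposition~\ref{prop:interoncompacts} / standard properties of $\capac^{(n)}$ and the fact $\capac^{(n)}(K_n)\le c\capac(B(r))\le c'/(1-r)$ (and similarly for $\capac(K)$) only where needed to absorb lower-order terms, though the statement is written to be uniform in $\capac(K),\capac^{(n)}(K_n)$ so I would keep those explicit.
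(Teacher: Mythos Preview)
Your proposal has the right spirit---reduce to hitting probabilities, use Beurling to control capacity under fattening, balance errors to get the $1/3$---but the central mechanism is missing. Testing at a point $z_0\in\partial\D$ cannot work: $G(z_0,\cdot)\equiv 0$ there. Integrating against $\sigma_{1-\eps}$ (equivalently, against the normalized equilibrium measure $\overline e_{B(r)}$) is indeed the right move, and via the sweeping identity this expresses $\capac(K)/\capac(B(r))$ as the hitting probability $\Pm_{\overline e_{B(r)}}(\tau_K<\infty)$, with the analogous discrete formula. But once you are at that point, the whole difficulty is to compare the \emph{discrete} hitting probability $\Pm^{(n)}_{\overline e^{(n)}_{B_n}}(\tau_{K_n}^{(n)}<\infty)$ to the \emph{continuous} one. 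Your items (i)--(iii) (Green comparison, spatial continuity of $G$, Beurling) do not by themselves bridge that gap: the Green-function estimate \eqref{e.greenest} lets you compare $G$ with $G^{(n)}$ pointwise, but the equilibrium measures $e_K$ and $e_{K_n}^{(n)}$ live on different supports, and you have no a~priori control on how they match up without knowing capacities are close---which is what you are trying to prove. The ``sandwich'' you propose reduces to comparing $\capac^{(n)}$ of fattened sets to $\capac$ of fattened sets, which is the same problem.

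The paper closes this loop probabilistically, not analytically: it couples the \emph{starting points} via Lemma~\ref{l.EQcoup} (so that $E_B$ and $E^{(n)}_{B_n}$ are within $s\log n/n$ with probability $\ge 1-c/s$), and then couples the \emph{trajectories} via the KMT coupling (Lemma~\ref{lem:KMTuntilboundary}), so that on the good event $\{\tau_K<\infty\}\triangle\{\tau^{(n)}_{K_n}<\infty\}$ forces one path to hit the $\eps_n$-fattening of the other's target without hitting the target itself. That last event is then bounded by Beurling exactly as you envisaged, yielding $\sqrt{s\log n/(n(1-r))}\,\capac$. The three resulting error terms are $c/s$ (coupling failure of starting points), $cs\log n/(n(1-r))$ (KMT failure), and the Beurling term; optimizing in $s$ gives the $1/3$. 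Your $\delta^{-1}$ ``additive error'' has no obvious analogue here---the $c/s$ really comes from the equilibrium-measure coupling, not from a covering argument. So the step you are missing is precisely the appeal to the couplings of Appendix~\ref{sec:KMT}; once you insert those, your Beurling-and-balance outline becomes the paper's proof.
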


\begin{proof}
Write $B=B(r\vee(1/2))$ and let $B_n=B\cap\D_n.$ By the sweeping identity \eqref{e.consistency} and the corresponding discrete identity, see for instance \cite[(1.59)]{MR2932978},
we have
\begin{align*}
\frac{\capac(K)}{\capac(B)}-\frac{ \capac^{(n)}(K_n)}{\capac^{(n)}(B_n)}&=
   \Pm_{\overline{e}_B} \left( \tau_K<\infty \right)- \Pm_{\overline{e}_{B_n}^{(n)}}^{(n)} \left( \tau_{K_n}^{(n)} < \infty \right)
   \\&=\E_{\Q_r}\left[\Pm_{E_B}(\tau_K<\infty)-\Pm_{E_B^{(n)}}^{(n)}\left(\tau_{K_n}^{(n)}<\infty\right)\right],
\end{align*}
where $\Q_r$ denotes the coupling from Lemma~\ref{l.EQcoup}. Using Lemma~\ref{l.capconv} we thus have
\begin{equation}
\label{e.proofgenK1}
\begin{split}
\left|\frac{\capac(K)-\capac^{(n)}(K_n)}{\capac(B)} \right|&\leq \left|\frac{\capac(K)}{\capac(B)}-\frac{ \capac^{(n)}(K_n)}{\capac^{(n)}(B_n)}\right|+\capac^{(n)}(K_n)\left|\frac{1}{\mathrm{cap}(B)}-\frac{1}{\mathrm{cap}^{(n)}(B_n)}\right|
\\&\leq\E_{\Q_r}\left[\left|\Pm_{E_B}(\tau_K<\infty)-\Pm_{E_{B_n}^{(n)}}^{(n)}\left(\tau_{K_n}^{(n)}<\infty\right)\right|\right]+\frac{c\,\mathrm{cap}^{(n)}(K_n)}{n}.
\end{split}
\end{equation}
Now, using \eqref{e.boundapproequi} and recalling the coupling $\Pm_{z,x}^{(n)}$ from Lemma~\ref{lem:KMTuntilboundary}, we write for all $s>0$
\begin{equation}
    \label{e.proofgenK2}
\begin{split}
   & \E_{\Q_r}\left[\left|\Pm_{E_B}(\tau_K<\infty)-\Pm_{E_{B_n}^{(n)}}^{(n)}\left(\tau_{K_n}^{(n)}<\infty\right)\right|\right] \\& \leq \frac{c}{s}+ \frac{c\log(n)}{n(1-r)}
   + \E_{\Q_r}\left[ \Pm_{E_B,E_{B_n}^{(n)}}^{(n)} \big( \{\tau_K< \infty\}\Delta\{ \tau_{K_n}^{(n)}<\infty\}\big)\I\left\{|E_B-E_{B_n}^{(n)}|\leq \frac{s\log(n)}{n}\right\}  \right]
\end{split}
\end{equation}

We now bound the expectation on the right hand side of \eqref{e.proofgenK2}. Let $\overline{\tau}_K:=\inf\{t\geq0:\widehat{X}_t^{(n)}\in{K}\text{ or }Z_t\in{K}\}$ be the first time either $\widehat{X}^{(n)}$ or $Z$ hit $K,$ with the convention $\inf\varnothing=0,$ and note that $\overline{\tau}_K\leq\overline{L}_K,$ see above Lemma~\ref{lem:KMTuntilboundary}. Moreover due to our assumption on $K,$ we have $K_n\subset K\subset B(K_n,2/n).$ Letting 
\[
A = \left
\{ \sup_{0 \leq t \leq \overline{\tau}_K } | Z_t-\widehat{X}_t^{(n)}| \leq \frac{2s \log(n)}{n}  \right\},
\] 
 we then have for all $z,x$ with $|z-x|\leq s\log(n)/n$ and all $s\geq2$
\begin{align*}
\Pm_{z,x}^{(n)} 
    &\left( \tau_K< \infty, \tau_{K_n}^{(n)} = \infty, A \right) 
    \leq \Pm_{x}^{(n)} 
        \left( 
        \tau_{B_n(K_n,\eps_n)}^{(n)}< \infty, \tau_{K_n}^{(n)}  = \infty
         \right),  \\
\Pm_{z,x}^{(n)} 
    &\left( \tau_K= \infty, \tau_{K_n}^{(n)} < \infty, A \right) \leq \Pm_{z} 
            \left( 
            \tau_{B(K,\eps_n)}< \infty, \tau_{K}  = \infty 
            \right),
\end{align*}
where $\eps_n=3s\log(n)/n.$ Therefore, we have
\begin{equation}
    \label{e.proofgenK3}
\begin{split}
    & \E_{\Q_r}\left[ \Pm_{E_B,E_{B_n}^{(n)}} \left( \{\tau_K< \infty\}\Delta\{ \tau_{K_n}^{(n)}<\infty\}\right)\BI\left\{|E_B-E_{B_n}^{(n)}|\leq \frac{s\log(n)}{n}\right\}  \right] \\
    &\leq \Pm_{\overline{e}_B} (\tau_{B(K,\eps_n)}< \infty, \tau_{K}=\infty)+\Pm^{(n)}_{\overline{e}^{(n)}_{B_n}} \left( \tau_{B_n(K_n,\eps_n)}^{(n)}<\infty,\tau_{K_n}^{(n)}= \infty  \right)
    \\&\quad+  \sup_{x\in{\D_n},z\in{\D}:|x-z|\leq \frac{s\log(n)}{n}}\Pm_{z,x}^{(n)}(A^\ch).
\end{split}
\end{equation}
Using the Markov property and the sweeping identity  \eqref{e.consistency} we can estimate these quantities as follows
\begin{align*}
    \Pm_{\overline{e}_B} (\tau_{B(K,\eps_n)}< \infty, \tau_{K}=\infty) &= \E_{\overline{e}_B} \left[ \Pm_{Z_{\tau_{B(K,\eps_n)}}} \left(\tau_K= \infty\right) \I\{\tau_{B(K,\eps_n)} < \infty\} \right]
    \\&=\frac{\Pm_{e_{B(K,\eps_n)}} (\tau_K= \infty)}{\mathrm{cap}(B)}.
\end{align*}
Moreover, by the Beurling estimate, see Lemma~\ref{lem:beurling}, we have
\begin{equation}
\label{e.Petau=infty}
\Pm_{e_{B(K,\eps_n)}} (\tau_K= \infty)
    \leq \Pm_{{e}_{B(K,\eps_n)}} (\tau_{K}> \tau_{\partial B(Z_0,1-r) })
    \leq\sqrt{\frac{cs\log(n)}{n(1-r)}} \capac(B(K,\eps_n)).
\end{equation}
Note that by the sweeping identity \eqref{e.consistency}, the left-hand side of \eqref{e.Petau=infty} is actually equal to $\mathrm{cap}(B(K,\eps_n))-\mathrm{cap}(K),$ and in particular for $s\log(n)/(n(1-r))$ small enough, it implies $\mathrm{cap}(B(K,\eps_n))\leq 2\mathrm{cap}(K).$ Therefore using \eqref{capball} we obtain
\begin{equation}
    \label{e.boundKsKcont}
         \Pm_{\overline{e}_B} (\tau_{B(K,\eps_n)}< \infty, \tau_{K}=\infty)\leq \sqrt{\frac{cs(1-r)\log(n)}{n}}\capac(K) .
\end{equation}
Similarly, using the discrete sweeping identity, Beurling estimate \eqref{eq:disbeurling}, and \eqref{eq:approcap}, one can easily prove
\begin{equation}
    \label{e.boundKsKdis}
     \Pm_{\overline{e}_{B_n}^{(n)}}^{(n)} (\tau_{B_n(K_n,\eps_n)}< \infty, \tau_{K_n}^{(n)}=\infty)\leq \sqrt{\frac{cs(1-r)\log(n)}{n}} \capac^{(n)}(K_n).
\end{equation}
Combining  \eqref{eq:KMTuntilboundary}, \eqref{e.proofgenK2}, \eqref{e.proofgenK3}, \eqref{e.boundKsKcont} and \eqref{e.boundKsKdis}, we obtain for all $s$ satisfying $s\geq s_0$ and $s\log(n)/(n(1-r))$ small enough,
\begin{equation}
\label{eq:proofgenKf}
\begin{split}
    &\E_{\Q_r}\left[\left|\Pm_{e_B}(\tau_K<\infty)-\Pm_{e_{B_n}^{(n)}}\left(\tau_{K_n}^{(n)}<\infty\right)\right|\right]
     \\&\leq \frac{c}{s}+ \frac{cs\log(n)}{n(1-r)}+\left(\mathrm{cap}(K)+\mathrm{cap}^{(n)}(K_n)\right)\sqrt{\frac{cs(1-r)\log(n)}{n}} 
\end{split}
\end{equation}
and, taking $s=\big(cn/(\log(n))\big)^{1/3}\left(1+\mathrm{cap}(K)+\mathrm{cap}^{(n)}(K_n)\right)^{-2/3}\sqrt{1-r}$ for some large enough constant $c,$ which we can assume without loss of generality satisfies $s\geq s_0$ and $s\log(n)/(n(1-r))$ is small enough, we can conclude by \eqref{capball} and \eqref{e.proofgenK1}.
\end{proof}

Note that the choice of $s$ below \eqref{eq:proofgenKf} is not optimal, but the optimal choice depends on the relation between $r,$ $n$ and $\mathrm{cap}(K).$ Thus for certain values of these parameters the bound \eqref{e.gencapconv} could be improved, but we anyway do not believe  that the bounds obtained via this method would be optimal.

\bibliography{references} 
\end{document}